\title{Algebraic entropy for amenable semigroup actions}
\date{}
\newtheorem{lemma}{Lemma}[section]
\newtheorem{proposition}[lemma]{Proposition}
\newtheorem{theorem}[lemma]{Theorem}
\newtheorem{corollary}[lemma]{Corollary}
\newtheorem{conjecture}[lemma]{Conjecture}
\newtheorem{claim}[lemma]{Claim}
\newtheorem{question}[lemma]{Question}
\theoremstyle{definition}
\newtheorem{definition}[lemma]{Definition}
\newtheorem{remark}[lemma]{Remark}
\newtheorem{example}[lemma]{Example}
\newtheorem{examples}[lemma]{Examples}
\newcommand*{\norm}[1]{\rVert #1 \rVert}
\def\N{\mathbb N}
\def\R{\mathbb R}
\def\Z{\mathbb Z}
\def\Q{\mathbb Q}
\def\F{\mathbb F}
\def\P{\mathcal P}
\def\Pf{\P_{fin}}
\def\ent{\mathrm{ent}}
\def\Aut{\mathrm{Aut}}
\def\Se{\mathfrak S}
\newcommand{\halg}{h_{alg}}
\newcommand{\Halg}{H_{alg}}
\DeclareMathOperator{\cardm}{card}
\newcommand*{\card}[1]{\left\vert #1 \right\vert}
\newcommand*{\abs}[1]{\left\lvert #1 \right\rvert}
\newcommand*{\set}[1]{\left\{ #1 \right\}}
\newcommand*{\Pa}[1]{\bigl(#1\bigr)}
\newcommand{\SDiff}{\mathbin{\Delta}}
\newcommand{\rest}{\mathbin\restriction}
\newcommand{\Aver}{\mathcal H}
\newcommand{\FolSeq}{\mathfrak F}
\newcommand{\Folner}{F\o lner\xspace} 
\newcommand{\eqeps}{\mathrel{=_{\eps}}}
\newcommand{\leqeps}{\mathrel{\leq_{\eps}}}
\newcommand{\eps}{\varepsilon}
\newcommand{\beps}{{\bar\varepsilon}}
\numberwithin{equation}{section}
\author{Dikran Dikranjan \and Antongiulio Fornasiero \and Anna Giordano Bruno}
\newlength{\bibitemsep}\setlength{\bibitemsep}{.0\baselineskip plus .05\baselineskip minus .05\baselineskip}
\newlength{\bibparskip}\setlength{\bibparskip}{0pt}
\let\oldthebibliography\thebibliography
\renewcommand\thebibliography[1]{%
  \oldthebibliography{#1}%
  \setlength{\parskip}{\bibitemsep}%
  \setlength{\itemsep}{\bibparskip}%
}
\begin{document}

\maketitle

\abstract{
We introduce two notions of algebraic entropy for actions of cancellative right amenable semigroups $S$ on discrete abelian groups $A$ by endomorphisms; these extend the classical algebraic entropy for endomorphisms of abelian groups, corresponding to the case $S=\mathbb N$. 
We investigate the fundamental properties of the algebraic entropy and compute it in several examples, paying special attention to the case when $S$ is an amenable group.

For actions of cancellative right amenable monoids on torsion abelian groups, we prove the so called Addition Theorem. 
In the same setting, we see that a Bridge Theorem connects the algebraic entropy with the topological entropy of the dual action 
by means of the Pontryagin duality, so that we derive an Addition Theorem for the topological entropy of actions of cancellative left amenable monoids on totally disconnected compact abelian groups.}
\\

2010 Mathematics Subject Classification: {\sl Primary 20K30, 20M20; Secondary 37A35, 37B40, 43A07.}

Keywords: {\sl algebraic entropy, topological entropy, amenable semigroup, amenable monoid, group endomorphism, semigroup action.}


\section{Introduction}

The notion of entropy was largely studied for discrete dynamical systems since the mid fifties, when 
Kolmogorov and Sinai defined the measure entropy in ergodic theory. Inspired by their work, Adler, Konheim, and McAndrew \cite{AKM} introduced the topological entropy for continuous selfmaps of compact topological spaces, while a different notion of topological entropy for uniformly continuous selfmaps of metric spaces was given by Bowen \cite{B} and Dinaburg \cite{Din} independently.

Yuzvinski \cite{Y} computed the topological entropy of continuous endomorphisms of compact metrizable groups, including the celebrated formula (carrying now his name, that is, Yuzvinski Formula) establishing that the topological entropy of a continuous endomorphism of a finite-dimensional universal solenoid coincides with the Mahler measure of their characteristic polynomial over $\Z$. 
Yuzvinski proved also the so-called Addition Theorem (usually called Yuzvinski's addition formula) for the topological entropy of continuous endomorphisms of compact metrizable groups, that was recently extended to all compact groups in \cite{Dik+Manolo}.  Later on, Stoyanov \cite{St} established uniqueness of the topological entropy of continuous endomorphisms of compact groups, imposing several natural axioms, in the so-called Uniqueness Theorem. As a by-product, this entails the coincidence of the topological and the measure entropy in the category of compact groups and surjective continuous endomorphisms (see \cite{B} for the metrizable case, \cite{Aoki} for the abelian case).

After a very brief and schematic introduction in the very end of \cite{AKM}, the algebraic entropy for endomorphisms of abelian groups was gradually developed by Weiss \cite{W} and Peters  \cite{P1,P2}. The interest in this direction increased after \cite{DGSZ}, where a rather complete description in the case of torsion abelian groups was obtained, including an Addition Theorem and a Uniqueness Theorem. These were generalized to all abelian groups in \cite{DGB}. Details and results can be found in \cite{AADGBH,AD,DGB,DGB4,GBshift,GBV,GBV2}, in \cite{GBSp,GBSp1} for the non-abelian case, in \cite{SVV,SZ1} for the algebraic entropy for modules; see also the surveys \cite{DGB3,DGB0,DSV,GS1,GS2}.
 
\medskip
As far as non-discrete dynamical systems are concerned, the measure entropy for actions of finitely generated groups on probability spaces by measure preserving transformations was defined by Kirillov \cite{Ki}; the case of abelian group actions was studied by Conze \cite{Con}, and by Katznelson and Weiss \cite{KW}. 
Lind, Schmidt, and Ward \cite{LSW} gave reference to Conze for the measure entropy of $\Z^d$-actions, while for $\Z^d$-actions on compact metrizable groups they  generalized to this setting both the definition of topological entropy by Bowen, as well as that by Adler, Konheim, and McAndrew, showing that they coincide.
They proved the Addition Theorem for $\Z^d$-actions on compact metrizable groups, and analogues of the Yuzvinski Formula (involving multidimensional Mahler measure).
 
The measure entropy for amenable group actions was introduced by Kieffer \cite{Kie}, while the topological entropy for amenable group actions on compact metric spaces by Stepin and Tagi-Zade \cite{ST}, and Ollagnier \cite{Oll} defined the topological entropy for amenable group actions on compact spaces using open covers as in \cite{AKM}.
A cornerstone in the theory of entropy of amenable group actions is the work by Ornstein and Weiss \cite{OW}, where in particular they proved the celebrated Ornstein-Weiss Lemma.
Apparently, not much was done for actions of genuine non-abelian amenable groups until Deninger's paper \cite{De}, followed by Chung and Thom \cite{CT}, and Li \cite{Li}. These authors established appropriate analogues of the Yuzvinski Formula in terms of the Fuglede-Kadison determinant in place of the Mahler measure, and analogues of the Addition Theorem. In particular, Li \cite{Li} proved the Addition Theorem for actions of countable amenable groups on compact metrizable abelian groups.

The algebraic and the topological entropy were extended by Virili \cite{V2} to actions on locally compact abelian groups. Another paper by Virili \cite{V3} concerns the algebraic entropy of amenable group actions on modules; there he proved also an Addition Theorem and finds applications to the Stable Finiteness Conjecture and the Zero Divisors Conjecture, originally stated by Kaplansky.
These ideas were pushed further by Li and Liang \cite{LL}. Various extensions of these entropies to the case of actions of sofic groups 
can be found, for example, in \cite{Bo0,Bo,KH}, and the survey \cite{Weisss}.



\medskip
Recently, Ceccherini-Silberstein, Coornaert, and Krieger \cite{CCK} extended Ornstein-Weiss Lemma to cancellative amenable semigroups (see Theorem~\ref{CCKLemma} below). 
Using this result, they introduced the measure entropy and the topological entropy for actions of cancellative amenable semigroups. In particular, a notion of topological entropy $h_{top}$ was defined for left actions of cancellative left amenable semigroups on compact topological spaces (see \S\ref{htop-sec} below) extending the one in \cite{AKM}.

Following this approach, we consider left actions $$S \overset{\alpha}\curvearrowright A$$ of cancellative right amenable semigroups $S$ on abelian groups $A$ by group endomorphisms  (i.e., $\alpha(s):A\to A$ is a group endomorphism for every $s\in S$). 
For such actions we define and investigate two variants of the algebraic entropy, denoted by $\ent$ and $\halg$, that coincide when $A$ is torsion. In case $S=\N$, these notions of algebraic entropy coincide with those mentioned above for discrete dynamical systems.
Details and basic properties of the algebraic entropies can be found in \S\ref{halgsec}.

Before that, in \S\ref{background} we provide the necessary background on amenable semigroups and their F\o lner nets, in particular we introduce the canonical F\o lner nets. These tools are used in \S\ref{sec:integral} to build a kind of integration theory for a class of real valued functions defined on the finite subsets of an amenable cancellative monoid. We show that this integral satisfies an appropriate version of Fubini's Theorem (see Theorem~\ref{Fubini}). 

This theory allows us to introduce the algebraic entropy in \S\ref{halgsec} as an integral of a suitable function. Moreover, our counterpart of Fubini's Theorem applies several times in \S\ref{restr:quot:sec}, where we compute the algebraic entropy of restriction and quotient actions.
More precisely, for a left action $G \overset{\alpha}\curvearrowright A$ of a cacellative right amenable monoid $G$ on an abelian group $A$, denote by $$N\overset{ \alpha\rest_N}{\curvearrowright} A$$ the restriction action of $\alpha$ with respect to a submonoid $N$ of $G$.
If $G$ is a an amenable group and $N$ is a normal subgroup of $G$ trivially acting on $A$, then the quotient $G/N$ of $G$ acts on $A$ and we denote by 
$$
G/N \overset{\overline{\alpha}_{G/N}}\curvearrowright A
$$
this quotient action of $\alpha$. 
We show that, in case $N$ is a normal subgroup of the amenable group $G$, the algebraic entropy of $\alpha\restriction_N$ is always a greater than the algebraic entropy of $\alpha$, and the same applies to $\overline{\alpha}_{G/N}$ when $N$ acts trivially on $A$ (see Theorems~\ref{teo:submonoid} and~\ref{quotient}).

Several corollaries of the theorems on restriction and quotient actions are given, showing that the algebraic entropy vanishes very often. 
In particular, many actions of $\N^d$ and $\Z^d$, for $d>1$ (see Corollary~\ref{Coro:June12}  and Corollary~\ref{Coro:application1}), as well as the natural action of various amenable subgroups of $\mathrm{GL}_n(K)$ on $K^n$ for an infinite field $K$ (see Corollary~\ref{field}) have zero algebraic entropy. 
The counterparts of this frequent vanishing of the algebraic entropy seem to be known for the topological entropy and the measure entropy
 (see \cite{HSt}). They represent a motivation to study an alternative kind of entropy,  
called measure receptive entropy and topological receptive entropy in \cite{BDGBS} (see also \cite{BDGBS-Car}), for actions of finitely generated, not necessarily amenable monoids.   
%

\medskip
We dedicate the whole \S\ref{AT:sec} to the following Addition Theorem for actions on torsion abelian groups $A$ with respect to an invariant subgroup $B$:

\begin{theorem}[Addition Theorem]\label{ATintro}
Let $S \overset{\alpha}\curvearrowright A$ be a left action of a cancellative right amenable monoid $S$ on a torsion abelian group $A$.
Let $B$ be an $\alpha$-invariant subgroup of $A$, and denote by $\alpha_B$ and $\alpha_{A/B}$ the induced actions of $S$ on $B$ and on $A/B$, respectively. Then
\[\ent(\alpha) = \ent(\alpha_B)  +\ent (\alpha_{A/B}).\]
\end{theorem}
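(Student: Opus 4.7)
The plan is to extend the proof of the classical Addition Theorem for endomorphisms of torsion abelian groups from \cite{DGSZ,DGB} to the amenable semigroup setting. Since $A$ is torsion, one can compute $\ent(\alpha)$ as the supremum of $H(\alpha,F)$ over finite subgroups $F\leq A$, where $H(\alpha,F):=\lim_{K}\log|T_K(\alpha,F)|/|K|$ is the limit along a F\o lner net of the partial $K$-trajectory $T_K(\alpha,F):=\sum_{s\in K}\alpha(s)F$; existence of this limit is guaranteed by the Ornstein--Weiss type result Theorem~\ref{CCKLemma}. I would then establish the two opposite inequalities separately.

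For the easier direction $\ent(\alpha)\geq\ent(\alpha_B)+\ent(\alpha_{A/B})$, fix finite subgroups $F_B\leq B$ and $\bar F\leq A/B$, lift $\bar F$ to a finite $\tilde F\leq A$ (possible since $A/B$ is torsion), and set $F:=F_B+\tilde F$. Since the projection $\pi\colon A\to A/B$ intertwines the two actions, the short exact sequence of finite groups
\[0\to T_K(\alpha,F)\cap B\to T_K(\alpha,F)\to T_K(\alpha_{A/B},\bar F)\to 0,\]
together with the inclusion $T_K(\alpha_B,F_B)\subseteq T_K(\alpha,F)\cap B$, gives $|T_K(\alpha,F)|\geq|T_K(\alpha_B,F_B)|\cdot|T_K(\alpha_{A/B},\bar F)|$. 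Taking logarithms, dividing by $|K|$, passing to the F\o lner limit, and taking the supremum over $F_B$ and $\bar F$ completes this half.

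The reverse inequality is the main obstacle. For a fixed finite $F\leq A$, the same short exact sequence yields $|T_K(\alpha,F)|=|T_K(\alpha,F)\cap B|\cdot|T_K(\alpha_{A/B},\pi(F))|$, and the task reduces to bounding $|T_K(\alpha,F)\cap B|$ from above by a trajectory of $\alpha_B$ from a finite subgroup $F'\leq B$ that depends on $F$ but not on $K$. The inclusion $T_K(\alpha_B,F\cap B)\subseteq T_K(\alpha,F)\cap B$ is in general strict: choosing a set-theoretic section $\sigma\colon\pi(F)\to F$ of $\pi|_F$, every element of $T_K(\alpha,F)\cap B$ splits as $b+c$ with $b\in T_K(\alpha_B,F\cap B)$ and $c=\sum_{s\in K}\alpha(s)\sigma(\bar f_s)$, where $(\bar f_s)_{s\in K}\in\pi(F)^K$ must satisfy the cocycle constraint $\sum_{s\in K}\alpha_{A/B}(s)\bar f_s=0$. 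The number of such tuples is a priori exponential in $|K|$, so a naive count yields only $H(\alpha,F)\leq H(\alpha_B,F\cap B)+\log|\pi(F)|$, which is too weak.

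The hardest step is thus to exploit the F\o lner condition on $K$ to replace the uniform bound $\log|\pi(F)|$ with $H(\alpha_{A/B},\pi(F))$. The strategy is to enlarge $F\cap B$ to a finite subgroup $F'\leq B$ absorbing the finitely many elements $\sigma(\bar f)-\sigma(\bar f')$ with $\bar f,\bar f'\in\pi(F)$ and, for sufficiently invariant $K$, to show that each correction term $c$ can be rewritten as an element of $T_K(\alpha_B,F')$ plus an error whose cardinality is at most $c_F^{|\partial K|}$ with $c_F$ depending only on $F$; here the torsion hypothesis on $A$ ensures finiteness of $F$, $F\cap B$, $\pi(F)$ and hence of $c_F$. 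Since $|\partial K|/|K|\to 0$ along the F\o lner net, combining these bounds gives
\[\frac{\log|T_K(\alpha,F)|}{|K|}\leq\frac{\log|T_K(\alpha_B,F')|}{|K|}+\frac{\log|T_K(\alpha_{A/B},\pi(F))|}{|K|}+o(1),\]
and passing to the F\o lner limit (via Theorem~\ref{CCKLemma}) produces $H(\alpha,F)\leq\ent(\alpha_B)+\ent(\alpha_{A/B})$. The supremum over $F$ finishes the proof; the crux is the interplay between amenability of $S$ (controlling the boundary error) and torsion of $A$ (keeping the constants finite).
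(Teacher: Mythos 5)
Your first inequality, $\ent(\alpha)\geq\ent(\alpha_B)+\ent(\alpha_{A/B})$, follows essentially the same route as the paper's Proposition~\ref{AThalf}: lift a finite subgroup of $A/B$, add a finite subgroup of $B$, and compare cardinalities through the projection. That half is fine.

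The converse inequality contains a genuine gap at precisely the point you yourself flag as the hardest step. You correctly observe that the naive count of correction terms $c=\sum_{s\in K}\alpha(s)\sigma(\bar f_s)$ gives only the useless bound $H(\alpha,F)\leq H(\alpha_B,F\cap B)+\log|\pi(F)|$, but the proposed repair --- that after enlarging $F\cap B$ to a finite $F'\leq B$ each correction term lies in $T_K(\alpha_B,F')$ up to an error of cardinality $c_F^{|\partial K|}$ --- is asserted rather than proved, and I see no reason it should hold. The constraint $\sum_{s\in K}\alpha_{A/B}(s)\bar f_s=0$ is global in $K$, so the correction terms are in no sense localized near a boundary, and a single F\o lner set in a general cancellative right amenable monoid carries no interior/boundary decomposition that would let you rewrite $c$ as a $B$-trajectory plus a boundary error. (A smaller slip: the elements $\sigma(\bar f)-\sigma(\bar f')$ you propose to absorb into $F'$ do not lie in $B$ unless $\bar f=\bar f'$; presumably you mean the section defects $\sigma(\bar f)+\sigma(\bar f')-\sigma(\bar f+\bar f')$, but even so the counting problem remains.) The paper closes this step by a different mechanism, namely the Ornstein--Weiss quasi-tiling machinery (Theorem~\ref{thm:filling} and Corollary~\ref{cor:tiling}): one fixes finitely many tiles $F_1,\dotsc,F_N$ from the F\o lner net, forms the single finite subgroup $X=\sum_{i=1}^N T_{F_i}(\alpha,Y)\cap B$ of $B$, $\eps$-tiles each large F\o lner set $F_j$ by left translates of the tiles, and transfers the relative-length estimate $\ell(T_{F_j}(\alpha,Y),T_{F_j}(\alpha,X))$ tile by tile, using that $T_{F_i}(\alpha,Y)\cap T_{F_i}(\alpha,X)=T_{F_i}(\alpha,Y)\cap B$. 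Without this (or an equivalent covering input) your outline does not close; note that for $S=\N$ the classical proof in \cite{DGSZ} instead relies on a stabilization argument specific to $\N$ and to the algebraic structure of $A$, which likewise does not transfer to general amenable monoids.
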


As recalled above, this theorem was proved for $\N$-actions in \cite{DGSZ}, and its counterpart for $\N$-actions on arbitrary abelian groups in \cite{DGB0}. In both cases the proof was quite long, making  heavy use of the algebraic structure of $A$. Our proof avoids any recourse to the structure of $A$. 
In \cite{DFG-tiling} we consider a particular case of the Addition Theorem for monoids $S$ admitting tiling F\o lner sequences, which largely covers the case of $\N^d$-actions (and in particular, the case of $\N$-actions from \cite{DGSZ}), with a notably simplified proof.

The action $\alpha$ of a cancellative right amenable monoid $S$ on an abelian group $A$ provides a left $\Z[S]$-module structure on $A$. By Proposition~\ref{conju}, we have that $h_{alg}$ is an invariant of the category $\mathrm{Mod}_{\Z[S]}$ of left $\Z[S]$-modules, and by Proposition~\ref{contlim} $h_{alg}$ is upper-continuous, that is, continuous with respect to direct limits.
If we restrict to the subcategory $\mathfrak T$ of $\mathrm{Mod}_{\Z[S]}$ consisting of left $\Z[S]$-modules that are torsion as abelian groups, analogously we have that $\ent$ is an uppercontinuous invariant of $\mathfrak T$. Moreover, Theorem~\ref{ATintro} shows that $\ent$ is also additive, and so it is a length function of $\mathfrak T$ in the sense of Northcott and Reufel \cite{NR} and V\'amos \cite{Vamos}.

\medskip
In the case of discrete dynamical systems there is a remarkable connection, usually named Bridge Theorem, between the topological entropy and the algebraic entropy discovered by Weiss \cite{W} and Peters \cite{P1}, and proved in general in  \cite{DGB1}. More precisely, the topological entropy of a continuous endomorphism $\phi$ of a compact abelian group $K$ coincides with the algebraic entropy of its dual endomorphism $\widehat \phi$ of the Pontryagin dual $\widehat K$ of $K$, which is a discrete abelian group. 
This connection was extended to totally disconnected locally compact abelian groups in \cite{DGB2}.

The Bridge Theorem from \cite{P1} was recently extended by Virili \cite{V2} to the case of actions of amenable groups on locally compact abelian groups, while the one from \cite{DGB2} was extended in \cite{GB} to semigroup actions on totally disconnected locally compact abelian groups.
In \S\ref{BT-sec}, generalizing the main result of \cite{W}, we prove a Bridge Theorem for left actions of cancellative left amenable monoids on totally disconnected compact abelian groups (their Pontryagin dual groups are precisely the torsion abelian groups).

\begin{theorem}[Bridge Theorem]\label{BTintro}
Let $S \overset{\gamma}\curvearrowright K$ be a left action of a cancellative left amenable monoid on a totally disconnected compact abelian group $K$, which induces a dual right action $\widehat\gamma$ of $S$ on the Pontryagin dual of $A$.
Then the topological entropy of $\gamma$ coincides with the algebraic entropy of $\widehat \gamma$.
\end{theorem}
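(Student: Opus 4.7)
The plan is to reduce the statement, via Pontryagin duality, to a matching of the partial entropies on the two sides, indexed by a pair of cofinal families: open subgroups $U \leq K$ on the topological side and finite subgroups $B \leq \widehat K$ on the algebraic side. Recall that $\widehat K$ is a discrete torsion abelian group, and the annihilator correspondence $U \leftrightarrow U^\perp$ is a bijection between open subgroups of $K$ and finite subgroups of $\widehat K$, with $[K:U] = |U^\perp|$. Moreover, dualizing the left action $\gamma$ of the left amenable monoid $S$ yields a right action $\widehat\gamma$ of $S$ on $\widehat K$; equivalently a left action of the opposite monoid $S^{op}$, which is cancellative right amenable, so $\halg(\widehat\gamma) = \ent(\widehat\gamma)$ is well defined and every left F\o lner net for $S$ serves as a right F\o lner net for $S^{op}$.

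The first step is a cofinality reduction on the topological side. Because $K$ is compact and totally disconnected, open subgroups form a neighborhood base at $0$, and for any open cover $\mathcal U$ one finds, by compactness, a single open subgroup $U$ such that the clopen partition $\mathcal U_U := \{k+U : k \in K\}$ refines $\mathcal U$. Hence
\[
h_{top}(\gamma) = \sup_{U} h_{top}(\gamma,\mathcal U_U),
\]
with $U$ ranging over open subgroups of $K$. Since $\mathcal U_U$ is a partition of $K$ into $[K:U]$ clopen cosets, and the $F$-refinement $\bigvee_{s \in F}\gamma(s)^{-1}(\mathcal U_U)$ for a finite $F \subseteq S$ is the coset partition of the open subgroup $U_F := \bigcap_{s \in F}\gamma(s)^{-1}(U)$, one has minimal subcover cardinality $N\bigl(\bigvee_{s \in F}\gamma(s)^{-1}(\mathcal U_U)\bigr) = [K : U_F]$.

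Next comes the key duality identity. Annihilation turns preimages into images and intersections into sums of subgroups, so
\[
U_F^{\perp} \;=\; \Bigl(\bigcap_{s \in F}\gamma(s)^{-1}(U)\Bigr)^{\perp} \;=\; \sum_{s \in F}\widehat\gamma(s)(U^{\perp}) \;=\; \sum_{s \in F}\widehat\gamma(s)(B),
\]
where $B := U^\perp$. Taking cardinalities yields the crucial equality
\[
[K : U_F] \;=\; |U_F^{\perp}| \;=\; \Bigl|\sum_{s \in F}\widehat\gamma(s)(B)\Bigr|,
\]
so the integrands inside the averaged logarithm coincide on the nose. Applying a common F\o lner net and invoking Theorem~\ref{CCKLemma} to guarantee that both limits exist and equal the infimum of the averages, I obtain $h_{top}(\gamma,\mathcal U_U) = \ent(\widehat\gamma, B)$, i.e.\ the partial topological entropy of $\gamma$ along $\mathcal U_U$ equals the partial algebraic entropy of $\widehat\gamma$ along $B$.

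Finally, I take suprema: the left side produces $h_{top}(\gamma)$ by the cofinality established above, and the right side produces $\ent(\widehat\gamma) = \halg(\widehat\gamma)$ because every finite subset of the torsion group $\widehat K$ generates a finite subgroup, and these finite subgroups are exactly the annihilators $U^{\perp}$ of open subgroups of $K$. The main obstacle, as I anticipate it, is bookkeeping rather than new mathematical content: one must verify that the averaging provided by the Ornstein--Weiss style Theorem~\ref{CCKLemma} applies symmetrically to left F\o lner nets on $S$ (governing $h_{top}$) and to the same nets reinterpreted as right F\o lner nets for $S^{op}$ (governing $\halg$ of the dual right action), and that the two partial entropies are indeed defined as infima/limits that match under the equality of integrands; everything else is a direct translation through Pontryagin duality.
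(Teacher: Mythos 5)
Your proposal is correct and follows essentially the same route as the paper: reduce to coset covers $\zeta(U)$ of open subgroups via van Dantzig (Lemma~\ref{V=K/C} and Proposition~\ref{Htop}), use the annihilator identity $\bigl(\bigcap_{s\in F}\gamma(s)^{-1}(U)\bigr)^{\perp}=\sum_{s\in F}\widehat\gamma(s)(U^{\perp})$ to equate the integrands $[K:C_F(\gamma,U)]=\abs{T_F(\widehat\gamma,U^{\perp})}$ (this is exactly Lemma~\ref{CT}), and pass to the F\o lner limit and the supremum over the bijection $U\leftrightarrow U^{\perp}$. The only difference is organizational: the paper proves the identity starting from the discrete torsion side (Theorem~\ref{BTalg}) and recovers the compact-side statement by double duality (Corollary~\ref{BTcor}), whereas you work directly on the compact side, which is an equivalent presentation of the same argument.
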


Combining Theorem~\ref{ATintro} and Theorem~\ref{BTintro}, one obtains an Addition Theorem for the topological entropy of left actions of cancellative left amenable monoids on totally disconnected compact abelian groups.


\bigskip
What we said above about the vanishing of the algebraic entropy may leave the wrong feeling that there are no significant instances of cancellative right amenable semigroups $S$, distinct from $\N$ and $\Z$, acting with positive algebraic entropy on discrete abelian groups. This is not the case; indeed, in a preliminary version of this manuscript we dedicated special attention to the computation of the algebraic entropy of the shifts, that is, standard actions 
$S \overset{\alpha_S}\curvearrowright A^{(S)}$ of a cancellative right amenable semigroup $S$ on the direct sum $A^{(S)}=\bigoplus_S A$ of $|S|$ copies of an abelian group $A$. The shifts, except trivial cases,  have always positive algebraic entropy. Moreover, one can define and study the generalized shifts $S \overset{\alpha_\lambda}\curvearrowright A^{(X)}$ associated to actions $S\overset{\lambda}\curvearrowright X$ of $S$ on a set $X$. The algebraic entropy of $\alpha_\lambda$ is related to the set-theoretic entropy of $\lambda$, introduced in \cite{AD} in the case of $\N$-actions. 
Gradually the material on generalized shifts and set-theoretic entropy grew more and more, so it will be published separately in \cite{DFG}.

\subsubsection*{Acknowledgments}

It is a pleasure to thank Benjamin Weiss for the numerous useful suggestions and advise, and for the fruitful discussions with the second named author.
We would like to thank also Luigi Salce, who suggested us to work together on this topic.

\smallskip
This work was partially supported by Programma SIR 2014 by MIUR  (Project GADYGR, Number RBSI14V2LI, cup G22I15000160008), 
and partially also by the ``National Group for Algebraic and Geometric Structures, and their Applications'' (GNSAGA - INdAM).

\subsubsection*{Notation and terminology}

For a subset $X$, we let $$\ell(X)=\log|X|,$$ using the convention that $\ell(X)=\infty$ if the set $X$ is infinite. Moreover, we denote by $\P(X)$ the family of all subsets of $X$ and by $\Pf(X)$ its subfamily consisting of all non-empty finite subsets of $X$.

For a monoid $S$, let $\Pf^0(S)=\{Y\in\Pf(S):1\in Y\}$. Clearly, $\Pf^0(S)\subseteq
\Pf(S)\subseteq \P(S)$.

If $A$ is an abelian group, let $\mathcal F(A)$ denote the family of all
finite subgroups of $A$.

For a subset $W$ of $A$ and $m\in\N_+$, let $$W_m= \underbrace{W+W+\ldots+W}_m.$$

Let $S$ be a semigroup. For $F,F'\in\P(S)$, we denote $$F F'= \set{gh: g \in F, h \in F'}.$$

A \emph{left semigroup action} $S\overset{\alpha}{\curvearrowright}X$ of $S$ on a set $X$ is defined by a map
$$\alpha:S\times X\to X,\quad (s,x)\mapsto \alpha(s)(x)=s\cdot x,$$
such that $\alpha(st)=\alpha(s)\circ \alpha(t)$ for every $s,t\in S$ (i.e., $(st)\cdot x=s\cdot(t\cdot x)$ for every $s,t\in S$ and every $x\in X$).
Given $s \in S$, $x \in X$, and $Y \subseteq X$, we denote $$s \cdot Y = \set{\alpha(s)(y): y \in Y}.$$ Similarly, a right action $X\curvearrowleft S$ can be defined. 

In case $S$ is a monoid with neutral element $1$, a left semigroup action $S\overset{\alpha}{\curvearrowright}X$ is a \emph{left monoid action} of $S$ on $X$ if $\alpha(1)(x) = x$ for all $x\in X$, i.e., $\alpha(1)$ is the identity map $id_X$. If $S$ is a group, then this condition implies that $\alpha(s)$ is a bijection for every $s\in S$.
Unless otherwise stated, all the actions of monoids considered in this paper are monoid actions (see also Remark~\ref{groupaction}).

\begin{remark}\label{Sop}
\begin{enumerate}[(a)]
\item In order to avoid reformulating everything for both left and right actions, it is convenient to consider the opposite semigroup $(S^{op}, \cdot^{op})$ of the semigroup $(S,\cdot)$ defined as follows: 
$$S^{op} = S\mbox{ and } x\cdot ^{op}y = y\cdot x\mbox{ for every }x, y \in S.$$ 
This allows one to associate to every left action $S\overset{\alpha}{\curvearrowright}X$ a right action $X\overset{\alpha^{op}}{\curvearrowleft}S^{op}$ by simply putting $\alpha^{op}(s)= \alpha(s)$ for every $s\in S$.

\item For a non-empty set $Y$ we denote by $\Se ym(Y)$ the monoid of all selfmaps $Y \to Y$ with operation $f\cdot g = f \circ g$ and identity $id_Y$.  
Note that one has a left action $\Se ym(Y) \overset{\pi}\curvearrowright Y$ given by $\pi(\sigma)(y) = \sigma(y)$ for every $\sigma \in \Se ym(Y)$.

If $S$ is a semigroup and $S \overset{\gamma}\curvearrowright Y$ is a left semigroup action, then equivalently there is a semigroup homomorphism $\gamma: S \to \Se ym(Y)$. 
Analogously, a right semigroup action $Y \overset{\eta}\curvearrowleft S$ is given by a semigroup homomorphism $\gamma: S^{op} \to \Se ym(Y)$. 

In case $S$ is a monoid and $S \overset{\gamma}\curvearrowright Y$ is a  left monoid action, $\gamma: S \to \Se ym(Y)$ is a monoid homomorphism, i.e., $\gamma(1) = id_Y$. The same holds for right monoid actions.
\end{enumerate}
\end{remark}


\begin{remark}\label{groupaction}
Let $G$ be a group, $A$ an abelian group, and consider the left monoid action $G\overset{\alpha}{\curvearrowright}A$. Then $G$ acts on $A$ by automorphisms, that is, $\alpha$ takes $G$ into the group $\mathrm{Aut}(A)$.

If $\alpha$ is a left semigroup action (i.e., the condition $\alpha(1)=id_A$ need not be verifed), then $\alpha$ simply takes $G$ into some subsemigroup of the multiplicative semigroup of the unitary endomorphism ring $\mathrm{End}(A)$ that is a group but not necessarily contained in $\mathrm{Aut}(A)$; so $1$ may end up just in some idempotent element of $\mathrm{End}(A)$, not necessarily $id_A$ (e.g., $\alpha(G)$ can simply be the singleton $\{0\}$ in $\mathrm{End}(A)$). 
\end{remark}

Let $S$ be semigroup, $A$ an abelian group, and consider the left action $S\overset{\alpha}{\curvearrowright} A$.
We say that a subset $B$ of $A$ is \emph{$\alpha$-invariant} if $\alpha(s)(B)\subseteq B$ for every $s\in S$.
Moreover, if $T$ is a subsemigroup of $S$, we say that $B$ is \emph{$T$-invariant} if $\alpha(t)(B)\subseteq B$ for every $t\in T$, that is, $B$ is $\alpha\restriction_T$-invariant.

If $B$ is an $\alpha$-invariant subgroup of $A$, then $\alpha$ induces in a natural way an action $S\overset{\alpha_B}{\curvearrowright} B$ and an action $S\overset{\alpha_{A/B}}{\curvearrowright} A/B$.

\section{Background on amenable semigroups and F\o lner nets}\label{background}

\subsection{Amenable semigroups}

Let $S$ be a semigroup. For every $s\in S$ denote by $L_s:S\to S$ the left multiplication $x\mapsto sx$ and by $R_s:S\to S$ the right multiplication $x\mapsto xs$. The semigroup $S$ is \emph{left cancellative} (respectively, \emph{right cancellative}) if $L_s$ (respectively, $R_s$) is injective for every $s\in S$.
Every finite semigroup which is left cancellative and right cancellative is a group. 

\medskip
The semigroup $S$ is \emph{left amenable} if there exists a left subinvariant finitely additive probability measure on $S$, that is, a function $\mu:\P(S)\to [0,1]$ such that:
\begin{enumerate}[(L1)]
 \item $\mu(S)=1$;
 \item $\mu(F\cup E)=\mu(F)+\mu(E)$ for every $F,E\in\P(S)$ with $F\cap E=\emptyset$;
 \item $\mu(L_s^{-1}(F))=\mu(F)$ for every $s\in S$ and every $F\in\P(S)$.
\end{enumerate}

The semigroup $S$ is \emph{right amenable} if $S^{op}$ is left amenable, and
 $S$ is \emph{amenable} if it is both left amenable and right amenable (see \cite{Day1,Day2, Day3}). Every commutative semigroup is amenable \cite{Day1}.

\begin{remark}\label{am-meas}
The condition in (L3) is in general weaker than requiring $\mu$ to be left invariant (i.e., $\mu(sX)=\mu(X)$ for every $s\in S$ and every $X\in\P(S)$). In fact, assume that $\mu$ is left invariant and let $s\in S$ and $X\in\P(S)$. Denoting $f=L_s: S \to S$, our hypothesis yields 
\begin{equation}\label{inva}
\mu(f(Z)) = \mu(Z)\ \text{for every}\ Z\subseteq S.
\end{equation}
Let $Y= f(f^{-1}(X))$. Then $Y = X \cap f(S) \subseteq X$ and $\mu(f^{-1}(X)) = \mu(Y)$ by Equation~\eqref{inva}.
As $\mu(S) = \mu (f(S)) = 1$ by Equation~\eqref{inva}, we deduce that $$\mu(X \setminus Y) = \mu (X \setminus f(S))  \leq \mu(S \setminus f(S)) = 0.$$
Hence, $\mu(Y) = \mu(X)$, and so $\mu(f^{-1}(X)) = \mu(Y) = \mu(X)$. 

\medskip
In \cite{GK} the semigroups admitting a left invariant finitely additive probability measure are called \emph{left measurable.}
If $S$ is cancellative, then the two conditions are equivalent, that is, $S$ is left amenable if and only if $S$ is left measurable. Indeed, let $s\in S$ and $F\in\P(S)$; since $S$ is cancellative, $L_s^{-1}(sF)=F$, so if (L1) holds for $\mu$, then $\mu(F)=\mu(L_s^{-1} (sF))=\mu(sF)$.
\end{remark}


While left amenability and right amenability are equivalent for groups, and every finite group is amenable, there exist finite semigroups that are left amenable but not right amenable as the next example shows. An example of a cancellative right amenable semigroup which is not left amenable can be   found in \cite{Kla}.

\begin{example}\label{p1p2} 
Let $S$ be a semigroup  and let $\mu$ be a left subinvariant finitely additive probability measure on $S$. Assume that $z\in S$ is a left zero element of $S$ (i.e., $zx=z$ for every $x\in S$). Then (L1) implies that $\mu(\{z\})=1$, so $\mu(S\setminus\{z\})=0$ by (L2).
Hence, a left amenable semigroup cannot admit more than one left zero element.

Let $S = \set{p_1, p_2}$, with multiplication $p_i x = p_i$, $i = 1, 2$. Then $p_1$ and $p_2$ are left zero elements of the semigroup $S$, so $S$ is not left amenable. On the other hand, $S$ is right amenable with the measure $\mu:S\to [0,1]$ defined by $\mu(\{p_1\})=a_1$ and $\mu(\{p_2\})=a_2$, where $a_1,a_2\in[0,1]$ and $a_1+a_2=1$.
\end{example}

An example of an amenable group containing a copy of the free semigroup in two generators (which is not right amenable) can be found in \cite{Hochster1969}. In particular, not every submonoid of an amenable group is amenable.

\medskip
We say that a semigroup $S$ satisfies the \emph{left F\o lner condition} (briefly, lFC) if for every $K\in\Pf(S)$ and every $\varepsilon>0$ there exists $F\in\Pf(S)$ such that 
$$|kF\setminus F|\leq \varepsilon|F|\ \text{for every}\ k\in K.$$
We say that $S$ satisfies the \emph{right F\o lner condition} (briefly, rFC) if $S^{op}$ satisfies the lFC.

Clearly every finite semigroup $S$ satisfies both lFC and rFC. 

\medskip
A \emph{left F\o lner net} of a semigroup $S$ is a net $(F_i)_{i\in I}$ in $\Pf(S)$ such that for every $s\in S$
$$\lim_{i\in I}\frac{|sF_i\setminus F_i|}{|F_i|}=0.$$
Analogously, a  \emph{right F\o lner net} is a left F\o lner net of $S^{op}$.

\begin{example}\label{iunion}
If $S$ is a semigroup such that $S$ is increasing union  of a sequence $(F_n)_{n\in\N}$ of its finite subsemigroups, then $(F_n)_{n\in\N}$ is both a left F\o lner sequence and a right F\o lner sequence of $S$. This condition is satisfied for example by all countable torsion abelian groups, and more in general by all countable locally finite groups.
\end{example}

A semigroup $S$ satisfies lFC (respectively, rFC) if and only if there exists a left F\o lner net (respectively, a right F\o lner net) of $S$ (see \cite{CCK} and \cite[Proposition 4.7.1]{CC}).
On the other hand, every left amenable semigroup satisfies lFC (respectively, every right amenable semigroup satisfies rFC) (see \cite{Frey} and \cite[Theorem 3.5]{N}). 

By F\o lner Theorem \cite{F}, a group $G$ is amenable if and only if $G$ satisfies lFC (respectively, rFC), 
%
%
but this cannot be extended to  semigroups, since there exist non left amenable semigroups $S$ satisfying lFC; indeed, a finite semigroup $S$ satisfies lFC, so it suffices to take a finite semigroup which is not left amenable as in Example~\ref{p1p2}. The equivalence holds again if we suppose that the semigroup is left cancellative:

\begin{theorem}[{see \cite[Corollary 4.3]{N}}]
Let $S$ be a left cancellative semigroup. Then the following conditions are equivalent:
\begin{enumerate}[(a)]
 \item $S$ is left amenable;
 \item $S$ satisfies lFC;
 \item $S$ admits a left F\o lner net.
\end{enumerate}
\end{theorem}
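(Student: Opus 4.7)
The statement records three equivalent conditions, and two of the three implications are already in the excerpt: (a)$\Rightarrow$(b) is the theorem of Frey~\cite{Frey} and Namioka~\cite[Theorem~3.5]{N}, and (b)$\Leftrightarrow$(c) is the restatement of lFC as the existence of a left F\o lner net (for (b)$\Rightarrow$(c) one indexes over the directed set $\Pf(S)\times\R_{>0}$ with $(K,\eps)\leq(K',\eps')$ iff $K\subseteq K'$ and $\eps'\leq\eps$, selecting $F_{(K,\eps)}$ as in lFC; conversely, from a left F\o lner net any sufficiently advanced index provides the $F$ required by lFC). Thus my plan is to concentrate on (c)$\Rightarrow$(a), which is exactly the step where left cancellativity is essential, as the finite counter-examples of Example~\ref{p1p2} show.

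The route I would take is the classical averaging argument on $\ellinf(S)$. Starting from a left F\o lner net $(F_i)_{i\in I}$, I define for each $i$ the normalized average
\[
m_i\colon \ellinf(S)\to\R,\qquad m_i(f) = \frac{1}{\card{F_i}}\sum_{x \in F_i} f(x).
\]
Each $m_i$ is a mean -- positive, unital, and of norm one -- so the net sits inside the weak$^\ast$-compact set of means in $\ellinf(S)^\ast$. Pick any weak$^\ast$-cluster point $m$ and set $\mu(A) := m(\chi_A)$ for $A\in\P(S)$. Conditions (L1) and (L2) are immediate from the fact that $m$ is a mean, so the entire content lies in verifying (L3), namely $m(f\circ L_s)=m(f)$ for every $s\in S$ and $f\in\ellinf(S)$.

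For this, the key computation is
\[
m_i(f\circ L_s) - m_i(f) = \frac{1}{\card{F_i}}\left(\sum_{y\in sF_i} f(y) - \sum_{y\in F_i} f(y)\right),
\]
where the reindexing $y = sx$ in the first sum is legitimate because $L_s$ is injective on $F_i$ -- this is the one and only place where left cancellativity enters, and it also yields $\card{sF_i}=\card{F_i}$, hence $\card{sF_i\setminus F_i}=\card{F_i\setminus sF_i}$. Cancelling the common part $sF_i\cap F_i$ from the two sums and estimating crudely,
\[
\abs{m_i(f\circ L_s)-m_i(f)} \leq \frac{2\|f\|_\infty \cdot \card{sF_i\setminus F_i}}{\card{F_i}}\longrightarrow 0
\]
along the F\o lner net. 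Any weak$^\ast$-cluster point therefore satisfies $m(f\circ L_s)=m(f)$ for every $s$ and $f$, and evaluating at $f=\chi_A$ yields (L3); in particular $\mu$ is in fact left invariant, in line with Remark~\ref{am-meas}.

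The main obstacle is not really conceptual but bookkeeping: the single cluster point $m$ must satisfy the invariance for every $s\in S$ simultaneously. Fortunately this is free -- for each fixed $s$ and $f$ the map $m'\mapsto m'(f\circ L_s)-m'(f)$ is weak$^\ast$-continuous on $\ellinf(S)^\ast$, and the above estimate shows that it tends to zero along the whole net $(m_i)$, so the limit property passes to every cluster point with no need to diagonalize over $s$. Without left cancellativity the identity $\card{sF_i}=\card{F_i}$ fails, the symmetric bound above breaks, and indeed the implication itself fails, as Example~\ref{p1p2} illustrates.
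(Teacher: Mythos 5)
Your argument is correct. Note that the paper does not prove this theorem at all — it is quoted verbatim from Namioka \cite[Corollary 4.3]{N}, with (a)$\Rightarrow$(b) and (b)$\Leftrightarrow$(c) also delegated to the cited sources — so there is no internal proof to compare against; your weak$^\ast$-cluster-point construction of an invariant mean from a left F\o lner net is exactly the standard argument underlying the cited result, and you correctly isolate the single use of left cancellativity (the reindexing $y=sx$ and the resulting identity $\card{sF_i}=\card{F_i}$), whose failure is what the finite counterexamples of Example~\ref{p1p2} exploit.
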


Clearly, the counterpart of this result holds also for right cancellative semigroups and right amenability.


\subsection{F\o lner nets}

\begin{remark}
Let $S$ be a right amenable semigroup. If $S$ is infinite and $(F_i)_{i\in I}$ is a right F\o lner net of $S$, then $\lim_{i\in I}\card{F_i}=\infty$. On the other hand, $(F_i)_{i\in I}$ need not be strictly increasing. For example, in $\N$ or $\Z$ consider $F_n=[n,n!]$ for every $n\in\N$, or $F'_n=[n,2n]$ for every $n\in\N$.
\end{remark}

The following known equivalent description of right F\o lner nets is a consequence of the definition.

\begin{lemma}\label{Feq}
Let $S$ be a cancellative right amenable semigroup. Then $(F_i)_{i\in I}$ is a right F\o lner net of $S$ if and only if, for every $s\in S$,
$$\lim_{i\in I}\frac{\card{F_is\SDiff F_i}}{\card{F_i}}=0.$$
\end{lemma}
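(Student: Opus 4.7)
My plan is to unpack both sides of the claimed equivalence in terms of the two pieces $F_i s \setminus F_i$ and $F_i \setminus F_i s$ of the symmetric difference, and then use right cancellativity to relate their sizes.

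First I would recall that, by definition, $(F_i)_{i\in I}$ is a right F\o lner net of $S$ precisely when $(F_i)$ is a left F\o lner net of $S^{op}$, i.e.\ for every $s\in S$
\[
\lim_{i\in I}\frac{\card{F_i s \setminus F_i}}{\card{F_i}}=0.
\]
One direction of the lemma is then immediate from the inclusion $F_i s\setminus F_i \subseteq F_is \SDiff F_i$: convergence of the symmetric-difference ratio to $0$ forces the one-sided ratio to go to $0$.

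For the converse, the key observation is that since $S$ is (right) cancellative, the right-multiplication map $R_s\colon F_i\to F_is$ is a bijection, whence $\card{F_is}=\card{F_i}$. I would then compute
\[
\card{F_i\setminus F_is}=\card{F_i}-\card{F_i\cap F_is}=\card{F_is}-\card{F_is\cap F_i}=\card{F_is\setminus F_i},
\]
so that $\card{F_is\SDiff F_i}=2\card{F_is\setminus F_i}$. Dividing by $\card{F_i}$ shows that convergence of $\card{F_is\setminus F_i}/\card{F_i}$ to $0$ is equivalent to convergence of $\card{F_is\SDiff F_i}/\card{F_i}$ to $0$.

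There is no real obstacle here; the only subtlety is to make sure the cardinality equality $\card{F_is}=\card{F_i}$ is used (which is exactly where the cancellativity hypothesis enters) so that the ``missing'' half of the symmetric difference $F_i\setminus F_is$ can be controlled by the ``extra'' half $F_is\setminus F_i$. Without right cancellativity the map $R_s$ could collapse $F_i$ and the two pieces of the symmetric difference would be uncomparable in size, so this is the step that genuinely needs the hypothesis of the lemma.
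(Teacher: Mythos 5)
Your proof is correct and follows essentially the same route as the paper: both arguments reduce the claim to the identity $\card{F_is\SDiff F_i}=2\card{F_is\setminus F_i}$, obtained from $\card{F_is}=\card{F_i}$ via right cancellativity. The only cosmetic difference is that you derive $\card{F_i\setminus F_is}=\card{F_is\setminus F_i}$ through the intersection, while the paper counts $\card{F\cup Fs}$ in two ways; these are the same computation.
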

\begin{proof} 
It suffices to note that, for $F\in\Pf(S)$ and $s \in S$, we have $$\card{Fs \SDiff F} = 2 \card{Fs \setminus F}.$$
Indeed, $\card F + \card{Fs \setminus F} = \card{F \cup Fs} =\card{Fs} + \card{F \setminus Fs}.$
Since $S$ is cancellative, $\card{Fs} = \card F$ is finite, and therefore $\card{Fs \setminus F} = \card{F \setminus Fs}.$
\end{proof}

%

\begin{lemma}\label{F_iF}
Let $S$ be a cancellative right amenable monoid and $(F_i)_{i \in I}$ a net in $\Pf(S)$. Then:
\begin{enumerate}[(a)]
\item $(F_i)_{i \in I}$ is a right F\o lner net of $S$ if and only if for every $E\in\Pf(S)$,
\begin{equation}\label{eq:Folner-finite}
\lim_{i \in I} \frac{\card{F_i E \SDiff F_i}}{\card{F_i}} = 0;
\end{equation}
 \item if $(F_i)_{i \in I}$ is a right F\o lner net of $S$ and $E\in\Pf(S)$, then 
\begin{equation}\label{?}
\lim_{i\in I}\frac{\card{F_iE}}{\card{F_i}}=1
\end{equation}
and $(F_i E)_{i \in I}$ is also a right F\o lner net of $S$.
\end{enumerate}
\end{lemma}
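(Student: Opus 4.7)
The plan is to deduce part (a) from Lemma~\ref{Feq} by a union-over-$E$ bound on the symmetric difference, and then to obtain (b) from (a) in two short steps.

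For (a), the ``if'' direction is immediate: taking $E=\{s\}$ for each $s\in S$ recovers the singleton form of the Følner condition characterized in Lemma~\ref{Feq}. For the substantive ``only if'' direction, I would write $E=\{s_1,\ldots,s_n\}$ and verify the set-theoretic inclusion
\[
F_i E \SDiff F_i \;\subseteq\; \bigcup_{k=1}^{n}\bigl(F_i s_k \SDiff F_i\bigr).
\]
This follows from $F_iE\setminus F_i=\bigcup_k(F_is_k\setminus F_i)$ together with the observation that any $x\in F_i\setminus F_iE$ automatically lies in $F_i\setminus F_is_k$ for every $k$. Dividing by $\card{F_i}$, summing over $k$, and applying Lemma~\ref{Feq} to each $s_k$ then gives~\eqref{eq:Folner-finite}.

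For (b), equation~\eqref{?} is immediate from (a): right cancellativity yields $\card{F_iE}\ge\card{F_i}$ (since $\card{F_is}=\card{F_i}$ for any $s\in E$), while
\[
\card{F_iE}-\card{F_i} \;=\; \card{F_iE\setminus F_i}-\card{F_i\setminus F_iE} \;\le\; \card{F_iE\SDiff F_i},
\]
so (a) forces the ratio $\card{F_iE}/\card{F_i}$ to $1$. To show that $(F_iE)_{i\in I}$ is again a right Følner net, I would fix $s\in S$, note that $(F_iE)s=F_i(Es)$ by associativity, and use
\[
F_i(Es)\SDiff F_iE \;\subseteq\; \bigl(F_i(Es)\SDiff F_i\bigr)\cup\bigl(F_i\SDiff F_iE\bigr).
\]
Part (a) applied to both $Es\in\Pf(S)$ and $E\in\Pf(S)$ makes each of the two terms on the right negligible compared to $\card{F_i}$; dividing by $\card{F_iE}$ instead of $\card{F_i}$ only introduces the harmless factor $\card{F_i}/\card{F_iE}\to 1$ established just above, and Lemma~\ref{Feq} then gives the right Følner condition for $(F_iE)_{i\in I}$.

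There is no serious obstacle in this argument—it reduces to elementary set algebra combined with two invocations of the singleton Følner condition. The only care required is to track where right cancellativity enters, both via the identity $\card{F_is}=\card{F_i}$ (which is also the point where Lemma~\ref{Feq} itself uses cancellativity) and in ensuring $\card{F_iE}\ge\card{F_i}$, so that the relevant ratios are well-behaved when one switches the denominator from $\card{F_i}$ to $\card{F_iE}$.
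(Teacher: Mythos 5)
Your proposal is correct and follows essentially the same route as the paper: the finite-union inclusion $F_iE\SDiff F_i\subseteq\bigcup_{s\in E}(F_is\SDiff F_i)$ for part (a), and for part (b) the inclusion $F_iEs\SDiff F_iE\subseteq(F_iEs\SDiff F_i)\cup(F_iE\SDiff F_i)$ together with the ratio $\card{F_iE}/\card{F_i}\to1$ to change denominators. The only (harmless) difference is cosmetic: you justify $\card{F_iE}/\card{F_i}\to1$ via $\abs{\card{F_iE}-\card{F_i}}\leq\card{F_iE\SDiff F_i}$, while the paper uses $\card{F_iE\setminus F_i}/\card{F_i}\to0$.
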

\begin{proof}
(a) The sufficiency of  the condition~\eqref{eq:Folner-finite} (when applied to singletons) is obvious.  

Conversely, assume that $(F_i)_{i \in I}$ is a right F\o lner net and let $E\in\Pf(S)$. Fix $\eps > 0$. By Lemma~\ref{Feq}, for every $s\in E$ there exists $i_s\in I$ such that, for every $j \in I$ with $j \geq i_s$,
$$\frac{\card{F_j s \SDiff F_j}}{\card{F_j}} < \eps.$$
Pick an $\imath \in I$ such that $\imath \geq i_s$ for every $s \in E$ (such an $\imath\in I$ exists since $E$ is finite and $I$ is a directed set).

Let $j > \imath$. Then
$$F_j E \SDiff F_j \subseteq \bigcup_{s \in E} F_j s \SDiff F_j,$$
and so
$$\frac{\card{F_j E \SDiff F_j}}{\card{F_j}} \leq \sum_{s \in E} \frac{\card{F_j s \SDiff F_j}}{\card{F_j}}< \eps \card E.$$
Thus,~\eqref{eq:Folner-finite} holds.

(b) The equality $\lim_{i\in I}\frac{\card{F_iE}}{\card{F_i}}=1$ can be easily deduced from (a), since it implies that $\lim_{i\in I}\frac{\card{F_iE\setminus F_i}}{\card{F_i}}=0$. Let $s \in S$; by Lemma~\ref{Feq} we have to show that
\begin{equation}\label{baba+}
\lim_{i \in I} \frac{\card{F_iEs \SDiff F_iE}}{\card{F_i E}}=0.
\end{equation}
The inclusion $F_iEs \SDiff F_iE\subseteq (F_iEs \SDiff F_i)\cup (F_iE \SDiff F_i)$, gives 
\begin{equation*}\label{baba}
\frac{\card{F_iEs \SDiff F_iE}}{\card{F_i E}} \leq \frac{\card{F_iEs \SDiff F_i} +\card{F_iE \SDiff F_i}}{\card{F_iE}}= \frac{\frac{\card{F_iEs \SDiff F_i}}{\card{F_i}} +\frac{\card{F_iE \SDiff F_i}}{\card{F_i}}}{\frac{\card{F_iE}}{\card{F_i}}}.
\end{equation*}
Item (a) yields $\lim_{i\in I} \frac{\card{F_iEs \SDiff F_i}}{\card{F_i}}=0$ and $\lim_{i\in I}\frac{\card{F_iE \SDiff F_i}}{\card{F_i}}=0$, while $\lim_{i\in I}\frac{\card{F_iE}}{\card{F_i}}=1$ by Equation~\eqref{?}. This proves the equality in Equation~\eqref{baba+}.
\end{proof}

\begin{remark}\label{Rem:Nov19} 
One may get the wrong impression that if  $(F_i)_{i\in I}$ is a right F\o lner net of a semigroup $S$, then the family $\{F_i: i\in I\}$ generates a ``big'' subsemigroup of $S$ (actually, the whole $S$). To see that this is wrong consider two examples. 
\begin{enumerate}[(a)]
\item Suppose that $S$ has a right absorbing element $a$ (i.e., such that $as =  a$ for all $s\in S$). Now the singleton $\{a\}$ is a subsemigroup of $S$ and forms a stationary right F\o lner net of $S$. 
\item Consider now the abelian group $S = \Z$. The F\o lner sequence $([0,n])_{n\in\N}$ is contained in the subsemigroup $\N$ of $S$.
\end{enumerate} 
Yet, it is true that the subgroup generated by any right F\o lner net of a group $S$ coincides with $S$. 
\end{remark}

The following well-known fact concerns right F\o lner nets in direct products. We omit its standard proof, since we prove a much more general result below in Theorem~\ref{lem:double-limit}.

\begin{lemma}\label{Fnprod}
Let $G = H\times K$ be an amenable group, let $(H_i)_{i\in I}$ be a right F\o lner net of $H$ and let $(K_j)_{j\in J}$ be a right F\o lner net of $K$. Then $(H_i\times K_j)_{(i,j)\in I\times J}$ is a right F\o lner net of $G$. 
\end{lemma}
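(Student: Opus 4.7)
The plan is to verify the right F\o lner condition pointwise. Fix an arbitrary element $(h,k) \in G = H \times K$. Observing that right multiplication distributes across the product, we have $(H_i \times K_j)(h,k) = H_ih \times K_jk$, and clearly $|H_i \times K_j| = |H_i|\cdot|K_j|$. The task is therefore to show that
\[
\frac{|H_ih \times K_jk \setminus H_i \times K_j|}{|H_i|\cdot|K_j|} \longrightarrow 0.
\]

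The key step is the elementary identity
\[
(A \times B) \setminus (C \times D) = \bigl((A \setminus C) \times B\bigr) \cup \bigl(A \times (B \setminus D)\bigr),
\]
which one checks by noting that $(x,y) \notin C \times D$ iff $x \notin C$ or $y \notin D$. Applying this with $A = H_ih$, $C = H_i$, $B = K_jk$, $D = K_j$, and using that $|H_ih| = |H_i|$ and $|K_jk| = |K_j|$ (since $G$ is a group, hence cancellative), we obtain the bound
\[
|H_ih \times K_jk \setminus H_i \times K_j| \;\leq\; |H_ih \setminus H_i|\cdot|K_j| \;+\; |H_i|\cdot|K_jk \setminus K_j|.
\]
Dividing by $|H_i|\cdot|K_j|$ yields
\[
\frac{|(H_i \times K_j)(h,k) \setminus (H_i \times K_j)|}{|H_i \times K_j|} \;\leq\; \frac{|H_ih \setminus H_i|}{|H_i|} \;+\; \frac{|K_jk \setminus K_j|}{|K_j|}.
\]

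Now one invokes the hypothesis: given $\varepsilon > 0$, by Lemma~\ref{Feq} applied to $(H_i)_{i\in I}$ there is $i_0 \in I$ such that the first fraction is less than $\varepsilon/2$ for $i \geq i_0$, and similarly there is $j_0 \in J$ controlling the second fraction for $j \geq j_0$. Since $I \times J$ is directed by componentwise comparison, $(i_0,j_0)$ is a legitimate index, and for $(i,j) \geq (i_0,j_0)$ both terms are bounded by $\varepsilon/2$, so the sum is at most $\varepsilon$. This proves the right F\o lner condition for $(H_i \times K_j)_{(i,j) \in I \times J}$ at the element $(h,k)$, completing the proof.

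I do not anticipate a serious obstacle here: the argument is purely combinatorial and relies only on the product decomposition together with cancellativity. The only mild point to be careful about is that one cannot write $|H_i \times K_j|\cdot\card{(h,k)}$-style translates unless one knows right translation is injective, which is automatic in the group setting assumed by the lemma. The same proof actually goes through verbatim for cancellative right amenable semigroups, which is consistent with the fact that the more general Theorem~\ref{lem:double-limit} subsumes this result.
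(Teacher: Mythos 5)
Your proof is correct. Note, however, that the paper does not actually prove Lemma~\ref{Fnprod} at all: it explicitly omits ``its standard proof'' and instead points to the much more general Theorem~\ref{lem:double-limit}, which handles surjections with a good section (and hence semidirect products, where Example~\ref{Example: semi-direct product} shows the naive product net can fail). What you have written is precisely that omitted standard argument: the decomposition
\[(A \times B) \setminus (C \times D) = \bigl((A \setminus C) \times B\bigr) \cup \bigl(A \times (B \setminus D)\bigr)\]
together with $\card{H_ih}=\card{H_i}$ and $\card{K_jk}=\card{K_j}$ gives the bound by the sum of the two Følner ratios, and the directedness of $I\times J$ finishes the net convergence. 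This is simpler and more self-contained than routing through Theorem~\ref{lem:double-limit}, at the cost of generality; your closing observation that the argument works verbatim for cancellative right amenable semigroups is also correct, since cancellativity is all that is used. One cosmetic point: you do not need Lemma~\ref{Feq} here, since the definition of a right Følner net is already stated in terms of $\card{F_is\setminus F_i}/\card{F_i}$ rather than the symmetric difference.
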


In Example~\ref{Example: semi-direct product} we show that the na\"ive generalization of this lemma for semidirect products fails, while Theorem~\ref{lem:double-limit} gives the correct generalization.

\medskip

Recall that a semigroup $S$ is called \emph{Ore semigroup} (or \emph{right reversible semigroup} as in \cite{Paterson}
) if it satisfies the \emph{left Ore condition}, that is, 
\begin{center}
for every $a, b \in S$ there exist $f,g \in S$ such that $fa = gb$.
\end{center}
It is a well-known fact (see \cite[Proposition 1.23]{Paterson}) that a right amenable semigroup $S$ satisfies the left Ore condition, and that the group generated by $S$ is of the form $S^{-1} S$.

We use the following consequence of this property.

%

\begin{corollary}\label{cor:multi-Ore}
Let $S$ be a right amenable semigroup, and $s_1, \dotsc, s_k \in S$. Then there exist $t, r_1, \dotsc, r_k \in S$ such that $t = r_i s_i$ for every $i\in\{1, \dots, k\}$.
\end{corollary}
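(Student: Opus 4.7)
My plan is to prove this by induction on $k$, using the left Ore condition as the fundamental tool. Recall that the excerpt explicitly states that any right amenable semigroup satisfies the left Ore condition.

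For the base case $k=1$, any element works: since $S$ is nonempty, pick any $r_1\in S$ and set $t=r_1 s_1$.

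For the inductive step, suppose we have already found $t', r_1', \dots, r_{k-1}' \in S$ satisfying $t' = r_i' s_i$ for every $i \in \{1, \dots, k-1\}$. Apply the left Ore condition to the pair $(t', s_k)$: there exist $f, g \in S$ such that $f t' = g s_k$. Now define
\[
t = f t' = g s_k, \qquad r_i = f r_i' \text{ for } i < k, \qquad r_k = g.
\]
For every $i < k$, associativity gives $r_i s_i = f r_i' s_i = f t' = t$, while $r_k s_k = g s_k = t$. This completes the induction.

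There is essentially no obstacle here: the whole content is packaged in the left Ore condition (which is already invoked in the paragraph preceding the statement), and the passage from two elements to finitely many is the standard iterative construction. The only minor point is the base case, where one must note that a right amenable semigroup is nonempty (which is immediate, since a probability measure on $S$ forces $S\neq\emptyset$), so that we can pick some $r_1$ at all.
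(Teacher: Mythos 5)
Your proof is correct: the paper states this corollary without proof as an immediate consequence of the left Ore condition (citing Paterson), and your induction is exactly the standard argument the authors leave implicit. Nothing is missing; the base case and the iterative use of the Ore condition are both handled properly.
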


We conclude with the following relation between amenable monoids and amenable groups.

\begin{lemma}\label{S->G}
Let $G$ be a group and $S$ a submonoid of $G$ that generates $G$ as a group. 
\begin{enumerate}[(a)]
\item If $S$ is right amenable, then $G$ is amenable. 
\item If $(F_i)_{i\in I}$ is a right F\o lner net of $S$, then $(F_i)_{i\in I}$ is a right F\o lner net of $G$.
\end{enumerate}
\end{lemma}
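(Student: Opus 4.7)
The plan is to prove (b) first and then deduce (a) from it together with F\o lner's Theorem for groups. The key ingredient is that a right amenable semigroup satisfies the left Ore condition, so the group it generates is $S^{-1}S$; in particular every $g \in G$ admits a representation $g = s^{-1} t$ with $s, t \in S$.

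For part (b), fix $g \in G$ and write $g = s^{-1} t$ with $s,t \in S$. To estimate $|F_i g \SDiff F_i|$, I would use that right multiplication by $s$ is a bijection of $G$, hence preserves cardinalities of symmetric differences:
\[
\card{F_i g \SDiff F_i} \;=\; \card{F_i g s \SDiff F_i s} \;=\; \card{F_i t \SDiff F_i s}.
\]
Now I apply the triangle-type inclusion $F_i t \SDiff F_i s \subseteq (F_i t \SDiff F_i) \cup (F_i s \SDiff F_i)$, which gives
\[
\frac{\card{F_i g \SDiff F_i}}{\card{F_i}} \;\leq\; \frac{\card{F_i t \SDiff F_i}}{\card{F_i}} + \frac{\card{F_i s \SDiff F_i}}{\card{F_i}}.
\]
Since $s, t \in S$ and $(F_i)_{i \in I}$ is a right F\o lner net of $S$, both summands tend to $0$ by Lemma~\ref{Feq}. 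Hence the left-hand side tends to $0$, proving (b).

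For part (a), I would start from the fact (noted in the text after Example~\ref{iunion}) that a right amenable semigroup satisfies rFC, and so admits a right F\o lner net $(F_i)_{i \in I}$. By (b), $(F_i)_{i \in I}$ is then a right F\o lner net of $G$, so $G$ satisfies rFC. Since $G$ is a group, F\o lner's Theorem (quoted in \S\ref{background}) yields that $G$ is amenable.

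The only mild subtlety is verifying that the cancellation step $\card{F_i g \SDiff F_i} = \card{F_i g s \SDiff F_i s}$ and the replacement $F_i g s = F_i t$ are both legitimate; the first is immediate because right multiplication by an element of $G$ is a bijection of $G$, and the second follows from $gs = s^{-1}ts = t$ in the group $G$ (bearing in mind that the sets involved are a priori subsets of $G$, not of $S$). Beyond this bookkeeping, no further obstacle arises, and no choice of F\o lner net for $G$ different from the one inherited from $S$ is needed.
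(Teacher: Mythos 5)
Your overall strategy (prove (b) from the Ore decomposition $G=S^{-1}S$, then deduce (a) from rFC plus F\o lner's Theorem) is legitimate and different from the paper's, but the central computation contains a genuine error. From $g=s^{-1}t$ you get $gs=s^{-1}ts$, \emph{not} $t$: the identity $s^{-1}ts=t$ you invoke in your final paragraph holds only when $s$ and $t$ commute, and $G$ is an arbitrary (possibly non-abelian) group here. Consequently $F_igs$ need not equal $F_it$, and the chain $\card{F_ig\SDiff F_i}=\card{F_igs\SDiff F_is}=\card{F_it\SDiff F_is}$ breaks at the second equality. This is precisely the step you flagged as the ``only mild subtlety,'' and as written it fails.

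The approach is repairable without changing its spirit: first apply the triangle inclusion and only then cancel. Namely, $F_ig\SDiff F_i\subseteq(F_is^{-1}t\SDiff F_it)\cup(F_it\SDiff F_i)$, and
\[
\card{F_is^{-1}t\SDiff F_it}=\card{\Pa{F_is^{-1}\SDiff F_i}t}=\card{F_is^{-1}\SDiff F_i}=\card{\Pa{F_i\SDiff F_is}s^{-1}}=\card{F_i\SDiff F_is},
\]
so both summands are controlled by the F\o lner property of $(F_i)_{i\in I}$ for the elements $s,t\in S$. (You should also note that in (b) the existence of a right F\o lner net for the cancellative $S$ already forces $S$ to be right amenable, so Corollary~\ref{cor:multi-Ore}/the Ore condition is indeed available.) For comparison, the paper avoids the Ore condition entirely: it writes $g=b_1\cdots b_n$ with $b_i\in\{a_i,a_i^{-1}\}$, $a_i\in S$, proves $\card{Fg_m\SDiff F}<m\eps\card F$ by induction on the partial products $g_m$, and uses only that $S$ generates $G$; your route, once corrected, trades that induction for the single factorization $g=s^{-1}t$, which is shorter but leans on right amenability of $S$ even in part (b).
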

\begin{proof} 
(a) Let $g \in G$. Then, there exist $a_1, \dots, a_n \in S$ such that $g = b_1 \ldots b_n$, where $b_i \in\{a_i, a_i^{-1}\}$.
Let $F\in\Pf(S)$, such that, for every $i \leq n$, $\card{F a_i \SDiff F} < \eps \card F$.
For every $m \leq n$, let $g_m = b_1 \dots b_m \in G$.
Then we prove that 
\begin{equation}\label{claim0}
\card {F g_m \SDiff F} < m \eps \card F.
\end{equation}
Then, taking $m = n$ we get the F\o lner condition for $G$.

Thus, we are left with the proof of Equation~\eqref{claim0}. If $m = 1$, then $g_1 = b_1 = a_1^{\pm 1}$.
If $b_1 = a_1$, then $\card {F a_1 \SDiff F} <\eps \card F$ by definition of $F$. If $b_1 = a_1^{-1}$, then
\[\card {F b_1 \SDiff F}  = \card {F \SDiff F b_1^{-1}} = \card {F \SDiff F{a_1}}<\eps \card F,\]
also by definition of $F$.

Assume, by inductive hypothesis, that we have proved the claim for $m$; we want to prove it also for $m + 1$. In view of the inclusion
$$F g_{m}b_{m+1} \SDiff F \subseteq (F g_{m}b_{m+1} \SDiff F b_{m+1})\cup (F b_{m+1} \SDiff F)$$
we have that
\begin{align*}
\card {F g_{m+1} \SDiff F} &= \card {F g_{m}b_{m+1} \SDiff F} \\ 
&\leq \card {F g_{m}b_{m+1} \SDiff F b_{m+1}} + \card {F b_{m+1} \SDiff F} \\ 
&=\card {F g_{m} \SDiff F} + \card {F b_{m+1} \SDiff F}\\
&\leq m \eps \card F  + \eps \card F \\ 
&= (m+1)\eps \card F,
\end{align*}
where we have used the inductive hypothesis and the case $m = 1$.

(b) Let $(F_i)_{i\in I}$ be a right F\o lner net of $S$. Then, for every $s\in S$, $\lim_{i\in I}\frac{\card{F_is\SDiff F_i}}{\card{F_i}}=0$.
Let $g\in G$. As above, there exist $a_1, \dots, a_n \in S$ such that $g = b_1 \ldots b_n$, where $b_i\in\{a_i, a_i^{-1}\}$. Moreover, for $\eps>0$, there exists a cofinal  subset $J\subseteq I$ such that, for every $i\in J$ and for every $l\in\{i,\ldots,n\}$, $$\frac{\card{F_i s_l\SDiff F_i}}{\card{F_i}}<\eps;$$
by \eqref{claim0}, for every $i\in J$, $$\frac{\card {F_i g\SDiff F_i}}{\card{F_i}}<n\eps.$$
Hence $\lim_{i\in I}\frac{\card {F_i g\SDiff F_i}}{\card{F_i}}=0$, and so $(F_i)_{i\in I}$ is a right F\o lner net of $G$.
\end{proof}

\begin{corollary}
If $S$ is a cancellative right amenable monoid, then it embeds in an amenable group.
\end{corollary}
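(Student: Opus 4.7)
The plan is to exhibit an explicit amenable overgroup for $S$ by combining the Ore construction with Lemma~\ref{S->G}(a). Since $S$ is right amenable, the discussion preceding Corollary~\ref{cor:multi-Ore} guarantees that $S$ satisfies the left Ore condition. Together with the cancellativity of $S$, this is exactly the classical hypothesis of Ore's embedding theorem, so $S$ embeds as a submonoid into its group of left fractions $G = S^{-1}S$, and moreover $S$ generates $G$ as a group.

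Once this embedding is in hand, Lemma~\ref{S->G}(a) applies verbatim: $G$ is a group, $S$ is a submonoid of $G$ generating $G$, and $S$ is right amenable by hypothesis. Hence $G$ is amenable, and $S \hookrightarrow G$ is the desired embedding into an amenable group.

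The only nontrivial ingredient is the Ore embedding itself, which is standard and not something we need to reprove here; everything else reduces to citing Lemma~\ref{S->G}(a). I do not expect any genuine obstacle: the right amenability is used twice (once to get the Ore condition, once to feed Lemma~\ref{S->G}), and cancellativity is precisely what makes the Ore localization injective on $S$.
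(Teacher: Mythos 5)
Your proposal is correct and follows exactly the route the paper intends: the left Ore condition (guaranteed for right amenable semigroups, as noted before Corollary~\ref{cor:multi-Ore}) together with cancellativity yields the Ore embedding $S \hookrightarrow G = S^{-1}S$ with $S$ generating $G$, and Lemma~\ref{S->G}(a) then gives amenability of $G$. The paper leaves this corollary without proof precisely because it is this immediate combination, so there is nothing to add.
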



\subsection{Canonically indexed F\o lner nets}

Fixed $\eps\in(0,1]$, we introduce the relation $$\sim_\eps$$ for finite subsets of a set $S$, although we use it in the sequel when $S$ is a monoid. 
For $F,F'\in\Pf(S)$, let
$$F\sim_\eps F'\ \text{if and only if}\ \card{F}=\card{F'}\ \text{and}\ \card{F\SDiff F'}\leq\eps\card F.$$

The relation $\sim_\eps$ is reflexive and symmetric; we see that in some sense it is also transitive:

\begin{lemma}\label{aequiv}
Let $S$ be a set, $F,F',F''\in\Pf(S)$ and $\eps,\eps'\in(0,1]$.
\begin{enumerate}[(a)]
\item $F\sim_\eps F$;
\item $F\sim_\eps F'$ implies $F'\sim_\eps F$;
\item $F\sim_\eps F'$ and $F'\sim_{\eps'} F''$ imply $F\sim_{\eps+\eps'} F''$.
\end{enumerate}
\end{lemma}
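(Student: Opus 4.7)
The plan is to verify each of the three items directly from the definition of $\sim_\eps$, using only the elementary facts that the symmetric difference is commutative ($F \SDiff F' = F' \SDiff F$) and satisfies the triangle inclusion $F \SDiff F'' \subseteq (F \SDiff F') \cup (F' \SDiff F'')$.

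For (a), I would note that $F \SDiff F = \emptyset$, so $\card{F \SDiff F} = 0 \leq \eps \card F$, and trivially $\card F = \card F$. For (b), the cardinality condition $\card F = \card{F'}$ is symmetric in $F$ and $F'$, and from $F \SDiff F' = F' \SDiff F$ together with $\card F = \card{F'}$ we get $\card{F' \SDiff F} = \card{F \SDiff F'} \leq \eps \card F = \eps \card{F'}$, so $F' \sim_\eps F$.

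For (c), the equality of cardinalities is immediate by transitivity of $=$: from $\card F = \card{F'}$ and $\card{F'} = \card{F''}$ one gets $\card F = \card{F''}$. The key step is the symmetric-difference inequality: using the standard inclusion $F \SDiff F'' \subseteq (F \SDiff F') \cup (F' \SDiff F'')$ together with subadditivity of cardinality, I obtain
\[
\card{F \SDiff F''} \leq \card{F \SDiff F'} + \card{F' \SDiff F''} \leq \eps \card F + \eps' \card{F'} = (\eps + \eps') \card F,
\]
where the final equality uses $\card F = \card{F'}$. This yields $F \sim_{\eps+\eps'} F''$.

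There is no real obstacle here: the statement is essentially a pseudometric-style property of the normalized symmetric difference, and the only minor point to be careful about is invoking the equality $\card F = \card{F'}$ when converting the bound $\eps' \card{F'}$ into a bound in terms of $\card F$ in the proof of (c).
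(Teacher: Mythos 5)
Your proof is correct and follows exactly the same route as the paper: part (b) from the symmetry of the definition, and part (c) from the inclusion $F\SDiff F''\subseteq (F\SDiff F')\cup(F'\SDiff F'')$ together with $\card F=\card{F'}$. You have simply written out the details that the paper's one-line proof leaves implicit.
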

\begin{proof}
(a) is obvious, (b) follows from the fact that $\card F=\card{F'}$ when $F\sim_\eps F'$, and (c) follows from the inclusion $F\SDiff F''\subseteq (F\SDiff F')\cup(F'\SDiff F'')$.
\end{proof}

We see other useful properties of the relation $\sim_\eps$.

\begin{lemma}\label{sim}
Let $S$ be a set, $F,F',F'',E,E'\in\Pf(S)$ and $\eps\in(0,1]$. If $F\sim_\eps F'$ and $E\sim_\eps E'$, then:
\begin{enumerate}[(a)]
\item $F \sqcup E \sim_{\eps} F' \sqcup E'$;
\item $F \times E \sim_\eps F' \times E$;
\item $F \times E \sim_{4 \eps} F' \times E'$;
\item when $S$ is a right cancellative semigroup, $Fs\sim_\eps F's$ for every $s\in S$.
\end{enumerate}
\end{lemma}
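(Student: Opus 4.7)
The plan is to verify each item by a direct counting argument, using a few basic set-theoretic identities and only invoking transitivity of $\sim_\eps$ (Lemma~\ref{aequiv}(c)) where cardinalities genuinely fail to match.

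For (a), since the unions are disjoint, $\card{F \sqcup E} = \card F + \card E = \card{F'} + \card{E'} = \card{F' \sqcup E'}$. The key point is the inclusion
\[
(F \sqcup E) \SDiff (F' \sqcup E') \subseteq (F \SDiff F') \cup (E \SDiff E'),
\]
which holds because an element in the left-hand side that belongs, say, to $F$ but not to $F' \cup E'$ must lie in $F \SDiff F'$ (the case $x \in E$ being symmetric). Taking cardinalities and using the hypotheses $F \sim_\eps F'$, $E \sim_\eps E'$ finishes (a).

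For (b), $\card{F \times E} = \card F \cdot \card E = \card{F'} \cdot \card E = \card{F' \times E}$, and one has the exact identity $(F \times E) \SDiff (F' \times E) = (F \SDiff F') \times E$, so $\card{(F \times E) \SDiff (F' \times E)} \leq \eps \card F \cdot \card E = \eps \card{F \times E}$. For (c) I would combine (b) with the mirrored statement $F' \times E \sim_\eps F' \times E'$ (which is proved by the same computation, using $(F' \times E) \SDiff (F' \times E') = F' \times (E \SDiff E')$), and then invoke Lemma~\ref{aequiv}(c) twice to obtain $F \times E \sim_{2\eps} F' \times E'$, which is in particular $\sim_{4\eps}$. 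Finally, for (d), right cancellativity makes $R_s$ injective, so $\card{Fs} = \card F = \card{F'} = \card{F's}$ and $(Fs) \SDiff (F's) = (F \SDiff F')s$ has the same cardinality as $F \SDiff F'$, which is at most $\eps \card F = \eps \card{Fs}$.

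None of the four items poses a real obstacle: each one reduces to a clean symmetric-difference identity together with the counting hypothesis provided by $F \sim_\eps F'$ and $E \sim_\eps E'$. The only subtlety is in (c), where the naive bound from a single application of transitivity already gives $2\eps$, comfortably inside the stated $4\eps$; I would nevertheless keep $4\eps$ as in the statement since it suffices for later applications.
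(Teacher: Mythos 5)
Your proof is correct, and for items (a), (b) and (d) it is the same argument as the paper's: the inclusion $(F\sqcup E)\SDiff(F'\sqcup E')\subseteq(F\SDiff F')\cup(E\SDiff E')$, the identity $(F\times E)\SDiff(F'\times E)=(F\SDiff F')\times E$, and the injectivity of $R_s$ applied to $Fs\SDiff F's=(F\SDiff F')s$. The only place you diverge is (c): the paper proves it in one step from the four-fold inclusion
$$(F\times E)\SDiff(F'\times E')\subseteq((F\SDiff F')\times E)\cup((F\SDiff F')\times E')\cup(F\times(E\SDiff E'))\cup(F'\times(E\SDiff E')),$$
each piece having cardinality at most $\eps\card{F\times E}$, which is where the constant $4\eps$ comes from; you instead factor through the intermediate set $F'\times E$, apply the (b)-type identity on each coordinate, and chain the two relations with Lemma~\ref{aequiv}(c). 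Your route is slightly cleaner and yields the sharper constant $2\eps$ (which of course implies $\sim_{4\eps}$), so either version is acceptable; the paper's direct inclusion avoids any appeal to transitivity but pays for it with the larger constant.
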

\begin{proof} 
The inclusions 
$$(F\sqcup E)\SDiff (F'\sqcup E')\subseteq (F\SDiff F')\cup (E\SDiff E') \  \ \mbox{ and } \ \  (F\times E)\SDiff(F'\times E)\subseteq (F\SDiff F')\times E$$
imply (a) and (b) respectively, while (c) follows from the inclusion
$$ (F\times E)\SDiff(F'\times E')\subseteq ((F\SDiff F')\times E)\cup((F\SDiff F')\times E')\cup (F\times(E\SDiff E'))\cup (F'\times (E\SDiff E')).$$
The equality $Fs\SDiff F's=(F\SDiff F')s =R_s(F\SDiff F')$ for $s\in S$, and the injectivity of $R_s$, give (d).
\end{proof}

\medskip
Let $S$ be a monoid, and let $$\mathcal I(S)=\Pf^0(S)\times\N_+,$$ endowed with the partial order defined as follows. For every $E,E'\in\Pf^0(G)$ and $n,n'\in\N_+$,
$$(E,n)\leq (E',n')\ \text{if and only if}\ E\subseteq E'\ \text{and}\ n\leq n'.$$

\begin{definition}
Let $S$ be a cancellative monoid. A right F\o lner net a $(F_i)_{i\in \mathcal I(S)}$ of $S$ is \emph{canonically indexed} if
\begin{equation}\label{FEn}
F_{(E,n)}s\sim_{\frac{1}{n}} F_{(E,n)}
\end{equation}
for every $(E,n)\in\mathcal I(S)$ and every $s\in E$.
\end{definition}

We see several useful properties of the canonically indexed right F\o lner nets.

\begin{lemma}
Let $S$ be a cancellative right amenable monoid and $(F_i)_{i\in \mathcal I(S)}$ a canonically indexed right F\o lner net of $S$. Then:
\begin{enumerate}[(a)]
\item $(F_i)_{i\in \mathcal I(S)}$ is a right F\o lner net of $S$;
\item for every $E\in\Pf^0(S)$ and every $s\in E$,
$$\lim_{n\to \infty}\frac{\card{F_{(E,n)}s\SDiff F_{(E,n)}}}{\card{F_{(E,n)}}}=0.$$
\end{enumerate}
\end{lemma}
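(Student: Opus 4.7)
The plan is that both items are essentially immediate unpackings of the condition~\eqref{FEn} defining canonical indexing, so the main task is to chase the definitions carefully.

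For part (b), I would fix $E \in \Pf^0(S)$ and $s \in E$, and for each $n \in \N_+$ simply apply the canonical indexing condition~\eqref{FEn}: it gives $F_{(E,n)} s \sim_{1/n} F_{(E,n)}$, which by the definition of $\sim_{1/n}$ (recalling that $\card{F_{(E,n)}s} = \card{F_{(E,n)}}$ since $S$ is cancellative, hence $R_s$ is injective) means
\[
\card{F_{(E,n)} s \SDiff F_{(E,n)}} \leq \tfrac{1}{n}\, \card{F_{(E,n)}}.
\]
Dividing by $\card{F_{(E,n)}}$ and letting $n \to \infty$ yields the claimed limit. No structural manipulation is needed.

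For part (a), I would deduce it from (b) (together with the directedness of $\mathcal I(S)$). Fix an arbitrary $s \in S$ and $\eps > 0$; choose $n_0 \in \N_+$ with $1/n_0 < \eps$ and set $E_0 = \{1,s\} \in \Pf^0(S)$, so that $(E_0,n_0) \in \mathcal I(S)$. For any $(E,n) \geq (E_0,n_0)$ in $\mathcal I(S)$, we have $s \in E_0 \subseteq E$ and $n \geq n_0$, so \eqref{FEn} applied to this pair gives
\[
\frac{\card{F_{(E,n)} s \SDiff F_{(E,n)}}}{\card{F_{(E,n)}}} \leq \frac{1}{n} \leq \frac{1}{n_0} < \eps.
\]
This is exactly the right F\o lner condition of Lemma~\ref{Feq} along the directed set $\mathcal I(S)$.

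The only real subtlety worth flagging is the cancellativity hypothesis: without it, the equality $\card{F_{(E,n)}s} = \card{F_{(E,n)}}$ required by the definition of $\sim_{1/n}$ could fail. Since the lemma assumes $S$ is cancellative, this is harmless, and I do not foresee any genuine obstacle.
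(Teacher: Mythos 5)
Your proof is correct and follows essentially the same route as the paper: part (b) is read off directly from the defining condition~\eqref{FEn}, and part (a) is obtained by taking $\bar\imath=(\{1,s\},n_0)$ as the threshold index in the directed set $\mathcal I(S)$ and invoking~\eqref{FEn} for all larger indices, exactly as in the paper's argument (via the symmetric-difference characterization of Lemma~\ref{Feq}). No issues.
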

\begin{proof}
(a) Let $s \in S$ and $n\in\N_+$. We have to find $\bar i \in \mathcal I(S)$ such that, for every $j\in\mathcal I(S)$ with $j\geq \bar i$, 
$$\frac{\card{F_j s \SDiff F_j}}{\card{F_j}}\leq \frac 1 n.$$
Take $\bar i=(\set{e,s},n)$ and  $j=(X,m)\in\mathcal I(S)$ with $j\geq\bar i$; this means that $s\in X$ and $m \geq n$. Thus, Equation~\eqref{FEn} yields
$$\frac{\card{F_j s \SDiff F_j}}{\card{F_j}}=\frac{\card{F_{(X,m)} s \SDiff F_{(X,m)}}}{\card{F_{(X,m)}}} \leq \frac 1 m \leq \frac 1 n.$$

(b) Follows directly from Equation~\eqref{FEn}.
\end{proof}

Canonically indexed right F\o lner nets are always available in cancellative right amenable monoids:

\begin{proposition}\label{canonical}
Let $S$ be a cancellative right amenable monoid. Then $S$ admits a canonically indexed right F\o lner net.
\end{proposition}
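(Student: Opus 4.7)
The plan is essentially to build $(F_i)_{i\in \mathcal{I}(S)}$ pointwise: for each index $i=(E,n) \in \Pf^0(S) \times \N_+$, we choose $F_{(E,n)} \in \Pf(S)$ satisfying the single local requirement $F_{(E,n)}s \sim_{1/n} F_{(E,n)}$ for all $s\in E$. Once that is done there is nothing else to verify, since the lemma preceding the proposition already shows that any family satisfying condition~\eqref{FEn} pointwise is automatically a right F\o lner net of $S$.

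First I would invoke the fact that $S$, being cancellative and right amenable, satisfies rFC: by the counterpart of the theorem quoted from \cite[Corollary 4.3]{N} for right cancellative right amenable semigroups, for every $K \in \Pf(S)$ and every $\eps > 0$ there is $F \in \Pf(S)$ with $\card{Fk \setminus F} \leq \eps \card F$ for all $k \in K$. I apply this with $K = E$ and $\eps = \frac{1}{2n}$ to produce $F_{(E,n)} \in \Pf(S)$ such that
$$\card{F_{(E,n)} s \setminus F_{(E,n)}} \leq \tfrac{1}{2n}\card{F_{(E,n)}} \quad \text{for every } s\in E.$$
Because $S$ is cancellative, the map $R_s$ is injective, so $\card{F_{(E,n)} s} = \card{F_{(E,n)}}$, and, exactly as in the proof of Lemma~\ref{Feq}, $\card{F_{(E,n)}s \SDiff F_{(E,n)}} = 2\card{F_{(E,n)}s \setminus F_{(E,n)}}$. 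Combining these two observations gives
$$\card{F_{(E,n)}s \SDiff F_{(E,n)}} \leq \tfrac{1}{n}\card{F_{(E,n)}},$$
which together with the equality of cardinalities is precisely $F_{(E,n)} s \sim_{1/n} F_{(E,n)}$, i.e., condition~\eqref{FEn}.

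Performing this choice for every index $(E,n) \in \mathcal{I}(S)$ defines a net $(F_i)_{i\in \mathcal{I}(S)}$ in $\Pf(S)$ that is canonically indexed by construction; and by the previous lemma it is automatically a right F\o lner net of $S$. The argument is elementary, and no step looks delicate: the only point worth flagging is that the relation $\sim_{1/n}$ requires both equality of cardinalities and the symmetric-difference bound, and both of these follow from cancellativity combined with an application of rFC tuned with the factor $\frac{1}{2n}$ (to absorb the factor $2$ coming from passing between set difference and symmetric difference).
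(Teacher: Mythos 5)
Your proof is correct and follows essentially the same route as the paper: both construct $F_{(E,n)}$ pointwise from the \Folner property of $S$ and then appeal to the lemma showing that condition~\eqref{FEn} alone already forces the net to be a right \Folner net. The only cosmetic difference is that you invoke rFC directly with the tolerance $\frac{1}{2n}$ to absorb the factor $2$ from Lemma~\ref{Feq}, whereas the paper obtains $F_{(E,n)}$ from Lemma~\ref{F_iF}(a) applied to the whole finite set $E$; your bookkeeping of the factor of $2$ is, if anything, slightly more careful.
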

\begin{proof} 
Let $(E,n)\in\mathcal I(S)$. By  Lemma~\ref{F_iF}(a), there exists $F\in\Pf(S)$ such that
$$\frac{\card{F E \SDiff F}}{\card{F}}\leq \frac 1 n.$$
Let $F_{(E,n)}=F$. Then $(F_{(E,n)})_{(E,n)\in\mathcal I(S)}$ is a canonically indexed right F\o lner net of $S$.
\end{proof}

\subsection{Good sections and canonical \Folner nets}

Let $S$ be a cancellative monoid, $C$ a monoid, and $\pi: S \to C$ a surjective homomorphism. 
For $s,s'\in S$ let $$s\sim s'\ \ \text{if}\ \ \pi(s) = \pi(s').$$ 
It is well-known that $\sim$ is a congruence, that is, an equivalence relation compatible with the semigroup operation of $S$. In particular, $$N= [1]_\sim = \pi^{-1}(1)$$ is a submonoid of $S$.  Obviously, every fibre $$[s]_\sim = \pi^{-1}(\pi(s))$$ satisfies $$Ns \subseteq [s]_\sim \supseteq sN;$$ nevertheless, these inclusions need not (simultaneously) become equalities. 

\begin{definition} 
Let $S$ be a cancellative monoid, $C$ a monoid, and $\pi: S \to C$ a surjective homomorphism.
\begin{enumerate}[(a)] 
\item An element $s\in S$ is \emph{good} (respectively, \emph{semi-good}) if  $Ns= [s]_\sim = sN$ (respectively, $Ns= sN$).
\item A fibre $[s]_\sim$ is \emph{good} (respectively, \emph{semi-good}) if it admits a good (respectively, semi-good) representative.
\end{enumerate}
\end{definition}

The existence of good elements of a specific fibre $[s]_\sim$ does not mean that all elements of $[s]_\sim$ are good. Indeed, $N$ is a good fiber as $1$ is a good element, and the set of all good elements of $N$ coincides with the group $U(N)$ of all invertible elements of $N$.

\begin{lemma}\label{lem:good-5}  
Let $S$ be a cancellative monoid, $C$ a monoid, and $\pi: S \to C$ a surjective homomorphism.  If $s\in S$ is semi-good, then for every $n \in N$ there exists a unique element $h_s(n) \in N$ such that $$n s = sh_s(n).$$ 
Then $h_s:N\to N$, $n\mapsto h_s(n)$, is an automorphism of $N$. 
\end{lemma}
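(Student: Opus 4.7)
The plan is to unfold the semi-good condition $Ns = sN$ into existence, then use cancellativity of $S$ to promote existence to uniqueness, a homomorphism property, and finally bijectivity.

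First I would establish existence and uniqueness of $h_s(n)$. Given $n \in N$, the element $ns$ lies in $Ns$, and since $Ns = sN$ (semi-goodness), there exists $m \in N$ with $ns = sm$. Any two such $m$ coincide by left cancellation in $S$, so $h_s(n) := m$ is well-defined as an element of $N$.

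Next I would verify that $h_s$ is a monoid endomorphism of $N$. The identity $1 \cdot s = s \cdot 1$ forces $h_s(1) = 1$ by left cancellation. For multiplicativity, given $n_1, n_2 \in N$, compute
\[(n_1 n_2) s = n_1 (n_2 s) = n_1 (s\, h_s(n_2)) = (n_1 s)\, h_s(n_2) = s\, h_s(n_1)\, h_s(n_2),\]
and compare with $(n_1 n_2) s = s\, h_s(n_1 n_2)$; left cancellation by $s$ gives $h_s(n_1 n_2) = h_s(n_1)\, h_s(n_2)$.

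Finally I would prove bijectivity. Injectivity: if $h_s(n_1) = h_s(n_2)$, then $n_1 s = s\, h_s(n_1) = s\, h_s(n_2) = n_2 s$, and right cancellation in $S$ gives $n_1 = n_2$. Surjectivity is the step that genuinely uses the \emph{other} inclusion $sN \subseteq Ns$ in the definition of semi-good (the prior steps only used $Ns \subseteq sN$): given $m \in N$, the element $sm$ lies in $sN = Ns$, so there is $n \in N$ with $ns = sm$, whence $h_s(n) = m$. Thus $h_s$ is a monoid automorphism of $N$. There is no serious obstacle here; the main point to flag is that both inclusions in $Ns = sN$ are used, and that cancellativity of $S$ (on both sides) is the essential ingredient transmuting equalities in $S$ into equalities in $N$.
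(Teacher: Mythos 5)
Your proof is correct and follows essentially the same route as the paper's: existence from $Ns=sN$, uniqueness and multiplicativity by cancellation in $S$, and bijectivity from the reverse inclusion (the paper phrases this as exhibiting the inverse map $m\mapsto n$ with $ns=sm$, which is the same content as your injectivity-plus-surjectivity argument). Your explicit bookkeeping of which inclusion of $Ns=sN$ is used at each step is a nice clarification but not a different method.
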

\begin{proof} 
Obviously, $h(n)$ exists by definition, and it is unique because $S$ is cancellative.

Consider $h:N\to N$, $n\mapsto h_s(n)$. Then $h$ is bijective since it has an inverse, defined by associating to every $m \in N$ the (unique) $n\in N$ such that $ns = sm$.
Moreover, for $n, n' \in N$, we have that
\[s h_s(n) h_s(n') = n s h_s(n') = n n' s = s h_s(n n');\]
since $S$ is cancellative, we conclude that $h_s(n n') = h_s(n)h_s(n')$, and therefore $h_s$ is an automorphism of $N$.
\end{proof}


\smallskip
Now we face the situation when all fibers are good. Recall that, if $\pi: S \to C$ a surjective monoid homomorphism, a \emph{section} for $\pi$ is a map $\sigma: C \to S$ such that $$\pi(\sigma(c)) = c\quad \text{for every}\ c \in C.$$

\begin{definition}
Let $S$ be a cancellative monoid, $C$ a monoid, and $\pi: S \to C$ a surjective homomorphism.
A section $\sigma$ for $\pi$ is \emph{good} if $\sigma(c)$ is good for every $c\in C$.
\end{definition}

Clearly, $\sigma$ is a good section for $\pi$ if and only if
\begin{equation}\label{def:good}
\pi^{-1}(c) = N \sigma(c) = \sigma(c) N\quad \text{for every}\ c \in C.
\end{equation}
Moreover, $\pi$ has a good section if and only if all fibers of $\pi$ are good; the section is defined, for every $c\in C$, by $\sigma(c) = s_c$, where $s_c$ is any of the good elements of $\pi^{-1}(c)$.

\medskip
Several possibilities can occur; indeed, it may happen that all sections are good, some sections are  good, or no section is good.
First, if $S$ and $C$ are groups and $\pi: S \to C$ is a surjective homomorphism, then every section for $\pi$ is good.

\begin{examples}\label{ex:good}
\begin{enumerate}[(a)]
\item If $S = N \times C$ for some cancellative monoids $N$ and $C$, and $\pi: S \to C$ is the canonical projection, then the canonical section $\sigma: c \mapsto (1,c)$ is good.

\item Let $N$ and $C$ be cancellative monoids, and let $\phi: C \to \Aut(N)$ be a homomorphism of monoids. Let $S$ be the semidirect product of $N$ and $C$ with respect to $\phi$ and $\pi: S \to C$ be the canonical projection. Then, the canonical section $\sigma: c \mapsto (1,c)$ is good.

\item Let $S = (\N, +)$. Fix $n \geq 1$ and let $C= \Z/ n\Z$ with the usual addition. Let $\pi: \N \to C$ be the canonical projection. Then, $\pi$ has a unique good section.

\item Fix $n \in \N$ and let $S= \set{0, 1, \dotsc, n-1} \cup [n, + \infty) \subseteq \R$. Then $S$ is a submonoid of $(\R, +)$. Let $C= \R/\Z$ and $\pi: S \to C$ be the restriction of the canonical projection $\R \to C$. Then $\pi$ has a unique good section.

\item Let $S = (\N, +)$. Let $C = \set{0,1,2}$ with commutative monoid operation $\oplus$ given by $x \oplus y = \min(2, x + y)$ (notice that $0$ is the neutral element of $C$). Let $\pi: \N \to C$ be the homomorphism defined by $x\mapsto \min(2, x)$. Then no section for $\pi$ is good.

\item A variant of the above example is the following. Let $S= \R_{\geq 0}$, and $C= \Pa{[0,1], \oplus}$ be the standard MV-algebra:
$x \oplus y := \min(1, x + y)$. Let $\pi: S \to C$ be the homomorphism $\pi(x)= \min(1, x)$. Then, no section for $\pi$ is good.

\item  Let $S = (\R_{\geq 0}, +)$, $C= \R/\Q$, and let $\pi : S \to C$ be the restriction of the canonical projection $\R \to C$. Even if $S$ is cancellative and $C$ is a group, there is no good section for $\pi$.
\end{enumerate}
\end{examples}

\begin{lemma}\label{lem:good-1}
Let $S$ be a cancellative monoid, $C$ a monoid, and $\pi: S \to C$ a surjective homomorphism admitting a good section $\sigma$.
Then every section for $\pi$ is good if and only if $N = \pi^{-1}(1)$ is a group. 
\end{lemma}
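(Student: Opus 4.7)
The plan is to prove the two directions independently; both are short, and the key idea lies in the converse.

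For the ``if'' direction, I would assume $N$ is a group and let $\sigma'$ be any section of $\pi$. For each $c \in C$, since the given section $\sigma$ is good, $\sigma'(c) \in \pi^{-1}(c) = N\sigma(c) = \sigma(c)N$, so there exist $n, n' \in N$ with $\sigma'(c) = n\sigma(c) = \sigma(c)n'$. Because $N$ is a group, $Nn = N$ and $n'N = N$, hence
\[
N\sigma'(c) = Nn\sigma(c) = N\sigma(c) = \pi^{-1}(c),
\]
and analogously $\sigma'(c)N = \sigma(c)n'N = \sigma(c)N = \pi^{-1}(c)$. Thus $\sigma'$ is good.

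For the ``only if'' direction, I would assume every section of $\pi$ is good and show that every $n_0 \in N$ is invertible in $N$. The trick is to modify the given good section $\sigma$ at the single point $1 \in C$: define $\sigma' \colon C \to S$ by $\sigma'(1) = n_0$ and $\sigma'(c) = \sigma(c)$ for $c \neq 1$. This is a section because $\pi(n_0) = 1$. By hypothesis $\sigma'$ is good, so
\[
N = \pi^{-1}(1) = N\sigma'(1) = Nn_0 \quad \text{and} \quad N = \sigma'(1)N = n_0 N.
\]
In particular $1 \in Nn_0$ and $1 \in n_0 N$, so there exist $m, m' \in N$ with $m n_0 = 1 = n_0 m'$. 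The standard monoid identity $m = m(n_0 m') = (m n_0) m' = m'$ forces $m = m'$ to be a two-sided inverse of $n_0$ in $N$. Since $n_0 \in N$ was arbitrary, $N$ is a group.

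No real obstacle is expected; the only point deserving attention is verifying that the single-point modification of $\sigma$ is still a section (immediate from $n_0 \in \pi^{-1}(1)$), which is precisely what allows the hypothesis to deliver $Nn_0 = n_0 N = N$ for an arbitrary $n_0 \in N$.
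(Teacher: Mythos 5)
Your proof is correct and follows essentially the same route as the paper: the forward direction uses that $Nn=N$ and $n'N=N$ for $n,n'\in N$ when $N$ is a group, and the converse modifies the good section at the single point $1\in C$ to force $Nn_0=n_0N=N$ for an arbitrary $n_0\in N$. The only difference is that you spell out why $1\in Nn_0$ and $1\in n_0N$ yield a two-sided inverse, a step the paper leaves implicit.
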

\begin{proof}
Suppose that $N$ is a group and let $\sigma'$ be another section for $\pi$. Fix $c \in C$ and let $s \in \pi^{-1}(c)$.
Then, there exist $m,n \in N$ such that $s = n \sigma(c)$ and $\sigma'(c) = m \sigma(c)$. Therefore, $s = n m^{-1} \sigma'(c) \in N \sigma'(c)$. Similarly one proves that $s \in \sigma'(c) N$. 

Suppose that every section for~$\pi$ is good. Let $n\in N$ and define a new section $\sigma'$ for $\pi$ by putting $\sigma'(c) = \sigma(c)$ for all $c\in C \setminus \{1\}$ and $\sigma'(1) = n$. Equation~\eqref{def:good} applied to $c=1$ and the good section $\sigma'$ gives $N=Nn=nN$, which yields that $n$ is an invertible element of $N$. 
\end{proof}

Item (b) of the next lemma makes use of the assumption that $S$ is cancellative.

\begin{lemma}\label{lem:good-2}\label{lem:good-3} 
Let $S$ be a cancellative monoid, $C$ a monoid, and $\pi: S \to C$ a surjective homomorphism admitting a good section $\sigma$. Then:
\begin{enumerate}[(a)]
\item $\sigma(x) \sigma(y) N \subseteq \sigma(xy) N = \pi^{-1}(xy)$ for every $x, y \in C$; 
\item the map $f: N \times C \to S$, $(n,c)\mapsto n \sigma (c)$ is a bijection.
\end{enumerate}
\end{lemma}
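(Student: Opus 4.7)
The plan is to read off both items directly from the definition of a good section, Equation~\eqref{def:good}, using that $\pi$ is a homomorphism, that $\sigma$ is a section for $\pi$, and (only in part (b)) that $S$ is cancellative. There is no serious obstacle here; the lemma is essentially an unpacking of definitions.

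For (a), the equality $\sigma(xy)N = \pi^{-1}(xy)$ is immediate from Equation~\eqref{def:good} applied with $c = xy$. For the inclusion $\sigma(x)\sigma(y)N \subseteq \sigma(xy)N$, I would first note that
\[
\pi\bigl(\sigma(x)\sigma(y)\bigr) = \pi(\sigma(x))\,\pi(\sigma(y)) = xy,
\]
so $\sigma(x)\sigma(y) \in \pi^{-1}(xy)$. Since $\pi(n)=1$ for every $n \in N$, right multiplication by any such $n$ keeps us inside the fibre: $\pi(\sigma(x)\sigma(y)n) = xy \cdot 1 = xy$. Hence $\sigma(x)\sigma(y)N \subseteq \pi^{-1}(xy) = \sigma(xy)N$, which is the desired chain.

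For (b), I would handle surjectivity and injectivity of $f:(n,c)\mapsto n\sigma(c)$ separately. For surjectivity, given $s\in S$ set $c = \pi(s)\in C$; by the left-hand equality in Equation~\eqref{def:good}, $s \in \pi^{-1}(c) = N\sigma(c)$, so $s = n\sigma(c) = f(n,c)$ for some $n\in N$. For injectivity, suppose $f(n,c) = f(n',c')$, i.e. $n\sigma(c) = n'\sigma(c')$. Applying $\pi$ to both sides and using $\pi(n)=\pi(n')=1$ together with $\pi(\sigma(c))=c$, $\pi(\sigma(c'))=c'$ yields $c = c'$. The equation then becomes $n\sigma(c) = n'\sigma(c)$, and right-cancellativity of $S$ (i.e.\ injectivity of $R_{\sigma(c)}$) gives $n = n'$. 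This completes the proof.
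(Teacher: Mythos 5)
Your proof is correct and follows essentially the same route as the paper: the equality in (a) is read off from Equation~\eqref{def:good}, the inclusion follows by applying $\pi$ to $\sigma(x)\sigma(y)n$, and in (b) surjectivity uses $s\in N\sigma(\pi(s))$ while injectivity applies $\pi$ to get $c=c'$ and then cancels $\sigma(c)$ on the right. Your explicit remark that it is right-cancellativity (injectivity of $R_{\sigma(c)}$) that is being used is a slightly more precise statement of the paper's ``since $S$ is cancellative.''
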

\begin{proof} 
(a) The equality is immediate by the assumption that $\sigma$ is good. To verify the (first) inclusion, let $n \in N$. Then $$\pi(\sigma(x) \sigma(y) n) ) = \pi(\sigma(x)) \pi(\sigma(y)) \pi(n) = xy,$$ and hence $$\sigma(x) \sigma(y) n \in \pi^{-1}(xy) = \sigma(xy) N.$$

(b) To see that $f$ is injective assume that $f(n,c) = f(n', c')$ for some $(n,c), (n',c')\in N\times C$. Then $n \sigma(c) = n' \sigma(c')$, and so $$c= \pi(n \sigma(c)) = \pi(n' \sigma(c')) = c'.$$ Since $S$ is cancellative, we conclude that $n = n'$.

To check that $f$ is surjective take $s \in S$; as $\sigma$ is good, we have that $s \in N \sigma(\pi(s)) \subseteq  f(N \times C)$.
\end{proof}

The following technical lemma applies in the next section.

\begin{lemma}\label{lem:good-exchange} 
Let $S$ be a cancellative monoid, $C$ a monoid, and $\pi: S \to C$ a surjective homomorphism admitting a good section $\sigma$. Assume that $N=\pi^{-1}(1)$ is right amenable. If $\bar C\in\Pf(C)$ and $y \in C$, there exist $Z\in\Pf(N)$ and $u \in N$ such that
\[u \sigma(y \bar C) \subseteq Z \sigma(y) \sigma(\bar C).\]
\end{lemma}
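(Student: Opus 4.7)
The plan is to reduce the desired inclusion to a finite instance of the left Ore condition inside $N$, and then invoke Corollary~\ref{cor:multi-Ore} (which applies because $N$ is right amenable).

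First, I would use goodness of $\sigma$ to rewrite each element $\sigma(yc)$ in terms of $\sigma(y)\sigma(c)$. Fix $c\in\bar C$. By Lemma~\ref{lem:good-2}(a), $\sigma(y)\sigma(c)\in\pi^{-1}(yc)$, and by the assumption that $\sigma$ is a good section,
\[
\pi^{-1}(yc)=N\sigma(yc).
\]
Hence there exists $n_c\in N$ such that
\[
\sigma(y)\sigma(c)=n_c\,\sigma(yc).
\]
This produces a finite family $\{n_c:c\in\bar C\}\subseteq N$ indexed by $\bar C$.

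Next, since $N$ is a cancellative right amenable monoid (cancellativity is inherited from $S$), I would apply Corollary~\ref{cor:multi-Ore} to this finite family. It yields elements $u\in N$ and $z_c\in N$ (for each $c\in\bar C$) such that
\[
u=z_c\,n_c\qquad \text{for every } c\in\bar C.
\]
Set $Z=\{z_c:c\in\bar C\}\in\Pf(N)$.

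Finally, I would verify the target inclusion by direct computation. For every $c\in\bar C$,
\[
u\,\sigma(yc)=z_c\,n_c\,\sigma(yc)=z_c\,\sigma(y)\sigma(c)\in Z\,\sigma(y)\sigma(\bar C).
\]
Letting $c$ range over $\bar C$ gives $u\,\sigma(y\bar C)\subseteq Z\,\sigma(y)\sigma(\bar C)$, as required.

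The only non-routine step is the simultaneous left Ore condition that produces a common $u\in N$ serving all $c\in\bar C$ at once; this is precisely what Corollary~\ref{cor:multi-Ore} supplies, and it is the reason the right amenability hypothesis on $N$ is needed. Everything else is a direct unfolding of the definition of a good section together with the cancellativity of $S$.
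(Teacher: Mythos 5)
Your proposal is correct and follows essentially the same route as the paper's proof: write $\sigma(y)\sigma(c)=n_c\,\sigma(yc)$ using goodness of $\sigma$, apply Corollary~\ref{cor:multi-Ore} in $N$ to find a common left multiple $u=z_c n_c$, and verify the inclusion elementwise. No gaps.
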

\begin{proof} 
Let $\bar C= \set{c_1, \dotsc, c_k}$. Since $\sigma$ is a good section, for every $i \in\{ 1, \dots, k\}$ there exists $t_i \in N$ such that $$\sigma(y)\sigma(c_i) = t_i \sigma(y c_i).$$ By Corollary~\ref{cor:multi-Ore}, there exist $u \in N$ and $z_1, \dotsc, z_k \in N$ such that $$u = z_i t_i\quad\text{for every}\ i\in\{ 1, \dots, k\}.$$ 
Let $Z= \set{z_1, \dots, z_k}$. Therefore,
\[u \sigma(y c_i) = z_i t_i \sigma(y c_i) = z_i \sigma(y) \sigma(c_i) \in Z \sigma(y) \sigma(\bar C)\]
for every $i\in\{ 1, \dots, n\}$.
\end{proof}

We see now that it may occur that $S$ is a non cancellative monoid even if $C$ is a cancellative monoid and $\pi:S\to C$ is a surjective homomorphism admitting a good section and such that $N=\pi^{-1}(1)$ is a subgroup of $S$.

\begin{example}\label{ex:non-cancellative}
Let $S= \Pa{\Z \times \set 0} \cup \Pa{\set 0 \times {\N_+}}$, with the operation
\[(x,n) + (y, m) = \begin{cases} (x + y, 0) & \text{if } n = m = 0\\ (0, m + n) & \text{otherwise}.\end{cases}\]
Then $(S, +)$ is a commutative non-cancellative monoid. Let $C=\N$ and let $\pi: S \to C$, $(a,b)\mapsto b$, notice that $\pi$ is a surjective homomorphism of monoids. Let $\sigma: \N \to S$, $\sigma(n)= (0,n)$. Then $C$ is cancellative, $\sigma$ is a good section and $N=\pi^{-1}(0)$ is a group.
\end{example}

We show that in presence of a good section for the surjective monoid homomorphism $\pi:S\to C$, with $S$ cancellative, the amenability of $S$ implies the amenability also of $C$ and $N=\pi^{-1}(1)$.

\begin{lemma}\label{lem:amenable-subgroup} 
Let $S$ be a cancellative monoid, $C$ a monoid, $\pi: S \to C$ a surjective homomorphism, and let $N=\pi^{-1}(1)$. If $S$ is right amenable, then:
\begin{enumerate}[(a)]
\item $C$ is right amenable;
\item in case $\pi$ admits a good section~$\sigma$, $N = \pi^{-1}(1)$ is right amenable as well.
\end{enumerate}
\end{lemma}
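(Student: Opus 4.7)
The plan is to push forward a right (sub)invariant finitely additive probability measure $\mu$ on $S$ to obtain the required measure on $C$ for part~(a), and on $N$ for part~(b). I work with the measure-theoretic characterization of amenability rather than the F\o lner one, because on the fibers of $\pi$ right multiplication by $n \in N$ is twisted by the automorphism $h_{\sigma(\pi(s))}$ of Lemma~\ref{lem:good-5}, which would be awkward to control uniformly via F\o lner sets.

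For part~(a), I would define $\nu \colon \P(C) \to [0,1]$ by $\nu(X) = \mu(\pi^{-1}(X))$. Then $\nu(C) = \mu(S) = 1$ and additivity is immediate, since $\pi^{-1}$ preserves disjoint unions. For right subinvariance, given $c \in C$ and $X \in \P(C)$, pick any $t \in \pi^{-1}(c)$ (available by surjectivity of $\pi$); since $\pi$ is a homomorphism one checks $\pi^{-1}(R_c^{-1}(X)) = R_t^{-1}(\pi^{-1}(X))$, whence $\nu(R_c^{-1}(X)) = \mu(R_t^{-1}(\pi^{-1}(X))) = \mu(\pi^{-1}(X)) = \nu(X)$ by right subinvariance of~$\mu$.

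For part~(b), I first upgrade $\mu$ to a right invariant measure: since $S$ is cancellative and right amenable, applying Remark~\ref{am-meas} to $S^{op}$ gives $\mu(Fs) = \mu(F)$ for all $F \in \P(S)$ and $s \in S$. By Lemma~\ref{lem:good-3}(b), the map $(n,c) \mapsto n\sigma(c)$ is a bijection $N \times C \to S$, so in particular the fibers $\pi^{-1}(c) = \sigma(c) N$ are pairwise disjoint. I then define $\nu \colon \P(N) \to [0,1]$ by
\[ \nu(A) = \mu(\sigma(C) A) = \mu\Bigl( \bigsqcup_{c \in C} \sigma(c) A \Bigr), \]
the union being disjoint because $\sigma(c) A \subseteq \pi^{-1}(c)$. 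To verify the required properties: $\nu(N) = \mu(S) = 1$; additivity reduces to the claim that disjoint $A, B \subseteq N$ yield disjoint $\sigma(C) A$ and $\sigma(C) B$, which holds because $\sigma(c_1) a = \sigma(c_2) b$ forces $c_1 = c_2$ (apply $\pi$) and then $a = b$ by left cancellation in~$S$; and for $n \in N$, right invariance of $\mu$ gives $\nu(A n) = \mu(\sigma(C) A n) = \mu(\sigma(C) A) = \nu(A)$. Since $N$ is cancellative as a submonoid of~$S$, Remark~\ref{am-meas} concludes that $N$ is right amenable.

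The main obstacle is the definition of $\nu$ in part~(b). The naive pushforward along the ``$N$-component'' map $p_N \colon S \to N$, $n\sigma(c) \mapsto n$, is not right invariant: right multiplication by $n \in N$ sends $s = m \sigma(c)$ to $m\, h_{\sigma(c)}(n)\, \sigma(c)$, so $p_N \circ R_n$ differs from $R_n \circ p_N$ by the twist $h_{\sigma(c)}$ which depends on the fiber. The fix is to integrate over all fibers simultaneously with $\nu(A) = \mu(\sigma(C) A)$, since then right multiplication by $n$ preserves the fiber decomposition and preserves $\mu$-measure globally, sidestepping the fiber-by-fiber twist altogether.
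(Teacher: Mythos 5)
Your proposal is correct and follows essentially the same route as the paper: part (a) is the pushforward $\nu(X)=\mu(\pi^{-1}(X))$, and part (b) uses exactly the measure $\nu(A)=\mu(TA)$ with $T=\sigma(C)$ that the paper writes down, with goodness of $\sigma$ entering only to guarantee $\sigma(C)N=S$ and hence $\nu(N)=1$. The paper leaves the verifications as ``easy to see''; you have simply supplied them (together with a sensible remark on why a fiberwise projection onto $N$ would fail), so there is nothing to correct.
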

\begin{proof}
Remember that the right amenability of $S$ is equivalent to the existence of a right invariant finitely additive probability measure on $S$ (see Remark~\ref{am-meas}). Let $\mu$ be a right invariant finitely additive probability measure on~$S$.

(a) That $C$ is right amenable is a known fact (see \cite{Day3} -- the (short) proof is to define a right invariant finitely additive probability measure $\lambda$ on $C$ by $\lambda(Y)= \mu(\pi^{-1}(Y))$, as $Y$ varies among subsets of $C$).

(b) Let $T = \sigma(C)$, which is a transversal of $N$ in $S$. Let $X \subseteq N$, and define $\nu(X)=\mu(TX)$. It is easy to see that $\nu$ is a right invariant finitely additive probability measure on~$N$. The only place where we need that $\sigma$ is good is in proving that $\nu(N) = \mu(TN) = \mu(S) = 1$.
\end{proof}

We cannot drop the assumption that $\pi$ has a good section in the above lemma; indeed, in \cite{Hochster1969} one can find an example of a solvable amenable group containing a submonoid which is not right amenable.

\medskip
The following technical result is a kind of converse of Lemma~\ref{lem:amenable-subgroup} illustrating the impressing utility of the canonically indexed right F\o lner nets. Given cancellative monoids $S$ and $C$, a surjective homomorphism $\pi:S \to C$ and a good section $\sigma: C \to S$ for $\pi$, we prove that $S$ is right amenable whenever $N=\pi^{-1}(1)$ and $C$ are right amenable. More precisely, starting from canonically indexed right F\o lner nets of $N$ and $C$, we provide a canonically indexed right F\o lner net of $S$.

\begin{theorem}\label{lem:double-limit}
Let $S$ and $C$ be cancellative monoids, $\pi:S \to C$ be a surjective homomorphism admitting a good section $\sigma$, and $N = \pi^{-1}(1)$.  Let $(N_i)_{i\in\mathcal I(N)}$ and $(C_i)_{i\in\mathcal I(C)}$ be canonically indexed right F\o lner nets of $N$ and $C$, respectively. Then there exists a map $\zeta:\mathcal I(C)\to \Pf^0(N)$ such that 
\begin{enumerate}[(a)] 
\item for every $(X,m)\in\mathcal I(N)$ and $(Y,n)\in\mathcal I(C)$ with $m\geq n$, letting $$F_{((X,m),(Y,n))}= N_{(\zeta(Y,n)\cup X,m)}\sigma(C_{(Y,n)}),$$ 
we have that, for every $x\in X$ and $y\in Y$,
\begin{equation}\label{8/n}
F_{((X,m),(Y,n))}x\sigma(y)\sim_{\frac{3}{n}} F_{((X,m),(Y,n))}.
\end{equation}
\item Hence, the net
\[(F_{((X,m),(Y,n))})_{(X,m)\in\mathcal I(N),(Y,n)\in\mathcal I(C),m\geq n}\]
is a right F\o lner net of $S$  such that $$|F_{((X,m),(Y,n))}| = |N_{(\zeta(Y,n)\cup X,m)}|\cdot |C_{(Y,n)}|.$$
\end{enumerate}
\end{theorem}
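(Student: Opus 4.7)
The plan is to construct $\zeta$ via Lemma~\ref{lem:good-exchange} applied to $C_{(Y,n)}$ so that the structure constants $n_{c,y}\in N$ defined by $\sigma(c)\sigma(y)=n_{c,y}\sigma(cy)$ are absorbed into $\zeta(Y,n)$, and then to verify~\eqref{8/n} by a fiberwise analysis exploiting the bijection $N\times C\to S$ of Lemma~\ref{lem:good-3}(b) together with the canonical F\o lner properties of $(N_i)_{i\in\mathcal I(N)}$ and $(C_i)_{i\in\mathcal I(C)}$.

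\emph{Construction of $\zeta$.} Fix $(Y,n)\in\mathcal I(C)$ and set $D=C_{(Y,n)}$. For every $c\in D$ and $y\in Y$, since $\sigma$ is a good section, there is a unique $n_{c,y}\in N$ with $\sigma(c)\sigma(y)=n_{c,y}\sigma(cy)$. For each $y\in Y$, apply multi-Ore (Corollary~\ref{cor:multi-Ore}) to the finite family $\{n_{c,y}:c\in D\}\subseteq N$ to obtain $u_y\in N$ and a finite set $Z_y=\{z_{y,c}:c\in D\}\subseteq N$ with $u_y=z_{y,c}\,n_{c,y}$ for every $c\in D$; this is the right-handed analogue of Lemma~\ref{lem:good-exchange}, proved by the same argument. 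Set $\zeta(Y,n)=\{1\}\cup\bigcup_{y\in Y}(\{u_y\}\cup Z_y)\in\Pf^0(N)$, which depends only on $(Y,n)$.

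\emph{The F\o lner estimate.} Fix $(X,m)\in\mathcal I(N)$ with $m\ge n$, set $T=\zeta(Y,n)\cup X$, $M=N_{(T,m)}$, and $F=F_{((X,m),(Y,n))}=M\sigma(D)$. By Lemma~\ref{lem:good-3}(b) the union $F=\bigsqcup_{d\in D}M\sigma(d)$ is disjoint with $|F|=|M|\cdot|D|$, already giving the cardinality formula of (b). Fix $x\in X$ and $y\in Y$. Using $\sigma(c)x=h_{\sigma(c)}^{-1}(x)\sigma(c)$ (Lemma~\ref{lem:good-5}) together with $\sigma(c)\sigma(y)=n_{c,y}\sigma(cy)$ one obtains the disjoint decomposition
\[
F\cdot x\sigma(y)=\bigsqcup_{c\in D}M\,a_{c,x,y}\,\sigma(cy),\qquad a_{c,x,y}=h_{\sigma(c)}^{-1}(x)\,n_{c,y},
\]
where distinct $c$ produce distinct fibers $\pi^{-1}(cy)$. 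Compared fiberwise with $F=\bigsqcup_{d'\in D}M\sigma(d')$, the symmetric difference splits into (i)~the pieces supported over fibers in $Dy\SDiff D$, bounded by $|M|\cdot|Dy\SDiff D|\le|F|/n$ by the canonical F\o lner property of $C_{(Y,n)}$ at $y\in Y$, and (ii)~the contributions $|Ma_{c,x,y}\SDiff M|$ from common fibers $cy\in D$. For (ii), the factorisation $u_y=z_{y,c}n_{c,y}$ with $u_y,z_{y,c}\in T$ combined with right-cancellation in $N$ yields $|Mn_{c,y}\SDiff M|\le 2|M|/m$, and a parallel multi-Ore synchronisation within $N$ applied to the finite family $\{a_{c,x,y}:c\in D\}$---which works because $X\subseteq T$---bounds $|Ma_{c,x,y}\SDiff M|\le 2|M|/m$ for every $c$. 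Summing over $c$ and invoking $m\ge n$ bounds (ii) by $2|F|/n$, so altogether $|F\cdot x\sigma(y)\SDiff F|\le 3|F|/n$, which is~\eqref{8/n}.

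Part (b) then follows at once: the cardinality formula has been noted, and since every $s\in S$ factors uniquely as $x\sigma(y)$ with $x\in N$, $y\in C$ by Lemma~\ref{lem:good-3}(b), eventually $x\in X$ and $y\in Y$ along the index, and then $|Fs\SDiff F|/|F|\le 3/n\to 0$. The main technical obstacle throughout is the fiberwise control of the twisted factor $h_{\sigma(c)}^{-1}(x)$ uniformly as $c$ ranges over the potentially very large set $D$; it is precisely here that the cooperation between Lemma~\ref{lem:good-exchange}, the multi-Ore Corollary~\ref{cor:multi-Ore}, and the inclusion $X\subseteq T$ built into the test set for $M$ becomes essential.
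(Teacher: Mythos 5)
Your fiberwise decomposition, the treatment of the boundary fibers $Dy \SDiff D$, and the handling of the purely $y$-dependent factors $n_{c,y}$ are all sound (indeed the multi-Ore detour for the $n_{c,y}$ is not even needed, since these elements depend only on $(Y,n)$ and could be placed directly into $\zeta(Y,n)$, which is what the paper does). The genuine gap is in step (ii), at the twisted factor $h_{\sigma(c)}^{-1}(x)$. The canonical \Folner property of $M = N_{(T,m)}$ only controls right translates $Ms$ for $s \in T$. To run your ``parallel multi-Ore synchronisation'' on the family $\set{a_{c,x,y} : c \in D}$ you would obtain $v, w_c \in N$ with $v = w_c a_{c,x,y}$, and the transitivity argument then requires $Mv \sim_{1/m} M$ and $Mw_c \sim_{1/m} M$ --- i.e.\ $v, w_c \in T$. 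But $v$ and $w_c$ depend on $x$ through $h_{\sigma(c)}^{-1}(x)$, while $T = \zeta(Y,n) \cup X$ is fixed before $x$ enters the picture; the inclusion $X \subseteq T$ only gives $Mx \sim_{1/m} M$, which says nothing about $Mh_{\sigma(c)}^{-1}(x)$, since $h_{\sigma(c)}$ is an arbitrary automorphism of $N$ varying with $c$ over the large set $D$. So the claimed bound $\card{Ma_{c,x,y} \SDiff M} \leq 2\card{M}/m$ is unjustified, and I see no way to recover it from the ingredients you cite.

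The paper's proof avoids this by brute force: it puts the elements $z_{c,x}$ defined by $\sigma(c)x = z_{c,x}\sigma(c)$ (that is, exactly your $h_{\sigma(c)}^{-1}(x)$), for \emph{all} $c \in C_{(Y,n)}$ and $x \in X$, into the index set $Z$ of the $N$-component $\bar N = N_{(Z \cup X, m)}$, so that $\bar N z_{c,x} \sim_{1/m} \bar N$ holds by the canonical indexing itself; it then establishes $\bar N\sigma(\bar C)x \sim_{1/n} \bar N\sigma(\bar C)$ and $\bar N\sigma(\bar C)\sigma(y) \sim_{2/n} \bar N\sigma(\bar C)$ separately and combines them using the right-invariance and transitivity of $\sim_\eps$ (Lemmas~\ref{sim}(d) and~\ref{aequiv}(c)), never forming the product $h_{\sigma(c)}^{-1}(x)n_{c,y}$ at all. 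Note that this makes the paper's $Z$ depend on $X$ as well as on $(Y,n)$ --- strictly more than the statement's $\zeta:\mathcal I(C)\to\Pf^0(N)$ advertises, though this harms nothing downstream since the only property used later is $(\zeta(Y,n)\cup X,m)\geq (X,m)$. To repair your argument you must likewise allow the test set $T$ to absorb the elements $h_{\sigma(c)}^{-1}(x)$; they cannot be synchronised away after the fact.
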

\begin{proof} 
(a) Let $(Y,n)\in \mathcal I(C)$ and denote $\bar C=C_{(Y,n)}$. We have to produce $Z=\zeta(Y,n)\in \Pf^0(N)$ such that Equation~\eqref{8/n} holds, for every $(X,m)\in\mathcal I(N)$ with $m\geq n$, and for every $x\in X$ and $y\in Y$; that is, denoting 
\begin{equation}\label{barN}
\bar N=N_{(Z\cup X,m)},
\end{equation}
for every $x\in X$ and $y\in Y$ we have to verify that
\begin{equation}\label{8/nbis}
\bar N\sigma(\bar C) x\sigma(y)\sim_{\frac{3}{n}} \bar N\sigma(\bar C).
\end{equation}

In view of Lemma~\ref{lem:good-2}, for every $c\in\bar C$ and every $x\in X$, there exists $z_{c,x}\in N$ such that $$\sigma(c)x=z_{c,x}\sigma(c).$$
Moreover, for every $c\in\bar C$ and every $y\in Y$, there exists $z_{c,y}\in N$ such that $$\sigma(c)\sigma(y)=z_{c,y}\sigma(cy)$$ by the same lemma. Let 
$$Z=\{z_{c,x}: c\in\bar C,x\in X\}\cup\{z_{c,y}: c\in\bar C,y\in Y\}.$$
First we see that for every $x\in X$ and every $m\geq n$,
\begin{equation}\label{first}
\bar N\sigma(\bar C)x\sim_{\frac{1}{n}} \bar N\sigma(\bar C).
\end{equation}
To this end, fix $x \in X$ and $m\geq n$. Then, using the notation in~\eqref{barN}, since $\bar N\subseteq N$,
$$\bar N \sigma(\bar C) x =\bigsqcup_{c \in\bar C} \bar N \sigma(c)x=\bigsqcup_{c\in\bar C} \bar N z_{c,x}\sigma(c).$$
According to Equation~\eqref{FEn}, since $m\geq n$, we have that $\bar Nz_{c,x} \sim_{\frac{1}{n}} \bar N$ for every $c\in\bar C$; therefore, by Lemma~\ref{sim}(a,d) 
$$\bar N \sigma(\bar C)x=\bigsqcup_{c\in\bar C}\bar Nz_{c,x}\sigma(c)\sim_{\frac{1}{n}}\bigsqcup_{c\in\bar C}\bar N\sigma(c)=\bar N\sigma(\bar C).$$
This settles Equation~\eqref{first}.

Now we verify that for every $y\in Y$ and every $m\geq n$.
\begin{equation}\label{second}
\bar N\sigma(\bar C)\sigma(y)\sim_{\frac{2}{n}} \bar N\sigma(\bar C).
\end{equation}
To this end, fix $y\in Y$ and $m\geq n$. Then, according to Equation~\eqref{FEn} and Lemma~\ref{sim}(a,d), since $m\geq n$,
$$\bar N\sigma(\bar C)\sigma(y)=\bigsqcup_{c\in\bar C}\bar N\sigma(c)\sigma(y)=\bigsqcup_{c\in\bar C}\bar Nz_{c,y}\sigma(cy)\sim_{\frac{1}{n}} \bigsqcup_{c\in\bar C}\bar N\sigma(cy)=\bar N\sigma(\bar C y).$$
To complete the proof of Equation~\eqref{second}, we see now that $$\bar N\sigma(\bar C y)\sim_{\frac{1}{n}}\bar N\sigma(\bar C).$$
Indeed, $\bar Cy\sim_{\frac{1}{n}}\bar C$ by Equation~\eqref{FEn}, so $\sigma(\bar Cy)\sim_{\frac{1}{n}}\sigma(\bar C)$ since $\sigma$ is injective. Hence, $$\bar N\times \sigma(\bar Cy)\sim_{\frac{1}{n}}\bar N\times \sigma(\bar C)$$ by Lemma~\ref{sim}(b). Since the map $N\times C\to S$ defined by $(x,y)\mapsto x\sigma(y)$ is a bijection by Lemma~\ref{lem:good-2}(b), 
we conclude that $$\bar N\sigma(\bar C y)\sim_{\frac{1}{n}}\bar N\sigma(\bar C),$$ as required. In view of Lemma~\ref{aequiv}(c), this settles Equation~\eqref{second}.

Equation~\eqref{first} and~\eqref{second}, in view of Lemma~\ref{aequiv}(c), imply Equation~\eqref{8/nbis}, and so the thesis.

\medskip
(b) Let $$\mathcal I(N,C)=\set{((X,m),(Y,n))\in \mathcal I(N)\times\mathcal I(C): m\geq n},$$ with the partial order induced by the partial orders of $\mathcal I(N)$ and $\mathcal I(C)$.

Let $g\in S$ and $\eps>0$. We have to find $\imath \in \mathcal I(N,C)$ such that, for every $j=((X,m),(Y,n))\in \mathcal I(N,C)$ with $j\geq \imath$, $$\frac{\card{F_j g \SDiff F_j}}{\card{F_j}}\leq \eps.$$
Let $n\in\N_+$ such that $\frac{3}{n}\leq\eps$ and write $g=x\sigma(y)$ for some $x\in N$ and $y\in C$. Define $$\imath=((\set{e,x},n),(\set{e,y},n))\in \mathcal I(N,C)$$ and let $j=((X,m),(Y,k))\in \mathcal I(N,C)$ with $j\geq \imath$; this means that $x\in X$, $y\in Y$ and $m \geq k\geq n$.
Thus, by item (a),
$$\frac{\card{F_j g \SDiff F_j}}{\card{F_j}}\leq\frac 3 n\leq\eps,$$
as required.
\end{proof}

The map $(X,Y)\mapsto X\sigma(Y)$ in the above theorem gives a cofinal embedding, as partially ordered sets, of $\Pf^0(N)\times \Pf^0(C)$ in $\Pf^0(G)$.

\medskip
We show an explicit example of the above construction. 

\begin{example}\label{Example: semi-direct product}\label{quest:new1}
Let $A$ and $C$ be groups, $\varphi$ an action of $C$ on $A$, and
$G = A \rtimes_\varphi C$ be their semidirect product. That is, as a set $G = A \times C$, and the product of elements is given by
\[(a_1, c_1) * (a_2, c_2) = (a_1 \cdot a_2^{c_1}, c_1 \cdot c_2), \]
where $a^{c} = \varphi(c)(a)$.
Then (under the identifications $a \mapsto (a, e)$ and $c \mapsto (e, c)$),
$A$ and $C$ are subgroups of $G$, with $A$ normal in $G$ and $C$ isomorphic to
$G/A$. The map $\sigma: C \to G$ given by $c \mapsto (e, c)$ is a section of the quotient map $G \to C$, and, as said before, we identify $c$ with $\sigma(c)$.

\smallskip
Let $(A_i)_{i \in I}$ and $(C_j)_{j \in J}$ be right \Folner nets of $A$ and $C$ respectively; define $G_{i,j}= A_i * C_j$. We show that $(G_{i,j})_{i,j}$ need not be a right \Folner net in general. 

Let $x \in A$ and $y \in C$. Notice that $x * y = (x, y)$ and $y * x = (x^y, y)$. We have that
\[G_{i,j} * y = \set{(a,c) * (e,y): a \in A_i, c \in C_j} = \set{(a, c \cdot y): a \in A_i, c \in C_j} = A_i *(C_j \cdot y)\]
Thus,
\[\frac{\card{G_{i,j} * y  \setminus G_{i,j}}}{\card{G_{i,j}}} = \frac{\card{C_j \cdot y \setminus C_j}}{\card{C_j}},\]
which converges to $0$ as $j$ goes to infinity, uniformly in $i$. However, 
\[G_{i,j} * x = \set{(a,c) * (x,e): a \in A_i, c \in C_j} = \set{(a \cdot x^c, c): a \in A_i, c \in C_j} = \bigsqcup_{c \in C_j} (A_i \cdot x^c) * \set{c}.\]
Thus,
\[\frac{\card{G_{i,j} * x  \setminus G_{i,j}}}{\card{G_{i,j}}} = \frac 1 {\card{C_j}}\sum_{c \in C_j} \frac{\card{A_i \cdot x^c \setminus A_i}}{|A_i|},\]
which in general does not converge to $0$ as $i,j$ go to infinity. 

\medskip 
The following example was suggested to us by B. Weiss. 
Let $A = \Z^2$, $C = \Z$, and, for every $n\in\Z$ and $(v_1,v_2)\in\Z^2$,
\[\varphi(n)(v_1,v_2)= (v_1 + n v_2,v_2).\]
Take $x=(0,1)$, $C_n= [0,n-1]$, and $A_m = [0,m-1]^2$. Then
\[\delta_{n,m}(x) := \frac{\card{G_{m,n} * x  \setminus G_{m,n}}}{\card{G_{m,n}}} =\frac 1 {n}\sum_{c = 0}^{n-1} \frac{\card{A_m + x^c \setminus A_m}}{m^2}.\]
We have that
\[\delta_{n,n}(x) \geq  \frac 1 {n^3}\sum_{c = n/2}^{n-1} \card{A_n +(c,1) \setminus A_n} \geq \frac{n n^2/4}{n^3} = 1/4.\]
Therefore, $(G_{i,j})_{i,j}$ is not a right \Folner net. 

On the other hand, for every $\eps > 0$, $n \in \N$  and $x \in \Z^2$, if we take $m$ large enough (depending on $n$, $x$, and $\eps$), we have that
\[\frac{\card{A_m + x^c \setminus A_m}}{\card{A_m}} < \eps\]
for every $c \in C_n$, and therefore $\delta_{n,m}(x) < \eps$. Thus, for a function $f(n)$ growing fast enough, the sequence $(G_{f(n),n})_{n \in \N}$ is a right \Folner sequence of $G$. 

For a similar example see \cite[Example 0.5]{Paterson}.
\end{example}

\section{An integral for subadditive functions}\label{sec:integral}

\subsection{Definition of the integral}

Let $S$ be a semigroup and let $f:\Pf(S)\to \R$ be a function. Following \cite{CCK}, we say that $f$ is:
\begin{enumerate}[(1)]
 \item \emph{subadditive} if $f(F_1\cup F_2)\leq f(F_1)+f(F_2)$ for every $F_1,F_2\in\Pf(S)$;
 \item \emph{right subinvariant} (respectively, \emph{left subinvariant}) if $f(Fs)\leq f(F)$ (respectively, if $f(sF)\leq f(F)$) for every $s\in S$ and every $F\in\Pf(S)$;
 \item \emph{right invariant} (respectively, \emph{left invariant}) if $f(Fs)=f(F)$ (respectively, if $f(sF)= f(F)$) for every $s\in S$ and every $F\in\Pf(S)$;
 \item \emph{uniformly bounded on singletons} if there exists a real number $M\geq 0$ with $f(\{s\})\leq M$ for every $s\in S$.
\end{enumerate}

A subadditive function is automatically non-negative; in fact, if $f:\Pf(S)\to \R$ is subadditive, then $f(F) = f(F \cup F) \leq f(F) + f(F)$ for every $F\in\Pf(S)$.

If $S$ is a monoid, then from the fact that $f$ is right subinvariant, it
follows that $f(\{s\})\leq f(\{e\})$ for every $s\in S$, that is, $f$ is uniformly bounded on singletons. If $S$ is a group, then from the fact that $f$ is right subinvariant, it follows that $f$ is right invariant, that is, $f(Fs)=f(F)$ for every $s\in S$ and every $F\in\Pf(S)$.

The obvious counterparts for left subinvariance and left invariance hold true. 

\medskip
The following is the counterpart of Ornstein-Weiss Lemma for cancellative left amenable semigroups.

\begin{theorem}[{see \cite[Theorem 1.1]{CCK}}]\label{CCKLemma} 
Let $S$ be a cancellative left amenable semigroup and let $f:\Pf(S)\to \R$ 
be a subadditive right subinvariant function uniformly bounded on singletons. Then there exists $\lambda\in\R_{\geq 0}$ such that, for every left F\o lner net $(F_i)_{i\in I}$ of $S$, $$\lim_{i\in I}\frac{f(F_i)}{|F_i|}=\lambda.$$
\end{theorem}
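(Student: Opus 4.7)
The plan is to adapt the classical Ornstein--Weiss strategy to this cancellative left amenable setting. Set
\[\lambda \;:=\; \inf_{F \in \Pf(S)} \frac{f(F)}{\card{F}}.\]
Subadditivity applied to $F \cup F$ gives $f(F) \leq 2 f(F)$, so $f$ is automatically nonnegative; iterating subadditivity together with the singleton bound $M$ yields $f(F) \leq M \card F$ for every $F \in \Pf(S)$, hence $0 \leq \lambda \leq M$. Trivially $\liminf_{i \in I} f(F_i)/\card{F_i} \geq \lambda$ for every left F\o lner net $(F_i)_{i \in I}$, so the whole content of the theorem lies in proving the reverse estimate $\limsup_{i \in I} f(F_i)/\card{F_i} \leq \lambda$.

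Fix $\eps > 0$ and pick $E_0 \in \Pf(S)$ with $f(E_0)/\card{E_0} < \lambda + \eps$. The idea is to quasi-tile each sufficiently F\o lner $F_i$ by pairwise disjoint right translates of finitely many near-optimal tiles. Following the Ornstein--Weiss quasi-tiling construction, one builds a finite list $E_1, \dots, E_N \in \Pf(S)$, each still satisfying $f(E_k)/\card{E_k} < \lambda + \eps$ and each substantially more left invariant than the previous, so that for any prescribed $\delta > 0$ and every sufficiently F\o lner $F_i$ there exist families $\{s_{k,j}\}_{j} \subseteq S$, $k = 1, \dots, N$, for which the translates $E_k s_{k,j}$ are pairwise disjoint, contained in $F_i$, and cover $F_i$ up to a remainder $R_i$ with $\card{R_i} \leq \delta \card{F_i}$. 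Cancellativity guarantees $\card{E_k s_{k,j}} = \card{E_k}$, while the left F\o lner condition provides the approximate invariance of the ``admissible centre'' sets $\bigcap_{e \in E_k} L_e^{-1}(F_i)$ needed to feed a greedy packing argument.

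Granting the quasi-tiling, right subinvariance yields $f(E_k s_{k,j}) \leq f(E_k) < (\lambda + \eps) \card{E_k s_{k,j}}$, and the singleton bound combined with subadditivity gives $f(R_i) \leq M \card{R_i}$. Subadditivity of $f$ therefore produces
\[f(F_i) \;\leq\; \sum_{k,j} f(E_k s_{k,j}) + f(R_i) \;\leq\; (\lambda + \eps)\card{F_i} + M \delta \card{F_i}.\]
Dividing by $\card{F_i}$ and letting first $\delta \to 0$ and then $\eps \to 0$ forces $\limsup_{i \in I} f(F_i)/\card{F_i} \leq \lambda$, which, combined with the trivial lower bound, yields the claimed convergence with a common value $\lambda$ independent of the chosen F\o lner net.

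The main obstacle is the quasi-tiling lemma itself, the technical heart of Ornstein--Weiss-type arguments. Its adaptation from amenable groups to cancellative left amenable semigroups requires replacing the group-theoretic inverses $e^{-1} F$ with the injective preimages $L_e^{-1}(F)$ and extracting from the left F\o lner condition the approximate invariance $\card{L_e^{-1}(F_i) \SDiff F_i}/\card{F_i} \to 0$, so that iterated greedy selection of translation centres leaves an asymptotically negligible remainder.
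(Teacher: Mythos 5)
You define $\lambda := \inf_{F\in\Pf(S)} f(F)/\card F$ and then aim to prove that the F\o lner averages converge to this infimum. That identification is not part of the theorem and is not justified: for functions that are merely subadditive and right (sub)invariant the ``infimum rule'' $\lim_i f(F_i)/\card{F_i}=\inf_F f(F)/\card F$ can fail (it holds for \emph{strongly} subadditive functions via Shearer's inequality, e.g.\ Shannon entropy of partitions, but not in general; the function $f_{\mathcal U}(F)=\log N(\mathcal U_{\gamma,F})$ attached to a subshift and a fixed open cover satisfies all the hypotheses of the theorem and is a known source of counterexamples, cf.\ Downarowicz--Frej--Romagnoli on the infimum rule). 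Since $\liminf_i f(F_i)/\card{F_i}\geq\inf_F f(F)/\card F$ is automatic, your proposed upper bound $\limsup_i f(F_i)/\card{F_i}\leq\inf_F f(F)/\card F$ would force equality with the infimum, so the scheme cannot be correct as stated.

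The step that actually breaks is the quasi-tiling. The Ornstein--Weiss/CCK filling machinery (the paper's Theorem~\ref{thm:filling}) does not allow tiling by translates of \emph{arbitrary} near-optimal sets: the tiles $F_1,\dotsc,F_N$ must satisfy the relative-invariance hypothesis~\eqref{eq:filling-1}, i.e.\ each tile must be highly invariant with respect to the earlier ones, and such towers of tiles are only guaranteed to exist inside a F\o lner net. A set $E_0$ with $f(E_0)/\card{E_0}<\lambda+\eps$ for your $\lambda$ may be as non-invariant as a singleton, and for a general subadditive $f$ there is no mechanism for enlarging it to an invariant tile while keeping the ratio small --- this is precisely the assertion you pass over with ``each still satisfying $f(E_k)/\card{E_k}<\lambda+\eps$ and each substantially more left invariant than the previous''. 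The actual proof (CCK, Theorem~1.1; see how the paper uses Corollary~\ref{cor:tiling} in the Addition Theorem) instead fixes one F\o lner net, defines $\lambda$ as the $\liminf$ of the averages \emph{along that net}, extracts tiles from the net whose averages are within $\eps$ of $\lambda$ and which satisfy~\eqref{eq:filling-1}, and then $\eps$-fills the members of an arbitrary second net with translates of those tiles. Your surrounding bookkeeping ($f\geq 0$, $f(F)\leq M\card F$, the estimate $f(F_i)\leq\sum_{k,j}f(E_k s_{k,j})+f(R_i)$ and the passage to the limit) is correct and is indeed how the output of the filling theorem is exploited, but it is fed the wrong tiles and the wrong target value.
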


By applying Theorem~\ref{CCKLemma} to $S^{op}$, one has the following ``dual" version that we formulate here for reader's convenience.

\begin{corollary}\label{CCKLemmar}
Let $S$ be a cancellative right amenable semigroup and let $f:\Pf(S)\to \R$ be a subadditive left subinvariant function uniformly bounded on singletons. Then there exists $\lambda\in\R_{\geq 0}$ such that, for every right F\o lner net $(F_i)_{i\in I}$ of $S$, $$\lim_{i\in I}\frac{f(F_i)}{|F_i|}=\lambda.$$
\end{corollary}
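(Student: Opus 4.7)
The plan is to derive this corollary as a direct instance of Theorem~\ref{CCKLemma} applied to the opposite semigroup $S^{op}$, exploiting the symmetry described in Remark~\ref{Sop}. The key observation is that essentially every hypothesis in Corollary~\ref{CCKLemmar} translates verbatim into a hypothesis of Theorem~\ref{CCKLemma} once we replace $S$ by $S^{op}$, because left multiplication in $S$ is right multiplication in $S^{op}$, and vice versa.

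First, I would set up the transfer. Since $S$ is cancellative right amenable, by definition $S^{op}$ is cancellative left amenable. Moreover, $\Pf(S) = \Pf(S^{op})$ as sets, so the very same function $f : \Pf(S) \to \R$ can be regarded as a function $f : \Pf(S^{op}) \to \R$. Subadditivity is formulated purely in terms of union and thus is unaffected by the reversal of the product; likewise, being uniformly bounded on singletons depends only on values on one-element subsets and is preserved. The crucial point is the subinvariance: for $s \in S$ and $F \in \Pf(S)$, the set $sF$ computed in $S$ equals $F \cdot^{op} s$ computed in $S^{op}$, so the left subinvariance $f(sF) \leq f(F)$ in $S$ becomes exactly the right subinvariance $f(F \cdot^{op} s) \leq f(F)$ in $S^{op}$.

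Next, I would compare F\o lner nets. By the definition in \S\ref{background}, a right F\o lner net of $S$ is precisely a left F\o lner net of $S^{op}$; this is again immediate because the relevant quantity $\card{F_i s \SDiff F_i}/\card{F_i}$ in $S$ coincides with $\card{s \cdot^{op} F_i \SDiff F_i}/\card{F_i}$ in $S^{op}$.

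Finally, applying Theorem~\ref{CCKLemma} to the cancellative left amenable semigroup $S^{op}$ and to the subadditive right subinvariant function $f$ (uniformly bounded on singletons) on $\Pf(S^{op})$, one obtains a constant $\lambda \in \R_{\geq 0}$ such that $\lim_{i \in I} f(F_i)/\card{F_i} = \lambda$ for every left F\o lner net $(F_i)_{i \in I}$ of $S^{op}$. Translating back via the equivalence described above, this yields the same conclusion for every right F\o lner net of $S$, which is exactly the statement of the corollary. No real obstacle arises: the proof is essentially bookkeeping about the opposite-semigroup duality, and all the analytic work is contained in Theorem~\ref{CCKLemma}.
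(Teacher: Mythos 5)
Your proposal is correct and matches the paper's own (implicit) proof exactly: the paper simply states that the corollary follows "by applying Theorem~\ref{CCKLemma} to $S^{op}$," which is precisely the opposite-semigroup transfer you carry out in detail. Your careful verification that subadditivity, boundedness on singletons, subinvariance, and the F\o lner condition all translate correctly under the passage to $S^{op}$ is a sound elaboration of that one-line argument.
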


Let $S$ be a cancellative right amenable semigroup and let $$\mathcal S(S)$$ be the family of all functions $f:\Pf(S)\to \R_+$ 
that are increasing (i.e., such that $f(F)\leq f(F')$ whenever $F\subseteq F'$), subadditive, left subinvariant, and uniformly bounded on singletons. 

\medskip
By Corollary~\ref{CCKLemmar}, the limit in the following definition exists and it does not depend on the choice of the right F\o lner net. The mere existence of the limit defining $\mathcal H_S(f)$ does not require that the function in $\mathcal S(S)$ is increasing. Yet this very mild property is always present in all cases of interest, so it is harmless to impose it as a blanket condition in the definition of $\mathcal S(S)$, since it is needed in some proofs in the sequel (e.g., Lemma~\ref{pre-Fubini}, Theorem~\ref{Fubini}, etc.).

If $N$ is a right amenable subsemigroup of a cancellative right amenable semigroup $S$ and $f\in \mathcal S(S)$, then $f\restriction_{\Pf(N)}\in \mathcal S(N)$.

\begin{definition}\label{Integral:definition}
Let $S$ be a cancellative right amenable semigroup and $f\in\mathcal S(S)$.
Define $$\mathcal H_S(f)=\lim_{i\in I}\frac{f(F_i)}{\card{F_i}},$$
where $(F_i)_{i\in I}$ is a right F\o lner net of $S$.
 
If $N$ is a right amenable subsemigroup of $S$, let
$$\mathcal H_N(f)=\mathcal H_N(f\restriction_{\Pf(N)}).$$
\end{definition}

The following results follow directly from the definition.

\begin{lemma}\label{constantFubini} 
Let $S$ be a cancellative right amenable semigroup and $f\in\mathcal S(S)$.
\begin{enumerate}[(a)]
\item If $f$ is the constant function $f\equiv a\in\R_+$, then 
$$\mathcal H_S(f)=\begin{cases}0&\text{if $S$ is infinite},\\ \frac{a}{\card{S}}&\text{if $S$ is finite (and hence a group)}.\end{cases}$$
\item If $f$ is bounded and $S$ is infinite, then $\mathcal H_S(f)=0$.
\end{enumerate}
\end{lemma}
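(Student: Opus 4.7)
My plan is to reduce everything to the definition of $\mathcal H_S(f)$ as $\lim_{i\in I} f(F_i)/\card{F_i}$, using the fact that this limit is independent of the chosen right \Folner net (by Corollary~\ref{CCKLemmar}). First I should check that the functions under consideration actually lie in $\mathcal S(S)$: a nonnegative constant function $f\equiv a$ is trivially increasing, left subinvariant (with equality), uniformly bounded on singletons, and subadditive since $a\leq 2a$. Hence Corollary~\ref{CCKLemmar} applies and $\mathcal H_S(f)$ is well-defined.

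For part (a), when $S$ is infinite I would invoke the remark preceding Lemma~\ref{Feq}, namely that any right \Folner net of an infinite right amenable semigroup satisfies $\lim_i \card{F_i}=\infty$. Then $\frac{f(F_i)}{\card{F_i}}=\frac{a}{\card{F_i}}\to 0$, proving $\mathcal H_S(f)=0$. If instead $S$ is finite, then being cancellative it is a group. In this case, for any right \Folner net $(F_i)_{i\in I}$, the integrality of $\card{F_i s \SDiff F_i}$ forces $\card{F_i s \SDiff F_i}=0$ eventually, i.e., $F_i s = F_i$ for all $s\in S$ eventually. Since $F_i$ is then a non-empty right $S$-invariant subset of the group $S$, we get $F_i=S$ eventually, and therefore $\frac{f(F_i)}{\card{F_i}}=\frac{a}{\card{S}}$ eventually, yielding $\mathcal H_S(f)=a/\card{S}$.

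Part (b) is even easier: if $f$ is bounded by some $M\geq 0$ and $S$ is infinite, the same remark gives $\card{F_i}\to\infty$, so
\[0\leq \frac{f(F_i)}{\card{F_i}}\leq \frac{M}{\card{F_i}}\longrightarrow 0,\]
and thus $\mathcal H_S(f)=0$. The only step requiring any thought is justifying $\card{F_i}\to\infty$ in the infinite case, but this is already recorded earlier in the section; the rest is an immediate book-keeping exercise and I expect no real obstacle.
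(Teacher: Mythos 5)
Your proposal is correct and follows the only natural route: the paper itself gives no written proof (it states that the lemma ``follows directly from the definition''), and your computation --- using $\card{F_i}\to\infty$ in the infinite case and reducing to $F_i=S$ in the finite case --- is exactly the intended direct argument. The only simplification available in the finite case is to invoke net-independence of the limit and use the constant \Folner net $(S)$ outright (as the paper does in Example~\ref{Esempio:AG}(a)), rather than showing that an arbitrary \Folner net is eventually equal to $S$; both are valid.
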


For the remainder of this section $S$ is a cancellative right amenable monoid, so every $f\in\mathcal S(S)$ is automatically bounded on singletons as observed above. 

\begin{lemma}\label{rem:decreasing}
Let $S$ be a cancellative right amenable monoid and $f\in\mathcal S(S)$.
Then \[\mathcal H_S(f) \leq f(\set 1).\]
\end{lemma}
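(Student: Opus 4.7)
The plan is to establish the pointwise bound $f(F) \leq |F|\cdot f(\{1\})$ for every $F\in\Pf(S)$, from which the inequality for $\mathcal H_S(f)$ is immediate by dividing by $|F_i|$ and passing to the limit along any right \Folner net.

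First I would exploit left subinvariance with the specific choice $F = \{1\}$: for every $s\in S$, the set $s\{1\}=\{s\}$ satisfies
\[f(\{s\}) = f(s\{1\}) \leq f(\{1\}).\]
This is precisely the observation already made in the paper (right after Theorem~\ref{CCKLemma}, but in the left version), and it supplies a uniform upper bound on singletons given by $f(\{1\})$.

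Second, for an arbitrary $F\in\Pf(S)$, I would write $F = \bigcup_{s\in F}\{s\}$ and apply subadditivity (iterated over the finitely many singletons) together with the singleton bound above to obtain
\[f(F) \leq \sum_{s\in F} f(\{s\}) \leq |F|\cdot f(\{1\}).\]
Therefore $f(F)/|F| \leq f(\{1\})$ for every $F\in\Pf(S)$.

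Finally, applying this to a right \Folner net $(F_i)_{i\in I}$ of $S$ and passing to the limit (which exists and is independent of the chosen net by Corollary~\ref{CCKLemmar}) yields
\[\mathcal H_S(f) = \lim_{i\in I}\frac{f(F_i)}{|F_i|} \leq f(\{1\}),\]
as required. There is no real obstacle here; the argument is essentially a bookkeeping application of subadditivity plus the fact that for a monoid the hypothesis of left subinvariance automatically bounds $f$ on singletons by $f(\{1\})$.
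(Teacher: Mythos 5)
Your proof is correct and follows essentially the same route as the paper: subadditivity over the singletons of $F$ gives $f(F)\leq\sum_{s\in F}f(\{s\})$, left subinvariance applied to $\{1\}$ gives $f(\{s\})\leq f(\{1\})$, and dividing by $\card{F}$ and passing to the limit along a right \Folner net yields the bound. No issues.
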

\begin{proof}
For every $F \in \Pf(S)$, we have that
\[\frac{f(F)}{\card F} \leq \frac{\sum_{s \in F} f(\set g)}{\card F} \leq  \frac{\sum_{s \in F}  f(\set 1)}{\card F} = f(\set 1),\]
hence the thesis.
\end{proof}

For a cancellative right amenable monoid $S$, $\mathcal S(S)$ with the pointwise addition is a submonoid of the partially ordered vector space $\R^{\Pf(S)}$, so $\mathcal S(S)$ is a commutative cancellative monoid. 

Let $\hat{\mathcal S}(S)$ be the subgroup of $\R^{\Pf(S)}$ generated by $\mathcal S(S)$. 
Obviously, $\hat{\mathcal S}(S)$ is a vector space over $\R$ (actually, a subspace of $\R^{\Pf(S)}$) and $\mathcal H_S$ extends uniquely to a linear functional on $\hat{\mathcal S}(S)$, by imposing $\mathcal H_S(f_1-f_2)=\mathcal H_S(f_1)-\mathcal H_S(f_2)$ (this definition does not depend on the choice of $f_1$ and $f_2$ representing $f_1-f_2$).

Moreover, $\hat{\mathcal S}(S)$ is a partially ordered vector space with the partial order given by $$f \leq f'\quad \text{if}\ f(F) \leq f'(F)\ \text{for every}\ F \in \Pf(S),$$ and $\hat{\mathcal S}(S)$ is also endowed with the seminorm 
$$\norm f = \limsup_{F\in\Pf(S)} \frac{\abs{f(F)}}{\card{F}};$$ 
clearly, $\mathcal H_S(f) \leq \norm f$ for every $f\in\hat{\mathcal S}(S)$. For example, the function $\cardm:F\mapsto\card F$ is in $\hat{\mathcal S}(S)$, and $\Aver_S(\cardm) = \norm f = 1$.

\smallskip
Furthermore, $\mathcal H_S(f)$ behaves like an average for $f$  and its extension to $\hat{\mathcal S}(S)$ is a positive linear functional on the partially ordered vector space $\hat{\mathcal S}(S)$. 
Next, we show that this average is invariant under the right action $f \mapsto f^{F}$ of $\Pf(S)$ on $\mathcal S(S)$, where for every $f\in\mathcal S(S)$ and every $F\in\Pf(S)$,
$$f^F:\Pf(S)\to\R_+,\ X\mapsto f(XF);$$
clearly, $f^F \in\mathcal S(S)$. (This should be compared to the well-known fact that there is a right invariant mean on the  set of all  bounded real-valued functions of an amenable group $G$.)

\begin{lemma}\label{fF}
Let $S$ be a cancellative right amenable monoid, $f\in\mathcal S(S)$, and $F\in\Pf(S)$. Then 
\[\mathcal H_S(f)=\mathcal H_S(f^F).\]
\end{lemma}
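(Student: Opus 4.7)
The plan is to reduce $\mathcal{H}_S(f^F)$ to $\mathcal{H}_S(f)$ by showing that the Følner net $(F_i)_{i\in I}$ used to compute $\mathcal{H}_S(f^F)$ can effectively be replaced by the ``thickened'' net $(F_i F)_{i\in I}$, which by Lemma~\ref{F_iF} is itself a right F\o lner net with $|F_iF|/|F_i|\to 1$. Once that comparison is in place, the equality is immediate.

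More precisely, I would first record that $f^F\in\mathcal S(S)$, so that $\mathcal H_S(f^F)$ is well-defined: the operator $X\mapsto XF$ is monotone, distributes over finite unions, and commutes with the left $S$-action in the sense that $(sX)F=s(XF)$, so $f^F$ inherits monotonicity, subadditivity and left subinvariance from $f$, while uniform boundedness on singletons for monoids is automatic (as recalled before Theorem~\ref{CCKLemma}). Fix now any right F\o lner net $(F_i)_{i\in I}$ of $S$. By definition,
\[
\mathcal H_S(f^F)=\lim_{i\in I}\frac{f^F(F_i)}{\card{F_i}}=\lim_{i\in I}\frac{f(F_iF)}{\card{F_i}}.
\]

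By Lemma~\ref{F_iF}(b), the net $(F_iF)_{i\in I}$ is again a right F\o lner net of $S$, and moreover $\lim_{i\in I}\card{F_iF}/\card{F_i}=1$. Applying Corollary~\ref{CCKLemmar} to the function $f\in\mathcal S(S)$ along this new F\o lner net gives
\[
\lim_{i\in I}\frac{f(F_iF)}{\card{F_iF}}=\mathcal H_S(f).
\]
Combining the two displays, and using that both limiting factors exist,
\[
\mathcal H_S(f^F)=\lim_{i\in I}\frac{f(F_iF)}{\card{F_iF}}\cdot\frac{\card{F_iF}}{\card{F_i}}=\mathcal H_S(f)\cdot 1=\mathcal H_S(f),
\]
which is the claim.

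There is essentially no obstacle: the whole argument rests on the already established Lemma~\ref{F_iF}(b), which packages exactly the two ingredients (that $(F_iF)_i$ is still F\o lner, and that its size is asymptotically equivalent to that of $F_i$) needed to trade the ``shift by $F$'' in the function for a corresponding modification of the averaging net. The only point to double-check is that $f^F$ belongs to $\mathcal S(S)$, so that Definition~\ref{Integral:definition} applies; this is the short verification indicated above.
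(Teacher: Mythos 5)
Your proposal is correct and follows exactly the route of the paper's own (much terser) proof: invoke Lemma~\ref{F_iF}(b) to get that $(F_iF)_{i\in I}$ is again a right F\o lner net with $\card{F_iF}/\card{F_i}\to 1$, and then compare the two limits. Your added verification that $f^F\in\mathcal S(S)$ is the short check the paper dismisses with ``clearly'' just before the lemma, so there is nothing to object to.
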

\begin{proof}
Let $(F_i)_{i\in I}$ be a right F\o lner net of $S$. By Lemma~\ref{F_iF}(b) $(F_iF)_{i\in I}$ is a right F\o lner net of $S$, too. Then the assertion follows from Lemma~\ref{F_iF}(b).
\end{proof}

The next lemma shows that this average is invariant also under the obvious action of $\mathrm{Aut}(S)$ on $\mathcal S(S)$.

\begin{lemma}\label{varphi}
Let $S$ be a cancellative right amenable monoid, $\varphi:S\to S$ an automorphism, and $f\in\mathcal S(S)$. Then
\[\mathcal H_S(f)=\lim_{i\in I}\frac{f(\varphi(F_i))}{\card{F_i}} = \mathcal H_S(f \circ \varphi),\]
for any right F\o lner net $(F_i)_{i\in I}$ of $S$.
\end{lemma}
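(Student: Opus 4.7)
The plan is to exploit the fact that an automorphism transports F\o lner nets to F\o lner nets, combined with the independence of $\mathcal H_S(f)$ from the choice of right F\o lner net (Corollary~\ref{CCKLemmar}).

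\textbf{Step 1.} I would first verify that if $(F_i)_{i\in I}$ is a right F\o lner net of $S$, then so is $(\varphi(F_i))_{i\in I}$. Since $\varphi$ is an automorphism, for each $s \in S$ we have $\varphi(F_i) s = \varphi(F_i)\varphi(\varphi^{-1}(s)) = \varphi(F_i\, \varphi^{-1}(s))$, so that
\[\varphi(F_i) s \SDiff \varphi(F_i) = \varphi\Pa{F_i\, \varphi^{-1}(s) \SDiff F_i}.\]
Because $\varphi$ is a bijection on $S$, it preserves cardinalities, and therefore
\[\frac{\card{\varphi(F_i) s \SDiff \varphi(F_i)}}{\card{\varphi(F_i)}} = \frac{\card{F_i\, \varphi^{-1}(s) \SDiff F_i}}{\card{F_i}} \xrightarrow[i\in I]{} 0\]
by Lemma~\ref{Feq} applied to $\varphi^{-1}(s)\in S$.

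\textbf{Step 2.} Apply Corollary~\ref{CCKLemmar} to the right F\o lner net $(\varphi(F_i))_{i\in I}$ and use $\card{\varphi(F_i)}=\card{F_i}$ to get
\[\mathcal H_S(f) = \lim_{i\in I} \frac{f(\varphi(F_i))}{\card{\varphi(F_i)}} = \lim_{i\in I} \frac{f(\varphi(F_i))}{\card{F_i}},\]
which settles the first equality.

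\textbf{Step 3.} For the second equality, define $g:\Pf(S)\to\R_+$ by $g(F) = f(\varphi(F))$; this is ``$f\circ\varphi$'' interpreted on finite subsets. I would briefly check that $g \in \mathcal S(S)$: monotonicity and subadditivity are immediate because $\varphi$ is a bijection commuting with unions, left subinvariance follows from $\varphi(sF)=\varphi(s)\varphi(F)$ together with the left subinvariance of $f$, and boundedness on singletons is automatic in the monoid setting. Then, directly from Definition~\ref{Integral:definition} applied to $g$ along $(F_i)_{i\in I}$,
\[\mathcal H_S(f\circ\varphi) = \lim_{i\in I} \frac{g(F_i)}{\card{F_i}} = \lim_{i\in I} \frac{f(\varphi(F_i))}{\card{F_i}},\]
which matches the middle expression of the statement.

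There is no real obstacle here, as the argument reduces everything to Corollary~\ref{CCKLemmar} and to the bijectivity of $\varphi$; the only mildly delicate point is to remember that the F\o lner condition for $(\varphi(F_i))_{i\in I}$ uses $\varphi^{-1}(s)$ rather than $s$, which is harmless because the F\o lner net property holds for \emph{every} element of $S$.
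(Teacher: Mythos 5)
Your proposal is correct and follows essentially the same route as the paper: the paper's proof likewise observes that $(\varphi(F_i))_{i\in I}$ is a right F\o lner net with $\card{\varphi(F_i)}=\card{F_i}$ and invokes the net-independence of $\mathcal H_S$, treating the second equality as immediate from the definition. You merely fill in the details the paper leaves implicit (the verification that $\varphi$ transports F\o lner nets to F\o lner nets via $\varphi^{-1}(s)$, and that $f\circ\varphi\in\mathcal S(S)$), which is harmless and slightly more careful.
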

\begin{proof}
The second equality is by definition of $\mathcal H_S$.

Since $(\varphi(F_i))_{i\in I}$ is a right F\o lner net of $S$ and $\card{\varphi(F_i)}=\card{F_i}$ for every $i\in I$, we have that 
$$\mathcal H_S(f)=\lim_{i\in I}\frac{f(\varphi(F_i))}{\card{\varphi(F_i)}}=\lim_{i\in I}\frac{f(\varphi(F_i))}{\card{F_i}},$$
hence the first equality in the thesis holds.
\end{proof}

\begin{remark}\label{1in} \label{1in-bounded}
Let $S$ be a cancellative right amenable monoid, let $f\in\mathcal S(S)$ and $C\in\Pf(S)$, and denote $$\P_C(S)=\{X\in\Pf(S): C\subseteq X\}.$$
\begin{enumerate}[(a)]
  \item Assume that $f$ is bounded on $\P_C(S)$, namely $f\rest_{\P_C(S)} \leq r \in \R$. If $S$ is infinite, then $\mathcal H_S(f)= 0$. Indeed, there exists a right F\o lner net $(F_i)_{i\in I}$ of $S$ such that $C\subseteq F_i$ for every $i\in I$. Then $$\mathcal H_S(f)=\lim_{i\in   I}\frac{f(F_i)}{\card{F_i}} \leq \lim_{i\in I}\frac{r}{\card{F_i}}=0.$$ 
  \item If $f$ is constant on $\P_C(S)$, namely $f\rest_{\P_C(S)}=r\in\R$, then $$\mathcal H_S(f)=\begin{cases}0 & \text{if $S$ is infinite,}\\ \frac{r}{\card{S}} & \text{if $S$ is finite.}\end{cases}$$
Indeed, if $S$ is finite, then $\mathcal H_S(f)=\frac{f(S)}{\card{S}}=\frac{r}{\card{S}}$. If $S$ is infinite, then item (a) applies. 
\end{enumerate}
\end{remark}


Fix a surjective homomorphism $\pi: S \to C$ of cancellative monoids, let $N=\ker \pi$, and fix a good section $\sigma$ for $\pi$ with $\sigma(1) =1$. Define the map 
$$\Theta_\sigma: \mathcal S(S)\to \mathcal S(C)$$
by setting, for $f\in\mathcal S(S)$,
$$\Theta_\sigma(f)(X)=\mathcal H_N(f^{\sigma(X)})\quad \text{for all}\ X\in\Pf(C).$$ 

In the next lemma we show that indeed $\Theta_\sigma(f)\in \mathcal S(C)$ and $\Theta_\sigma$ does not depend on the choice of the good section $\sigma$. This is why most often we  write simply $\Theta$ in place of $\Theta_\sigma$, and $\theta$ in place of  $\Theta_\sigma(f)$ when the function $f$ is clear from the context. 

\begin{lemma}\label{pre-Fubini}
Let $S$ and $C$ be cancellative monoids,  $\pi: S \to C$ a surjective homomorphism with a good section $\sigma$, and assume that $C$ and $N= \pi^{-1}(1)$ are right amenable. 
Then $\Theta_\sigma$ does not depend on the choice of $\sigma$, and $\Theta_\sigma(f)\in\mathcal S(C)$ for every $f\in\mathcal S(S)$. 
\end{lemma}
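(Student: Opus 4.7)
The plan is to verify in turn that $\theta := \Theta_\sigma(f)$ satisfies each of the four defining properties of membership in $\mathcal S(C)$, and then to deduce independence from the choice of good section by a symmetric comparison. Well-definedness is not an issue: for every $X \in \Pf(C)$ the restriction $f^{\sigma(X)}\rest_{\Pf(N)}$ lies in $\mathcal S(N)$, since the four properties of $f$ all descend to this restriction, and $N$ is right amenable by hypothesis, so $\mathcal H_N(f^{\sigma(X)})$ is defined via Corollary~\ref{CCKLemmar}.

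For the easy properties, I would first note that if $X \subseteq X'$ in $\Pf(C)$ then $\sigma(X) \subseteq \sigma(X')$, whence $f^{\sigma(X)} \leq f^{\sigma(X')}$ pointwise on $\Pf(N)$ and monotonicity of $\mathcal H_N$ gives $\theta(X) \leq \theta(X')$. Subadditivity follows from $\sigma(X_1 \cup X_2) = \sigma(X_1) \cup \sigma(X_2)$ and the subadditivity of $f$, which yields the pointwise bound $f^{\sigma(X_1 \cup X_2)} \leq f^{\sigma(X_1)} + f^{\sigma(X_2)}$ and passes to $\theta$ by linearity of $\mathcal H_N$ on $\hat{\mathcal S}(N)$. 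Lemma~\ref{rem:decreasing} applied to $f^{\{\sigma(c)\}}\rest_{\Pf(N)}$ gives $\theta(\{c\}) \leq f(\{\sigma(c)\})$, which is bounded by $f(\{1\})$ uniformly in $c$ by left subinvariance of $f$.

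The main obstacle is left subinvariance of $\theta$: given $c \in C$ and $X \in \Pf(C)$, one must show $\theta(cX) \leq \theta(X)$. I would apply Lemma~\ref{lem:good-exchange} to obtain $u \in N$ and $Z \in \Pf(N)$ with $u\sigma(cX) \subseteq Z \sigma(c) \sigma(X)$. Fix a right F\o lner net $(N_i)_{i \in I}$ of $N$ and set $M_i := N_i Z$; by Lemma~\ref{F_iF}(b), both $(N_i u)$ and $(M_i)$ are right F\o lner nets of $N$ with $|N_i u|/|N_i| \to 1$ and $|M_i|/|N_i| \to 1$. Monotonicity of $f$ applied to $N_i u \sigma(cX) \subseteq M_i \sigma(c) \sigma(X)$ then yields
\[
\theta(cX) \;=\; \lim_i \frac{f(N_i u \sigma(cX))}{|N_i|} \;\leq\; \lim_i \frac{f(M_i \sigma(c) \sigma(X))}{|M_i|}.
\]
Since $\sigma(c)$ is a good element, Lemma~\ref{lem:good-5} furnishes an automorphism $h_{\sigma(c)}$ of $N$ with $n \sigma(c) = \sigma(c) h_{\sigma(c)}(n)$ for every $n \in N$, so $M_i \sigma(c) \sigma(X) = \sigma(c)\, h_{\sigma(c)}(M_i)\, \sigma(X)$. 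Left subinvariance of $f$ at the single element $\sigma(c)$ then gives $f(M_i \sigma(c) \sigma(X)) \leq f(h_{\sigma(c)}(M_i) \sigma(X))$, and Lemma~\ref{varphi}, applied to $f^{\sigma(X)}\rest_{\Pf(N)}$ along the F\o lner net $(M_i)$ via the automorphism $h_{\sigma(c)}$, closes the chain at $\theta(X)$.

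For independence of the section, if $\sigma'$ is another good section, then for each $c \in C$ there exists $n_c \in N$ with $\sigma'(c) = n_c \sigma(c)$; setting $N'_X := \{n_x : x \in X\} \in \Pf(N)$ gives $Y \sigma'(X) \subseteq Y N'_X \sigma(X)$ for every $Y \in \Pf(N)$. Applying Lemma~\ref{F_iF}(b) to $E = N'_X$ to replace $N_i$ by $N_i N'_X$ and mimicking the previous paragraph yields $\Theta_{\sigma'}(f)(X) \leq \Theta_\sigma(f)(X)$; swapping the roles of $\sigma$ and $\sigma'$ gives the reverse inequality.
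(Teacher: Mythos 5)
Your proof is correct and follows essentially the same route as the paper's: the same reduction of left subinvariance to Lemma~\ref{lem:good-exchange}, the automorphism $h_{\sigma(c)}$ from Lemma~\ref{lem:good-5}, and Lemmas~\ref{varphi} and~\ref{fF}, with independence of the section handled by the same symmetric comparison of $\sigma'(X)\subseteq N'_X\sigma(X)$. The only differences are cosmetic: you run the subinvariance chain upward from $\theta(cX)$ instead of downward from $\theta(X)$, and you spell out the uniform bound on singletons, which the paper leaves implicit.
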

\begin{proof} 
First we see that $\Theta_\sigma$ does not depend on the choice of $\sigma$.
Assume that $\sigma':C\to G$ is another good section;
we have to prove that, for every $\bar C\in\Pf(C)$,
$$\mathcal H_N(f^{\sigma'(\bar C)})=\mathcal H_N(f^{\sigma(\bar C)}).$$ 
For a fixed $\bar C\in\Pf(C)$, we verify that $\mathcal H_N(f^{\sigma'(\bar C)})\leq\mathcal H_N(f^{\sigma(\bar C)})$; the converse inequality can be proved similarly exchanging the roles of $\sigma$ and $\sigma'$. For every $c\in\bar C$ let $z_c\in N$ such that $\sigma'(c)=z_c\sigma(c)$, and let $Z=\{z_c: c\in \bar C\}$; then $\sigma'(\bar C)\subseteq Z\sigma(\bar C)$. Let $(N_i)_{i\in I}$ be a right F\o lner net of $N$. Then, applying Lemma~\ref{fF} in the last equality,
$$\mathcal H_N(f^{\sigma'(\bar C)})=\lim_{i\in I}\frac{f(N_i\sigma'(\bar C))}{\card{N_i}}\leq\lim_{i\in I}\frac{f(N_i Z\sigma(\bar C))}{\card{N_i}}=\mathcal H_N((f^{\sigma(\bar C)})^Z)=\mathcal H_N(f^{\sigma(\bar C)}).$$

\medskip 
Let $f\in\mathcal S(S)$ and put $\theta= \Theta_\sigma(f)$. We prove that $\theta\in\mathcal S(C)$. First, $\theta$ is increasing, since, if $X,X'\in \Pf(C)$ and $X\subseteq X'$, then 
$$\theta(X)=\lim_{i\in I}\frac{f^{\sigma(X)}(N_i)}{\card{N_i}}=\lim_{i\in I}\frac{f(N_i\sigma(X))}{\card{N_i}}\leq\lim_{i\in I}\frac{f(N_i\sigma(X'))}{\card{N_i}}=\lim_{i\in I}\frac{f^{\sigma(X')}(N_i)}{\card{N_i}}=\theta(X').$$
Analogously, one proves that $\theta$ is subadditive.
To verify that $\theta$ is left subinvariant, we need to prove that for $y \in C$ and $\bar C\in\Pf(C)$,
\begin{equation}\label{yC}
\theta(\bar C) \leq \theta(y\bar C).
\end{equation}
Let $(N_i)_{i\in I}$ be a right F\o lner net of $N$. By definition, $$\theta(y \bar C)=\mathcal H_N(f^{y\bar C})=\lim_{i\in I}\frac{f(N_i\sigma(y\bar C))}{\card{N_i}}.$$
By Lemma~\ref{lem:good-exchange}, there exist $u \in N$ and $Z \in \Pf(N)$ such that
\[u \sigma(y\bar C)\subseteq Z\sigma(y)\sigma(\bar C).\]
Let $h=h_{\sigma(y)}:N\to N$ be the automorphism of $N$ from Lemma~\ref{lem:good-5}. 
Let also $f'= f^{\sigma(\bar C)} \in \mathcal S(S)$. 
Therefore,
\begin{align*}
\theta(\bar C) &= \lim_{i \in I} \frac{f'(N_i)}{\card{N_i}} = & \text{by  Lemma~\ref{fF}}\\
& =\lim_{i \in I} \frac{f'(N_i h(Z))}{\card{N_i}} = & \text{by Lemmas~\ref{lem:good-5} and~\ref{varphi}}\\
&= \lim_{i \in I} \frac{f'(h(N_i) h(Z))}{\card{N_i}} = \lim_{i \in I} \frac{f'(h(N_iZ))}{\card{N_i}} \geq  & \text{since $f'$ is left subinvariant}\\
&\geq \lim_{i \in I} \frac{f'(\sigma(y) h(N_i Z))}{\card{N_i}} = & \text{by definition of } h\\
&= \lim_{i \in I} \frac{f'(N_i Z \sigma(y))}{\card{N_i}} = \lim_{i \in I} \frac{f(N_i Z \sigma(y)\sigma(\bar C))}{\card{N_i}} \ge &\text{since }  Z \sigma(y) \sigma(\bar C) \supseteq u \sigma(y \bar C) \\
&\geq  \lim_{i \in I} \frac{f(N_i u \sigma(y \bar C))}{\card{N_i}} =  \lim_{i \in I} \frac{f^{\sigma(y \bar C)}(N_i u)}{\card{N_i}} = &\text{ by Lemma~\ref{fF}}\\
&=\theta(y \bar C).
\end{align*} 
Therefore, $\theta$ is left subinvariant, hence $\theta\in\mathcal S(C)$.
\end{proof} 

Clearly, in the above notation, the obvious extension  $\hat \Theta:  \hat{\mathcal S}(S)\to \hat{\mathcal S}(C)$ of the map $ \Theta$ is a vector space homomorphism.

\subsection{Fubini's Theorem for monoids}\label{sec:Fubini-monoid}

Considering $\mathcal H_S$ introduced in the previous subsection as an integral, the next theorem reminds Fubini's Theorem.

\medskip
For $r,s\in\R$, denote 
\begin{align*}
&r \eqeps s\quad \text{if}\ \abs{r-s}\leq \eps\\
&r \leqeps s \quad \text{if}\ r \leq s + \eps.
\end{align*}

\begin{theorem}\label{Fubini} 
Let $S$ and $C$ be cancellative right amenable monoids, $\pi: S \to C$ a surjective homomorphism admitting a good section $\sigma$, and let $N= \pi^{-1}(1)$. Then, for every $f \in \mathcal S(S)$,
\begin{equation}\label{HGHC}
\mathcal H_S(f)=\mathcal H_C(\Theta_\sigma(f)).
\end{equation}
\end{theorem}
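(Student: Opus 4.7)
The plan is to produce a concrete right \Folner net of $S$ from canonically indexed right \Folner nets of $N$ and $C$ via Theorem~\ref{lem:double-limit}, and then compute $\mathcal H_S(f)$ along this net as an iterated limit that factors through $\Theta_\sigma(f)$. By Lemma~\ref{lem:amenable-subgroup}(b), $N$ is right amenable (since $S$ is right amenable and $\pi$ admits a good section), so Proposition~\ref{canonical} yields canonically indexed right \Folner nets $(N_i)_{i \in \mathcal I(N)}$ of $N$ and $(C_j)_{j \in \mathcal I(C)}$ of $C$. Theorem~\ref{lem:double-limit} then furnishes a map $\zeta: \mathcal I(C) \to \Pf^0(N)$ such that
\[F_{((X,m),(Y,n))} := N_{(\zeta(Y,n) \cup X, m)}\, \sigma(C_{(Y,n)})\]
is a right \Folner net of $S$ over the directed set $\mathcal I(N,C) = \{((X,m),(Y,n)) : m \geq n\}$, with cardinality $|N_{(\zeta(Y,n) \cup X, m)}| \cdot |C_{(Y,n)}|$; by Corollary~\ref{CCKLemmar},
\[\mathcal H_S(f) = \lim_{((X,m),(Y,n)) \in \mathcal I(N,C)} \frac{f\bigl(N_{(\zeta(Y,n) \cup X, m)} \, \sigma(C_{(Y,n)})\bigr)}{|N_{(\zeta(Y,n) \cup X, m)}|\cdot |C_{(Y,n)}|}.\]

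Next, I evaluate the inner slice-limit. Fix $(Y,n) \in \mathcal I(C)$ and write $\bar C = C_{(Y,n)}$. As observed in the proof of Lemma~\ref{pre-Fubini}, the restriction of $f^{\sigma(\bar C)}$ to $\Pf(N)$ lies in $\mathcal S(N)$. The map $(X,m) \mapsto (\zeta(Y,n) \cup X, m)$ from the slice $\{(X,m) \in \mathcal I(N) : m \geq n\}$ into $\mathcal I(N)$ is cofinal, so $(N_{(\zeta(Y,n) \cup X, m)})$ is a cofinal subnet of the right \Folner net $(N_i)$, hence itself a right \Folner net of $N$. Applying Corollary~\ref{CCKLemmar} yields
\[\lim_{(X,m),\, m \geq n} \frac{f\bigl(N_{(\zeta(Y,n) \cup X, m)} \, \sigma(\bar C)\bigr)}{|N_{(\zeta(Y,n) \cup X, m)}|} = \mathcal H_N\bigl(f^{\sigma(\bar C)}\bigr) = \Theta_\sigma(f)(C_{(Y,n)}),\]
and dividing by $|C_{(Y,n)}|$ gives the inner slice-limit $\Theta_\sigma(f)(C_{(Y,n)})/|C_{(Y,n)}|$.

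Finally, I apply the iterated-limit theorem for nets: if a net has a joint limit $L$ over a directed set and for each slice-parameter the inner limit exists, then the outer iterated limit also equals $L$. Concretely, for $\eps > 0$ pick $((X_0,m_0),(Y_0,n_0)) \in \mathcal I(N,C)$ such that the joint quotient is within $\eps$ of $\mathcal H_S(f)$ beyond this index; then for every $(Y,n) \geq (Y_0,n_0)$ the inner slice-limit lies within $\eps$ of $\mathcal H_S(f)$, so the outer limit equals $\mathcal H_S(f)$. Combined with the previous paragraph,
\[\mathcal H_S(f) = \lim_{(Y,n)} \frac{\Theta_\sigma(f)(C_{(Y,n)})}{|C_{(Y,n)}|} = \mathcal H_C(\Theta_\sigma(f)),\]
since $(C_{(Y,n)})$ is a right \Folner net of $C$. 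The main technical point is verifying the cofinality of the slice-indexing in $\mathcal I(N)$ and applying the iterated-limit theorem over the non-product directed set $\mathcal I(N,C)$; once that bookkeeping is in place, Equation~\eqref{HGHC} drops straight out of Theorem~\ref{lem:double-limit} and Corollary~\ref{CCKLemmar}.
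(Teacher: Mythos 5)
Your proof is correct and follows essentially the same route as the paper's: both build the \Folner net $N_{(\zeta(Y,n)\cup X,m)}\sigma(C_{(Y,n)})$ via Theorem~\ref{lem:double-limit}, identify $\mathcal H_S(f)$ with the joint limit of $\rho$ over $\mathcal I(N,C)$, recognize the inner slice-limits as $\Theta_\sigma(f)(C_{(Y,n)})/\card{C_{(Y,n)}}$ via Corollary~\ref{CCKLemmar}, and match joint against iterated limit by an $\eps$-argument. Your variant of taking the inner limit along the shifted indices $(\zeta(Y,n)\cup X,m)$ (as a cofinal subnet of $\mathcal I(N)$) rather than along all of $\mathcal I(N)$ is a minor, and arguably cleaner, reorganization of the same bookkeeping.
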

\begin{proof}
Fix $f\in\mathcal S(S)$ and put $\theta=\Theta_\sigma(f)$ for brevity. 
Let $(N_i)_{i \in \mathcal I(N)}$ and $(C_j)_{j\in\mathcal I(C)}$ be canonically indexed right F\o lner nets of $N$ and $C$ respectively (they exist in view of Proposition~\ref{canonical}). Given $i\in\mathcal I(N)$ and $j\in\mathcal I(C)$, define $$\rho(i,j)= \frac{f(N_{i} \sigma(C_j))}{\card{N_{i}} \card{C_j}}.$$
By definition, 
\begin{align*}
\mathcal H_C(\theta) &= \lim_{j\in \mathcal I(C)}\frac{\theta(C_j))}{\card{C_j}} \\
&= \lim_{j\in \mathcal I(C)}\frac{\mathcal H_C(f^{\sigma(C_j)})}{\card{C_j}} \\
&= \lim_{j\in \mathcal I(C)}  \frac{1}{\card{C_j}} \lim_{i\in \mathcal I(N)}  \frac{f(N_{i} \sigma(C_j))}{\card{N_i}} \\ 
&= \lim_{j\in \mathcal I(C)}\lim_{i\in\mathcal I(N)} \rho(i,j).
\end{align*}
Fix $\eps>0$. Then there exists $\bar i\in\mathcal I(N)$  such that, for every $i\geq\bar i$ there exists $\bar j(i)\in\mathcal I(C)$ such that, for every $j\geq\bar j(i)$,
\begin{equation}\label{eq:rho'}
\mathcal H_C(\theta)\eqeps\rho(i,j).
\end{equation}

Let $\zeta:\mathcal I(C)\to\Pf^0(N)$ and
\[\mathcal I(N,C) = \set{((X,m),(Y,n))\in \mathcal I(N)\times\mathcal I(C): m\geq n}
\subseteq \mathcal I(N)\times\mathcal I(C)\]
be as in the proof of Theorem~\ref{lem:double-limit}. Let also 
$$
\mu:\mathcal I(N,C)\to \mathcal I(N),\ ((X,m),(Y,n))\mapsto (\zeta(Y,n)\cup X,m),
$$
and note that, for every $(i,j)\in\mathcal I(N,C)$,
\begin{equation}\label{eq:mu}
\mu(i,j)\geq i.
\end{equation}
By Theorem~\ref{lem:double-limit}, $(N_{\mu(i,j)}\sigma(C_j))_{(i,j)\in\mathcal I(N,C)}$ is a right F\o lner net of $S$, so we have that
$$\mathcal H_S(f) = \lim_{(i,j)\in\mathcal I(N,C)}  \rho(\mu(i, j),j).$$
Thus, there exists $(i_0,j_0)\in\mathcal I(N,C)$ such that $i_0\geq\bar i$, $j_0\geq \bar j(i_0)$, and for every $(i,j)\in\mathcal I(N,C)$ with $(i,j)\geq (i_0,j_0)$,
\begin{equation}\label{eq:rho-G}
\mathcal H_S(f)\eqeps\rho(\mu(i,j),j).
\end{equation}
Since $\mu(i_0, j_0) \geq j_0$ by~\eqref{eq:mu}, in view of~\eqref{eq:rho'} we have that $$\mathcal H_S(f)\eqeps \rho(\mu(i_0, j_0),j_0)\eqeps \mathcal H_C(\theta).$$
Since $\eps>0$ is arbitrary, this gives Equation~\eqref{HGHC}.
\end{proof}


\begin{corollary}\label{cor:subgroup}
Let $S$ and $C$ be cancellative right amenable monoids, let $\pi: S \to C$ be a surjective homomorphism admitting a good section $\sigma$, and let $N= \pi^{-1}(1)$. If $f \in \mathcal S(S)$, then 
\[\mathcal H_S(f) \leq \mathcal H_N(f).\]
\end{corollary}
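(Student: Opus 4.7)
The plan is to combine Fubini's Theorem (Theorem~\ref{Fubini}) with the simple bound provided by Lemma~\ref{rem:decreasing}. By Fubini's Theorem we have the identity
\[\mathcal{H}_S(f) = \mathcal{H}_C(\Theta_\sigma(f)),\]
so the task reduces to bounding $\mathcal{H}_C(\Theta_\sigma(f))$ from above by $\mathcal{H}_N(f)$. Since $\Theta_\sigma(f) \in \mathcal{S}(C)$ (again by Lemma~\ref{pre-Fubini}), Lemma~\ref{rem:decreasing} applied on $C$ yields
\[\mathcal{H}_C(\Theta_\sigma(f)) \leq \Theta_\sigma(f)(\set{1_C}).\]

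It remains to identify $\Theta_\sigma(f)(\set{1_C})$ with $\mathcal{H}_N(f)$. First, by Lemma~\ref{pre-Fubini}, $\Theta_\sigma$ does not depend on the choice of the good section $\sigma$, so without loss of generality I may choose $\sigma$ with $\sigma(1_C) = 1_S$ (such a section exists since, for a good section, $\sigma(1_C) \in N$ is necessarily an invertible element of $N$, so one can modify $\sigma$ on the fibre of $1_C$ to send it to $1_S$). With this choice, $\sigma(\set{1_C}) = \set{1_S}$, and therefore
\[\Theta_\sigma(f)(\set{1_C}) = \mathcal{H}_N\bigl(f^{\set{1_S}}\bigr).\]
Since $f^{\set{1_S}}(X) = f(X \cdot \set{1_S}) = f(X)$ for every $X \in \Pf(S)$, the restriction $f^{\set{1_S}}\rest_{\Pf(N)}$ equals $f\rest_{\Pf(N)}$, so $\mathcal{H}_N(f^{\set{1_S}}) = \mathcal{H}_N(f)$ by the definition at the end of Definition~\ref{Integral:definition}.

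Putting the three steps together,
\[\mathcal{H}_S(f) = \mathcal{H}_C(\Theta_\sigma(f)) \leq \Theta_\sigma(f)(\set{1_C}) = \mathcal{H}_N(f),\]
which is the desired inequality. There is no real obstacle here: the work has already been done in Fubini's Theorem and in the machinery of $\Theta_\sigma$. The only point requiring mild care is the freedom in choosing $\sigma(1_C) = 1_S$, which is granted by the section-independence of $\Theta_\sigma$ established in Lemma~\ref{pre-Fubini}; if one preferred not to invoke this, an equally short route is to observe that for any good section, $\sigma(1_C) \in N$ and to use Lemma~\ref{fF} to replace $f^{\set{\sigma(1_C)}}$ by $f$ inside $\mathcal{H}_N$.
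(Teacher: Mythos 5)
Your proof is correct and follows essentially the same route as the paper's: Fubini's Theorem reduces $\mathcal H_S(f)$ to $\mathcal H_C(\Theta_\sigma(f))$, Lemma~\ref{rem:decreasing} bounds this by $\Theta_\sigma(f)(\{1\})$, and this value is identified with $\mathcal H_N(f)$. The paper simply fixes the convention $\sigma(1)=1$ when defining $\Theta_\sigma$, whereas you justify that normalization explicitly; this is a harmless (and welcome) extra care.
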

\begin{proof} 
By Theorem~\ref{Fubini}, we have
\(\mathcal H_S(f)=\mathcal H_C(\theta),\)
where $\theta=\Theta_\sigma(f) \in \mathcal S(C)$. Therefore, by Lemma~\ref{rem:decreasing}, we have that
\[\mathcal H_S(f) = \mathcal H_C(\theta) \leq \theta(\set 1) = \mathcal H_N(f^{\set 1})  = \mathcal H_N(f),\]
hence the required inequality.
\end{proof}

As mentioned in Definition~\ref{Integral:definition}, the limit defining $\mathcal H_S(f)$ does not depend on the choice of the right F\o lner net $(F_i)_{i\in I}$ of the cancellative right amenable monoid $S$, so we can assume when necessary that all members $F_i$ contain the neutral element $1$ of $S$. 

\begin{proposition}\label{prop:Fubini-restriction}
Let $S$ and $C$ be cancellative right amenable monoids, $\pi: S \to C$ a surjective homomorphism admitting a good section $\sigma$, and let $N= \pi^{-1}(1)$.
Given $f \in \mathcal S(C)$, define $$f_\pi : \Pf(S) \to \R,\ X\mapsto f(\pi(X)).$$
Then $f_\pi \in \mathcal S(S)$ and  \[\mathcal H_S(f_\pi) = \frac{\mathcal H_C(f)}{\card{N}}.\]
\end{proposition}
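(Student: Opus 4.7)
The plan is to verify that $f_\pi$ belongs to $\mathcal S(S)$, then reduce the computation of $\mathcal H_S(f_\pi)$ to $\mathcal H_C$ via Fubini's Theorem (Theorem~\ref{Fubini}). The point is that $\Theta_\sigma(f_\pi)$ turns out to be either $f/\card{N}$ (when $N$ is finite) or the zero function (when $N$ is infinite), so the conclusion is essentially immediate from the linearity of $\mathcal H_C$ on $\hat{\mathcal S}(C)$.

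First I would check that $f_\pi \in \mathcal S(S)$. Since $\pi$ is a map of sets, $X \subseteq X'$ implies $\pi(X) \subseteq \pi(X')$, so $f_\pi$ inherits monotonicity from $f$. Subadditivity and left subinvariance follow from $\pi(X_1 \cup X_2) = \pi(X_1) \cup \pi(X_2)$ and $\pi(sX) = \pi(s)\pi(X)$ combined with the analogous properties of $f$; and $f_\pi(\{s\}) = f(\{\pi(s)\})$ is uniformly bounded by $f(\{1\})$ because $f \in \mathcal S(C)$ and $C$ is a monoid.

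Next I would compute $\Theta_\sigma(f_\pi)$. Fix $X \in \Pf(C)$. For any $Y \in \Pf(N)$, since $\pi(Y) = \{1\}$ and $\pi(\sigma(X)) = X$, one gets
\[
(f_\pi)^{\sigma(X)}(Y) \;=\; f_\pi(Y\sigma(X)) \;=\; f\bigl(\pi(Y)\pi(\sigma(X))\bigr) \;=\; f(X).
\]
Hence the restriction of $(f_\pi)^{\sigma(X)}$ to $\Pf(N)$ is the constant $f(X)$. By Lemma~\ref{constantFubini}(a) we obtain
\[
\Theta_\sigma(f_\pi)(X) \;=\; \mathcal H_N\bigl((f_\pi)^{\sigma(X)}\bigr) \;=\; \frac{f(X)}{\card{N}},
\]
with the convention $f(X)/\card{N} = 0$ when $N$ is infinite.

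Finally, Theorem~\ref{Fubini} applies directly and yields
\[
\mathcal H_S(f_\pi) \;=\; \mathcal H_C\bigl(\Theta_\sigma(f_\pi)\bigr) \;=\; \mathcal H_C\!\left(\frac{f}{\card{N}}\right) \;=\; \frac{\mathcal H_C(f)}{\card{N}},
\]
using linearity of the extension of $\mathcal H_C$ to $\hat{\mathcal S}(C)$ in the finite case, and observing that both sides vanish in the infinite case. There is no real obstacle here: the only delicate point is the uniform constancy of $(f_\pi)^{\sigma(X)}$ on $\Pf(N)$, which lets Lemma~\ref{constantFubini}(a) replace what would otherwise be a nontrivial limit; everything else is a clean application of Fubini.
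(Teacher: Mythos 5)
Your proof is correct and follows essentially the same route as the paper: both compute $(f_\pi)^{\sigma(X)}$ to be constant equal to $f(X)$ on $\Pf(N)$, deduce $\Theta_\sigma(f_\pi)=f/\card{N}$, and conclude by Theorem~\ref{Fubini}. The only difference is that you spell out the (routine) verification that $f_\pi\in\mathcal S(S)$ and cite Lemma~\ref{constantFubini}(a) explicitly, which the paper leaves implicit.
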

\begin{proof}
Let us first note that $N$ is amenable by Lemma~\ref{lem:amenable-subgroup}, therefore $\mathcal H_N(f_\pi)$ is well-defined.
It is easy to check that $f_\pi\in \mathcal S(S)$.

Let $\theta = \Theta_\sigma(f_\pi)$. 
Given $X\in\Pf(C)$, we have that
\[\theta(X) = \mathcal H_N(f_\pi^{\sigma(X)}).\]
By definition, for every $Y \subseteq N$,
\[f_\pi^{\sigma(X)}(Y) = f\Pa{\pi(\sigma(X)Y)} = f\Pa{\pi(\sigma(X))\pi(Y)} = f(\pi(\sigma(X))) = f(X)\]
does not depend on $Y$. Thus,
\[\theta(X) = \mathcal H_N(f_\pi^{\sigma(X)}) = \frac{f(X)}{\card N},\]
and, by Theorem~\ref{Fubini},
\[\mathcal H_S(f_\pi) = \mathcal H_C(\theta) =  \frac{\mathcal H_C(f)}{\card N},\]
as required.
\end{proof}

\begin{corollary}\label{prop:pre:Fubini}
Let $S$ and $C$ be cancellative right amenable monoids, let $\pi: S \to C$ be a surjective homomorphism admitting a good section $\sigma$, and consider the function 
$$\cardm_\pi: \Pf(S)\to \R_+,\ F\mapsto \card{\pi(F)}.$$ Then $\cardm_\pi\in\mathcal S(S)$ and $$\mathcal H_S(\cardm_\pi) = \frac{1}{| \pi^{-1}(1)|}.$$
\end{corollary}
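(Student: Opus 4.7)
The plan is to recognize this as a direct application of Proposition~\ref{prop:Fubini-restriction}. Consider the cardinality function on $C$, namely $\mathrm{card}_C : \Pf(C) \to \R_+$, $X \mapsto |X|$. Then by definition $\cardm_\pi = (\mathrm{card}_C)_\pi$ in the notation of Proposition~\ref{prop:Fubini-restriction}, since $\cardm_\pi(F) = |\pi(F)| = \mathrm{card}_C(\pi(F))$.

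First I would check that $\mathrm{card}_C \in \mathcal S(C)$. This amounts to verifying the four defining conditions: monotonicity and subadditivity of cardinality are immediate; $|\{c\}| = 1$ gives uniform boundedness on singletons; and left subinvariance $|cX| \leq |X|$ holds trivially (indeed with equality, since $C$ is cancellative so $L_c$ is injective). Thus $\mathrm{card}_C \in \mathcal S(C)$, and consequently $\cardm_\pi \in \mathcal S(S)$ by the (easy) first claim of Proposition~\ref{prop:Fubini-restriction}.

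Next I would compute $\mathcal H_C(\mathrm{card}_C)$. For any right F\o lner net $(C_j)_{j \in J}$ of $C$,
\[
\mathcal H_C(\mathrm{card}_C) = \lim_{j \in J} \frac{\mathrm{card}_C(C_j)}{|C_j|} = \lim_{j \in J} \frac{|C_j|}{|C_j|} = 1.
\]
(Note that $N = \pi^{-1}(1)$ is right amenable by Lemma~\ref{lem:amenable-subgroup}(b), which is used implicitly in Proposition~\ref{prop:Fubini-restriction}.) Applying Proposition~\ref{prop:Fubini-restriction} now yields
\[
\mathcal H_S(\cardm_\pi) = \mathcal H_S\bigl((\mathrm{card}_C)_\pi\bigr) = \frac{\mathcal H_C(\mathrm{card}_C)}{|N|} = \frac{1}{|\pi^{-1}(1)|},
\]
as required. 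There is no real obstacle here: all the work has been done in Proposition~\ref{prop:Fubini-restriction} and the only content of the corollary is the identification $\cardm_\pi = (\mathrm{card}_C)_\pi$ together with the trivial evaluation $\mathcal H_C(\mathrm{card}_C) = 1$.
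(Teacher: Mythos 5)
Your proof is correct and follows exactly the paper's route: the paper's entire proof is ``Apply Proposition~\ref{prop:Fubini-restriction} to the function $\cardm \in \mathcal S(C)$,'' relying on the earlier observation that $\mathcal H_C(\cardm)=1$. You have merely spelled out the routine verifications that the paper leaves implicit.
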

\begin{proof}
Apply Proposition~\ref{prop:Fubini-restriction} to the function $\cardm \in \mathcal S(C)$. 
\end{proof}

\section{Algebraic entropy for amenable semigroup actions}\label{halgsec}

We define two notions of algebraic entropy for left actions of cancellative right amenable semigroups on discrete abelian groups. They extend respectively the algebraic entropy $\ent$ introduced by Weiss \cite{W} for endomorphisms of torsion abelian groups and the algebraic entropy $h_{alg}$ introduced in \cite{DGB0} following the work of Peters \cite{P1} for endomorphisms of abelian groups.

For amenable group actions on discrete abelian groups our definition of algebraic entropy coincides with that given in \cite{V2} for locally compact abelian groups.

\subsection{Definitions}

Let $S$ be a cancellative right amenable semigroup, $A$ an abelian group, and consider the left action $S\overset{\alpha}{\curvearrowright} A$. For a non-empty subset $X$ of $A$ and for every $F\in\Pf(S)$, let 
$$T_F(\alpha,X)=\sum_{s\in F}\alpha(s)(X)=\sum_{s\in F}s\cdot X$$
be the \emph{$\alpha$-trajectory of $X$ with respect to $F$}. 
Note that $T_F(\alpha,X)$ is finite, whenever $X$ is finite. 

When there is no danger of confusion we simply write $T_F(X)$ in place of $T_F(\alpha,X)$.

\medskip
For $X\in \Pf(A)$, consider the function $$f_X:\Pf(S)\to \R,\quad F\mapsto \ell(T_F(\alpha,X)).$$
Note that $f_X(F)=0$ for all $F\in \Pf(S)$ whenever $X\subseteq A$ is a singleton (as $T_F(\alpha,X)$ is a singleton as well).

In the next lemma we see, in particular, that  $f_X$ is subadditive for every $X\in\Pf(A)$.

\begin{lemma}\label{OWconditions} 
Let $S$ be a cancellative right amenable semigroup, $A$ an abelian group, $S\overset{\alpha}{\curvearrowright} A$ a left action, and $X\in\Pf(A)$.
Then $f_X\in\mathcal S(S)$. 
\end{lemma}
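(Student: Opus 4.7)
The plan is to verify the four defining properties of $\mathcal S(S)$ directly for $f_X$. The crucial algebraic input is that each $\alpha(s)$ is a group endomorphism of $A$, which yields two facts used repeatedly: compatibility with Minkowski sums, $\alpha(s)(Y + Z) = \alpha(s)(Y) + \alpha(s)(Z)$ for $Y, Z \subseteq A$, and the inequality $|\alpha(s)(Y)| \leq |Y|$ for finite $Y$. Non-negativity of $f_X$ is immediate since $T_F(\alpha, X)$ is non-empty for every $F \in \Pf(S)$, so $|T_F(\alpha,X)| \geq 1$.

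The natural order is to deal first with the increasing property and then derive subadditivity from it. Both rely on the identity
\[
T_{F_1 \cup F_2}(\alpha, X) = T_{F_1}(\alpha, X) + T_{F_2 \setminus F_1}(\alpha, X),
\]
which is a plain set equality because the right-hand side is a sum indexed by the disjoint union $F_1 \sqcup (F_2 \setminus F_1) = F_1 \cup F_2$. Applied to $F_1 \subseteq F_2$, this exhibits $T_{F_2}(\alpha,X)$ as a union of translates of $T_{F_1}(\alpha,X)$ by elements of the non-empty set $T_{F_2 \setminus F_1}(\alpha,X)$; since translation preserves cardinality, $|T_{F_1}(\alpha,X)| \leq |T_{F_2}(\alpha,X)|$, giving monotonicity. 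For subadditivity, the same identity together with the standard Minkowski bound and monotonicity yields
\[
|T_{F_1 \cup F_2}(\alpha,X)| \leq |T_{F_1}(\alpha,X)| \cdot |T_{F_2 \setminus F_1}(\alpha,X)| \leq |T_{F_1}(\alpha,X)| \cdot |T_{F_2}(\alpha,X)|,
\]
and taking logarithms gives $f_X(F_1 \cup F_2) \leq f_X(F_1) + f_X(F_2)$.

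For left subinvariance, fix $s \in S$ and $F \in \Pf(S)$. Using the action axiom $\alpha(st) = \alpha(s)\alpha(t)$ and additivity,
\[
T_{sF}(\alpha, X) = \sum_{t \in F} \alpha(s)\alpha(t)(X) = \alpha(s)\bigl(T_F(\alpha, X)\bigr),
\]
so $|T_{sF}(\alpha,X)| \leq |T_F(\alpha,X)|$, i.e.\ $f_X(sF) \leq f_X(F)$. Finally, for every singleton $\{s\} \in \Pf(S)$, $f_X(\{s\}) = \ell(\alpha(s)(X)) \leq \ell(X)$, so the constant $M = \ell(X)$ is a uniform bound on singletons. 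None of the steps is delicate; the only point worth flagging is that subadditivity must be preceded by monotonicity, since the identity above involves $F_2 \setminus F_1$ rather than $F_2$ itself.
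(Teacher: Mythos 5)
Your proof is correct and follows essentially the same route as the paper's: the decomposition $T_{F_1\cup F_2}(\alpha,X)=T_{F_1}(\alpha,X)+T_{F_2\setminus F_1}(\alpha,X)$, monotonicity established first via translates, subadditivity from $\ell(Y+Z)\leq\ell(Y)+\ell(Z)$, left subinvariance from $T_{sF}(\alpha,X)=\alpha(s)(T_F(\alpha,X))$, and the bound $\ell(\alpha(s)(X))\leq\ell(X)$ on singletons. No gaps.
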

\begin{proof}
First we verify that $f_X$ is increasing. Let $F,F'\in\Pf(S)$ with $F\subseteq F'$ and
$F\neq F'$.
Since $L := T_{F'\setminus F}(\alpha,X) \ne \emptyset$, we have that
$$|T_F(\alpha,X)| = |T_F(\alpha,X) + l| \leq |T_F(\alpha,X)  + L|=\card{T_{F'}(\alpha,X)} $$ for every $l \in L$, as $T_F(\alpha,X) + l \subseteq T_F(\alpha,X)  + L$.  

\smallskip
Let $F_1,F_2\in\Pf(S)$.  In case $F_1\cap F_2 = \emptyset$,
$$T_{F_1\cup F_2}(X)=\sum_{s\in F_1\cup F_2}\alpha(s)(X)= \sum_{s\in F_1} \alpha(s)(X)+\sum_{s\in F_2}\alpha(s)(X)=T_{F_1}(X)+T_{F_2}(X).$$ 
In the general case, let  $F_2^*= F_2 \setminus F_1$, so that $F_1$ and $F_2^*$ are disjoint, yet we have the same union $F_1 \cup F_2$. By the first case, 
$$ T_{F_1 \cup F_2}(X) = T_{F_1 \cup F_2^*}(X) = T_{F_1}(X) + T_{F_2^*}(X). $$ 
Therefore, 
$$f_X({F_1 \cup F_2})=\ell(T_{F_1\cup F_2^*}(X))=\ell(T_{F_1}(X)+T_{F_2^*}(X))\leq \ell(T_{F_1}(X))+\ell(T_{F_2^*}(X)) =f_X(F_1) + f_X(F_2^*).$$
As the function $f_X$ is increasing, we have that $f_X(F_2^*) \leq   f_X(F_2)$.  This proves the desired inequality 
$$f_X({F_1 \cup F_2}) \leq  f_X(F_1) + f_X(F_2),$$  i.e.,  $f_X$ is subadditive.

\smallskip
Let now $F\in\Pf(S)$ and $s\in S$. Then 
$$T_{sF}(X) = \sum_{f\in F}\alpha(sf)(X)=\alpha(s)\left(\sum_{f\in F}\alpha(f)(X)\right)=\alpha(s)(T_{F}(X)),$$
and so $$f_X(sF)=\ell(T_{sF}(X))= \ell(\alpha(s)(T_{F}(X))) \leq \ell(T_{F}(X))=f_X(F).$$ Therefore, $f_X$ is left subinvariant.

\smallskip
Finally, for every $s\in S$, $$f_X(\{s\})=\log|\alpha(s)(X)|\leq\log|X|,$$ so $f_X$ is uniformly bounded on singletons.
\end{proof}

In view of Lemma~\ref{OWconditions}, by applying Corollary~\ref{CCKLemmar}, we can give the following definition.

\begin{definition}\label{def:entropy}
Let $S$ be a cancellative right amenable semigroup, $A$ an abelian group, and $S\overset{\alpha}{\curvearrowright}A$ a left action. For $X\in\Pf(A)$, the \emph{algebraic entropy of $\alpha$ with respect to $X$} is 
$$H_{alg}(\alpha,X)=\mathcal H_S(f_X).$$
(In other words, $H_{alg}(\alpha,X)=\lim_{i\in I}\frac{\ell(T_{F_i}(\alpha,X))}{|F_i|},$ where $(F_i)_{i\in I}$ is a right F\o lner net of $S$.)

\smallskip
The \emph{algebraic entropy of $\alpha$} is 
$$h_{alg}(\alpha)=\sup\{H_{alg}(\alpha,X): X\in \Pf(A)\}.$$
Let also $$\ent(\alpha)=\sup\{H_{alg}(\alpha,X): X\in\mathcal F(A)\}.$$
\end{definition}

The definition of $H_{alg}(\alpha,X)$ does not depend on the choice of the right F\o lner net $(F_i)_{i\in I}$ in view of Corollary~\ref{CCKLemmar}.
Moreover, for every $X\in\Pf(A)$, $H_{alg}(\alpha,X) \leq \ell(X)$ is bounded, according to Lemma~\ref{rem:decreasing}. 


\begin{remark}\label{SSop} One can introduce the algebraic entropy also for right actions of cancellative left amenable semigroups on abelian groups. 
(see also Remark~\ref{GGop} for the case of right actions of amenable groups). Namely, consider a right action $$A\overset{\beta}{\curvearrowleft}S$$ of a cancellative left amenable semigroup $S$ on an abelian group $A$.
To define the algebraic entropy of $\beta$, 
consider the left action $\beta^{op}$ of the cancellative right amenable semigroup $S^{op}$ on $A$ (see Remark~\ref{Sop}) and let 
$$h_{alg}^r(\beta)= h_{alg}(\beta^{op}).$$
\end{remark}

Now we see  that the function $H_{alg}(\alpha,-)$ is monotone increasing. As a consequence of this fact, it is possible to restrict the family of finite subsets of $A$ on which we compute the algebraic entropy. In particular, we can always assume that $X\in\Pf^0(A)$.

\begin{lemma}\label{cofinal}
Let $S$ be a cancellative right amenable semigroup, $A$ an abelian group, and $S\overset{\alpha}{\curvearrowright}A$ a left action.
\begin{enumerate}[(a)]
\item If $X,Y\in \Pf(A)$ and $X\subseteq Y$, then $H_{alg}(\alpha,X)\leq H_{alg}(\alpha,Y)$.
\item If $\mathcal F\subseteq\Pf(A)$ is cofinal with respect to $\subseteq$, then $h_{alg}(\alpha)=\sup\{H_{alg}(\alpha,X): X\in\mathcal F\}.$
\end{enumerate}
\end{lemma}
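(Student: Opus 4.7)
The plan is to derive both parts from the elementary observation that Minkowski sums of finite subsets of the abelian group $A$ are monotone in each summand: if $U_s \subseteq V_s$ for every $s$ in a finite index set $F$, then $\sum_{s \in F} U_s \subseteq \sum_{s \in F} V_s$.

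For (a), I would first note that $X \subseteq Y$ implies $\alpha(s)(X) \subseteq \alpha(s)(Y)$ for every $s \in S$. Applying the Minkowski-sum monotonicity one coordinate at a time gives $T_F(\alpha,X) \subseteq T_F(\alpha,Y)$ for every $F \in \Pf(S)$, hence $f_X(F) \leq f_Y(F)$. Dividing by $\card F$ and passing to the limit along any right F\o lner net of $S$ (which exists because $S$ is cancellative right amenable, and along which the limit is independent of the net by Corollary~\ref{CCKLemmar}) yields $H_{alg}(\alpha,X) \leq H_{alg}(\alpha,Y)$.

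For (b), one inequality, namely $\sup\{H_{alg}(\alpha,X) : X \in \mathcal F\} \leq h_{alg}(\alpha)$, is immediate from $\mathcal F \subseteq \Pf(A)$ and the definition of $h_{alg}(\alpha)$. For the reverse inequality, I would pick an arbitrary $Y \in \Pf(A)$; by cofinality of $\mathcal F$ there is some $X \in \mathcal F$ with $Y \subseteq X$, and part (a) yields $H_{alg}(\alpha,Y) \leq H_{alg}(\alpha,X) \leq \sup\{H_{alg}(\alpha,X') : X' \in \mathcal F\}$. Taking the supremum over $Y \in \Pf(A)$ produces the desired inequality.

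There is no real obstacle here; the lemma is essentially bookkeeping. The only mildly delicate point is noticing that the inclusion $T_F(\alpha,X) \subseteq T_F(\alpha,Y)$ is preserved under taking cardinalities, logarithms, averages over F\o lner sets, and finally suprema—all of which are monotone operations—so the whole chain goes through without any quantitative estimate.
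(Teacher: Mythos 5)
Your proof is correct and takes essentially the same route as the paper, which simply states that (a) is ``clear from the definition'' and that (b) follows from (a); the content you supply --- the inclusion $T_F(\alpha,X)\subseteq T_F(\alpha,Y)$ from monotonicity of Minkowski sums in each summand, followed by monotonicity of cardinality, logarithm, the F\o lner limit, and suprema --- is exactly what the paper leaves implicit. Nothing further is needed.
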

\begin{proof} 
(a) is clear from the definition and (b) follows from (a).
\end{proof}

Since the torsion part $t(A)$ of an abelian group $A$ is a fully invariant subgroup of $A$, and so in particular $t(A)$ is $\alpha$-invariant,  it clearly follows from the definition that 
\begin{equation}\label{entt}
\ent(\alpha)=\ent(\alpha_{t(A)}).
\end{equation}

In view of Lemma~\ref{cofinal} we clarify the relation between $h_{alg}$ and $\ent$.

\begin{proposition}\label{h=ent}
Let $S$ be a cancellative right amenable semigroup, $A$ an abelian group, and $S\overset{\alpha}{\curvearrowright}A$ a left action. Then 
$$\ent(\alpha)=\ent(\alpha_{t(A)})=h_{alg}(\alpha_{t(A)}).$$
\end{proposition}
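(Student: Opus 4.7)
The plan is to establish the two equalities separately. The first equality $\ent(\alpha)=\ent(\alpha_{t(A)})$ is simply Equation~\eqref{entt}, which follows from the fact that every finite subgroup of $A$ is contained in the torsion part $t(A)$. More precisely, $\mathcal F(A)=\mathcal F(t(A))$; since $t(A)$ is $\alpha$-invariant, for every $X\in\mathcal F(A)$ the trajectories $T_F(\alpha,X)$ and $T_F(\alpha_{t(A)},X)$ coincide for every $F\in\Pf(S)$, hence so do the associated values of $H_{alg}$, and taking suprema gives the first equality.

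For the second equality $\ent(\alpha_{t(A)})=h_{alg}(\alpha_{t(A)})$, I would reduce to an application of Lemma~\ref{cofinal}(b) with the cofinal subfamily $\mathcal F(t(A))\subseteq\Pf(t(A))$. The key observation is that $t(A)$ is a torsion abelian group, so any $X\in\Pf(t(A))$ is contained in its generated subgroup $\langle X\rangle$, which is finitely generated and torsion, hence finite. Thus $\langle X\rangle\in\mathcal F(t(A))$ and $X\subseteq\langle X\rangle$, which proves that $\mathcal F(t(A))$ is cofinal in $\Pf(t(A))$ with respect to inclusion.

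Once cofinality is established, Lemma~\ref{cofinal}(b) directly yields
\[h_{alg}(\alpha_{t(A)})=\sup\{H_{alg}(\alpha_{t(A)},X):X\in\mathcal F(t(A))\}=\ent(\alpha_{t(A)}),\]
which completes the proof. There is no real obstacle here: the argument is entirely structural, relying on the elementary algebraic fact that a finitely generated torsion abelian group is finite, combined with the monotonicity of $H_{alg}(\alpha,-)$ already established in Lemma~\ref{cofinal}. The statement is essentially a packaging of Equation~\eqref{entt} together with this cofinality observation.
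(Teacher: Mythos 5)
Your proof is correct and follows essentially the same route as the paper: the first equality is Equation~\eqref{entt}, and the second reduces to the cofinality of $\mathcal F(t(A))$ in the family of finite subsets of the torsion group $t(A)$ (since a finitely generated torsion abelian group is finite), after which Lemma~\ref{cofinal}(b) applies. The extra detail you supply about $\langle X\rangle$ being finite is exactly the reason the paper invokes for cofinality, so there is nothing to add.
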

\begin{proof}
In view of Equation~\eqref{entt}, it suffices to prove that, if $A$ is torsion, then $\ent(\alpha)=h_{alg}(\alpha)$. This is true since in this case $\mathcal F(A)$ is cofinal in $\Pf^0(A)$, and so Lemma~\ref{cofinal} applies.
\end{proof}

In the next remark we see that for $S=\N$ we find the classical case of the algebraic entropy of a single endomorphism. This is why we keep the same notation.

\begin{remark}\label{Naction}
Assume that $A$ is an abelian group and fix an endomorphism $\phi:A\to A$. Then $\phi$ induces the action $\alpha_\phi$ of $\N$ on $A$ defined by $\alpha_\phi(n)=\phi^n$ for every $n\in\N$.
In \cite{DGB0}, the algebraic entropy of $\phi$ is defined exactly as $$h_{alg}(\phi)=h_{alg}(\alpha_\phi),$$ using the special right F\o lner sequence $(F_n)_{n\in\N_+}$ of $\N$ with $F_n=[0,n-1]$ for every $n\in\N_+$, and consequently the $n$-th $\phi$-trajectories $$T_n(\phi,X)=T_{F_n}(\alpha_\phi,X)$$ for every $X\in\Pf(A)$.
\end{remark}



On the other hand, there is a relevant difference with respect to the case of a single endomorphism when the acting semigroup $S$ is cyclic and finite. In fact, if $A$ is an abelian group and $\phi:A\to A$ is an endomorphism such that $\phi^n=\phi^m$ for some distinct $n,m\in\N$, then $h_{alg}(\phi)=0$ (see \cite{DGB,DGB0}).
This is no more true in general for the algebraic entropy defined in this paper, as we see in item (a) of the next example.

\begin{example}\label{Esempio:AG} 
Let $S$ be a cancellative right amenable monoid, $A$ an abelian group, and $S\overset{\alpha}{\curvearrowright}A$ a left action.
\begin{enumerate}[(a)]
\item If $S$ is finite,
then $$h_{alg}(\alpha)=\frac{\ell(A)}{\card{S}}\quad\text{and}\quad\ent(\alpha)=\frac{\ell(t(A))}{\card{S}};$$
in particular, $h_{alg}(\alpha)=\infty$ and $\ent(\alpha)=\infty$ whenever  $t(A)$ is infinite.

Indeed, take the constant right F\o lner sequence $(S)_{n\in\N}$ of $S$. If $A$ is finite, since $T_S(\alpha,A)=A$, by Lemma~\ref{cofinal}(a)  and Proposition~\ref{h=ent} we get that
$$h_{alg}(\alpha)=H_{alg}(\alpha,A)=\frac{\ell(T_S(\alpha,A))}{|S|}=\frac{\ell(A)}{|S|} \ \mbox{ and }\ \ent(\alpha) = \frac{\ell(t(A))}{|S|}.$$
If $A$ is infinite, for every $X\in\Pf^0(A)$, since $X\subseteq T_S(\alpha,X)$, we have that
\begin{equation*}\label{dag}
H_{alg}(\alpha,X)=\frac{\ell(T_S(\alpha,X))}{|S|}\geq \frac{\ell(X)}{|S|}.
\end{equation*}
Then $h_{alg}(\alpha)\geq\sup\left\{\frac{\ell(X)}{|S|}: X\in\Pf^0(A)\right\}$ is infinite since $A$ is infinite.
Similarly, $\ent(\alpha)$ is infinite whenever $t(A)$ is infinite. 

In particular, if $S=\{e\}$, then $H_{alg}(\alpha,X)=\ell(X)$ for every $X\in\Pf^0(A)$.

%
%

\item Consider the trivial left monoid action $S\overset{\tau}{\curvearrowright}A$, defined by $\tau(s)=id_A$ for every $s\in S$. If $S$ is infinite, then $h_{alg}(\tau)=0$. 

Indeed, for a right F\o lner net $(F_i)_{i\in I}$ of $S$ and $X\in\Pf(A)$, one has $$T_{F_i}(\tau,X)= X + \ldots + X.$$ So, 
$$f_X(F_i) \leq |X|\log (1+ |F_i|).$$ This yields $$H_{alg}(\tau,X) \leq \lim_{i\in I} \frac{|X|\log (1+ |F_i|)}{|F_i|}= 0,$$ as $A$ is infinite and so $\lim_{i\in I} |F_i| = \infty$.
\end{enumerate}
\end{example}

In item (a) of the above example, we assume $S$ to be a finite monoid and $S\overset{\alpha}{\curvearrowright}A$ a left monoid action, otherwise the conclusion could be false. Indeed, consider the action such that $\alpha(s)=0_A$ for every $s\in S$, which is a left semigroup action but not a left monoid action. Then, $T_S(\alpha,X)=\{0\}$ for every $X\in\Pf^0(A)$, so $h_{alg}(\alpha)=\sup\{H_{alg}(\alpha,X): X\in\Pf^0(A)\}=0$.

\begin{remark}\label{S->Gh}
Let $G$ be a group and $S$ a cancellative right amenable submonoid of $G$ that generates $G$ as a group; by Lemma~\ref{S->G}(a) the group $G$ is necessarily amenable. Let $A$ be an abelian group and consider the left action $G\overset{\alpha}\curvearrowright A$. Then $$h_{alg}(\alpha)=h_{alg}(\alpha\restriction_S).$$
In fact, by Lemma~\ref{S->G}(b), a right F\o lner net $(F_i)_{i\in I}$ of $S$ is also a right F\o lner net of $G$. So, for every $X\in\Pf^0(A)$, we have that $H_{alg}(\alpha,X)=H_{alg}(\alpha\restriction_S,X)$ by definition.
\end{remark}

\begin{remark}
Let $G$ be an amenable group, $A$ an abelian group, and $G\overset{\alpha}{\curvearrowright}A$ a left action. Consider $$\ker\alpha=\{g\in G: \alpha(g)=id_A\}.$$ 
It should be natural to expect that the induced action $G/\ker\alpha\overset{\bar\alpha}{\curvearrowright}A$ would have the same algebraic entropy of $G\overset{\alpha}{\curvearrowright}A$. Actually, this is not the case in view of the above examples. Indeed, $h_{alg}(\tau)=0$, while $\ker\tau=G$, so $G/\ker\tau=\{1\}$ and hence $h_{alg}(\bar\tau)=\infty$ whenever $A$ is infinite. 

\smallskip
For a non-trivial example, in which $G/\ker\alpha$ is infinite, witnessing that $h_{alg}(\alpha)$ is not equal to $h_{alg}(\bar\alpha)$, see Example~\ref{gamma=alpha} as well as the general Theorem~\ref{quotient}.
\end{remark}

\subsection{Basic properties}

We start showing that $h_{alg}$ coincides for weakly conjugated actions, defined as follows:  

\begin{definition}\label{New:Def}
For cancellative right amenable semigroups $S$ and $T$, abelian groups $A$ and $B$, and left actions $S\overset{\alpha}{\curvearrowright} A$ and $T\overset{\beta}{\curvearrowright}B$, we say that $\alpha$ and $\beta$ are \emph{weakly conjugated} if there exist an isomorphism $\eta:S\to T$ and an isomorphism $\xi:A\to B$ such that 
\begin{equation}\label{conjeq}
\xi\circ\alpha(s)=\beta(\eta(s))\circ\xi
\end{equation} 
for every $s\in S$.
\end{definition}

Our leading example is when $S=T$ and $\eta = id_S$, yet Lemma~\ref{alphaeta} provides a relevant instance when $S=T$ is an amenable group, yet $\eta$ is an arbitrary automorphism of $S$.  The next proposition justifies our attention to weak conjugacy. 

\begin{proposition}\label{conju}
Let $S,T$ be cancellative right amenable semigroups, $A,B$ abelian groups, and $S\overset{\alpha}{\curvearrowright} A$, $T\overset{\beta}{\curvearrowright}B$ left actions. If $\alpha$ and $\beta$ are weakly conjugated, then $h_{alg}(\alpha)=h_{alg}(\beta)$.
\end{proposition}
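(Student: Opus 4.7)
The plan is to show that the function $X\mapsto H_{alg}(\alpha,X)$ is carried bijectively onto $Y\mapsto H_{alg}(\beta,Y)$ via the bijection $\xi_{*}:\Pf(A)\to \Pf(B)$, $X\mapsto\xi(X)$, whence taking suprema will yield the equality of the algebraic entropies.

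First I would observe that, because $\eta:S\to T$ is a semigroup isomorphism, $(\eta(F_i))_{i\in I}$ is a right F\o lner net of $T$ whenever $(F_i)_{i\in I}$ is a right F\o lner net of $S$. Indeed, any $t\in T$ equals $\eta(s)$ for a unique $s\in S$, and injectivity of $\eta$ gives
\[\eta(F_i)\,t\setminus \eta(F_i) = \eta(F_i s)\setminus \eta(F_i) = \eta(F_is\setminus F_i),\]
so $|\eta(F_i)t\setminus \eta(F_i)| = |F_is\setminus F_i|$ while $|\eta(F_i)|=|F_i|$, and the F\o lner quotients agree.

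Next I would use Equation~\eqref{conjeq} to transport trajectories. Fix $X\in\Pf(A)$. Iterating the conjugacy relation, for every $s\in S$ we have $\beta(\eta(s))(\xi(X))=\xi(\alpha(s)(X))$, and since $\xi$ is a group homomorphism it commutes with finite sums. Therefore
\[T_{\eta(F_i)}(\beta,\xi(X))=\sum_{s\in F_i}\beta(\eta(s))(\xi(X))=\sum_{s\in F_i}\xi(\alpha(s)(X))=\xi\bigl(T_{F_i}(\alpha,X)\bigr).\]
As $\xi$ is a bijection, $\ell\bigl(T_{\eta(F_i)}(\beta,\xi(X))\bigr)=\ell\bigl(T_{F_i}(\alpha,X)\bigr)$, and dividing by $|\eta(F_i)|=|F_i|$ and passing to the limit along the F\o lner net yields $H_{alg}(\beta,\xi(X))=H_{alg}(\alpha,X)$.

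Finally, since $\xi$ is an isomorphism, $\xi_*$ is a bijection between $\Pf(A)$ and $\Pf(B)$, so
\[h_{alg}(\beta)=\sup_{Y\in\Pf(B)}H_{alg}(\beta,Y)=\sup_{X\in\Pf(A)}H_{alg}(\beta,\xi(X))=\sup_{X\in\Pf(A)}H_{alg}(\alpha,X)=h_{alg}(\alpha).\]
The only genuinely non-routine point is the verification that $(\eta(F_i))_{i\in I}$ remains a right F\o lner net in $T$, but this reduces to bijectivity of $\eta$ together with the compatibility of $\eta$ with the semigroup operation, as indicated above.
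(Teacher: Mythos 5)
Your proof is correct and follows essentially the same route as the paper: transport a right F\o lner net of $S$ to one of $T$ via $\eta$, use the conjugacy relation to identify $T_{\eta(F_i)}(\beta,\xi(X))$ with $\xi(T_{F_i}(\alpha,X))$, and conclude via the bijection $\Pf(A)\to\Pf(B)$ induced by $\xi$. The only difference is that you spell out why $(\eta(F_i))_{i\in I}$ is again a right F\o lner net, a step the paper states without proof.
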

\begin{proof}
Let $(F_i)_{i\in I}$ be a right F\o lner net of $S$, and let $X\in\Pf(A)$. Then $(\eta(F_i))_{i\in I}$ is a right F\o lner net of $T$.  For every $i\in I$, 
$$T_{\eta(F_i)}(\beta,\xi(X))=\sum_{s\in F_i}\beta(\eta(s))(\xi(X))=\sum_{s\in F_i}\xi(\alpha(s)(X))=\xi\left(\sum_{s\in F_i}\alpha(s)(X)\right)=\xi(T_{F_i}(\alpha,X)).$$ Hence, $\ell(T_{\eta(F_i)}(\beta,\xi(X)))=\ell(T_{F_i}(\alpha,X))$, and so $H_{alg}(\beta,\xi(X))=H_{alg}(\alpha,X)$. Since $\xi$ induces a bijection between $\Pf(A)$ and $\Pf(B)$, this implies $h_{alg}(\alpha)=h_{alg}(\beta)$.
\end{proof}

\begin{remark}\label{wconj}
Since the only automorphism of $\N$ is $id_\N$, two left actions $\N\overset{\alpha}{\curvearrowright} A$ and $\N\overset{\beta}{\curvearrowright}B$ on abelian groups $A$ and $B$ are weakly conjugated precisely when $$\xi\circ\alpha(n)=\beta(n)\circ\xi$$ for all $n\in \N$. In other words, letting $\phi = \alpha(1)$ and $\psi = \beta(1)$, one has 
\begin{equation}\label{Wconjeq}
\phi=\xi^{-1}\circ\psi\circ\xi. 
\end{equation} 

Usually, a pair of endomorphisms $\phi:A\to A$ and $\psi:B\to B$ are said to be \emph{conjugated} if the condition in Equation~\eqref{Wconjeq} holds for some isomorphism $\xi:A\to B$ (see \cite{DGB0}). Clearly, a pair of conjugated endomorphisms  $\phi:A\to A$ and $\psi:B\to B$ gives rise to weakly conjugated $\N$-actions $\N\overset{\alpha_\phi}{\curvearrowright}A$ and $\N\overset{\alpha_\psi}{\curvearrowright}B$. Indeed, $\phi=\xi^{-1}\circ\psi\circ\xi$ implies $\phi^n=\xi^{-1}\circ\psi^n\circ\xi$ for every $n\in\N$, that is, $\xi\circ\phi^n=\psi^n\circ\xi$ for every $n\in\N$; in terms of actions this means that $\xi\circ\alpha_\phi(n)=\alpha_\psi(n)\circ\xi$ for every $n\in\N$, that is Equation~\eqref{conjeq} is satisfied with $\eta=id_\N$. 

It is known that $h_{alg}(\phi)=h_{alg}(\psi)$ whenever the endomorphisms $\phi$ and $\psi$ are conjugated (see \cite{DGB0}). 
\end{remark}

On the other hand, weakly conjugated automorphisms of $A$, viewed as $\Z$-actions, need not be conjugated in the sense of Equation~\eqref{Wconjeq} (see Remark~\ref{New:RmerK}). 

\smallskip
In the case of an automorphism $\phi:A\to A$ of an abelian group $A$, it is known that $$h_{alg}(\phi)=h_{alg}(\phi^{-1}).$$
In order to obtain this equality as a consequence of Proposition~\ref{conju} in Remark~\ref{New:RmerK}, we see that $\phi$ and ${\phi^{-1}}$ are weakly conjugated. To this end we consider, more generally, an action $G\overset{\alpha}\curvearrowright A$ of an amenable group $G$ on an abelian group $A$, and an automorphism $\eta:G\to G$. Define the action $$G\overset{\alpha_\eta}{\curvearrowright}A$$ by $\alpha^\eta(g)=\alpha(\eta(g))$ for every $g\in G$.

\medskip
We omit the immediate proof of the next lemma. 

\begin{lemma}\label{alphaeta}
Let $G$ be an amenable group, $A$ an abelian group, and $G\overset{\alpha}\curvearrowright A$ a left action. 
If $\eta:G\to G$ is an automorphism, then $\alpha$ and $\alpha^\eta$ are weakly conjugated and $h_{alg}(\alpha)=h_{alg}(\alpha^\eta)$.
\end{lemma}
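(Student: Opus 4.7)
The plan is to verify weak conjugacy directly from the definitions and then invoke Proposition~\ref{conju} to conclude. Recall that by definition $\alpha^\eta(g) = \alpha(\eta(g))$ for every $g \in G$, so the action $\alpha^\eta$ is built from $\alpha$ by precomposing with $\eta$. Weak conjugacy in the sense of Definition~\ref{New:Def} (for actions of the same group $G$ on the same abelian group $A$) requires exhibiting an automorphism $\eta': G \to G$ and an isomorphism $\xi: A \to A$ such that
\[
\xi \circ \alpha(g) = \alpha^\eta(\eta'(g)) \circ \xi = \alpha(\eta(\eta'(g))) \circ \xi \quad \text{for every } g \in G.
\]

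The natural choice is $\xi = \mathrm{id}_A$ and $\eta' = \eta^{-1}$, which is available precisely because $\eta$ is an automorphism of $G$. With these choices the displayed condition collapses to $\alpha(g) = \alpha(\eta(\eta^{-1}(g))) = \alpha(g)$, which is trivially satisfied for all $g\in G$. This shows that $\alpha$ and $\alpha^\eta$ are weakly conjugated.

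With weak conjugacy established, Proposition~\ref{conju} immediately yields $h_{alg}(\alpha) = h_{alg}(\alpha^\eta)$. The proof involves no real obstacle; it is a direct unwinding of Definition~\ref{New:Def}, made possible by the fact that $\eta \in \mathrm{Aut}(G)$ (so that $\eta^{-1}$ exists and serves as the required isomorphism between the acting groups), combined with the fact that both actions are defined on the same group $A$, which allows the identity map on $A$ to serve as the intertwining isomorphism.
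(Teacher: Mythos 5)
Your proposal is correct and is exactly the ``immediate proof'' that the paper explicitly omits: choosing $\xi=\mathrm{id}_A$ and the group isomorphism $\eta^{-1}$ (available since $\eta\in\Aut(G)$) makes the intertwining condition of Definition~\ref{New:Def} hold trivially, and Proposition~\ref{conju} then gives $h_{alg}(\alpha)=h_{alg}(\alpha^\eta)$. There is nothing to add.
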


\begin{remark}\label{New:RmerK} 
Let $G$ be an abelian group, $\eta=-id_G\in \Aut(G)$, and $G\overset{\alpha}\curvearrowright A$ a left action. 
Then $\alpha^\eta$ is now given by $\alpha^\eta(g)= \alpha(g)^{-1}$ for every $g\in G$, and it is weakly conjugated to $\alpha$, so $h_{alg}(\alpha)=h_{alg}(\alpha^\eta)$ by Lemma~\ref{alphaeta}. In particular, for $G=\Z$ we obtain $h_{alg}(\phi)=h_{alg}(\phi^{-1})$ for any automorphism $\phi:A\to A$. 

To finish the comparison between conjugacy and weak conjugacy in the realm of automorphisms, consider the automorphism $m_2:\Q\to \Q$, defined by $x\mapsto 2x$, inducing an obvious $\Z$-action on $\Q$, which is weakly conjugated to the action induced by $(m_2)^{-1}=m_{\frac{1}{2}}$. Nevertheless, $m_2$ is not conjugated to $(m_2)^{-1}$ in the sense of Equation~\eqref{Wconjeq}.
\end{remark}

Resuming Remark~\ref{wconj} and Remark~\ref{New:RmerK}, the notion of weak conjugacy provides a convenient umbrella covering two relevant cases of coincidence of algebraic entropies: of a pair of conjugated endomorphisms, or a pair of two mutually inverse automorphisms.

\medskip
Using Proposition~\ref{conju} we can specify the relation between the algebraic entropy of $\alpha(g)$ and the algebraic entropy of the restriction of $\alpha$ to the semigroup generated by $g$.

\begin{remark}\label{mayday}
Let $S$ be a cancellative right amenable monoid, $A$ an abelian group, and $S\overset{\alpha}{\curvearrowright}A$ a left action. Let $g\in S$ and let $T$ be the submonoid of $S$ generated by $g$.
\begin{enumerate}[(a)]
\item If $T$ is infinite, then $T\cong \N$ and $\alpha\restriction_{T}$ is weakly conjugated to the action $\alpha_{\alpha(g)}$ in the notation of Remark~\ref{Naction}. 
So, $h_{alg}(\alpha(g))=h_{alg}(\alpha\restriction_{T})$ by Remark~\ref{Naction} and Proposition~\ref{conju}. 
\item 
If $T$ is finite, then a straightforward computation shows that $h_{alg}(\alpha(g))=0$, while $h_{alg}(\alpha\restriction_{T})$ is always positive (actually $\infty$ if $A$ is infinite) by Example~\ref{Esempio:AG}(a). 
\end{enumerate}
The same properties hold when $S$ is a group and $T=\langle g\rangle$ is the subgroup of $S$ generated by $g$.
\end{remark}

The following observation using Proposition~\ref{conju} is related to Remark~\ref{SSop}.

\begin{remark}\label{GGop} 
In case of a right action $A\overset{\beta}{\curvearrowleft}G$ of an amenable group $G$ acting on an abelian group $A$, one has also an alternative option to define a left action $\beta'$ that leads to the same algebraic entropy $h_{alg}^r$. Indeed, define the left action $\beta'$ by putting $\beta'(g)=\beta(g^{-1})$ for every $g\in G$. 
The left actions $$G^{op}\overset{\beta^{op}}{\curvearrowright}A\quad \text{and}\quad G\overset{\beta'}{\curvearrowright}A$$ are conjugated since the map $\iota: g \mapsto g^{-1}$ provides a group isomorphism $G \to G^{op}$. By Proposition~\ref{conju}, $$h_{alg}^r(\beta)=h_{alg}(\beta^{op})=h_{alg}(\beta').$$
This shows that we can use both ways to pass from right actions to left ones obtaining the same result. 
\end{remark}


Now we consider the monotonicity of $h_{alg}$ with respect to invariant subgroups and quotients over invariant subgroups.

\begin{proposition}\label{restriction_quotient} 
Let $S$ be a cancellative right amenable semigroup, $A$ an abelian group, and $S\overset{\alpha}\curvearrowright A$ a left action. If $B$ is an $\alpha$-invariant subgroup of $A$, then
$$h_{alg}(\alpha)\geq \max\{h_{alg}(\alpha_B),h_{alg}(\alpha_{A/B})\}.$$
\end{proposition}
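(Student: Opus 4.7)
The plan is to handle the two inequalities separately, both following directly from the definition of $h_{alg}$ via trajectories, with no appeal to amenability beyond what is already in Definition~\ref{def:entropy}.

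For the restriction $\alpha_B$: given any $X \in \Pf(B)$, we also have $X \in \Pf(A)$, and since $B$ is $\alpha$-invariant, the trajectory $T_F(\alpha_B, X) = \sum_{s \in F} \alpha_B(s)(X)$ computed inside $B$ coincides set-theoretically with $T_F(\alpha, X)$ computed inside $A$. Therefore $f_X(F)$ is the same whether we view $X$ as a finite subset of $A$ or of $B$, giving $H_{alg}(\alpha_B, X) = H_{alg}(\alpha, X)$ for every $X \in \Pf(B)$. Taking suprema over $\Pf(B) \subseteq \Pf(A)$ yields $h_{alg}(\alpha_B) \leq h_{alg}(\alpha)$.

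For the quotient $\alpha_{A/B}$: let $\pi \colon A \to A/B$ be the canonical projection. Given $Y \in \Pf(A/B)$, pick a section, i.e.\ a set $X \in \Pf(A)$ with $\pi\restriction_X \colon X \to Y$ a bijection (so $\card X = \card Y$). Since $\pi$ is a group homomorphism intertwining $\alpha$ and $\alpha_{A/B}$ (that is, $\pi \circ \alpha(s) = \alpha_{A/B}(s) \circ \pi$ for every $s \in S$), for every $F \in \Pf(S)$ we have
\[
\pi\bigl(T_F(\alpha, X)\bigr) = \sum_{s \in F} \pi(\alpha(s)(X)) = \sum_{s \in F} \alpha_{A/B}(s)(Y) = T_F(\alpha_{A/B}, Y).
\]
In particular $\card{T_F(\alpha_{A/B}, Y)} \leq \card{T_F(\alpha, X)}$, whence $f_Y(F) \leq f_X(F)$ pointwise. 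Passing to the limit along any right F\o lner net yields $H_{alg}(\alpha_{A/B}, Y) \leq H_{alg}(\alpha, X) \leq h_{alg}(\alpha)$, and taking the supremum over $Y \in \Pf(A/B)$ gives $h_{alg}(\alpha_{A/B}) \leq h_{alg}(\alpha)$.

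Combining the two bounds produces the stated inequality. There is no genuine obstacle here: the only subtlety is the choice of a finite lift $X$ of $Y$ with $\card X = \card Y$, which is possible purely set-theoretically and guarantees that $X$ itself lies in $\Pf(A)$ so that $H_{alg}(\alpha, X)$ is defined and bounded above by $h_{alg}(\alpha)$.
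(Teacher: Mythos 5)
Your proposal is correct and follows essentially the same route as the paper: the first inequality via the identification $H_{alg}(\alpha_B,X)=H_{alg}(\alpha,X)$ for $X\in\Pf(B)$, and the second via a finite lift $X_0$ of $Y$ with $\pi(T_F(\alpha,X_0))=T_F(\alpha_{A/B},Y)$. The only cosmetic difference is that you insist on a bijective lift, whereas the paper takes an arbitrary finite preimage; both work equally well.
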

\begin{proof} 
For every $X\in\Pf(B)$,  $$H_{alg}(\alpha_B,X)=H_{alg}(\alpha,X),$$ so $H_{alg}(\alpha_B, F)\leq h_{alg}(\alpha)$. Hence, $h_{alg}(\alpha_B)\leq h_{alg}(\alpha)$.

Assume that $X\in\Pf(A/B)$ and $X=\pi(X_0)$ for some $X_0\in\Pf(A)$, where $\pi:A\to A/B$ is the canonical projection. Let $(F_i)_{i\in I}$ be a right F\o lner net of $S$. Then, for every $i\in I$, 
$$\pi(T_{F_i}(\alpha,X_0))=\sum_{s\in F_i}\pi(\alpha(s)(X_0))=\sum_{s\in F_i}\alpha_{A/B}(s)(\pi (X_0))=T_{F_i}(\alpha_{A/B},X).$$ 
It follows that $h_{alg}(\alpha)\geq H_{alg}(\alpha,X_0)\geq H_{alg}(\alpha_{A/B}, X)$, and hence $h_{alg}(\alpha)\geq h_{alg}(\alpha_{A/B})$.
\end{proof}

Next we verify the continuity for direct limits.

\begin{proposition}\label{contlim}
Let $S$ be a cancellative right amenable semigroup, $A$ an abelian group, and $S\overset{\alpha}\curvearrowright A$ a left action. If $A$ is a direct limit of $\alpha$-invariant subgroups $\{A_i: i\in I\}$, then $h_{alg}(\alpha)=\sup_{i\in I}h_{alg}(\alpha_{A_i})$.
\end{proposition}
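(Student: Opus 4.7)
The plan is to prove the equality by establishing two inequalities, one of which is essentially immediate from previous results in the paper.

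For the inequality $h_{alg}(\alpha) \geq \sup_{i\in I} h_{alg}(\alpha_{A_i})$, I would simply invoke Proposition~\ref{restriction_quotient}: since each $A_i$ is an $\alpha$-invariant subgroup of $A$, we have $h_{alg}(\alpha_{A_i}) \leq h_{alg}(\alpha)$ for every $i \in I$, and then take the supremum. This direction does not use that the $A_i$'s exhaust $A$, only that each is $\alpha$-invariant.

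For the reverse inequality $h_{alg}(\alpha) \leq \sup_{i\in I} h_{alg}(\alpha_{A_i})$, the key point is the interplay between finite subsets and directed unions. Writing $A = \varinjlim A_i$ with transition maps being inclusions means $A = \bigcup_{i \in I} A_i$, and the family $\{A_i : i \in I\}$ is directed. Hence, given any $X \in \Pf(A)$, finiteness of $X$ combined with directedness gives some $i_0 \in I$ with $X \subseteq A_{i_0}$. Since $A_{i_0}$ is $\alpha$-invariant, for every $F \in \Pf(S)$ the trajectory $T_F(\alpha, X) = \sum_{s \in F} \alpha(s)(X)$ lies in $A_{i_0}$ and equals $T_F(\alpha_{A_{i_0}}, X)$. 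Passing to the limit along a right F\o lner net of $S$ gives
\[H_{alg}(\alpha, X) = H_{alg}(\alpha_{A_{i_0}}, X) \leq h_{alg}(\alpha_{A_{i_0}}) \leq \sup_{i \in I} h_{alg}(\alpha_{A_i}).\]
Taking the supremum over all $X \in \Pf(A)$ yields $h_{alg}(\alpha) \leq \sup_{i \in I} h_{alg}(\alpha_{A_i})$, as required.

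There is no real obstacle here: the argument is essentially a finiteness-meets-directedness observation, combined with the monotonicity already established. The only small point worth flagging is to make clear that ``direct limit of $\alpha$-invariant subgroups'' is being used in the (standard) sense that the $A_i$'s form a directed family of subgroups with inclusions as transition maps, so that $A = \bigcup_i A_i$; otherwise one would need an additional step showing that $X \in \Pf(A)$ can be pulled back into some $A_i$ along the limit cone.
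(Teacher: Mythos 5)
Your proof is correct and follows the same route as the paper's: the lower bound via Proposition~\ref{restriction_quotient}, and the upper bound by locating each finite $X\subseteq A$ inside some $A_{i_0}$ using directedness and observing $H_{alg}(\alpha,X)=H_{alg}(\alpha_{A_{i_0}},X)$. Your extra remark spelling out why the trajectories stay inside $A_{i_0}$ is a harmless elaboration of what the paper leaves implicit.
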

\begin{proof}
By Proposition~\ref{restriction_quotient}, $h_{alg}(\alpha)\geq h_{alg}(\alpha_{A_i})$ for every $i\in I$ and so $h_{alg}(\alpha)\geq\sup_{i\in I}h_{alg}(\alpha_{A_i})$.

To check the converse inequality, let $X\in\Pf(A)$. Since $A=\varinjlim\{A_i: i\in I\}$ and $\{A_i: i\in I\}$ is a directed family, there exists $j\in I$ such that $X\subseteq A_j$. Then 
$$H_{alg}(\alpha,X)=H_{alg}(\alpha_{A_j},X)\leq h_{alg}(\alpha_{A_j}).$$ This proves that $h_{alg}(\alpha)\leq\sup_{i\in I}h_{alg}(\alpha_{A_j})$.
\end{proof}

The following is a basic instance of the Addition Theorem.

\begin{proposition}\label{wAT}
Let $S$ be a cancellative right amenable semigroup, $A$ an abelian group, and $S\overset{\alpha}\curvearrowright A$ a left action. If $A=A_1\times A_2$, with $A_1$, $A_2$ $\alpha$-invariant subgroups of $A$, then $h_{alg}(\alpha)=h_{alg}(\alpha_{A_1})+h_{alg}(\alpha_{A_2})$.
\end{proposition}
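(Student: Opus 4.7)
The proof proceeds by proving the two inequalities separately, exploiting the fact that trajectories in a direct product factor as products of trajectories in each factor.

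\medskip

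For the key computational lemma, let $X_1 \in \Pf(A_1)$ and $X_2 \in \Pf(A_2)$, and view $X_1 \times X_2 \subseteq A_1 \times A_2 = A$. Since $A_1$ and $A_2$ are $\alpha$-invariant, for every $s \in S$ we have $\alpha(s)(X_1 \times X_2) = \alpha_{A_1}(s)(X_1) \times \alpha_{A_2}(s)(X_2)$. Using the elementary identity $(Y_1 \times Y_2) + (Y_1' \times Y_2') = (Y_1 + Y_1') \times (Y_2 + Y_2')$ applied iteratively along any $F \in \Pf(S)$, I obtain
\[
T_F(\alpha, X_1 \times X_2) = T_F(\alpha_{A_1}, X_1) \times T_F(\alpha_{A_2}, X_2),
\]
so that $\ell(T_F(\alpha, X_1 \times X_2)) = \ell(T_F(\alpha_{A_1}, X_1)) + \ell(T_F(\alpha_{A_2}, X_2))$. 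Dividing by $\card{F_i}$ along any right F\o lner net and passing to the limit gives
\[
H_{alg}(\alpha, X_1 \times X_2) = H_{alg}(\alpha_{A_1}, X_1) + H_{alg}(\alpha_{A_2}, X_2).
\]

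\medskip

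For the inequality $h_{alg}(\alpha) \geq h_{alg}(\alpha_{A_1}) + h_{alg}(\alpha_{A_2})$, I take the supremum of the displayed equality over $X_1 \in \Pf(A_1)$ and $X_2 \in \Pf(A_2)$ independently; the result is immediate.

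\medskip

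For the reverse inequality $h_{alg}(\alpha) \leq h_{alg}(\alpha_{A_1}) + h_{alg}(\alpha_{A_2})$, I take an arbitrary $X \in \Pf(A)$ and let $\pi_i : A \to A_i$ denote the canonical projections. Setting $X_i = \pi_i(X) \in \Pf(A_i)$ yields the inclusion $X \subseteq X_1 \times X_2$. By Lemma~\ref{cofinal}(a) and the computational lemma above,
\[
H_{alg}(\alpha, X) \leq H_{alg}(\alpha, X_1 \times X_2) = H_{alg}(\alpha_{A_1}, X_1) + H_{alg}(\alpha_{A_2}, X_2) \leq h_{alg}(\alpha_{A_1}) + h_{alg}(\alpha_{A_2}).
\]
Taking the supremum over $X \in \Pf(A)$ finishes the argument.

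\medskip

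There is no real obstacle beyond the careful verification that trajectories factor through the direct product decomposition; the delicate point is simply that the Minkowski sum of product sets is the product of Minkowski sums, which requires no amenability input and no appeal to F\o lner nets beyond the routine passage to the limit. Note also that the proof produces exact equality, not just additivity up to error, so no Fubini-type machinery from Section~\ref{sec:integral} is required here.
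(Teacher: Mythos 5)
Your proof is correct and follows essentially the same route as the paper's: both rest on the factorization $T_F(\alpha,X_1\times X_2)=T_F(\alpha_{A_1},X_1)\times T_F(\alpha_{A_2},X_2)$, the resulting additivity of $H_{alg}$ on product sets, and the cofinality of the sets $X_1\times X_2$ in $\Pf(A)$ (which you make explicit by taking $X_i=\pi_i(X)$, whereas the paper invokes Lemma~\ref{cofinal}(b) directly). No gaps.
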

\begin{proof}
Note that, $\alpha(s)=\alpha_{A_1}(s)\times\alpha_{A_2}(s)$ for every $s\in S$.

Let $(F_i)_{i\in I}$ be a right F\o lner net of $S$. For every $i\in I$, and for $X_1\in\Pf(A_1)$, $X_2\in\Pf(A_2)$,
\[\begin{split} T_{F_i}(\alpha,X_1\times X_2)=\sum_{s\in F_i}(\alpha_{A_1}(s)(X_1)\times \alpha_{A_2}(s)(X_2))&=\\=\left(\sum_{s\in F_i}\alpha_{A_1}(s)(X_1)\right)\times \left(\sum_{s\in  F_i}\alpha_{A_2}(s)(X_2)\right)&=T_{F_i}(\alpha_{A_1},X_1)\times T_{F_i}(\alpha_{A_2}, X_2).\end{split}\]
Hence, 
\begin{equation}\label{times-eq}
H_{alg}(\alpha,X_1\times X_2)=H_{alg}(\alpha_{A_1},X_1)+H_{alg}(\alpha_{A_2},X_2).
\end{equation} 
Consequently, $h_{alg}(\alpha)\geq h_{alg}(\alpha_{A_1})+h_{alg}(\alpha_{A_2})$. 

Since $\{X_1\times X_2: X_i\in\Pf(A_i),i=1,2\}$ is cofinal in $\Pf(A)$, in view of Lemma~\ref{cofinal}, Equation~\eqref{times-eq} proves also that $h_{alg}(\alpha)\leq h_{alg}(\alpha_{A_1})+h_{alg}(\alpha_{A_2})$.
\end{proof}

\subsection{Properties of the trajectories}

\begin{lemma}\label{lemma?}
Let $S$ be a semigroup, $A$ an abelian group, and $S\overset{\alpha}\curvearrowright A$ a left action.
If $F\in\Pf(S)$ and $X,X'\in\Pf^0(A)$, then:
\begin{enumerate}[(a)]
\item $T_F(X + X') = T_F(X)+T_F(X')$;
\item $T_F(X)\cup T_F(X')\subseteq T_F(X\cup X')\subseteq T_F(X)+T_F(X')$.
\end{enumerate}
\end{lemma}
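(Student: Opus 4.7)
My plan is to treat the two parts separately, observing that each $\alpha(s)$ is a group endomorphism, and that the Minkowski sum on the abelian group $A$ is associative and commutative; the hypothesis $X, X' \in \Pf^0(A)$ (so $0 \in X \cap X'$) is only used in part~(b).

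For part~(a), I would first use that $\alpha(s): A \to A$ is an endomorphism to get $\alpha(s)(X + X') = \alpha(s)(X) + \alpha(s)(X')$ for each $s \in F$. Then I would rearrange the Minkowski sum indexed by $F$:
\[
T_F(X + X') = \sum_{s \in F} \alpha(s)(X + X') = \sum_{s \in F} \bigl(\alpha(s)(X) + \alpha(s)(X')\bigr) = \sum_{s \in F} \alpha(s)(X) + \sum_{s \in F} \alpha(s)(X') = T_F(X) + T_F(X'),
\]
where the third equality is just commutativity/associativity of Minkowski sums in the abelian group $A$.

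For part~(b), the first inclusion follows from monotonicity of Minkowski sums under inclusion: since $X \subseteq X \cup X'$, each $\alpha(s)(X) \subseteq \alpha(s)(X \cup X')$, so $T_F(X) \subseteq T_F(X \cup X')$, and analogously $T_F(X') \subseteq T_F(X \cup X')$. For the second inclusion, I would first observe that $X \cup X' \subseteq X + X'$: indeed, if $x \in X$ then $x = x + 0 \in X + X'$ using $0 \in X'$, and symmetrically for $x' \in X'$ using $0 \in X$. Applying monotonicity of $T_F$ again and then invoking part~(a) yields
\[
T_F(X \cup X') \subseteq T_F(X + X') = T_F(X) + T_F(X').
\]

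There is essentially no obstacle here; the only subtle point is remembering that the hypothesis $0 \in X \cap X'$, encoded in $X, X' \in \Pf^0(A)$, is what makes the chain $X \cup X' \subseteq X + X'$ work and is therefore responsible for the second inclusion in~(b). Everything else is formal manipulation of Minkowski sums and the fact that the action is by endomorphisms.
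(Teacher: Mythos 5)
Your proof is correct and follows essentially the same route as the paper: part (a) is the same element-wise rearrangement of the Minkowski sum using that each $\alpha(s)$ is an endomorphism, and for part (b) the paper merely says it is ``analogous to (a)'', which your argument via monotonicity of $T_F(-)$ together with the inclusion $X\cup X'\subseteq X+X'$ (using $0\in X\cap X'$) carries out cleanly. No gaps.
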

\begin{proof} 
(a) We have
\begin{equation*}\begin{split}
T_F(X + X')=\set{\sum_{f\in F}\alpha(f)(x_f+x'_f)\colon x_f\in X, x'_f\in X'}=\\ =\set{\sum_{f \in F}\alpha(f)(x_f)+\sum_{f \in F}\alpha(f)(x'_f)\colon x_f\in X,x'_f\in X'}=T_F(X)+T_F(X').
\end{split}
\end{equation*}

(b) can be proved analogously to (a).
\end{proof}

Recall, that if $W$ is a subset of an abelian group $A$ and $m\in\N_+$, we use the notation $$W_m=\underbrace{W+W+\ldots+W}_m.$$

\begin{lemma}\label{next-to-the-last}
Let $S$ be a right amenable semigroup, $A$ an abelian group, $S\overset{\alpha}\curvearrowright A$ a left action, $F\in\Pf(S)$, and $B\in\Pf(A)$. Then, for every $m\in\N_+$, $$T_{F}(\alpha,B_m)=T_F(\alpha,B)_m.$$
\end{lemma}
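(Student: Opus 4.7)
The plan is to prove the identity $T_F(\alpha, B_m) = T_F(\alpha, B)_m$ by a straightforward induction on $m \in \N_+$, using Lemma~\ref{lemma?}(a) as the key tool. The base case $m=1$ is immediate from the definitions, since $B_1 = B$ and $T_F(B)_1 = T_F(B)$.

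For the inductive step, suppose the identity holds for some $m \geq 1$. By the definition of $B_{m+1}$, we have $B_{m+1} = B_m + B$. Applying Lemma~\ref{lemma?}(a) with $X = B_m$ and $X' = B$, we get
\[
T_F(\alpha, B_{m+1}) = T_F(\alpha, B_m + B) = T_F(\alpha, B_m) + T_F(\alpha, B).
\]
By the inductive hypothesis, $T_F(\alpha, B_m) = T_F(\alpha, B)_m$, so
\[
T_F(\alpha, B_{m+1}) = T_F(\alpha, B)_m + T_F(\alpha, B) = T_F(\alpha, B)_{m+1},
\]
which closes the induction.

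There is no real obstacle here — the statement is essentially the additivity of trajectories from Lemma~\ref{lemma?}(a) promoted to iterated Minkowski sums via induction. One minor point to observe is that the assumption $0 \in B$ (which is implicit if one wants $B \in \Pf^0(A)$, as in the hypothesis of Lemma~\ref{lemma?}) is not actually needed for part (a) of that lemma, whose proof relies only on the identity $\alpha(f)(x + x') = \alpha(f)(x) + \alpha(f)(x')$ valid for arbitrary $x, x' \in A$; so the induction goes through for any $B \in \Pf(A)$.
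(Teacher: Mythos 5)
Your proof is correct; the paper establishes the same identity in a single direct computation, pushing each homomorphism $\alpha(f)$ through the $m$-fold sum $B_m$ and then rearranging the resulting Minkowski sums, which is exactly what your induction via Lemma~\ref{lemma?}(a) accomplishes step by step. Your side remark that part (a) of Lemma~\ref{lemma?} does not actually require $0\in B$ is accurate and worth making, since the present statement only assumes $B\in\Pf(A)$.
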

\begin{proof}
It suffices to compute that
$$T_{F}(\alpha,B_m)= \sum_{f\in F}\alpha(f)(B_m) =  \sum_{f\in F}\alpha(f)(B)_m=\left( \sum_{f\in F}\alpha(f)(B)\right)_m=  T_F(\alpha,B)_m$$
for every $m\in\N_+$.
\end{proof}

\begin{lemma}\label{TEF}
Let $S$ be a right amenable semigroup, $A$ an abelian group, $S\overset{\alpha}\curvearrowright A$ a left action, and $F,F'\in\Pf(S)$.  
If $X$ is a subset of $A$, and $X'=X_{\card{F'}}$, then  
\begin{equation}\label{Neew}
T_{FF'}(X)\subseteq T_F(T_{F'}(X))\subseteq T_{FF'}(X').
\end{equation}
If $X$ is a subgroup of $A$, then $X' = X$ and $T_{FF'}(X)= T_F(T_{F'}(X)).$
\end{lemma}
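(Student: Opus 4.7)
The strategy is to unfold both Minkowski sums and exploit the product map $F \times F' \twoheadrightarrow FF'$. An element of $T_{FF'}(X)$ has the form $\sum_{t \in FF'} \alpha(t)(x_t)$ indexed by $FF'$, while unfolding $T_F(T_{F'}(X)) = \sum_{f \in F} \alpha(f)\bigl(\sum_{f' \in F'} \alpha(f')(X)\bigr)$ shows that its elements have the form $\sum_{(f,f') \in F \times F'} \alpha(ff')(x_{f,f'})$ indexed by $F \times F'$. The two indexings differ precisely because the product map can have fibers of size larger than one, so the two inclusions trade off the ``size'' of $X$ against a collapsing of indices.

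For the first inclusion $T_{FF'}(X) \subseteq T_F(T_{F'}(X))$, choose a section $\sigma\colon FF' \to F \times F'$ of the product map, writing $\sigma(t) = (\phi(t), \phi'(t))$ with $\phi(t)\phi'(t) = t$. Then $\sum_{t \in FF'} \alpha(t)(x_t) = \sum_{f \in F} \alpha(f)\bigl(\sum_{t \in \phi^{-1}(f)} \alpha(\phi'(t))(x_t)\bigr)$; since $t$ is determined by $(\phi(t),\phi'(t))$, the map $\phi'$ is injective on $\phi^{-1}(f)$, so each inner sum is a partial sum over a subset of $F'$, which extends to a full element of $T_{F'}(X)$ by appending zero contributions (using $0 \in X$, the standing convention $X \in \Pf^0(A)$). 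For the second inclusion $T_F(T_{F'}(X)) \subseteq T_{FF'}(X')$, group the pairs $(f,f')$ by the value $t = ff'$: with $P_t = \{(f,f') \in F \times F' : ff' = t\}$, the generic element rewrites as $\sum_{t \in FF'} \alpha(t)\bigl(\sum_{(f,f') \in P_t} x_{f,f'}\bigr)$. By right cancellativity of $S$, $|P_t| \leq |F'|$ (for each $f' \in F'$ at most one $f \in F$ satisfies $ff' = t$), so each inner sum lies in $X_{|P_t|} \subseteq X_{|F'|} = X'$.

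In the subgroup case, $X_m = X$ for every $m \geq 1$ gives $X' = X$; moreover both inclusions become equalities, since the zero padding in the first is automatic and the inner sums in the second already lie in $X$, yielding the reverse inclusion $T_F(T_{F'}(X)) \subseteq T_{FF'}(X)$. The main (mild) technical point is the implicit use of $0 \in X$ (needed both for the zero padding in the first inclusion and for $X_{|P_t|} \subseteq X_{|F'|}$ in the second), which is consistent with the convention $X \in \Pf^0(A)$ cofinal for entropy computations by Lemma~\ref{cofinal}; once granted, the rest is careful bookkeeping of the product map $F \times F' \twoheadrightarrow FF'$ and its fibers.
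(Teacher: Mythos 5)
Your proof is correct and follows essentially the same route as the paper's: both rewrite $T_F(T_{F'}(X))$ as $\sum_{(f,f')\in F\times F'}\alpha(ff')(X)$, compare the index sets $FF'$ and $F\times F'$ via the product map, and bound the fibre sizes by $\card{F'}$ using right cancellation. Your explicit attention to the need for $0\in X$ (for the zero-padding in the first inclusion and for $X_{\card{P_t}}\subseteq X_{\card{F'}}$ in the second) is a point the paper's proof leaves implicit, but otherwise the two arguments coincide.
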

\begin{proof} For a subset $X$ of $A$, we have that
$$T_{FF'}(X)=\sum_{g\in FF'}\alpha(g)(X)$$
and
$$
T_F(T_{F'}(X))=\sum_{f\in F}\alpha(f)\left(\sum_{f'\in F'}\alpha(f')(X)\right)=\sum_{f\in F}\sum_{f'\in F'}\alpha(f)(\alpha(f')(X))=\sum_{f\in F,f'\in F'}\alpha(ff')(X).
$$
Hence, $T_{FF'}(X)\subseteq T_F(T_{F'}(X))$.

To prove the second containment in Equation~\eqref{Neew} we use as above that $T_F(T_{F'}(X))=\sum_{f\in F,f'\in F'}\alpha(ff')(X).$ Since 
$$T_{FF'}(X')=\sum_{g\in FF'}\alpha(g)(X')=\sum_{g\in FF'}\alpha(g)(X)_{\card{F'}},$$
and for every $g\in FF'$, by the right cancellation property of $S$,
$$\card{V_g=\set{(f,f')\in F\times F': ff'=g}}\leq \card{F'},$$ we can conclude that $T_{F}(T_{F'}(X))\subseteq T_{FF'}(X')$.

If $X$ is a subgroup of $A$, then $X' = X$, so Equation~\eqref{Neew} yields $T_{FF'}(X)= T_F(T_{F'}(X)).$
\end{proof}

The following examples show that the containments in the above lemma can be strict.

\begin{example} Let $A = \Q$.
\begin{enumerate}[(a)]
\item Let $S = \Z$ and consider the left action $S\overset{\alpha}\curvearrowright A$ defined by $\alpha(n)(q) = 2^n q$ for every $n\in\Z$ and every $q\in\Q$.
Let $F = F' = \set{0,1}=X$.  Then $T_F(X)=\set{0,1,2,3}$ and $FF=\set{0,1,2}$, so
$$T_{FF}(X)=\set{0,\ldots,7}\subsetneq T_F(T_F(X))=\set{0,\ldots,9}\subsetneq T_{FF}(X') = \set{0,1, \ldots,14}.$$
\item Let $S= \mathrm{Aut}(\Q)$, and consider the left action $S\overset{\alpha}\curvearrowright A$ defined by $\alpha(\phi)(q)=\phi(q)$ for every $\phi\in S$ and every $q\in\Q$.
Let $F=F'= \set{\pm id_\Q}$ and $X=\set{0,\pm 1}$. Then $T_F(X)=X+X=\set{0,\pm 1,\pm 2}$ and $FF=F$, hence
$$T_{FF}(X)=T_F(X)=\set{0,\pm1,\pm2}\subsetneq T_F(T_F(X))=T_F(X)+T_F(X)=\set{0,\pm1,\pm2,\pm3,\pm4}.$$
\end{enumerate}
\end{example}

\begin{lemma}\label{HX=HTX}
Let $S$ be a cancellative right amenable monoid, $A$ an abelian group, and $S\overset{\alpha}\curvearrowright A$ a left action.
If $X\in\mathcal F(A)$ and $F \in \Pf^0(S)$, then $$H_{alg}(\alpha,X)=H_{alg}(\alpha,T_F(\alpha,X)).$$
\end{lemma}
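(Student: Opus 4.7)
The plan is to exploit the equality in Lemma~\ref{TEF} valid for subgroups, combined with the fact (Lemma~\ref{F_iF}(b)) that translating a right F\o lner net by a fixed finite set produces another right F\o lner net with asymptotically the same cardinality.

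Set $Y = T_F(\alpha, X)$. Since $X$ is a subgroup, the second part of Lemma~\ref{TEF} gives, for every $F' \in \Pf(S)$, the \emph{equality}
\[ T_{F'F}(\alpha, X) = T_{F'}(\alpha, T_F(\alpha, X)) = T_{F'}(\alpha, Y). \]
Fix a right F\o lner net $(F_i)_{i\in I}$ of $S$. Then, by Lemma~\ref{F_iF}(b), $(F_iF)_{i\in I}$ is also a right F\o lner net of $S$ and $\lim_{i\in I}\frac{|F_iF|}{|F_i|}=1$. Computing with this last net,
\[ H_{alg}(\alpha, X) = \lim_{i\in I}\frac{\ell(T_{F_iF}(\alpha,X))}{|F_iF|} = \lim_{i\in I}\frac{\ell(T_{F_i}(\alpha,Y))}{|F_i|}\cdot\frac{|F_i|}{|F_iF|} = H_{alg}(\alpha, Y)\cdot 1, \]
where the middle equality uses the identity $T_{F_iF}(\alpha,X) = T_{F_i}(\alpha,Y)$ from Lemma~\ref{TEF}, and the last equality computes $H_{alg}(\alpha, Y)$ using the original F\o lner net $(F_i)_{i\in I}$.

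There is no real obstacle here: the whole argument is just a careful rearrangement of limits enabled by the subgroup hypothesis (which upgrades the inclusion in Lemma~\ref{TEF} to an equality) and by the stability of the F\o lner property under right translation by the finite set $F$. The hypothesis $F\in\Pf^0(S)$ (i.e.\ $1\in F$) is not strictly needed for this chain of equalities; it only ensures, via $\alpha(1)=id_A$ and $0\in\alpha(s)(X)$ for all $s$, that $X\subseteq Y$, which by Lemma~\ref{cofinal}(a) gives the monotonicity inequality $H_{alg}(\alpha,X)\leq H_{alg}(\alpha,Y)$ as an independent check on the direction of the equality.
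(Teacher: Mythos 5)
Your proof is correct, and it takes a genuinely different (and slightly slicker) route than the paper's. The paper argues by two inequalities: $H_{alg}(\alpha,X)\leq H_{alg}(\alpha,T_F(\alpha,X))$ comes from monotonicity (Lemma~\ref{cofinal}(a)) via $X\subseteq T_F(\alpha,X)$ --- this is the only place where $1\in F$ is used --- while the reverse inequality is obtained by computing $H_{alg}(\alpha,T_F(\alpha,X))$ along the \emph{original} net $(F_i)_{i\in I}$, rewriting $T_{F_i}(T_F(X))$ as $T_{F_iF}(X)$ via Lemma~\ref{TEF}, and then bounding the excess $\ell(T_{F_jF}(X))-\ell(T_{F_j}(X))$ by $\card{(F_jF)\setminus F_j}\,\ell(X)$, which is $o(\card{F_j})$ by Lemma~\ref{F_iF}(b). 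You instead get the equality in a single chain by evaluating $H_{alg}(\alpha,X)$ along the translated net $(F_iF)_{i\in I}$; this is legitimate because Corollary~\ref{CCKLemmar} makes the limit independent of the chosen right F\o lner net, the identity $T_{F_iF}(X)=T_{F_i}(T_F(X))$ holds since $X$ is a subgroup, and both limits in your product are finite ($H_{alg}(\alpha,Y)\leq\ell(Y)<\infty$ by Lemma~\ref{rem:decreasing}), so splitting the limit is valid. What your approach buys: no $\eps$-bookkeeping on the boundary set $(F_iF)\setminus F_i$, and the correct observation that the hypothesis $1\in F$ is superfluous, since it is needed only for the monotonicity half that you bypass. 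In substance your argument is the identity $f_{T_F(\alpha,X)}=(f_X)^F$ combined with Lemma~\ref{fF}. The paper's two-sided argument has the minor virtue that its error-estimate technique is the one that recurs elsewhere (e.g.\ in the proof of the Addition Theorem), but for this lemma your route is cleaner.
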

\begin{proof} 
Since $X\subseteq T_F(X)$, we have that $H_{alg}(\alpha,X)\leq H_{alg}(\alpha, T_F(X))$ by Lemma~\ref{cofinal}(a).

To prove the converse inequality, let $(F_i)_{i \in I}$ be a right F\o lner net of $S$ and assume without loss of generality that $1\in F_i$ for every $i\in I$. By Lemma~\ref{TEF}, we have that $$H_{alg}(\alpha, T_F(X)) = \lim_{i\in I} \frac{\ell(T_{F_i}(T_F(X)))}{\card{F_i}}=\lim_{i\in I} \frac{\ell(T_{F_i F}(X))}{\card{F_i}}.$$

For every $i\in I$, let $D_i= (F_i F) \setminus F_i$. Fix $\eps > 0$. Since $(F_i)_{i \in I}$ is a right F\o lner net of $S$, $\lim_{i\in I}\frac{\card{D_i}}{\card{F_i}}=0$ by Lemma~\ref{F_iF}(b), and so there exists $J\subseteq I$ cofinal such that $\ell(X) \card{D_j} \leq \eps \card{F_j}$ for every $j\in J$. Thus, for every $j\in J$,
$$\ell(T_{F_j F} (X))  \leq \ell(T_{F_j}(X)) + \ell(T_{D_j}(X)) \leq \ell(T_{F_j}(X)) + \card{D_j} \ell(X) \leq \ell(T_{F_j}(X)) + \eps \card{F_j},$$
and so
$$\frac{\ell(T_{F_j F}(X))}{\card{F_j}} \leq \frac{\ell(T_{F_j}(X))}{\card{F_j}} + \eps.$$
Therefore, we get $$H_{alg}(\alpha, T_{F}(X)) \leq H_{alg}(\alpha, X) + \eps.$$ Since the above inequality is true for every $\eps > 0$, we have the conclusion.
\end{proof}

The above lemma does not hold true in case $X$ is not a subgroup of $A$:

\begin{example}\label{4x} 
Let $S=\N$, $A=\Z$, and consider the left action $S\overset{\alpha_\phi}{\curvearrowright}A$, where $\phi=m_4:\Z\to \Z$ is defined by $\phi(x)=4x$ for every $x\in\Z$. It is known (and straightforward to prove) that $h_{alg}(\phi)=\log4$ (see \cite{DGB0}).

Let $X=\set{0,1}$. Then $T_2(\phi,X)=X+4X=\set{0,1,4,5}$ and $$T_n(\phi,X)=\set{a_0+a_14+\ldots+a_{n-1}4^{n-1}: a_i\in\set{0,1}}$$ for every $n\in\N_+$.
Then $$\bigcup_{n\in\N_+}T_n(\phi,X)\subsetneq\Z=\langle X\rangle.$$

Fix $n\in\N_+$ and consider the map $$j_n:\set{0,1,2,3}^n\to \set{0,1,2,\ldots,4^n-1},\ (a_0,\ldots,a_{n-1})\mapsto a_0+a_14+\ldots+a_{n-1}4^{n-1},$$
which is easily verified to be a bijection; since $j_n(\set{0,1}^n)=T_n(\phi,X)$, we have that $\card{T_n(\phi,X)}=2^n$. Hence, $$H_{alg}(\phi,X)=\log 2.$$

We see now that, for $X'=T_2(\phi,X)=\set{0,1,4,5}$, $$H_{alg}(\phi,X')=\log3.$$ One can prove by induction that, for every $n\in\N_+$,
\begin{equation}\label{Tn4}
T_n(\phi,X')=\{b_0+4 b_1+
\ldots+4^{n-1} b_{n-1}+4^n b_n : b_1,\ldots ,b_{n-1}\in\{0,1,2\},b_0,b_n\in\{0,1\}\}.
\end{equation}
It implies that for every $n\in\N_+$, $$T_n(\phi,X')=j_n(\set{0,1}\times\set{0,1,2}^n\times\set{0,1}),$$
so that we can conclude that $$\card{T_n(\phi,X')}=4\cdot 3^n.$$ Clearly, this implies that $H_{alg}(\phi,X')=\log 3$.
\end{example}

\subsection{Computing entropy using generators}

Let $S$ be a cancellative right amenable semigroup, $A$ an abelian group, and $S\overset{\alpha}\curvearrowright A$ a left action. For a subset $X$ of $A$, the \emph{full $\alpha$-trajectory} of $X$ is $$T_S(\alpha,X)=\bigcup_{F\in\Pf(S)}T_F(\alpha,X).$$

If $X$ is a subgroup of $A$, then $T_F(\alpha,X)=\langle \alpha(s)(X): s\in F\rangle$ is a subgroup of $A$ for every $F\in\Pf(S)$, and so also $T_S(\alpha,X)=\langle \alpha(s)(X): s\in S\rangle$ is a subgroup of $A$. If furthermore $X\in\mathcal F(A)$, then $T_S(\alpha,X)\subseteq t(A)$.
On the other hand, if $X$ is a subset of $A$, in general $T_S(\alpha,X)$ could be strictly contained in $\langle \alpha(s)(X): s\in S\rangle$ as Example~\ref{4x} shows, so the condition $A=T_S(\alpha,X)$ is stronger than $A=\langle\alpha(s)(X): s\in S\rangle$.

\medskip
We recall that the action $S\overset{\alpha}\curvearrowright A$ induces on $A$ a structure of left $\Z[S]$-module. So, a subgroup $B$ of $A$ generates $A$ as a $\Z[S]$-module if and only if $A=\bigcup_{X\in\Pf^0(B)}T_S(\alpha,X)$.

\begin{proposition}\label{alphagensgi}
Let $S$ be a right amenable semigroup, $A$ an abelian group and $S\overset{\alpha}\curvearrowright A$. If $B$ is a subgroup of $A$ such that $t(B)$ generates $t(A)$ as a left $\Z[S]$-module, then 
\begin{equation}\label{****}
\ent(\alpha) = \sup\set{H_{alg}(\alpha, X): X \in\mathcal F(B)}.
\end{equation}
In particular, if $t(A)=T_S(\alpha,X)$ for some $X\in\mathcal F(A)$, then $\ent(\alpha)=H_{alg}(\alpha,X).$
\end{proposition}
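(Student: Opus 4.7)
The plan is to prove both inequalities separately. The inequality $\ent(\alpha)\geq\sup\{H_{alg}(\alpha,X):X\in\mathcal F(B)\}$ is immediate from $\mathcal F(B)\subseteq\mathcal F(A)$ and the definition of $\ent(\alpha)$. For the reverse inequality, by Equation~\eqref{entt} it suffices to bound $H_{alg}(\alpha,Y)$ for an arbitrary $Y\in\mathcal F(t(A))$ by some $H_{alg}(\alpha,X)$ with $X\in\mathcal F(B)$.

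The key reduction step is the following. Since $Y$ is finite, the hypothesis that $t(B)$ generates $t(A)$ as a left $\Z[S]$-module yields a finite subset $X_0\subseteq t(B)$ and $F\in\Pf^0(S)$ such that each $y\in Y$ is a $\Z$-linear combination of the elements $\{\alpha(s)(x):s\in F,\,x\in X_0\}$. Set $X=\langle X_0\rangle$; being a finitely generated subgroup of the torsion group $t(B)$, $X$ is a \emph{finite} subgroup, so $X\in\mathcal F(B)$. Because $A$ is abelian and $X$ is a subgroup, $T_F(\alpha,X)=\sum_{s\in F}\alpha(s)(X)$ is itself a subgroup of $A$ containing every $\alpha(s)(x)$ with $s\in F$, $x\in X_0$; hence it absorbs the $\Z$-linear combinations expressing elements of $Y$, giving $Y\subseteq T_F(\alpha,X)$. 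Applying Lemma~\ref{cofinal}(a) and Lemma~\ref{HX=HTX} (which is permitted since $X\in\mathcal F(A)$ and we may assume $1\in F$),
\[H_{alg}(\alpha,Y)\leq H_{alg}(\alpha,T_F(\alpha,X))=H_{alg}(\alpha,X)\leq\sup\{H_{alg}(\alpha,X'):X'\in\mathcal F(B)\}.\]
Taking the supremum over $Y$ proves Equation~\eqref{****}.

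For the \emph{in particular} statement, assume $t(A)=T_S(\alpha,X)$ for some $X\in\mathcal F(A)$ and take $B=X$. As $X$ is finite, $B=t(B)=X$, and $T_S(\alpha,X)$ is (by the discussion preceding the proposition) precisely the $\Z[S]$-submodule of $A$ generated by $X$ since $X$ is a subgroup; hence $t(B)$ generates $t(A)$ as a $\Z[S]$-module, and the part just proved applies. Every $Y\in\mathcal F(X)$ satisfies $Y\subseteq X$, so by Lemma~\ref{cofinal}(a) the supremum over $\mathcal F(X)$ is attained at $X$, yielding $\ent(\alpha)=H_{alg}(\alpha,X)$.

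The only slightly delicate point is the first reduction: one must simultaneously (i)~package the finitely many generators of $Y$ together with their finitely many $\Z[S]$-expressions so as to land in a \emph{common} finite set $F$ and \emph{finite} subgroup $X$, and (ii)~exploit that $T_F(\alpha,X)$ is a subgroup in order to absorb the $\Z$-linear combinations. Both follow at once from $t(B)$ being torsion and from $X$ being a subgroup, so no deeper ingredient is needed beyond Lemmas~\ref{cofinal} and~\ref{HX=HTX}.
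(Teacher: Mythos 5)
Your proof is correct and follows essentially the same route as the paper: reduce an arbitrary $Y\in\mathcal F(A)$ to an inclusion $Y\subseteq T_F(\alpha,X)$ for some $X\in\mathcal F(B)$ and $F\in\Pf^0(S)$, then apply Lemma~\ref{cofinal}(a) and Lemma~\ref{HX=HTX}. The only difference is cosmetic: you unpack the $\Z[S]$-generation hypothesis by hand (building the finite subgroup $X=\langle X_0\rangle$ inside the torsion group $t(B)$), whereas the paper invokes the equivalent formulation $t(A)=\bigcup_{X\in\mathcal F(B)}T_S(X)$ directly.
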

\begin{proof} 
By definition, $\ent(\alpha) \geq \sup\set{H_{alg}(\alpha, X)\colon X \in\mathcal F(B)}$. To prove the opposite inequality, let $Y\in\mathcal F(A)$. Since $$t(A)=\bigcup_{X\in\mathcal F(B)}T_S(X),$$ there exist $F\in\Pf^0(S)$ and $X\in\mathcal F(B)$ such that $Y\subseteq T_F(X)$. Then, by Lemma~\ref{cofinal}(a) and Lemma~\ref{HX=HTX}, 
\begin{equation}\label{176}
H_{alg}(\alpha, Y) \leq H_{alg}(\alpha,T_F(X))= H_{alg}(\alpha, X).
\end{equation}
As $Y\in\mathcal F(A)$ was chosen arbitrarily, this proves \eqref{****}.

If $t(A)=T_S(\alpha,X)$ for some $X\in\mathcal F(A)$, for every $Y\in\mathcal F(A)$ there exists $F\in\Pf^0(S)$ such that $Y\subseteq T_F(X)$, so the last assertion follows again from \eqref{176}.
\end{proof}

In case $X$ is a finite subset of an abelian groups $A$, it is not clear whether $A=T_S(\alpha,X)$ would imply $h_{alg}(\alpha)=H_{alg}(\alpha,X)$.
On the other hand, we have at least the following result.

\begin{proposition}
Let $S$ be a cancellative right amenable monoid, $A$ an abelian group, and $S\overset{\alpha}\curvearrowright A$ a let action. 
If $B$ is a subgroup of $A$ that generates $A$ as a left $\Z[S]$-module, then
$$h_{alg}(\alpha)=\sup\set{H_{alg}(\alpha,X): X\in \Pf^0(B)}.$$
\end{proposition}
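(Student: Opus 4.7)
The ``$\leq$'' inequality is immediate, since $\Pf^0(B) \subseteq \Pf(A)$ and $h_{alg}(\alpha)$ is by definition the supremum of $H_{alg}(\alpha, Y)$ over $Y \in \Pf(A)$.

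For the reverse inequality, I would fix an arbitrary $Y \in \Pf(A)$ and produce some $X \in \Pf^0(B)$ with $H_{alg}(\alpha, Y) \leq H_{alg}(\alpha, X)$. The first step is to invoke the hypothesis that $B$ generates $A$ as a left $\Z[S]$-module: every $y \in Y$ can be written as a finite $\Z$-linear combination of elements of the form $\alpha(s)(b)$ with $s \in S$ and $b \in B$. Absorbing the integer coefficients into $B$ via $\Z$-linearity of each $\alpha(s)$ and then collecting terms with the same $s$ (using that $B$ is closed under addition), I can rewrite $y = \sum_{s \in F_y} \alpha(s)(b_{y,s})$ for some $F_y \in \Pf(S)$ and $b_{y,s} \in B$. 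Setting $F = \bigcup_{y \in Y} F_y$ and $X_0 = \{0\} \cup \{b_{y,s} : y \in Y,\ s \in F_y\}$, and recalling that each $\alpha(s)$ is a group endomorphism (so $\alpha(s)(0)=0$), I obtain $Y \subseteq T_F(\alpha, X_0)$ with $X_0 \in \Pf^0(B)$.

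The next step is the asymptotic trajectory bound. Let $(F_i)_{i \in I}$ be any right F\o lner net of $S$. By monotonicity of the trajectory in its second argument and Lemma~\ref{TEF} (applied with the roles of $F,F'$ played by $F_i,F$),
\[T_{F_i}(\alpha, Y) \subseteq T_{F_i}(\alpha, T_F(\alpha, X_0)) \subseteq T_{F_i F}(\alpha, (X_0)_{\card F}).\]
By Lemma~\ref{F_iF}(b), $(F_i F)_{i \in I}$ is itself a right F\o lner net of $S$ and $\card{F_i F}/\card{F_i} \to 1$. Dividing by $\card{F_i}$ and passing to the limit therefore yields
\[H_{alg}(\alpha, Y) \leq \lim_{i \in I} \frac{\ell(T_{F_i F}(\alpha, (X_0)_{\card F}))}{\card{F_i}} = H_{alg}(\alpha, (X_0)_{\card F}).\]

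To conclude, I observe that $(X_0)_{\card F} \in \Pf^0(B)$ because $B$ is a subgroup, hence closed under the finite Minkowski sum; so the right-hand side is dominated by $\sup\{H_{alg}(\alpha, X) : X \in \Pf^0(B)\}$. Taking the supremum over $Y \in \Pf(A)$ delivers the desired inequality. The delicate point, and the place where it is tempting to slip, is that one cannot replace $T_F(\alpha, X_0)$ by $X_0$ directly via Lemma~\ref{HX=HTX}: that lemma requires $X_0$ to be a subgroup, and Example~\ref{4x} shows the analogous equality can fail for a mere finite set. The trick is that the inclusion produced by Lemma~\ref{TEF} enlarges $X_0$ to $(X_0)_{\card F}$, which is still finite and, crucially, still contained in $B$ precisely because $B$ is a subgroup.
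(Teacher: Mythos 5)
Your proof is correct and follows essentially the same route as the paper's: express $Y$ inside a trajectory $T_F(\alpha,X_0)$ with $X_0\in\Pf^0(B)$, enlarge $X_0$ to $(X_0)_{\card F}$ via Lemma~\ref{TEF} (which stays in $\Pf^0(B)$ because $B$ is a subgroup), and pass to the limit along the \Folner net $(F_iF)_{i\in I}$ using Lemma~\ref{F_iF}(b). The only difference is cosmetic: you unpack the $\Z[S]$-module generation directly instead of citing the reformulation $A=\bigcup_{X\in\Pf^0(B)}T_S(\alpha,X)$, and your argument does not need the paper's separate treatment of the case where $S$ is finite.
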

\begin{proof} 
If $S$ is finite, then $h_{alg}(\alpha)=\infty$ and $$\sup\set{H_{alg}(\alpha,X): X\in \Pf^0(B)}=\infty,$$ 
by Example~\ref{Esempio:AG}(a). So assume that $S$ is infinite and let $(F_i)_{i\in I}$ be a right F\o lner net of $S$. Clearly, 
$$h_{alg}(\alpha)\geq\sup\set{H_{alg}(\alpha,X): X\in \Pf^0(B)}.$$ 
To prove the converse inequality, let $Z\in\Pf^0(A)$. Since $A=\bigcup_{X\in\Pf^0(B)}T_S(\alpha,X)$, there exist $X\in\Pf^0(B)$ and $F\in\Pf^0(S)$ such that $Z\subseteq T_F(X)$.  Let $X'=X_{|F|}$; since $B$ is a subgroup of $A$, we have that $X'\in\Pf^0(B)$. 
Since $(F_iF)_{i\in I}$ is a right F\o lner sequence of $S$ by Lemma~\ref{F_iF}(b), and since $(F_i)_{i\in I}$ is strictly increasing, we have that 
$$H_{alg}(\alpha,X')=\lim_{i\in I}\frac{\ell(T_{F_iF}(X))}{\card{F_iF}}=\lim_{i\in I}\frac{\ell(T_{F_i F}(X))}{\card{F_i}}\cdot\frac{\card{F_i}}{\card{F_iF}}=\lim_{i\in I}\frac{\ell(T_{F_iF}(X))}{\card{F_i}}.$$
By Lemma~\ref{cofinal}(a) and Lemma~\ref{TEF},
$$H_{alg}(\alpha,Z)=\lim_{i\in I}\frac{\ell(T_{F_i}(Z))}{\card{F_i}}\leq \lim_{i\in I}\frac{\ell(T_{F_i}(T_F(X)))}{\card{F_i}}\leq \lim_{i\in I}\frac{\ell(T_{F_i F}(X'))}{\card{F_i}}=H_{alg}(\alpha,X').$$
Hence, we can conclude that $h_{alg}(\alpha)\leq\sup\set{H_{alg}(\alpha,X): X\in \Pf^0(B)}$.
\end{proof}

\begin{lemma}\label{lem:H+} Let $S$ be a cancellative right amenable monoid, $A$ an abelian group, $S\overset{\alpha}\curvearrowright A$ e left action, and $B, C\in\Pf(A)$. Then
\[H_{alg}(\alpha, B + C) \leq H_{alg}(\alpha, B) + H_{alg} (\alpha, C) \quad\text{and}\quad H_{alg}(\alpha, - B) = H_{alg}(\alpha, B).\]
\end{lemma}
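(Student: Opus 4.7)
The plan is to reduce both statements to properties of the trajectory sets $T_F(\alpha, -)$ that have essentially been recorded already, and then pass to the limit along a right F\o lner net.

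For the inequality $H_{alg}(\alpha, B+C) \leq H_{alg}(\alpha, B) + H_{alg}(\alpha, C)$, I would fix a right F\o lner net $(F_i)_{i\in I}$ of $S$ and, for each $i\in I$, apply Lemma~\ref{lemma?}(a) to obtain
\[
T_{F_i}(\alpha, B + C) = T_{F_i}(\alpha, B) + T_{F_i}(\alpha, C).
\]
Since the cardinality of a sumset in an abelian group is bounded by the product of the cardinalities of the summands, this gives
\[
\card{T_{F_i}(\alpha, B + C)} \leq \card{T_{F_i}(\alpha, B)} \cdot \card{T_{F_i}(\alpha, C)},
\]
and hence $f_{B+C}(F_i) \leq f_B(F_i) + f_C(F_i)$ after taking logarithms. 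Dividing by $\card{F_i}$ and passing to the limit (which exists for each of the three functions by Corollary~\ref{CCKLemmar} together with Lemma~\ref{OWconditions}) yields the desired inequality.

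For the equality $H_{alg}(\alpha, -B) = H_{alg}(\alpha, B)$, I would observe that for every $s \in S$ the map $\alpha(s)$ is a group endomorphism of $A$, so $\alpha(s)(-B) = -\alpha(s)(B)$; consequently
\[
T_F(\alpha, -B) = \sum_{s \in F} \alpha(s)(-B) = -\sum_{s \in F} \alpha(s)(B) = -T_F(\alpha, B)
\]
for every $F \in \Pf(S)$. Inversion is a bijection on $A$, hence $\card{T_F(\alpha, -B)} = \card{T_F(\alpha, B)}$ and so $f_{-B}(F) = f_B(F)$ for every $F$. The equality then follows directly from the definition of $H_{alg}$.

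Neither step presents a real obstacle; the substance of the lemma is entirely contained in the sumset identity from Lemma~\ref{lemma?}(a) and the fact that each $\alpha(s)$ is a homomorphism. The only point that needs a moment of care is noting that the sumset cardinality bound is a purely set-theoretic inequality that is preserved under $\log$, allowing the passage from the inclusion of trajectories to the subadditivity of $H_{alg}$.
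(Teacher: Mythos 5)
Your proof is correct and takes essentially the same route as the paper's, which simply asserts the pointwise inequalities $\ell(T_F(\alpha,B+C))\leq \ell(T_F(\alpha,B))+\ell(T_F(\alpha,C))$ and $\ell(T_F(\alpha,-B))=\ell(T_F(\alpha,B))$ and then divides by $\card{F_i}$ and passes to the limit; you merely make explicit the justification via Lemma~\ref{lemma?}(a), the sumset cardinality bound, and the fact that each $\alpha(s)$ is a homomorphism. (Note only that Lemma~\ref{lemma?}(a) is stated for subsets containing $0$, but its proof works verbatim for arbitrary $B,C\in\Pf(A)$, so this is harmless.)
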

\begin{proof}
For every $F \in\Pf(S)$,
\[\ell(T_F(\alpha, B +C)) \leq \ell(T_F(\alpha, B)) + \ell(T_F(\alpha, C))  \quad\text{and}\quad \ell(T_F(\alpha, -B)) = \ell(T_F(\alpha, B)).\]
Let $(F_i)_{i \in I}$ be a right \Folner net of $S$. Then
\[\frac{\ell(T_{F_i}(\alpha, B +C))}{\card{F_i}} \leq \frac{\ell(T_F(\alpha, B))}{\card{F_i}} + \frac{\ell(T_F(\alpha, C))}{\card{F_i}} \quad\text{and}\quad \frac{\ell(T_{F_i}(\alpha, -B))}{\card{F_i}} = \frac{\ell(T_{F_i}(\alpha, -B))}{\card{F_i}}.\]
Taking the limit as $i \to \infty$, we have the conclusion.
\end{proof}

The following is a useful technical consequence of the above lemma. 

\begin{lemma}\label{lem:halg-0}
Let $S$ be a cancellative right amenable monoid, $A$ an abelian group, and $S\overset{\alpha}\curvearrowright A$ a left action.
Let $B$ be a subset of $A$ generating $A$ as a group. For every $a \in A$, let $W_a= \set{0, a}\in\Pf^0(A)$.
If $H_{alg}(\alpha, W_b) = 0$ for every $b \in B$, then $\halg(\alpha) = 0$.
\end{lemma}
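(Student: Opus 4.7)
The plan is to reduce an arbitrary $X \in \Pf(A)$ to a sum of sets of the form $W_b$, using that $B$ generates $A$ as a group, and then apply the subadditivity given by Lemma~\ref{lem:H+}.

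First, I will upgrade the hypothesis to cover $-B$ as well: for every $b \in B$, note that $-W_b = \{0, -b\} = W_{-b}$, so by the second part of Lemma~\ref{lem:H+},
\[H_{alg}(\alpha, W_{-b}) = H_{alg}(\alpha, -W_b) = H_{alg}(\alpha, W_b) = 0.\]
Thus $H_{alg}(\alpha, W_c) = 0$ for every $c \in B \cup (-B)$.

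Next, I will encode finite subsets of $A$ as subset-sums. The key observation is that for any elements $c_1, \dots, c_k \in A$,
\[W_{c_1} + W_{c_2} + \cdots + W_{c_k} = \Bigl\{\, \sum_{j \in J} c_j : J \subseteq \{1, \dots, k\} \Bigr\}.\]
Fix $X \in \Pf(A)$. Since $B$ generates $A$ as a group, each $a \in X$ can be written as a finite signed sum $a = \sum_{j=1}^{k_a} \varepsilon_{a,j}\, b_{a,j}$ with $b_{a,j} \in B$ and $\varepsilon_{a,j} \in \{+1,-1\}$. Concatenating these expressions for all $a \in X$, I obtain a finite list $c_1, \dots, c_k \in B \cup (-B)$ such that every $a \in X$ is a partial sum $\sum_{j \in J_a} c_j$ for a suitable $J_a \subseteq \{1,\dots,k\}$. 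Consequently
\[X \subseteq W_{c_1} + W_{c_2} + \cdots + W_{c_k}.\]

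Finally, I apply monotonicity (Lemma~\ref{cofinal}(a)) and iterate the first inequality of Lemma~\ref{lem:H+}:
\[H_{alg}(\alpha, X) \leq H_{alg}\bigl(\alpha, W_{c_1} + \cdots + W_{c_k}\bigr) \leq \sum_{j=1}^{k} H_{alg}(\alpha, W_{c_j}) = 0.\]
Since $X \in \Pf(A)$ was arbitrary, $h_{alg}(\alpha) = \sup_{X \in \Pf(A)} H_{alg}(\alpha, X) = 0$, as required.

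No step here looks like a serious obstacle; the only point requiring a moment of care is handling the minus signs (to realize that the hypothesis on $B$ transfers to $B \cup (-B)$ via the symmetry $H_{alg}(\alpha, -B) = H_{alg}(\alpha, B)$), and the combinatorial bookkeeping to express a finite subset as a subset of a sum of two-element sets $W_{c_j}$.
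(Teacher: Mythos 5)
Your proof is correct and follows essentially the same route as the paper: express each element of a finite set $X$ as a signed sum of elements of $B$, deduce $X \subseteq \pm W_{b_1}+\ldots+\pm W_{b_m}$, and then combine the monotonicity of $H_{alg}(\alpha,-)$ with both parts of Lemma~\ref{lem:H+}. You merely spell out in more detail the sign-handling and the subset-sum bookkeeping that the paper leaves implicit.
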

\begin{proof}
Let $C \in\Pf^0(A)$. Since $B$ generates $A$, there exist $b_1, \dotsc, b_m \in B$ (each of the $b_i$ can appear more than once) such that
\[C \subseteq \pm W_{b_1} + \ldots + \pm W_{b_m}\]
By Lemma~\ref{lem:H+},
\[H_{alg}(\alpha, C) \leq H_{alg}(\alpha, W_1) + \ldots + H_{alg}(\alpha, W_m) = 0,\]
so we have the thesis.
\end{proof}

Following \cite{DGB,DGB0}, call an action $S\overset{\alpha}\curvearrowright A$ of a cancellative right amenable monoid $S$ on an abelian group $A$
\begin{enumerate}[(a)]
\item \emph{locally nilpotent} if  for every $a \in A$ there exists $s \in S$ such that $\alpha(s)(a) = 0$;
\item \emph{weakly locally nilpotent} if the same condition is satisfied for all elements $a$ taken from a set $B$ of generators of the group $A$. 
\end{enumerate}
If $S$ is commutative, then these two conditions are obviously equivalent. 

We show that the algebraic entropy of every weakly locally nilpotent action is zero (for $\N$-actions this can be found in \cite{DGB,DGB0}).

\begin{corollary}\label{cor:halg-annihilator}
Let $S$ be a cancellative right amenable monoid, $A$ an abelian group, and $S\overset{\alpha}\curvearrowright A$ a left action. If $\alpha$ is weakly locally nilpotent, then $\alpha$ is locally nilpotent, and $\halg(\alpha) = 0$.
\end{corollary}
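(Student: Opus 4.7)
The plan is to split the corollary into two parts: first, upgrade weak local nilpotence to local nilpotence using the multi-Ore property (Corollary~\ref{cor:multi-Ore}); second, reduce the entropy computation to the generators via Lemma~\ref{lem:halg-0} and then show $H_{alg}(\alpha, W_b)=0$ for each element $b$ of a witnessing generating set using the right F\o lner condition.

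For the first implication, fix a generating subset $B$ of $A$ witnessing weak local nilpotence: for every $b\in B$ there is $s_b\in S$ with $\alpha(s_b)(b)=0$. Given $a\in A$, write $a = \sum_{i=1}^k \eps_i b_i$ with $b_i\in B$ and $\eps_i\in\{-1,+1\}$. Apply Corollary~\ref{cor:multi-Ore} to $s_{b_1},\dotsc,s_{b_k}$ to obtain $t,r_1,\dotsc,r_k\in S$ with $t=r_is_{b_i}$ for each $i$. Since each $\alpha(r_i)$ is a group endomorphism, $\alpha(t)(b_i)=\alpha(r_i)(\alpha(s_{b_i})(b_i))=0$, and therefore $\alpha(t)(a)=0$, proving local nilpotence.

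For the vanishing of $\halg(\alpha)$, by Lemma~\ref{lem:halg-0} it suffices to verify that $H_{alg}(\alpha,W_b)=0$ for every $b\in B$, where $W_b=\{0,b\}$. Fix $b\in B$ and the corresponding $s_b$. The key observation is that whenever $f=gs_b$ for some $g\in S$, the relation $\alpha(f)=\alpha(g)\circ\alpha(s_b)$ forces $\alpha(f)(b)=0$. Consequently, for any $F\in\Pf(S)$,
\[T_F(\alpha,W_b)=\sum_{f\in F}\{0,\alpha(f)(b)\}\subseteq\sum_{f\in F\setminus Ss_b}\{0,\alpha(f)(b)\},\]
so $\card{T_F(\alpha,W_b)}\leq 2^{\card{F\setminus Ss_b}}$ and hence $\ell(T_F(\alpha,W_b))\leq\card{F\setminus Ss_b}\log 2$.

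It remains to control $\card{F_i\setminus Ss_b}/\card{F_i}$ along a right F\o lner net $(F_i)_{i\in I}$ of $S$. Since $F_is_b\subseteq Ss_b$, we have $F_i\setminus Ss_b\subseteq F_i\setminus F_is_b$, and by cancellativity $\card{F_i\setminus F_is_b}=\card{F_is_b\setminus F_i}$, which by Lemma~\ref{Feq} is $o(\card{F_i})$. Therefore
\[H_{alg}(\alpha,W_b)=\lim_{i\in I}\frac{\ell(T_{F_i}(\alpha,W_b))}{\card{F_i}}\leq\lim_{i\in I}\frac{\card{F_i\setminus Ss_b}\log 2}{\card{F_i}}=0,\]
so Lemma~\ref{lem:halg-0} yields $\halg(\alpha)=0$. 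I expect no serious obstacle here; the only subtle point is recognizing that right F\o lner-ness encodes exactly the required statement that almost every element of $F_i$ lies in the left ideal $Ss_b$, which is the bridge between the algebraic hypothesis (one $s_b$ kills $b$) and the combinatorial averaging.
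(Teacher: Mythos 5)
Your proof is correct. The first half (local nilpotence via Corollary~\ref{cor:multi-Ore}) and the reduction to $\Halg(\alpha,W_b)=0$ via Lemma~\ref{lem:halg-0} are exactly what the paper does. The only divergence is in how you kill $\Halg(\alpha,W_b)$: the paper observes that $(F_i s_b)_{i\in I}$ is again a right \Folner net (Lemma~\ref{F_iF}) and that $T_{F_i s_b}(\alpha,W_b)=\set{0}$ identically, so the limit is $0$ with no estimate at all, exploiting the independence of $\mathcal H_S$ from the choice of net. You instead stay on the original net and bound $\ell(T_{F_i}(\alpha,W_b))\leq \card{F_i\setminus Ss_b}\log 2\leq\card{F_i s_b\SDiff F_i}\log 2$, then invoke Lemma~\ref{Feq}. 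Both arguments rest on the same observation that $\alpha(gs_b)(b)=0$ for all $g\in S$; the paper's version is shorter because the translated trajectory is literally trivial, while yours is more self-contained in that it never needs to change the \Folner net and makes explicit that only the ``boundary'' $F_i\setminus F_i s_b$ can contribute to the trajectory. Each step of your estimate checks out (in particular $F_i\setminus Ss_b\subseteq F_i\setminus F_is_b$ and $\card{F_i\setminus F_is_b}=\card{F_is_b\setminus F_i}$ by right cancellativity), so there is no gap.
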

\begin{proof}
Let $B$ be a subset generating $A$ as a group and witnessing the weak local nilpotency, i.e., such that for every $b \in B$, there exists $s \in S$ such that $\alpha(s)(b) = 0$. Fix $b \in B$ and let $s \in S$ such that $\alpha(s)(b) = 0$.

To show that $\alpha$ is locally nilpotent, let $a \in A$. There exist $b_1, \dotsc ,b_k \in B$ such that $a = b_1 + \dotsc+ b_k$. For every $i\in\{1,\ldots,k\}$, let $s_i \in S$ such that $\alpha(s_i)(b_i) = 0$. By Corollary~\ref{cor:multi-Ore}, there exist $t, r_1, \dotsc, r_k \in S$ such that $t = r_1 s_1 = \ldots = r_k s_k$. For every $i\in\{1,\ldots,k\}$, we have that
\[\alpha(t)(b_i) = \alpha(r_i s_i)(b_i) = \alpha(r_i)(\alpha(s_i)(b_i)) = \alpha(r_i)(0) = 0.\]
Therefore, $$\alpha(t)(a) = \sum_{i=1}^k\alpha(t)(b_i) = 0,$$ that shows that $\alpha$ is locally nilpotent.

Let us prove now that $\halg(\alpha) = 0$. By Lemma~\ref{lem:halg-0}, it suffices to show that $\Halg(\alpha,W_b) = 0$ for every $b\in B$. Let $(F_i)_{i \in I}$ be a right \Folner net of $S$. Then $(F_i s)_{i \in I}$ is also a right \Folner net of $S$ by Lemma~\ref{F_iF}. Moreover, $T_{F_i s}(\alpha, W_b) = \set{0}$. Therefore,
\[\Halg(\alpha,W_b) = \lim_{i \in I} \frac{\ell(T_{F_i s}(\alpha, W_b))}{\card{F_i s}} = 0,\]
that concludes the proof.
\end{proof}

No group $S$ admits a weakly locally nilpotent action. Indeed, if $s \in S$ is invertible and $0 \neq a \in A$, then $\alpha(s)(a) \neq 0$.

\section{Entropy of restriction and quotient actions of amenable group actions}\label{restr:quot:sec}

\subsection{Entropy of restriction actions}

Let $G$ be a cancellative right amenable monoid, $A$ an abelian group, and $G\overset{\alpha}\curvearrowright A$ a left action. If $H$ is a submonoid of $G$ we call \emph{restriction action} of $\alpha$ the left action $H\overset{\alpha\restriction_H}\curvearrowright A$ induced by $\alpha$. 
In this section we point out some basic relation between the algebraic entropy of $\alpha$ and the algebraic entropy of $\alpha\restriction_H$. 

\medskip
The next example shows immediately that the restriction action of an action of finite (actually, zero) algebraic entropy may have infinite algebraic entropy.

\begin{example}\label{exconj}
Let $G$ be an infinite amenable monoid, $A$ an infinite abelian group, and consider the trivial action $G\overset{\tau}{\curvearrowright}A$. By Example~\ref{Esempio:AG}(b), we have that $$h_{alg}(\tau)=\ent(\tau)=0,$$ while for $H=\{1\}$, $$h_{alg}(\tau\restriction_H)=\infty$$ and $\ent(\tau\restriction_H)=\infty$ when $t(A)$ is infinite.
\end{example}

For a non-trivial example with $h_{alg}(\alpha)=0$ while $h_{alg}(\alpha\restriction_H)=\infty$ see Example~\ref{gamma=alpha}.

\medskip
Next we give a result for monoids showing that many actions of $\N^d$ have zero  algebraic entropy (see Corollary~\ref{Coro:June12}).
We will state many other results on the vanishing of the algebraic entropy for amenable group actions after Theorem~\ref{teo:submonoid}.

\begin{lemma}\label{N2N}
Let $G$ and $H$ be infinite countable cancellative right amenable monoids, $A$ an abelian group and consider the left action $G\times H\overset{\alpha}\curvearrowright A$. If $h_{alg}(\alpha\restriction_G)<\infty$, then $h_{alg}(\alpha)=0$.
\end{lemma}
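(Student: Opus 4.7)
The plan is to use the Fubini-type theorem (Theorem~\ref{Fubini}) applied to the canonical projection $\pi : G\times H \to H$, for which $\sigma : h \mapsto (1,h)$ is a good section and $\pi^{-1}(1) = G\times\{1\}\cong G$, all of which are cancellative right amenable monoids. Fix $X \in \Pf(A)$; the goal is to prove $H_{alg}(\alpha,X) = 0$, which suffices since $X$ is arbitrary.

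Let $\beta = \alpha\restriction_G$ and $\gamma = \alpha\restriction_H$. The first step is to unravel $\Theta_\sigma(f_X)$. For $E \in \Pf(H)$ and $F\subseteq G\times\{1\}$ of the form $F = F'\times\{1\}$ with $F'\in\Pf(G)$, we have $F\sigma(E) = F'\times E$, so using the fact that $\beta(g)$ and $\gamma(h)$ commute,
\[
f_X^{\sigma(E)}(F) = \ell\bigl(T_{F'\times E}(\alpha, X)\bigr) = \ell\bigl(T_{F'}(\beta, Y_E)\bigr),
\]
where $Y_E = T_E(\gamma, X) \in \Pf(A)$. Passing to the limit along a right F\o lner net of $N \cong G$ gives
\[
\Theta_\sigma(f_X)(E) \;=\; \mathcal H_N\bigl(f_X^{\sigma(E)}\bigr) \;=\; H_{alg}(\beta, Y_E).
\]

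The crucial observation is that, by the hypothesis $h_{alg}(\beta) < \infty$ and the very definition of $h_{alg}$, we have
\[
\Theta_\sigma(f_X)(E) \;=\; H_{alg}(\beta, Y_E) \;\leq\; h_{alg}(\beta) \;=\; h_{alg}(\alpha\restriction_G) \;<\; \infty
\]
uniformly in $E$. Thus $\Theta_\sigma(f_X) \in \mathcal S(H)$ (by Lemma~\ref{pre-Fubini}) is a bounded function on $\Pf(H)$. Since $H$ is infinite, Lemma~\ref{constantFubini}(b) yields $\mathcal H_H(\Theta_\sigma(f_X)) = 0$. By Theorem~\ref{Fubini},
\[
H_{alg}(\alpha, X) \;=\; \mathcal H_{G\times H}(f_X) \;=\; \mathcal H_H\bigl(\Theta_\sigma(f_X)\bigr) \;=\; 0,
\]
which completes the argument.

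The only delicate point is checking the identity $\Theta_\sigma(f_X)(E) = H_{alg}(\beta, Y_E)$ rigorously, i.e., that the internal limit along a F\o lner net of $G\times\{1\}$ really computes $H_{alg}(\beta, Y_E)$ for the finite set $Y_E$. This follows from the description of right F\o lner nets of $G\times\{1\}$ as images of right F\o lner nets of $G$ under $g \mapsto (g,1)$ and the commutation $\alpha(g,1)\alpha(1,h) = \alpha(1,h)\alpha(g,1)$, which lets us rewrite $T_{F'\times E}(\alpha, X) = T_{F'}(\beta, T_E(\gamma, X))$. Once this is in place, the rest is a direct application of the two lemmas cited.
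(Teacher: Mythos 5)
Your argument is correct, but it takes a different route from the paper's. The paper proves Lemma~\ref{N2N} directly: it takes right F\o lner sequences $(F_n)$ of $G$ and $(E_m)$ of $H$, uses Lemma~\ref{Fnprod} to get the product F\o lner net $(F_n\times E_m)$ of $G\times H$, rewrites $T_{F_n\times E_m}(\alpha,X)=T_{F_n}(\alpha\restriction_G,T_{E_m}(\alpha\restriction_H,X))$ exactly as you do, and then evaluates the double limit as an iterated limit: for fixed $m$ the inner limit over $n$ is $H_{alg}(\alpha\restriction_G,T_{E_m}(\alpha\restriction_H,X))\leq h_{alg}(\alpha\restriction_G)<\infty$, and dividing by $\card{E_m}\to\infty$ kills it. You instead route the same underlying idea through the integration machinery: the projection $\pi:G\times H\to H$ with the good section $h\mapsto(1,h)$, the identification $\Theta_\sigma(f_X)(E)=H_{alg}(\alpha\restriction_G,T_E(\alpha\restriction_H,X))$, boundedness of $\Theta_\sigma(f_X)$ by $h_{alg}(\alpha\restriction_G)$, Lemma~\ref{constantFubini}(b), and Theorem~\ref{Fubini}. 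All the hypotheses you need are in place ($G\times H$ and $H$ are cancellative right amenable monoids, the canonical section is good by Examples~\ref{ex:good}(a), and $\Theta_\sigma(f_X)\in\mathcal S(H)$ by Lemma~\ref{pre-Fubini}), so the proof is sound; in fact it is essentially the argument the paper deploys later for the more general Theorem~\ref{teo:submonoid}, specialized to a direct product. What the paper's version buys is economy — it only needs Lemma~\ref{Fnprod} and the Ornstein--Weiss limit, not the full Fubini apparatus (whose proof rests on canonically indexed F\o lner nets and Theorem~\ref{lem:double-limit}) — which is presumably why the authors give the elementary proof here and reserve the heavy machinery for the normal-subgroup case where no product F\o lner net is available. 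What your version buys is that it generalizes immediately beyond direct products and makes transparent why the conclusion is $0$ rather than merely $\leq h_{alg}(\alpha\restriction_G)$.
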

\begin{proof}
Let $(F_n)_{n\in \N}$ and $(E_m)_{m\in \N}$ be right F\o lner sequences of $G$ and $H$ respectively. Then $(F_n\times E_m)_{n,m\in\N}$ is a right F\o lner net of $G\times H$ by Lemma~\ref{Fnprod}.
Let $X\in\Pf^0(A)$. For every $n,m\in\N$, $$T_{F_n\times E_m}(\alpha,X)=T_{F_n}(\alpha\restriction_{G},T_{E_m}(\alpha\restriction_H,X)).$$
Fixed $m\in\N$, we have that 
$$\lim_{n\to \infty}\frac{\ell(T_{F_n}(\alpha\restriction_{G},T_{E_m}(\alpha\restriction_H,X)))}{|F_n|}=H_{alg}(\alpha,\restriction_G,T_{E_m}(\alpha\restriction_H,X))\leq h_{alg}(\alpha\restriction_G)<\infty.$$
Therefore,
$$H_{alg}(\alpha,X)=\lim_{n,m\to \infty}\frac{\ell(T_{F_n}(\alpha\restriction_{G},T_{E_m}(\alpha\restriction_H,X)))}{\card{F_n}\card{E_m}}\leq \lim_{m\to\infty}\frac{h_{alg}(\alpha\restriction_G)}{\card{E_m}}=0,$$
and we conclude that $h_{alg}(\alpha)=0$.
\end{proof}

\begin{corollary}
Let $G$ and $H$ be infinite countable cancellative right amenable monoids, $A$ an abelian group and consider the left action $G\times H\overset{\alpha}\curvearrowright A$. Then $h_{alg}(\alpha)\leq h_{alg}(\alpha\restriction_G)$.
\end{corollary}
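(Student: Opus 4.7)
The plan is to derive the corollary as an immediate consequence of Lemma~\ref{N2N} by a trivial case distinction on the finiteness of $h_{alg}(\alpha\restriction_G)$. Since the conclusion of the corollary is an inequality rather than the stronger equality $h_{alg}(\alpha) = 0$ appearing in the lemma, we do not need any new computation — the lemma already furnishes the strongest possible bound whenever its hypothesis is satisfied.

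First I would treat the case $h_{alg}(\alpha\restriction_G) = \infty$. Here there is nothing to prove, because the inequality $h_{alg}(\alpha) \leq \infty$ holds tautologically (recall that the algebraic entropy takes values in $[0, \infty]$ by Definition~\ref{def:entropy}).

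Next I would treat the case $h_{alg}(\alpha\restriction_G) < \infty$. Under this hypothesis Lemma~\ref{N2N} applies directly and yields $h_{alg}(\alpha) = 0$. Since $h_{alg}(\alpha\restriction_G) \geq 0$ by definition, we conclude $h_{alg}(\alpha) = 0 \leq h_{alg}(\alpha\restriction_G)$, as required.

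There is no genuine obstacle here — the corollary is a rephrasing of the lemma in a form that covers both the finite and the infinite case uniformly. The only minor bookkeeping point is to make sure the corollary is stated symmetrically enough that swapping the roles of $G$ and $H$ in the lemma is unnecessary, which is the case as stated. I would therefore write the proof as two short sentences, the first handling the trivial infinite case and the second invoking Lemma~\ref{N2N} in the finite case.
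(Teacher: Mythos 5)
Your proof is correct and is essentially identical to the paper's own: both split on whether $h_{alg}(\alpha\restriction_G)$ is infinite (trivial case) or finite (apply Lemma~\ref{N2N} to get $h_{alg}(\alpha)=0$). Nothing further is needed.
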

\begin{proof}
If $h_{alg}(\alpha\restriction_G)=\infty$ clearly $h_{alg}(\alpha)\leq h_{alg}(\alpha\restriction_G)=\infty$, and when $h_{alg}(\alpha\restriction_G)<\infty$ Lemma~\ref{N2N} gives $h_{alg}(\alpha)=0\leq h_{alg}(\alpha\restriction_G)$.
\end{proof}


\begin{corollary}\label{N2Ncor}
Let $d>1$, $A$ an abelian group and $\N^d\overset{\alpha}\curvearrowright A$ a left action. If $h_{alg}(\alpha(e_i))<\infty$ for some $e_i=(0,\ldots,0,1,0,\ldots,0)\in\N^d$ where $1$ is in the $i$-th entry, then $h_{alg}(\alpha)=0$.
\end{corollary}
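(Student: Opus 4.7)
The plan is to reduce this corollary to Lemma~\ref{N2N} by splitting off the $i$-th coordinate of $\N^d$ as the $G$-factor. Up to a coordinate permutation (which is an automorphism of $\N^d$, so preserves algebraic entropy by Lemma~\ref{alphaeta}), I can assume $i=1$ and write $\N^d = G \times H$ with $G = \N \times \{0\}^{d-1} \cong \N$ and $H = \{0\}\times\N^{d-1} \cong \N^{d-1}$. Since $d > 1$, both $G$ and $H$ are infinite countable cancellative right amenable monoids (they are commutative), so the hypotheses of Lemma~\ref{N2N} on the ambient monoids are met.

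Next I would show that $h_{alg}(\alpha\restriction_G) = h_{alg}(\alpha(e_i))$. Under the identification $\eta: \N \to G$, $n \mapsto n e_i$, the restriction action $\alpha\restriction_G$ corresponds to the $\N$-action $n \mapsto \alpha(ne_i) = \alpha(e_i)^n$, which is exactly the action $\alpha_{\alpha(e_i)}$ of Remark~\ref{Naction}. By Proposition~\ref{conju} (weak conjugacy via $\eta$ and $\xi = id_A$), we get
\[
h_{alg}(\alpha\restriction_G) = h_{alg}(\alpha_{\alpha(e_i)}) = h_{alg}(\alpha(e_i)) < \infty
\]
by hypothesis.

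Finally, applying Lemma~\ref{N2N} to the decomposition $\N^d = G \times H$ yields $h_{alg}(\alpha) = 0$, as required. There is no real obstacle here since everything is already in place; the only minor care needed is to verify that the coordinate permutation reducing to $i=1$ truly acts by an automorphism of $\N^d$ (it does, as the symmetric group $\mathrm{Sym}(d)$ permutes the canonical generators $e_1,\dotsc,e_d$), and that the identification of $\alpha\restriction_G$ with the cyclic action generated by $\alpha(e_i)$ fits exactly into the weak conjugacy framework of Proposition~\ref{conju}.
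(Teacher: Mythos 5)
Your proof is correct and is exactly the intended derivation: the paper states Corollary~\ref{N2Ncor} as an immediate consequence of Lemma~\ref{N2N}, obtained by splitting $\N^d$ into the $i$-th factor $G\cong\N$ and its complement $H\cong\N^{d-1}$ and identifying $h_{alg}(\alpha\restriction_G)$ with $h_{alg}(\alpha(e_i))$ via Remark~\ref{mayday}(a). The only cosmetic point is that Lemma~\ref{alphaeta} is stated for groups, whereas here the coordinate permutation of the monoid $\N^d$ is justified directly by Proposition~\ref{conju} (and is in fact unnecessary, since the direct-product decomposition is symmetric in its factors).
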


\begin{corollary}\label{Coro:June12}
Let $d>1$, $A$ a torsion-free abelian group of finite rank and $\N^d\overset{\alpha}\curvearrowright A$ a left action. Then $h_{alg}(\alpha)=0$.
\end{corollary}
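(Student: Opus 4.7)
The plan is to apply Corollary~\ref{N2Ncor}, which, since $d>1$, reduces the claim $h_{alg}(\alpha)=0$ to verifying that $h_{alg}(\alpha(e_1))<\infty$ for a single standard basis vector. By Remark~\ref{mayday}(a) together with Remark~\ref{Naction}, the number $h_{alg}(\alpha(e_1))$ coincides with $h_{alg}(\alpha\restriction_{\langle e_1\rangle})$, namely with the classical algebraic entropy of the single endomorphism $\phi := \alpha(e_1):A\to A$ viewed as an $\N$-action. Thus the whole task collapses to showing that a single endomorphism of a torsion-free abelian group of finite rank has finite algebraic entropy.

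To establish this, I would pass from $A$ to its rationalization. Since $A$ is torsion-free of rank $n$, the canonical map $A\hookrightarrow V:=A\otimes_\Z\Q\cong\Q^n$ is injective, and $\phi$ extends to the $\Q$-linear endomorphism $\tilde\phi=\phi\otimes\mathrm{id}_\Q$ on $V$; the image of $A$ is a $\tilde\phi$-invariant subgroup of $V$. Applying the restriction monotonicity of $h_{alg}$ (Proposition~\ref{restriction_quotient}) to the $\N$-actions induced by $\tilde\phi$ on $V$ and $\phi$ on $A$, one obtains
\[ h_{alg}(\phi)\leq h_{alg}(\tilde\phi). \]
Therefore it will suffice to show that $h_{alg}(\tilde\phi)<\infty$ for an arbitrary $\Q$-linear endomorphism of the finite-dimensional $\Q$-vector space $\Q^n$, whence Corollary~\ref{N2Ncor} delivers $h_{alg}(\alpha)=0$.

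The finiteness of $h_{alg}$ for an endomorphism of $\Q^n$ is part of the classical theory of algebraic entropy for $\N$-actions on torsion-free abelian groups of finite rank; an explicit expression involving the Mahler measure of the characteristic polynomial of $\tilde\phi$ (with corrections accounting for the denominators of a matrix representation) is available in \cite{DGB,DGB0,GBV}. This finiteness step is the main potential obstacle: it is not automatic since $\Q^n$ has infinite $\Z$-rank as an additive group, so one cannot simply bound $|T_k(\tilde\phi,X)|$ by the size of an ambient finitely generated subgroup. A self-contained argument would fix $X\in\Pf^0(\Q^n)$, represent $\tilde\phi$ by a matrix $M\in M_n(\Q)$ with common denominator $q\in\N$, and then use Cayley--Hamilton together with denominator-tracking to bound $|T_k(\tilde\phi,X)|$ by $c\cdot r^k$ for suitable constants $c,r$ depending on $X$, $M$, and $q$; this yields $H_{alg}(\tilde\phi,X)\leq\log r<\infty$, so that $h_{alg}(\tilde\phi)<\infty$. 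Invoking the existing results is substantially cleaner, and completes the proof.
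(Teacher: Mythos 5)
Your proposal is correct and follows essentially the same route as the paper: reduce via Corollary~\ref{N2Ncor} to the finiteness of the algebraic entropy of a single endomorphism, extend that endomorphism to the divisible hull $\Q^n$, and invoke the algebraic Yuzvinski Formula for the finiteness on $\Q^n$. The only (harmless) difference is that you use the inequality $h_{alg}(\phi)\leq h_{alg}(\widetilde\phi)$ from Proposition~\ref{restriction_quotient}, whereas the paper cites the equality $h_{alg}(\widetilde\phi)=h_{alg}(\phi)$ from the literature; either suffices.
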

\begin{proof}
In view of Corollary~\ref{N2Ncor}, it is enough to check that no endomorphism of $A$ may have infinite algebraic entropy. In fact, every endomorphism 
$\phi$ of $A$ extends to an endomorphism $\widetilde \phi$  of the divisible hull $D\cong\Q^k$, with $k\leq n$, of $A$, and $\halg(\widetilde \phi) = \halg(\phi)$ (see \cite{DGB0}). 
Moreover, all endomorphisms of $\Q^k$ have finite algebraic entropy by the algebraic Yuzvinski Formula (see \cite{GBV2}).
\end{proof}

From now on we consider amenable group actions and their restriction actions.
The following is a generalization of what is called Logarithmic Law in the case $G=\mathbb Z$.

\begin{proposition}\label{finind} 
Let $G$ be an amenable group, $A$ an abelian group, and $G\overset{\alpha}{\curvearrowright}A$ a left action. If $H$ is a subgroup of $G$ of finite index $[G:H]=k$, then 
$$h_{alg}(\alpha\restriction_H)=k\cdot h_{alg}(\alpha)\quad\text{and}\quad\ent(\alpha\restriction_H)=k\cdot \ent(\alpha).$$
In particular:
\begin{enumerate}[(a)]
\item $h_{alg}(\alpha)=0$ if and only if $h_{alg}(\alpha\restriction_H)=0$, and $\ent(\alpha)=0$ if and only if $\ent(\alpha\restriction_H)=0$;
\item $h_{alg}(\alpha)=\infty$ if and only if $h_{alg}(\alpha\restriction_H)=\infty$, and $\ent(\alpha)=\infty$ if and only if $\ent(\alpha\restriction_H)=\infty$.
\end{enumerate}
\end{proposition}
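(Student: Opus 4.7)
The plan is to link F\o lner nets of $G$ and of $H$ through a fixed set of coset representatives, and then use a short distributivity computation to convert algebraic entropy along $\alpha$ into algebraic entropy along $\alpha\rest_H$ up to the constant $k$. Note that $H$ is itself amenable, as a finite-index subgroup of the amenable group $G$, so the quantities $h_{alg}(\alpha\rest_H)$ and $\ent(\alpha\rest_H)$ are defined. I would fix right coset representatives $g_1 = 1, g_2, \dotsc, g_k$ of $H$ in $G$ and set $E = \set{g_1,\dotsc,g_k} \in \Pf^0(G)$, so that $G = \bigsqcup_{i=1}^k H g_i$.

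The first step is to verify that, given any right F\o lner net $(F_j)_{j\in J}$ of $H$, the net $(F_j E)_{j\in J}$ is a right F\o lner net of $G$ with $\card{F_j E} = k\card{F_j}$. Disjointness of $F_j E = \bigsqcup_i F_j g_i$ follows from $F_j g_i \subseteq H g_i$. For the F\o lner condition with respect to $g \in G$, writing $g_i g = h_i g_{\sigma(i)}$ with $h_i \in H$ and $\sigma$ a permutation of $\set{1,\dotsc,k}$ (obtained because each $g_i g$ lies in a unique coset $H g_{\sigma(i)}$, and $i \mapsto \sigma(i)$ is a bijection), one organizes $F_j E g$ and $F_j E$ coset by coset and obtains
\[\card{F_j E g \SDiff F_j E} = \sum_{i=1}^k \card{F_j h_i \SDiff F_j},\]
which divided by $k\card{F_j}$ tends to $0$ since $(F_j)$ is a right F\o lner net of $H$.

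The second step is the key identity: for every $X \in \Pf(A)$,
\[T_{F_j E}(\alpha, X) = T_{F_j}\Pa{\alpha\rest_H,\, T_E(\alpha, X)},\]
which follows from $\alpha(h g_i) = \alpha(h)\circ \alpha(g_i)$ and the compatibility of the group homomorphisms $\alpha(h)$ with Minkowski sums. Dividing $\ell$ of both sides by $\card{F_j E} = k\card{F_j}$ and passing to the limit yields $k \cdot H_{alg}(\alpha, X) = H_{alg}\Pa{\alpha\rest_H,\, T_E(\alpha, X)}$, and taking the sup over $X \in \Pf(A)$ immediately gives $k \cdot h_{alg}(\alpha) \leq h_{alg}(\alpha\rest_H)$. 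For the reverse inequality, by Lemma~\ref{cofinal}(b) one can compute $h_{alg}(\alpha\rest_H)$ over $Y \in \Pf^0(A)$ only; then $0 \in Y$ and $g_1 = 1$ ensure $Y = \alpha(g_1)(Y) \subseteq T_E(\alpha, Y)$, so
\[H_{alg}(\alpha\rest_H, Y) \leq H_{alg}\Pa{\alpha\rest_H,\, T_E(\alpha, Y)} = k \cdot H_{alg}(\alpha, Y) \leq k \cdot h_{alg}(\alpha),\]
producing $h_{alg}(\alpha\rest_H) \leq k \cdot h_{alg}(\alpha)$. The analogue for $\ent$ works identically, after observing that for $Y \in \mathcal F(A)$ the set $T_E(\alpha, Y) = \sum_i \alpha(g_i)(Y)$ is again a finite subgroup of $A$, being a finite sum of images of $Y$ under the group automorphisms $\alpha(g_i)$. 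Items (a) and (b) are immediate consequences of the resulting multiplicative equalities, since $k$ is a finite positive integer.

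The main obstacle is the F\o lner statement in the first step: proving that $(F_j E)$ is a right F\o lner net of \emph{all} of $G$. None of the general results in \S\ref{background} applies directly, since $F_j \subseteq H$ does not generate $G$ (so Lemma~\ref{S->G}(b) is not available), so this has to be done by the explicit coset-permutation computation sketched above; everything downstream is a short calculation.
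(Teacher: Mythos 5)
Your proof is correct and follows essentially the same route as the paper: fix a transversal $L$ of the right cosets (your $E$), pass from a right F\o lner net $(F_j)$ of $H$ to the net $(F_jL)$ of $G$ with $\card{F_jL}=k\card{F_j}$, and exploit the exact identity $T_{F_jL}(\alpha,X)=T_{F_j}(\alpha\rest_H,T_L(\alpha,X))$ together with monotonicity for the two inequalities. The only difference is that the paper justifies ``$(F_jL)$ is a right F\o lner net of $G$'' by invoking Lemma~\ref{F_iF}(b) (which, as stated, assumes the given net is already a F\o lner net of the ambient group), whereas you supply the explicit coset-permutation verification --- a point in your favour, not a divergence of method.
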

\begin{proof}
Let $(F_i)_{i\in I}$ be a right F\o lner net of $H$. Let $1\in L\subseteq G$ be a transversal of the family $G/H=\{Hg:g\in G\}$ of the right cosets of $H$ in $G$; in particular $|L|=k$, so we can write $L=\{t_1,t_2,\ldots,t_k\}$. By Lemma~\ref{F_iF}(b), $(F_iL)_{i\in I}$ is a right F\o lner net of $G$.

Observing that $|F_iL|=|F_i|\cdot|L|$, we can compute, for $X\in\Pf^0(A)$, $$H_{alg}(\alpha,X)=\lim_{i\in I}\frac{\ell(T_{F_iL}(\alpha,X))}{|F_iL|}.$$
For $Y=T_L(\alpha,X)$, we have that $$T_{F_iL}(\alpha,X)=T_{F_i}(\alpha,Y)=T_{F_i}(\alpha\restriction_H,Y),$$
so $$H_{alg}(\alpha,X)=\lim_{i\in I}\frac{\ell(T_{F_i}(\alpha,Y))}{|F_i|\cdot|L|}=\frac{1}{|L|}\lim_{i\in I}\frac{\ell(T_{F_i}(\alpha,Y))}{|F_i|}=\frac{1}{k}H_{alg}(\alpha,Y)=\frac{1}{k}H_{alg}(\alpha\restriction_H,Y).$$
Therefore, we have the inequality $h_{alg}(\alpha)\leq\frac{1}{k}h_{alg}(\alpha\restriction_H)$.

\medskip
To prove the converse inequality, let $Y\in\Pf^0(A)$. Then 
$$\frac{1}{k}H_{alg}(\alpha\restriction_H,Y)=\lim_{i\in I}\frac{\ell(T_{F_i}(\alpha,Y))}{|L|\cdot|F_i|}= \lim_{i\in I}\frac{\ell(T_{F_i}(\alpha,Y))}{|F_iL|}\leq 
\lim_{i\in I}\frac{\ell(T_{F_iL}(\alpha,Y))}{|F_iL|}=H_{alg}(\alpha,Y),$$
and so $\frac{1}{k}h_{alg}(\alpha\restriction_H)\leq h_{alg}(\alpha)$. This concludes the proof that $h_{alg}(\alpha\restriction_H)=k\cdot h_{alg}(\alpha)$.

\smallskip
By Proposition~\ref{h=ent} and the first equality, we have that 
$$\ent(\alpha\restriction_H)=h_{alg}(\alpha_{t(A)})=k\cdot h_{alg}(\alpha_{t(A)})=k\cdot \ent(\alpha),$$
hence the thesis.
\end{proof}

Let us recall that a group $G$ is \emph{virtually cyclic} if $G$ has a cyclic subgroup of finite index. The above proposition allows us to completely determine the algebraic entropy of actions of virtually cyclic groups:

\begin{remark}
Let $G$ be a virtually cyclic group, $A$ an abelian group, and $G\overset{\alpha}{\curvearrowright}A$ a left action. The case when $G$ is finite was already discussed in Example~\ref{Esempio:AG}(a), so we can assume without loss of generality that $G$ is infinite. Then $G$ has an infinite cyclic
subgroup $N$ of finite index. Then its normal core $N_G$ is still an infinite cyclic subgroup of finite index $k=[G:N_G]\in\N_+$.
As the subgroup $N_G$ is normal, we deduce from Proposition~\ref{finind} that $\halg(\alpha\rest_{N_G})= k\halg(\alpha)$. 
Denoting $\phi=\alpha(1)$, we have that $\halg(\alpha\rest_{N_G}) = \halg(\phi)$, by Remark~\ref{mayday}(a). Therefore, $\halg(\alpha) = k\halg(\phi)$. 
\end{remark}

%

The next proposition can be deduced from Theorem~\ref{teo:submonoid} and Proposition~\ref{finind}, nevertheless we anticipate it here since its proof is much easier than that of Theorem~\ref{teo:submonoid}.  

\begin{proposition}\label{restr:act} 
Let $G$ be an amenable group, $A$ an abelian group, and $G\overset{\alpha}{\curvearrowright}A$ a left action. If $N$ is a normal subgroup of $G$, then
$$h_{alg}(\alpha)\leq h_{alg}(\alpha\restriction_N)\quad \text{and}\quad \ent(\alpha)\leq\ent(\alpha\restriction_N).$$
\end{proposition}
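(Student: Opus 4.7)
The plan is to exploit the short exact sequence $1\to N\to G\to G/N\to 1$, where $G/N$ is amenable as a quotient of the amenable group $G$. Since $G$ is a group, any section $\sigma:G/N\to G$ of the quotient map $\pi:G\to G/N$ is automatically good (as observed before Example~\ref{ex:good}); fix such a $\sigma$ with $\sigma(1)=1$ and choose canonically indexed right F\o lner nets $(N_i)_{i\in\mathcal I(N)}$ of $N$ and $(Q_j)_{j\in\mathcal I(G/N)}$ of $G/N$ via Proposition~\ref{canonical}. By Theorem~\ref{lem:double-limit}, the sets $F_{i,j}:=N_i\sigma(Q_j)$ form a right F\o lner net of $G$ with $\lvert F_{i,j}\rvert=\lvert N_i\rvert\cdot\lvert Q_j\rvert$. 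Fix $X\in\Pf^0(A)$ and set $Y_j:=T_{\sigma(Q_j)}(\alpha,X)$. A direct computation using that $\alpha$ is a homomorphism on $G$ gives $T_{F_{i,j}}(\alpha,X)=T_{N_i}(\alpha\rest_N, Y_j)$, hence the Fubini-style rearrangement of the limit yields
\[
H_{alg}(\alpha, X)=\lim_j \frac{H_{alg}(\alpha\rest_N, Y_j)}{\lvert Q_j\rvert}.
\]

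The heart of the argument is the uniform bound $H_{alg}(\alpha\rest_N, Y_j)\leq\lvert Q_j\rvert\cdot H_{alg}(\alpha\rest_N, X)$. Iterated applications of Lemma~\ref{lem:H+} to $Y_j=\sum_{q\in Q_j}\alpha(\sigma(q))(X)$ reduce this to the $G$-invariance
\[
H_{alg}(\alpha\rest_N,\alpha(g)(X))=H_{alg}(\alpha\rest_N,X)\quad\text{for every}\ g\in G.
\]
This is where normality of $N$ enters decisively. For $n\in N$ and $g\in G$, we have $ng=g\cdot(g^{-1}ng)$ with $g^{-1}ng\in N$, so for every $E\in\Pf(N)$,
\[
T_E(\alpha\rest_N,\alpha(g)(X))=\alpha(g)\bigl(T_{g^{-1}Eg}(\alpha\rest_N,X)\bigr).
\]
Since $\alpha(g)\in\Aut(A)$ by Remark~\ref{groupaction}, cardinalities match; since conjugation $c_g:n\mapsto g^{-1}ng$ is an automorphism of $N$ (precisely because $N$ is normal), Lemma~\ref{varphi} applied to $c_g$ and any right F\o lner net of $N$ yields the desired equality.

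Putting the pieces together, $H_{alg}(\alpha,X)\leq H_{alg}(\alpha\rest_N,X)\leq h_{alg}(\alpha\rest_N)$ for every $X\in\Pf^0(A)$, and taking supremum over $X$ gives $h_{alg}(\alpha)\leq h_{alg}(\alpha\rest_N)$. For the $\ent$-inequality one simply restricts the same argument to $X\in\mathcal F(A)$: each $\alpha(\sigma(q))(X)$ is again a finite subgroup of $A$ (as $\alpha(\sigma(q))$ is a group homomorphism), and so is their sum $Y_j$, so the argument goes through verbatim to yield $\ent(\alpha)\leq\ent(\alpha\rest_N)$. The main delicacy is the correct interaction between the two limits (over $N_i$ and over $Q_j$), but this is already cleanly handled by the canonically indexed F\o lner nets of Theorem~\ref{lem:double-limit}; the rest of the proof is genuinely elementary, which explains why it is shorter than that of Theorem~\ref{teo:submonoid}.
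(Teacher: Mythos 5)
Your proof is correct, and it reaches the conclusion by a more hands-on route than the paper, which disposes of the whole statement in two lines by citing Corollary~\ref{cor:subgroup}: for $f=f_X$ one has $H_{alg}(\alpha,X)=\mathcal H_G(f)\leq\mathcal H_N(f)=H_{alg}(\alpha\rest_N,X)$, and the $\ent$-inequality then follows from Proposition~\ref{h=ent}. Both arguments rest on the same foundation --- Theorem~\ref{Fubini} applied to $\pi:G\to G/N$ --- but they diverge in how they bound $\mathcal H_{G/N}(\theta)$. The paper uses the abstract fact that $\theta=\Theta_\sigma(f)\in\mathcal S(G/N)$ (Lemma~\ref{pre-Fubini}) together with Lemma~\ref{rem:decreasing}, so that $\mathcal H_{G/N}(\theta)\leq\theta(\set{1})=\mathcal H_N(f)$; the subinvariance of $\theta$ needed there is established via the automorphism $h_s$ of $N$ from Lemma~\ref{lem:good-5}. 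You instead compute $\theta(Q_j)=H_{alg}(\alpha\rest_N,Y_j)$ explicitly and prove the bound $\theta(Q_j)\leq\card{Q_j}\,\theta(\set{1})$ by hand, combining the subadditivity of $H_{alg}(\alpha\rest_N,-)$ (Lemma~\ref{lem:H+}) with the invariance $H_{alg}(\alpha\rest_N,\alpha(g)(X))=H_{alg}(\alpha\rest_N,X)$, which you obtain from the identity $T_E(\alpha\rest_N,\alpha(g)(X))=\alpha(g)\bigl(T_{g^{-1}Eg}(\alpha\rest_N,X)\bigr)$ and Lemma~\ref{varphi} applied to conjugation. The payoff of your version is transparency: it makes visible exactly where normality of $N$ enters (conjugation must be an automorphism of $N$), a fact the paper's proof hides inside the good-section machinery; the cost is that you essentially re-derive, in the special case $f=f_X$, properties that Lemma~\ref{pre-Fubini} already supplies for arbitrary $f\in\mathcal S(G)$. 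Two minor points of bookkeeping: the net $N_i\sigma(Q_j)$ is not literally the F\o lner net produced by Theorem~\ref{lem:double-limit} (which reindexes via the map $\zeta$), and the interchange of the two limits is the content of Theorem~\ref{Fubini} rather than of Theorem~\ref{lem:double-limit} itself; neither affects the validity of your argument, since Theorem~\ref{Fubini} is stated for arbitrary $f\in\mathcal S(G)$ and is what you are really invoking.
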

\begin{proof} 
Let $X\in\Pf^0(A)$ and $f=f_X$ (we recall that $f_X(Y)=\ell(T_Y(\alpha,X))$ for every $Y \in \Pf(G)$). By Corollary~\ref{cor:subgroup},
$$H_{alg}(\alpha,X) = \mathcal H_G(f) \leq \mathcal H_N(f) = H_{alg}(\alpha \restriction_N,X).$$
Since the above is true for every $X\in\Pf^0(A)$, we have that
$$h_{alg}(\alpha) \leq h_{alg}(\alpha \restriction_N).$$
By Proposition~\ref{h=ent} the second assertion follows from the first one.
\end{proof}

 Example~\ref{exconj} shows that the inequalities $h_{alg}(\alpha)\leq h_{alg}(\alpha\restriction_H)$ and $\ent(\alpha)\leq \ent(\alpha\restriction_H)$ in Proposition~\ref{restr:act} can be strict.


\begin{corollary}
Let $G$  be a group and $G\overset{\alpha}{\curvearrowright}A$ be a left action on an abelian group $A$. If an element $g\in G$ is non-torsion and central, then \[h_{alg}(\alpha)\leq h_{alg}(\alpha(g)).\] 
In particular, if $h_{alg}(\alpha(g))=0$, then $h_{alg}(\alpha)=0$.
\end{corollary}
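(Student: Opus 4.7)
The proof is short and combines two results already established in the paper. Let me outline the plan.

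Let $N = \langle g \rangle$ be the subgroup of $G$ generated by $g$. Since $g$ is non-torsion, $N \cong \Z$ is infinite cyclic, and since $g$ is central in $G$, $N$ is automatically a normal subgroup of $G$. These are the two hypotheses I need to invoke Proposition~\ref{restr:act}, which yields
\[h_{alg}(\alpha) \leq h_{alg}(\alpha\restriction_N).\]

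The remaining step is to identify the right-hand side with $h_{alg}(\alpha(g))$. This is precisely the content of Remark~\ref{mayday}(a): since $N = \langle g \rangle$ is infinite, the restriction $\alpha\restriction_N$ is weakly conjugated (via the isomorphism $\Z \to N$, $n\mapsto g^n$) to the $\Z$-action $\alpha_{\alpha(g)}$ induced by the automorphism $\alpha(g)$ of $A$, and therefore by Proposition~\ref{conju}
\[h_{alg}(\alpha\restriction_N) = h_{alg}(\alpha(g)).\]
Combining the two displayed inequalities gives $h_{alg}(\alpha) \leq h_{alg}(\alpha(g))$. The ``in particular'' clause is then immediate, since $h_{alg}$ is non-negative.

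There is no genuine obstacle here; the only subtlety worth flagging is that the corollary tacitly assumes $G$ is amenable (as otherwise neither side of the inequality is defined in the setting of this paper), which is consistent with the standing assumption of Section~\ref{restr:quot:sec}. The centrality of $g$ is used solely to guarantee normality of $\langle g\rangle$, and the non-torsion hypothesis is used solely to guarantee that $\langle g \rangle$ is infinite so that Remark~\ref{mayday}(a) applies (in the finite case, $h_{alg}(\alpha\restriction_N)$ would typically be $\infty$ by Example~\ref{Esempio:AG}(a), making the inequality trivial but uninformative).
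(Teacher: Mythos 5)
Your proof is correct and follows exactly the paper's own argument: centrality of $g$ gives normality of $\langle g\rangle$, Proposition~\ref{restr:act} gives $h_{alg}(\alpha)\leq h_{alg}(\alpha\restriction_{\langle g\rangle})$, and Remark~\ref{mayday}(a) (in its group form, using that $g$ is non-torsion so that $\langle g\rangle$ is infinite) identifies the right-hand side with $h_{alg}(\alpha(g))$. Your side remarks on amenability and on the role of each hypothesis are accurate and consistent with the paper.
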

\begin{proof} 
The subgroup $H=\langle g\rangle$ is central, hence normal. By Proposition~\ref{restr:act}, $h_{alg}(\alpha)\leq h_{alg}(\alpha\restriction_H)$. Now, $h_{alg}(\alpha\restriction_H)=h_{alg}(\alpha(g))$ by Remark~\ref{mayday}(a).
\end{proof}

The next corollary should be compared with Corollary~\ref{h<infty} and Corollary~\ref{Coro:application} where the condition $h_{alg}(\phi)=0$ is relaxed to a milder one. 

\begin{corollary}\label{h=0all} 
Let $G$ be a non-torsion abelian group and let $G\overset{\alpha}{\curvearrowright}A$ be a left action on an abelian group $A$. If $h_{alg}(\phi)=0$ for every $\phi\in\Aut(A)$, then $h_{alg}(\alpha)=0$.
\end{corollary}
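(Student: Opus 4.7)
The plan is to apply directly the immediately preceding corollary, since the hypotheses are tailor-made for it. Since $G$ is non-torsion, I can pick some $g \in G$ of infinite order. Because $G$ is abelian, $g$ is automatically central (and $\langle g \rangle$ is automatically normal), so the preceding corollary applies and gives
\[
h_{alg}(\alpha) \leq h_{alg}(\alpha(g)).
\]

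Next I would invoke Remark~\ref{groupaction}: since $\alpha$ is an action of a \emph{group} $G$ on the abelian group $A$, we have $\alpha(g) \in \mathrm{Aut}(A)$. Therefore the hypothesis $h_{alg}(\phi) = 0$ for all $\phi \in \mathrm{Aut}(A)$ gives $h_{alg}(\alpha(g)) = 0$, whence $h_{alg}(\alpha) = 0$.

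There is no real obstacle here. The only thing to double-check is that $G$ is indeed amenable so that $h_{alg}(\alpha)$ is defined in the sense of \S\ref{halgsec}, but this is immediate since every abelian group is amenable. Also, the existence of a non-torsion element of $G$ is exactly what the ``non-torsion'' assumption on $G$ provides, so everything falls into place in one short argument.
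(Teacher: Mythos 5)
Your proof is correct and is exactly the argument the paper intends: the corollary is stated as an immediate consequence of the preceding one, obtained by choosing a non-torsion element $g$ (automatically central since $G$ is abelian) and noting that $\alpha(g)\in\Aut(A)$ because $\alpha$ is a monoid action of a group. Nothing further is needed.
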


The groups $\Z$, $\Z(p^{\infty})$ or $\Q/\Z$ satisfy the hypothesis imposed on $A$ in the above corollary (see \cite{DGSZ}).

\medskip
In the rest of this subsection we consider exclusively appropriate sufficient conditions that entail zero algebraic entropy for a left action $G\overset{\alpha}{\curvearrowright}A$. Most of these conditions are in terms of the restricted action $\alpha \rest_N$ with respect to appropriate subgroups
(necessarily of infinite index)  $N$ of $G$. An exception to this tendency is the hypothesis of the next proposition, verified in Example~\ref{gamma=alpha}(a).

\begin{proposition}
Let $G$ be an amenable group and let $G\overset{\alpha}{\curvearrowright}A$ be a left action an abelian group $A$. Let $N$ be a normal subgroup of $G$ such that $G/N$ is infinite. If every $N$-invariant subgroup of $A$ is also $\alpha$-invariant, then $$\ent(\alpha)=0.$$
\end{proposition}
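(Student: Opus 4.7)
The plan is to show $H_{alg}(\alpha,X)=0$ for every finite subgroup $X\in\mathcal F(A)$; since $\ent(\alpha)=\sup_{X\in\mathcal F(A)}H_{alg}(\alpha,X)$, this will finish the proof. Fix such $X$ and set $B:=T_N(\alpha\rest_N,X)$, the smallest $N$-invariant subgroup of $A$ containing $X$. By the hypothesis $B$ is $\alpha$-invariant, and since each $\alpha(g)$ is an automorphism of $A$, one deduces $\alpha(g)(B)=B$ for every $g\in G$. Because $G$ is an amenable group, so are its subgroup $N$ and its quotient $G/N$, and any section $\sigma\colon G/N\to G$ of the quotient projection $\pi\colon G\to G/N$ is good (for groups every section is good, cf.\ the discussion preceding Example~\ref{ex:good}, or Lemma~\ref{lem:good-1}).

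I would then apply the Fubini-type identity of Theorem~\ref{Fubini} to $f_X\in\mathcal S(G)$ (Lemma~\ref{OWconditions}): denoting $\theta:=\Theta_\sigma(f_X)\in\mathcal S(G/N)$, we get
\[H_{alg}(\alpha,X)=\mathcal H_G(f_X)=\mathcal H_{G/N}(\theta).\]
Unwinding the definition of $\Theta_\sigma$ and using the bijection $N\times(G/N)\to G$, $(n,c)\mapsto n\sigma(c)$ of Lemma~\ref{lem:good-2}(b) to reindex, one computes for each $Y\in\Pf(G/N)$
\[\theta(Y)=\mathcal H_N(f_X^{\sigma(Y)})=H_{alg}(\alpha\rest_N,Z_Y),\qquad Z_Y:=\sum_{y\in Y}\alpha(\sigma(y))(X),\]
where $Z_Y$ is a finite subgroup of $A$. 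The crucial point is that $Z_Y\subseteq B$: indeed $\alpha(\sigma(y))(X)\subseteq\alpha(\sigma(y))(B)=B$ for every $y\in Y$, so their sum lies in $B$ as well.

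The heart of the argument is the uniform bound on $\theta$. Since $B=\bigcup_{F\in\Pf^0(N)}T_F(\alpha\rest_N,X)$ is the directed union of the finite subgroups $T_F(\alpha\rest_N,X)$, and $Z_Y$ is finite, there exists some $F\in\Pf^0(N)$ with $Z_Y\subseteq T_F(\alpha\rest_N,X)$. Monotonicity (Lemma~\ref{cofinal}(a)) together with Lemma~\ref{HX=HTX} (which applies because $X\in\mathcal F(A)$ and $N$ is an amenable group) yield
\[\theta(Y)=H_{alg}(\alpha\rest_N,Z_Y)\le H_{alg}(\alpha\rest_N,T_F(\alpha\rest_N,X))=H_{alg}(\alpha\rest_N,X)\le\log|X|,\]
where the last inequality is Lemma~\ref{rem:decreasing} applied to $f_X$, giving $H_{alg}(\alpha\rest_N,X)\le f_X(\{1\})=\log|X|$. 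Thus $\theta$ is bounded on $\Pf(G/N)$ by $\log|X|$; since $G/N$ is infinite, Lemma~\ref{constantFubini}(b) forces $\mathcal H_{G/N}(\theta)=0$, whence $H_{alg}(\alpha,X)=0$. The delicate step is the confinement of each $Z_Y$ inside $B$, which is precisely what the hypothesis delivers; thereafter Lemma~\ref{HX=HTX} transfers the estimate from $Z_Y$ back to $X$, and the infinitude of $G/N$ closes the argument.
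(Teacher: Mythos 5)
Your proof is correct and follows essentially the same route as the paper's: confine the trajectories $T_{\sigma(Y)}(\alpha,X)$ inside the full $N$-trajectory $B=T_N(\alpha\rest_N,X)$, which the hypothesis makes $\alpha$-invariant, then apply Theorem~\ref{Fubini} and exploit the infinitude of $G/N$ via Lemma~\ref{constantFubini}. The only (harmless) divergence is at the bound on $\theta$: the paper shows $\theta$ is \emph{constant} equal to $\ent(\alpha\rest_N{}_L)$ by invoking Proposition~\ref{alphagensgi}, whereas you show $\theta$ is merely \emph{bounded} by $\log|X|$ using the directedness of $B=\bigcup_F T_F(\alpha\rest_N,X)$ together with Lemma~\ref{HX=HTX} — which is in fact the same lemma underlying Proposition~\ref{alphagensgi}, so the two arguments coincide in substance.
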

\begin{proof} 
For the sake of brevity, let $\alpha'= \alpha \! \restriction_N$. Fix $A_0\in\mathcal F(A)$. Then $L=T_N(\alpha',A_0)$ is an $\alpha'$-invariant (i.e., $N$-invariant) subgroup of $A$  containing $A_0$, and this gives an action $N \overset{\alpha'_L}\curvearrowright L$. By hypothesis, $L$ is also $\alpha$-invariant. 
Hence, 
for every $Y\in\Pf^0(G)$ the subgroup $L$ contains $T_Y(\alpha,A_0)$, which obviously generates $L$ as a left $\Z[N]$-module. By Proposition~\ref{alphagensgi} applied to $N\overset{\alpha'_L}\curvearrowright L$ and the subgroup $T_Y(\alpha,A_0)$ of $L$, we deduce that 
\begin{equation}\label{22Sept}
\ent(\alpha'_L)=H_{alg}(\alpha'_L,T_Y(\alpha,A_0)).
\end{equation}

Let $f=f_{A_0}$, and let $C= G/N$, which is infinite by virtue of our assumption. 
Fix a section $\sigma: C \to G$ and consider $\theta=\Theta_\sigma(f)$. 
Since $A_0$ is a subgroup of $A$, for every $X\in\Pf^0(C)$, $$f^{\sigma(X)}=f_{T_{\sigma(X)}(\alpha,A_0)}$$ by Lemma~\ref{TEF}. Hence, Equation~\eqref{22Sept} entails 
$$\theta(X) = \mathcal H_N(f^{\sigma(X)}) = H_{alg}(\alpha'_L, T_{\sigma(X)}(\alpha,A_0)) =\ent(\alpha'_L)$$
for every $X \in \Pf^0(C)$. As $\ent(\alpha'_L)$ is obviously independent from $X$, this proves that $\theta$ is constant on $\Pf^0(C)$.  
Since $C$ is infinite, we can apply Theorem~\ref{Fubini} and Lemma~\ref{constantFubini} to deduce that 
$$H_{alg}(\alpha,A_0)= \mathcal H_G(f) =\mathcal H_C(\theta)=0.$$
Taking the supremum over all $A_0\in\mathcal F(A)$, we get the thesis.
\end{proof}


The next is one of the main result that we have on restriction actions and it has some impressive consequences. 

\begin{theorem}\label{teo:submonoid}
Let $G$ be an amenable group, $A$ an abelian group, and $G\overset{\alpha}{\curvearrowright}A$ a left action. Let $N$ be a non-trivial normal subgroup of $G$ such that $G/N$ is infinite.
\begin{itemize}
\item[(a)] If $\ent(\alpha \rest_N)<\infty$, then $\ent(\alpha)=0$.
\item[(b)] If $\halg(\alpha \rest_N)<\infty$, then $\halg(\alpha)=0.$
\end{itemize}
\end{theorem}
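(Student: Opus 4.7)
The plan is to apply the Fubini-type theorem (Theorem~\ref{Fubini}) to the canonical quotient map $\pi: G \to G/N =: C$. Since $G$ is a group and $N$ is a normal subgroup, $C$ is a group and every section $\sigma: C \to G$ for $\pi$ is good (as noted just above Lemma~\ref{lem:good-1}). Moreover, $N$ is amenable (as a subgroup of the amenable group $G$) and $C$ is amenable (as a quotient), so Theorem~\ref{Fubini} applies. Fixing a subset $X$ of $A$ (taken to be a finite subgroup in case (a) and an arbitrary element of $\Pf(A)$ in case (b)), setting $f = f_X \in \mathcal S(G)$ and $\theta = \Theta_\sigma(f) \in \mathcal S(C)$, Fubini gives
\[H_{alg}(\alpha, X) = \mathcal H_G(f) = \mathcal H_C(\theta).\]

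The crucial step is to obtain a uniform bound on $\theta(Y)$ in terms of the (finite!) hypothesis $\halg(\alpha\rest_N)$ or $\ent(\alpha\rest_N)$. For $Y \in \Pf^0(C)$ define $X_Y := T_{\sigma(Y)}(\alpha, X)$; this is a finite subset of $A$, and when $X$ is a finite subgroup so is $X_Y$ (being a finite sum of homomorphic images of the finite subgroup $X$). By Lemma~\ref{TEF},
\[T_{F\sigma(Y)}(\alpha, X) \subseteq T_F(\alpha\rest_N, X_Y) \quad \text{for every } F \in \Pf(N),\]
with equality when $X$ is a subgroup. Hence $f^{\sigma(Y)}(F) \leq f_{X_Y}(F)$ and, dividing by $\card F$ and passing to the limit along a right F\o lner net of $N$,
\[\theta(Y) = \mathcal H_N(f^{\sigma(Y)}) \leq \mathcal H_N(f_{X_Y}) = H_{alg}(\alpha\rest_N, X_Y) \leq \begin{cases} \ent(\alpha\rest_N) & \text{in case (a)},\\ \halg(\alpha\rest_N) & \text{in case (b)},\end{cases}\]
where the final inequality uses that $X_Y \in \mathcal F(A)$ in case (a) and $X_Y \in \Pf(A)$ in case (b).

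The rest is immediate: since $C = G/N$ is infinite and $\theta$ is (uniformly) bounded on $\Pf^0(C)$, Lemma~\ref{constantFubini}(b) yields $\mathcal H_C(\theta) = 0$. Therefore $H_{alg}(\alpha, X) = 0$ for every finite subgroup $X$ of $A$, giving $\ent(\alpha) = 0$, and for every $X \in \Pf(A)$, giving $\halg(\alpha) = 0$. The main conceptual obstacle is precisely the boundedness of $\theta$: although the ``auxiliary'' set $X_Y$ grows with $Y$, the quantities $H_{alg}(\alpha\rest_N, X_Y)$ are uniformly controlled by the finite hypothesis on $\alpha\rest_N$, which is exactly what the infiniteness of $C$ converts, via averaging along a F\o lner net of $C$, into the \emph{vanishing} of $H_{alg}(\alpha, X)$.
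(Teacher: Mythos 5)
Your proof is correct and follows essentially the same route as the paper's: apply the Fubini theorem (Theorem~\ref{Fubini}) to $\pi:G\to G/N$, bound $\theta(Y)=\mathcal H_N(f^{\sigma(Y)})$ by $H_{alg}(\alpha\rest_N, T_{\sigma(Y)}(\alpha,X))\le \halg(\alpha\rest_N)$ using Lemma~\ref{TEF}, and conclude from the infiniteness of $G/N$ that $\mathcal H_C(\theta)=0$. The only cosmetic difference is that the paper deduces (a) from (b) via Proposition~\ref{h=ent} (passing to $t(A)$), whereas you run the same argument directly with $X$ a finite subgroup, observing that $T_{\sigma(Y)}(\alpha,X)$ is then again a finite subgroup.
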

\begin{proof} 
In view of Proposition~\ref{h=ent}, (a) trivially follows from (b), since $\ent(\alpha) = \halg(\alpha_{t(A)})$ and $\ent(\alpha\rest_N) = \halg((\alpha\rest_N)_{t(A)})$. 

The proof of (b) follows the proof of the above proposition, so we keep the same notation, i.e., let $\alpha'= \alpha \! \restriction_N$, $C= G/N$ (infinite, by our hypothesis) and $\sigma: C \to G$ is a section. There is a subtle difference though, now we let $A_0\in \Pf(A)$. For $f=f_{A_0}$ we define again $\theta=\Theta_\sigma(f)$. 

Our first aim is to see that $\theta$ is bounded by $\halg(\alpha')$. To this end fix $X \in \Pf^0(C)$ and let $A_1= T_{\sigma(X)}(\alpha, A_0)$. 
Clearly, $A_1 \in \Pf(A)$. Then, for an arbitrary \Folner net $(F_i)_{i \in I}$ for~$N$ one has 
\begin{equation}\label{newEq}
\theta(X) = \mathcal H_N(f^{\sigma(X)}) =\lim_{i \in I} \frac{\ell(T_{F_i \sigma(X)}(A_0) )}{\card{F_i}} \buildrel{(*)}\over\leq 
\lim_{i \in I} \frac{\ell(T_{F_i}(A_1))}{\card{F_i}} =H_{alg}(\alpha', A_1)\leq \halg(\alpha'), 
\end{equation}
where the inequality ($*$) is due to Lemma~\ref{TEF}. 
We have proved in this way that our assumption $\halg(\alpha') < \infty$ implies  boundedness of  $\theta\restriction_{\Pf^0(C)}$ for all $A_0\in \Pf(A)$. 

Since $C$ is infinite, Theorem~\ref{Fubini}, Lemma~\ref{constantFubini}, and Remark~\ref{1in-bounded} imply that $$H_{alg}(\alpha,A_0)= \mathcal H_G(f) =\mathcal H_C(\theta)=0.$$ After taking the supremum over all $A_0\in\Pf(A)$ we get $\halg(\alpha)=0$. 
\end{proof}

We conjecture that Theorem~\ref{teo:submonoid} can be proved without the assumption that the subgroup $N$ is normal (see Conjecture~\ref{Conj2}). 

\medskip
The following is another interesting consequence of Theorem~\ref{teo:submonoid}.

\begin{corollary}
Let $G_1$, $G_2$ be infinite amenable groups, $A_1$, $A_2$ abelian groups, and consider left actions $G_1\overset{\alpha_1}\curvearrowright A_1$ and $G_2\overset{\alpha_2}\curvearrowright A_2$. Let $G=G_1\times G_2$, $A=A_1\times A_2$, and let the left action $G\overset{\alpha}\curvearrowright A$ be defined by $\alpha(g_1,g_1)=\alpha_1(g_1)\times\alpha_2(g_2)$ for every $(g_1,g_2)\in G$. If either $h_{alg}(\alpha_1)$ or $h_{alg}(\alpha_2)$  is finite, then $h_{alg}(\alpha)=0$.
\end{corollary}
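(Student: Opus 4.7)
My plan is to reduce directly to Theorem~\ref{teo:submonoid}(b) by choosing the obvious normal subgroup of $G = G_1 \times G_2$. Without loss of generality suppose $h_{alg}(\alpha_1) < \infty$ (the case $h_{alg}(\alpha_2)<\infty$ is symmetric, using the other factor). Set $N = G_1 \times \{1\} \trianglelefteq G$. Then $N$ is non-trivial (since $G_1$ is infinite, hence non-trivial) and $G/N \cong G_2$ is infinite, so the hypotheses of Theorem~\ref{teo:submonoid}(b) will be satisfied as soon as I verify that $h_{alg}(\alpha\!\restriction_N) < \infty$. Also note that $G$ is amenable as a product of amenable groups, so the theorem applies.

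The heart of the argument is thus the identity
\[h_{alg}(\alpha\!\restriction_N) = h_{alg}(\alpha_1).\]
To see this, observe that the action $\alpha\!\restriction_N$ preserves the decomposition $A = A_1 \times A_2$: both $A_1$ and $A_2$ are $\alpha\!\restriction_N$-invariant subgroups. Therefore Proposition~\ref{wAT} gives
\[h_{alg}(\alpha\!\restriction_N) = h_{alg}\bigl((\alpha\!\restriction_N)_{A_1}\bigr) + h_{alg}\bigl((\alpha\!\restriction_N)_{A_2}\bigr).\]
The first summand equals $h_{alg}(\alpha_1)$: under the canonical isomorphism $\eta: N \to G_1$, $(g_1,1)\mapsto g_1$, one has $(\alpha\!\restriction_N)_{A_1}(g_1,1) = \alpha_1(\eta(g_1,1))$, so the two actions are weakly conjugated (with $\xi = id_{A_1}$) and have the same algebraic entropy by Proposition~\ref{conju}. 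The second summand vanishes: $(\alpha\!\restriction_N)_{A_2}$ is the trivial action of the infinite monoid $N \cong G_1$ on $A_2$, and Example~\ref{Esempio:AG}(b) gives zero algebraic entropy in that case.

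Putting these together, $h_{alg}(\alpha\!\restriction_N) = h_{alg}(\alpha_1) < \infty$, so Theorem~\ref{teo:submonoid}(b) yields $h_{alg}(\alpha) = 0$. There is no real obstacle here; the only point requiring minimal care is the split of the restricted action through $A = A_1 \times A_2$ and the invocation of Proposition~\ref{wAT}, both of which are immediate from the definition of $\alpha(g_1,g_2) = \alpha_1(g_1) \times \alpha_2(g_2)$.
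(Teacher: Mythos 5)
Your proof is correct and follows essentially the same route as the paper: take $N=G_1\times\{1\}$ and apply Theorem~\ref{teo:submonoid}(b). In fact you supply a detail the paper elides — the paper simply writes $\alpha\restriction_{G_1}=\alpha_1$, whereas the restriction really acts on all of $A=A_1\times A_2$, and your use of Proposition~\ref{wAT} together with the vanishing of the trivial action on $A_2$ (Example~\ref{Esempio:AG}(b)) is the right way to justify that $h_{alg}(\alpha\restriction_N)=h_{alg}(\alpha_1)$.
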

\begin{proof} 
Assume that $h_{alg}(\alpha_1)< \infty$.  Since $\alpha\restriction_{G_1}=\alpha_1$, it suffices to apply Theorem~\ref{teo:submonoid} with $N=G_1$.
\end{proof}

As an example one can take $G=G_1=G_2$, $A=A_1=A_2$ and an action $G\overset{\alpha}\curvearrowright A$ with $\halg(\alpha)< \infty$.
Then the action $G^2\curvearrowright A^2$ defined as above has zero algebraic entropy.


\medskip
The following corollary of Theorem~\ref{teo:submonoid} has very interesting consequences.

\begin{corollary}\label{cyclic0}
Let $G$ be an amenable group and $G\overset{\alpha}{\curvearrowright}A$ a left action on an infinite abelian group $A$. 
If $G$ has a non-torsion central element $g$ with $h_{alg}(\alpha(g))<\infty$ and $[G:\langle g\rangle]$ is infinite, then $h_{alg}(\alpha) = 0$.
\end{corollary}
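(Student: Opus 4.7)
The plan is to apply Theorem~\ref{teo:submonoid}(b) directly with the normal subgroup $N=\langle g\rangle$ of $G$, so the task reduces to verifying the three hypotheses of that theorem: $N$ is a non-trivial normal subgroup, $G/N$ is infinite, and $\halg(\alpha\rest_N)<\infty$. The first two of these are essentially free from the assumptions, and the third follows from the finiteness of $\halg(\alpha(g))$ via Remark~\ref{mayday}.

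More precisely, since $g$ is central in $G$, the cyclic subgroup $N=\langle g\rangle$ is automatically normal; since $g$ is non-torsion, $N$ is infinite cyclic, hence non-trivial; and by hypothesis $[G:N]=[G:\langle g\rangle]$ is infinite, so $G/N$ is infinite. This verifies two of the three hypotheses of Theorem~\ref{teo:submonoid}(b).

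For the remaining hypothesis $\halg(\alpha\rest_N)<\infty$, I invoke the final sentence of Remark~\ref{mayday}, which tells us that when $S$ is a group and $T=\langle g\rangle$ is the subgroup generated by a non-torsion element $g$, then $T\cong\Z$ and $\alpha\rest_T$ is weakly conjugated to the $\Z$-action $\alpha_{\alpha(g)}$ induced by the automorphism $\alpha(g)$. By Proposition~\ref{conju} weakly conjugated actions share the same algebraic entropy, so
\[\halg(\alpha\rest_N)=\halg(\alpha_{\alpha(g)})=\halg(\alpha(g))<\infty\]
by assumption. With all three hypotheses in place, Theorem~\ref{teo:submonoid}(b) yields $\halg(\alpha)=0$.

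The only step that requires a moment of care is the identification $\halg(\alpha\rest_N)=\halg(\alpha(g))$: one must confirm that the weak conjugacy statement of Remark~\ref{mayday} applies in the group (not just monoid) setting, which is explicitly guaranteed by the last sentence of that remark. Otherwise, the corollary is a clean application of Theorem~\ref{teo:submonoid}(b), and no further obstacle is expected; note that the hypothesis $A$ infinite is not directly used in the deduction, but is harmless as it simply rules out the trivial case where $\halg$ is automatically zero.
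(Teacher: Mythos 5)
Your argument is correct and coincides with the paper's own proof: both take $N=\langle g\rangle$ (central, hence normal, and infinite cyclic since $g$ is non-torsion), identify $h_{alg}(\alpha\restriction_N)=h_{alg}(\alpha(g))<\infty$ via Remark~\ref{mayday}(a), and conclude by Theorem~\ref{teo:submonoid}. Your extra remarks on verifying that $G/N$ is infinite and on the unused hypothesis that $A$ is infinite are accurate but do not change the route.
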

\begin{proof}
The subgroup $N=\langle\alpha(g)\rangle$ of $G$ is central, hence normal. Since $N=\langle g\rangle$ is infinite by hypothesis,
Remark~\ref{mayday}(a) gives that $h_{alg}(\alpha\restriction_N)=h_{alg}(\alpha(g))<\infty$. Hence, $h_{alg}(\alpha)=0$ by Theorem~\ref{teo:submonoid}.
\end{proof}

The next example shows that in Corollary~\ref{cyclic0} one cannot replace the hypothesis $g$ non-torsion by simply $g$ non-trivial, since $h_{alg}(\alpha(g))$ does not always coincide with $h_{alg}(\alpha\restriction_{\langle g\rangle})$.

\begin{example}\label{NEW:EXAMPLE!}
Let $G = \Z(2)\times \Z$ and let $g\ne 0$ be the generator of $\Z(2)$. Hence, $N=\langle g\rangle= \Z(2)$ and $[G:\langle g\rangle]$ is infinite. Then $o(g) =2  = o(\alpha(g))$.
 
Consider an action $G\overset{\alpha}{\curvearrowright}A$ on an infinite abelian group $A$, such that $\alpha\restriction_{\Z(2)}$ is trivial, while $\alpha\restriction_{\Z}$ has positive algebraic entropy, e.g., $A=\Q$ and, for every $(kg,m)\in G$, $$\alpha(kg,m)(x) = 2^mx\ \text{for every}\ x\in \Q.$$ 
Then $h_{alg}(\alpha(g))=0 <\infty$; nevertheless, $h_{alg}(\alpha\restriction_{\Z(2)}) = \infty$, as $A$ is infinite, according to Example~\ref{Esempio:AG}(a) and Remark~\ref{mayday}(b). 

Moreover, $\halg(\alpha)=\frac{1}{2}\halg(\alpha\restriction_{\Z})= \frac{1}{2}\log 2 > 0$, according to Proposition~\ref{finind} and Remark~\ref{mayday}(a),  since the multiplication $m_2:\Z\to \Z$, $x\mapsto 2x$ has $h_{alg}(m_2)=\log 2$ (see \cite{DGB0}).
\end{example}

In the next corollary, $\Z^d$ can be replaced by any non-torsion abelian group $G$ that is not virtually ciclic (so that $G$ has an infinite cyclic subgroup of infinite index). 

\begin{corollary}\label{Coro:application1} 
Let $n,d\in \N_+$, let $A$ be a subgroup of $\Q^n$, and $\Z^d\overset{\alpha}{\curvearrowright}A$ a left action. If $d> 1$, then $\halg(\alpha) = 0$. 
\end{corollary}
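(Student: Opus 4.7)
The plan is a direct application of Corollary~\ref{cyclic0} to $G=\Z^d$. First I would dispose of the degenerate case: since $A\leq\Q^n$ is torsion-free of finite rank at most $n$, if $A$ is finite then $A=\{0\}$ and $\halg(\alpha)=0$ trivially; so I may assume $A$ is infinite.

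Next I would choose the central non-torsion element $g=e_1=(1,0,\ldots,0)\in\Z^d$. Centrality is automatic since $\Z^d$ is abelian, and $g$ has infinite order. Crucially, the hypothesis $d>1$ is used here to ensure that the quotient $\Z^d/\langle e_1\rangle\cong \Z^{d-1}$ is infinite, i.e., $[\Z^d:\langle e_1\rangle]=\infty$. This is the only place where the hypothesis $d>1$ enters, and it is exactly what the hypothesis of Corollary~\ref{cyclic0} asks for.

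The remaining ingredient is to verify $\halg(\alpha(e_1))<\infty$, and this is handled by the same argument already used in the proof of Corollary~\ref{Coro:June12}: the endomorphism $\phi:=\alpha(e_1)$ of $A$ extends uniquely to an endomorphism $\widetilde\phi$ of the divisible hull $D$ of $A$, which is isomorphic to $\Q^k$ for some $k\leq n$; by \cite{DGB0} one has $\halg(\widetilde\phi)=\halg(\phi)$, and by the algebraic Yuzvinski Formula \cite{GBV2} every endomorphism of $\Q^k$ has finite algebraic entropy. Hence $\halg(\alpha(e_1))<\infty$, and Corollary~\ref{cyclic0} applies to yield $\halg(\alpha)=0$.

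There is essentially no obstacle; all the real work has already been done in Corollary~\ref{cyclic0} (which packages the normal-subgroup machinery from Theorem~\ref{teo:submonoid}) and in the algebraic Yuzvinski Formula. The corollary is really a sample application illustrating that these two ingredients combine painlessly whenever one has a central non-torsion element of infinite index acting with finite entropy.
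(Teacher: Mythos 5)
Your proof is correct, and it is in fact exactly the ``alternative proof'' that the paper itself records in one line after its primary argument: apply Corollary~\ref{cyclic0} to the central non-torsion element $e_1$, whose cyclic subgroup has infinite index in $\Z^d$ precisely because $d>1$, after checking $\halg(\alpha(e_1))<\infty$ via the divisible hull and the algebraic Yuzvinski Formula. The paper's primary route is different and even shorter: it restricts the $\Z^d$-action to the submonoid $\N^d$, invokes Corollary~\ref{Coro:June12} (the $\N^d$ version of the statement, proved via Corollary~\ref{N2Ncor} and the product-F\o lner-net computation of Lemma~\ref{N2N}), and then uses Remark~\ref{S->Gh} to pass back from the generating submonoid to the group. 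Both routes consume the same finiteness input --- no endomorphism of a finite-rank torsion-free group has infinite entropy --- but yours channels it through the normal-subgroup machinery of Theorem~\ref{teo:submonoid} packaged in Corollary~\ref{cyclic0}, whereas the paper's channels it through the monoid-product vanishing result. A small point in your favor: Corollary~\ref{cyclic0} is stated for an \emph{infinite} abelian group $A$, and you correctly dispose of the degenerate case $A=\{0\}$ (the only finite subgroup of $\Q^n$) before applying it, which the paper's one-line remark about the alternative proof does not bother to mention.
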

\begin{proof} 
Follows from Corollary~\ref{Coro:June12} and Remark~\ref{S->Gh}. 

Since $A$ has no automorphism of infinite algebraic entropy 
(see the proof of  Corollary~\ref{Coro:June12}), an alternative proof can be obtained from Corollary~\ref{cyclic0}, as
since $\Z^d$ with $d>1$ has no cyclic subgroups of finite index.
\end{proof}

We show now that the action of a field on itself by multiplication has zero algebraic entropy.

\begin{proposition}\label{casen=1}
Let $K$ be an infinite field  and consider the natural action $K^*\overset{\lambda}{\curvearrowright} K$ by multiplication. Then $h_{alg}(\lambda)=0$.
\end{proposition}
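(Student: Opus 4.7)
The plan is to reduce the computation to estimating $H_{alg}(\lambda,\set{0,1})$ and then to produce a right F\o lner net along which this quantity vanishes. For every $b \in K^*$ and every $F \in \Pf(K^*)$, the Minkowski sum $T_F(\lambda,\set{0,b})$ equals $b \cdot T_F(\lambda,\set{0,1})$ (because $g\cdot\set{0,b}=b\cdot(g\cdot\set{0,1})$ for every $g\in K^*$), and therefore $H_{alg}(\lambda,\set{0,b}) = H_{alg}(\lambda,\set{0,1})$ for every $b \in K$. Taking $B = K$ as a generating set of the additive group of $K$, Lemma~\ref{lem:halg-0} reduces the claim to proving $H_{alg}(\lambda,\set{0,1}) = 0$.

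Writing $\Sigma(F) = T_F(\lambda,\set{0,1}) = \set{\sum_{g\in F'} g : F'\subseteq F}$ for the subset-sum set of $F$ inside the additive group of $K$, we need a right F\o lner net $(F_\lambda)$ of $K^*$ with $\log\abs{\Sigma(F_\lambda)}/\abs{F_\lambda}\to 0$. Since $K$ is infinite, $K^*$ is never finitely generated: in characteristic $0$ this is because $\Q^* \leq K^*$ is not, while in characteristic $p > 0$ either $K$ is algebraic over $\F_p$ and $K^*$ is an infinite torsion group, or $K$ contains a transcendental element giving an infinite-rank free abelian factor. Express $K^*$ as a directed union of its finitely generated subgroups $H_\lambda \cong T_\lambda \oplus \Z^{k_\lambda}$, and for each $\lambda$ take the box F\o lner set $F_\lambda = T_\lambda \cdot \set{g_1^{a_1}\cdots g_{k_\lambda}^{a_{k_\lambda}} : \abs{a_i}\leq N_\lambda}$, for generators $g_1,\ldots,g_{k_\lambda}$ of $\Z^{k_\lambda}$ and $N_\lambda$ sufficiently large; a standard check shows that $(F_\lambda)$ is F\o lner for all of $K^*$.

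The key estimate is that $\Sigma(F_\lambda)$ lies in the finitely generated $\Z$-subring $R_\lambda \subseteq K$ generated by $H_\lambda$: after multiplying through by the common denominator $g_1^{N_\lambda}\cdots g_{k_\lambda}^{N_\lambda}$, each element of $\Sigma(F_\lambda)$ becomes an integer polynomial in $g_1,\ldots,g_{k_\lambda}$ of degree and coefficient magnitude polynomially bounded in $N_\lambda$. Hence $\log\abs{\Sigma(F_\lambda)}$ is polynomial in $N_\lambda$, while $\abs{F_\lambda} \geq (2 N_\lambda+1)^{k_\lambda}$ is super-polynomial once $k_\lambda \geq 2$. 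In the residual case when $K$ is algebraic over $\F_p$ (so $K^*$ is torsion and all $k_\lambda = 0$), one takes instead $H_\lambda = L_\lambda^*$ for a directed family of finite subfields $L_\lambda \subseteq K$ with $\abs{L_\lambda}\to\infty$: then $F_\lambda = H_\lambda$ is trivially a F\o lner set and $\abs{\Sigma(F_\lambda)}\leq\abs{L_\lambda}$, while $\abs{F_\lambda} = \abs{L_\lambda}-1$. In either case $\log\abs{\Sigma(F_\lambda)}/\abs{F_\lambda}\to 0$, giving $H_{alg}(\lambda,\set{0,1}) = 0$ and hence $\halg(\lambda) = 0$. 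The main obstacle is to make the polynomial bound on $\log\abs{\Sigma(F_\lambda)}$ rigorous, which requires controlling the height of elements of $R_\lambda$ as the box size $N_\lambda$ and the rank $k_\lambda$ vary together, while keeping $(F_\lambda)$ F\o lner with respect to the whole of $K^*$.
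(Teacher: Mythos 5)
Your reduction is correct and rather elegant: since $T_F(\lambda,\set{0,b})=b\cdot T_F(\lambda,\set{0,1})$ for $b\in K^*$, Lemma~\ref{lem:halg-0} does reduce everything to showing $H_{alg}(\lambda,\set{0,1})=0$. The gap is in the key counting estimate, which is not merely ``to be made rigorous'' but actually false. Take $K\supseteq \F_p(x_1,x_2)$ (or $\Q(x_1,x_2)$) with $x_1,x_2$ algebraically independent over the prime field. Every element of $H=\langle x_1,x_2\rangle\cong\Z^2$ is a Laurent monomial, and distinct Laurent monomials are linearly independent over the prime field; hence for \emph{every} finite $F\subseteq H$ the $2^{\card F}$ subset sums are pairwise distinct, so $\ell(T_F(\lambda,\set{0,1}))=\card F\log 2$ and the ratio equals $\log 2$ for every box, however large. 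Clearing denominators does not help: the $\set{0,1}$-polynomials supported on the $(2N+1)^k$ box monomials all have degree and coefficient size polynomially bounded in $N$, yet there are $2^{(2N+1)^k}=2^{\card F}$ of them, so ``bounded degree and coefficients'' only yields $\log\abs{\Sigma(F)}\leq \card F\log 2$, not a bound polynomial in $N$ with exponent independent of $k$. (Also, $(2N+1)^{k}$ is a polynomial of degree $k$ in $N$ for fixed $k$, so the ``polynomial versus super-polynomial'' comparison does not close the gap either.) The collapse of subset sums that the proposition needs is an arithmetic phenomenon: over $\Q$ it comes from the archimedean bound $\abs{\Sigma(F)}\leq 1+2d\sum_{g\in F}\abs{g}$ after clearing a common denominator $d$; in characteristic $p$ with $c$ transcendental it comes from the fact that all subset sums of $\set{c^i(c+1)^j:0\leq i,j\leq N}$ lie in an $\F_p$-space of dimension $O(N)$ while the box has $(N+1)^2$ elements. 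None of this is available for generators chosen without regard to their algebraic relations, and the example above even shows $\mathcal H_{\langle x_1,x_2\rangle}(f_{\set{0,1}})=\log 2>0$, i.e.\ restricting to the ``wrong'' rank-two subgroup gives positive entropy.

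This is precisely why the paper never computes directly along a F\o lner net of all of $K^*$: it combines the monotonicity $\mathcal H_{K^*}(f)\leq\mathcal H_N(f)$ (Proposition~\ref{restr:act}, Corollary~\ref{cor:subgroup}) with the vanishing theorems for product actions (Theorem~\ref{teo:submonoid}, Lemma~\ref{N2N}) applied to carefully chosen subgroups and submonoids --- $\Q^*$ acting on finite-dimensional $\Q$-subspaces in characteristic $0$, the finite subfields when $K$ is algebraic over $\F_p$, and $\langle c,c+1\rangle\cong\N^2$ acting on a finitely generated $\F_p[c]$-module otherwise. In each case the essential input is that one generator of a rank-two subgroup acts with \emph{finite} entropy; that algebraic input is exactly what your generic boxes lack, and supplying it would amount to redoing the paper's case analysis.
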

\begin{proof}  
Assume first that $K$ has characteristic zero. The underlying abelian group $A=(K,+)$ is a $\Q$-vector space and we can assume that $\Q\subseteq K$. By Proposition~\ref{restr:act}, $h_{alg}(\lambda)\leq h_{alg}(\varrho)$, where $\varrho=\lambda\restriction_{\Q^*}$, so it remains to prove that $h_{alg}(\varrho)=0$.

Let $X\in\Pf(A)$ and let $V$ be the $\Q$-linear subspace of $A$ generated by $X$, which is $\varrho$-invariant. Since $V$ is a torsion-free abelian group of finite rank, and $\Q^*$ as a group is isomorphic to $\Z(2)\times \Z^{(\Z)}$, by (the extended version of) Corollary~\ref{Coro:application1} and Proposition~\ref{finind}, we have that $h_{alg}(\varrho_V)=0$. Hence, $H_{alg}(\varrho,X)=H_{alg}(\varrho_V,X)\leq h_{alg}(\varrho_V)=0$.
Therefore, $h_{alg}(\varrho)=0$.

\medskip
Suppose that $K$ has characteristic $p>0$. We consider two cases.

First, assume that $K$ is an algebraic extension of $\F_p$. Then there exists a sequence $(a_n)_{n\in\N}$ in $\N_+$
such that $a_n|a_{n+1}$ for every $n \in\N$ and 
$$K= \bigcup_{n\in\N} K_n,\ \text{where}\ K_n = \F_{p^{a_n}}.$$
Then we can consider $(K^*_{n})_{n\in\N}$ as a right F\o lner sequence of $K^*$. Let $X\in\Pf^0(K)$. Then there exists $m\in\N$ such that $X\subseteq K_m$. For every $n\in\N$ with $n\geq m$, one has 
$$T_{K_n^*}(\lambda, K_m)=K_n^*K_m=K_n.$$
Hence, $$H_{alg}(\lambda,X)\leq H_{alg}(\lambda, K_m)=\lim_{n\to \infty}\frac{\ell(T_{K_n^*}(\lambda, K_m))}{|K_n^*|}=\lim_{n\to \infty}\frac{\ell(K_n)}{|K_n^*|}=0.$$
We can conclude that $h_{alg}(\lambda)=0$.

 Now assume that there exists a non-algebraic element $c\in K$. Let $S=\{c^i(c+1)^j\colon i,j\in\N\}$ be the submonoid of $K^*$ generated by $c$ and $c+1$.  Obviously, $S\cong\N^2$, since $c$ is transcendental over $\F_p$.  This determines the restricted action $S \overset{\lambda\restriction_S}{\curvearrowright} K$.  To conclude, it sufficed to prove that 
\begin{equation}\label{Laaaaast:Eq}
h_{alg}(\lambda\restriction_S)=0.
\end{equation}
Indeed, for the subgroup  $G$ of $K^*$ generated by $S$, \eqref{Laaaaast:Eq} implies that $h_{alg}(\lambda\restriction_G)=h_{alg}(\lambda\restriction_S)=0$ in view of Remark~\ref{S->Gh}, and hence $h_{alg}(\lambda)=0$ by Proposition~\ref{restr:act}.

To prove \eqref{Laaaaast:Eq} we have to verify that for every $X=\{x_0, x_1,\ldots,x_k\}\in\Pf^0(K)$ with $x_0=0$ one has 
\begin{equation}\label{Laaaaast:EqH}
H_{alg}(\lambda\restriction_S,X)=0.
\end{equation} 
Let $V$ be  the smallest $S$-invariant $\F_p$-linear subspace of $K$ that contains $X$ and let $S \overset{(\lambda\restriction_S)_V}{\curvearrowright} K$ the restricted action of $S$ on $V$. Then $H_{alg}(\lambda\restriction_S,X)=H_{alg}((\lambda\restriction_S)_V,X)$, and so \eqref{Laaaaast:EqH} is equivalent to $H_{alg}((\lambda\restriction_S)_V,X)=0$, so it is enough to prove that 
\begin{equation}\label{Laaaaast:Eqh}
h_{alg}((\lambda\restriction_S)_V)=0.
\end{equation} 

Once we limit the computation to $(\lambda\restriction_S)_V$, we can make use of Corollary~\ref{N2Ncor}. To this end we consider the submonoid $T$ of $S$ generated by $c$.  Obviously, $T\cong \N$ and we have the restricted actions
$$T \overset{\lambda\restriction_T}{\curvearrowright} K \ \mbox{ and } \ T \overset{(\lambda\restriction_T)_V}{\curvearrowright} V.$$ 

To apply Corollary~\ref{N2Ncor}, it is enough to prove that 
\begin{equation}\label{Laaaaast:EqT}
h_{alg}((\lambda\restriction_T)_V)<\infty.
\end{equation} 
To this end, note that $T$ is a cyclic monoid generated by $c$, so by Remark~\ref{mayday}(a)  
\begin{equation}\label{Tmc}
h_{alg}((\lambda\restriction_T)_V)=h_{alg}(m_c),
\end{equation}
where $$m_c=\lambda\restriction_T(c):V\to V$$ is the multiplication by $c$.
Let $U$ be the $\F_p$-linear span of $\{c^ix\colon i\in\N_+,x\in X\}$. Obviously, $U$ is $T$-invariant, and then also $S$-invariant; moreover, the transcendence of $c$ over $\F_p$ implies that $U\cong \F_p^{(\N)}$. 
Similarly, for $i\in\{1,\ldots,k\}$, each $V_i = Ux_i \cong \F_p^{(\N)}$ is $T$-invariant and $S$-invariant, and $V = V_1+\ldots + V_k$.
This determines the restricted actions $$T \overset{(\lambda\restriction_T)_{V_i}}{\curvearrowright} V_i$$
and the endomorphisms $$(\lambda\restriction_T)_{V_i}(c)=m_c\restriction_{V_i}.$$
Note that $V$ is a quotient of the vector space $W=V_1\times\ldots\times V_k$,
that $m_c$ is induced on $V$ by the multiplication $\mu_c^W$ by $c$ in $W$, and $$\mu_c^W=m_c\restriction_{V_1}\times \ldots\times m_c\restriction_{V_k}.$$ 
The multiplication $m_c\restriction_{V_i}$ by $c$ in $V_i$ acts on $V_i\cong\F_p^{(\N)}$ as the right Bernoulli shift, so $h_{alg}(m_c\restriction_{V_i})=\log p$ for every $i\in\{1,\ldots,k\}$. Hence, by Proposition~\ref{restriction_quotient}, Proposition~\ref{conju} and Proposition~\ref{wAT}, 
\begin{equation}\label{mc<infty}
h_{alg}(m_c)\leq h_{alg}(\mu_c^W)=k\log p \leq |X|\log p<\infty. 
\end{equation}

By \eqref{Tmc} and \eqref{mc<infty} we have \eqref{Laaaaast:EqT}, and so Corollary~\ref{N2Ncor} 
gives the desired equality \eqref{Laaaaast:Eqh}.
This proves \eqref{Laaaaast:EqH}, and so also \eqref{Laaaaast:Eq}. 
\end{proof}

The following result extends Proposition~\ref{casen=1}, which is  applied in its proof to cover the case $n=1$, while the general case follows from Theorem~\ref{teo:submonoid} and the case case $n=1$ (that is, Proposition~\ref{casen=1}).

\begin{corollary}\label{field}
Let $K$ be an infinite field, $n \in \N_+$ and let $G$ be the subgroup of $\mathrm{GL}_n(K)$ of upper triangular matrices. Denote by $A$ the vector space $K^n$ and by $G\overset{\alpha}\curvearrowright A$ the natural action. Then $h_{alg}(\alpha)=0$.
\end{corollary}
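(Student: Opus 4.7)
The plan is to reduce to Proposition~\ref{casen=1} via Theorem~\ref{teo:submonoid}, exploiting the center of $G$. First, I would note that $G$, being the group of invertible upper triangular matrices, is solvable and therefore amenable; this is a prerequisite for applying either proposition below. The case $n = 1$ is immediate: then $G = K^*$ and $\alpha$ is precisely the action studied in Proposition~\ref{casen=1}, so $\halg(\alpha) = 0$. From now on I assume $n \geq 2$.

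Let $Z = \set{aI_n : a \in K^*}$ denote the center of $G$; it is a non-trivial normal subgroup canonically isomorphic to $K^*$. Observe that $G/Z$ is infinite: the matrices $I_n + bE_{12}$, $b \in K$, all lie in $G$ and represent pairwise distinct cosets modulo $Z$, since $I_n + bE_{12} = a(I_n + b'E_{12})$ forces $a = 1$ and $b = b'$, and $K$ is infinite by hypothesis.

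The key step is to verify $\halg(\alpha \rest_Z) = 0$. Since $Z$ acts on $A = K^n$ by scalar multiplication, each coordinate axis $Ke_i$ is a $Z$-invariant subgroup, and the restricted action on $Ke_i \cong K$ is exactly the natural $K^*$-action of Proposition~\ref{casen=1}, whose algebraic entropy is $0$. Iterating Proposition~\ref{wAT} along the decomposition $K^n = Ke_1 \oplus \cdots \oplus Ke_n$ yields
\[
\halg(\alpha \rest_Z) = \sum_{i=1}^n \halg(K^* \curvearrowright Ke_i) = 0.
\]

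With $Z$ a non-trivial normal subgroup of the amenable group $G$, with $G/Z$ infinite, and with $\halg(\alpha \rest_Z) = 0 < \infty$, Theorem~\ref{teo:submonoid}(b) yields the desired conclusion $\halg(\alpha) = 0$. I expect no serious obstacle: the verifications (amenability of $G$, infiniteness of $G/Z$, invariance of the coordinate axes under $Z$) are all routine consequences of the choice of $Z$ as the center, and the substantive input comes precisely from Proposition~\ref{casen=1} and Theorem~\ref{teo:submonoid}, exactly as suggested by the text preceding the statement.
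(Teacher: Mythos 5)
Your proposal is correct and follows essentially the same route as the paper: reduce to Proposition~\ref{casen=1} for $n=1$, then for $n>1$ take $N=Z(G)\cong K^*$ (the scalar matrices), show $\halg(\alpha\rest_N)=0$ via the coordinate decomposition $K^n=Ke_1\oplus\cdots\oplus Ke_n$ and Proposition~\ref{wAT} (together with the weak conjugacy of Proposition~\ref{conju}), and conclude by Theorem~\ref{teo:submonoid}. The only differences are cosmetic, e.g.\ your explicit verification that $[G:Z]$ is infinite, which the paper leaves unstated.
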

\begin{proof} 
Note that $G$ is solvable, hence amenable.
The case $n=1$ is covered by Proposition~\ref{casen=1}. 
Assume that $n>1$. Then the center $N=Z(G)$ of $G$ has infinite index $[G:N]$. 
 As $N$ is the subgroup of $G$ of all non-zero scalar matrices, and $N\cong K^*$ so the action $\alpha\restriction_N$ is conjugated to the action $K^*\overset{\lambda}\curvearrowright K^n$ by multiplication. Then, from the case $n=1$, Proposition~\ref{conju} and Proposition~\ref{wAT}, we obtain that $h_{alg}(\alpha\restriction_N)=0$. Therefore, $h_{alg}(\alpha)=0$ by Theorem~\ref{teo:submonoid}. 
\end{proof}

\begin{remark} Let $K$ be a field of characteristic zero having finite degree $n>1$ over $\Q$. Then the underlying abelian group $A=(K,+)$ is a 
$\Q$-vector space and $A\cong\Q^n$. 
The group $L:=\Aut(A)=\Aut_\Q(A)\cong \mathrm{GL}_n(\Q)$ is not amenable. 
Let $L\overset{\alpha}\curvearrowright A$ denote the natural action. 
By Corollary~\ref{field}, for $N= Z(L)$, we have that $h_{alg}(\alpha\restriction_N)=0$.
Hence, by Theorem~\ref{teo:submonoid}, for every amenable subgroup $G$ of $L$ that contains $N$ and with $[G:N]$ infinite, $h_{alg}(\alpha\restriction_G)=0$.
\end{remark}

The subgroups of $\Q^n$, and more generally the abelian groups with finite ranks introduced below, have the property required in the following result (see Lemma~\ref{narrow}). Under the assumption on $G$ to be non-virtually cyclic, Corollary~\ref{h<infty} strengthens Corollary~\ref{h=0all} above.

\begin{corollary}\label{h<infty}
Let $G$ be a non-torsion non-virtually-cyclic abelian group, $A$ an abelian group, and $G\overset{\alpha}{\curvearrowright} A$ a left action.  If $\halg(\phi) < \infty$ for all $\phi\in\Aut(A)$, then $\halg(\alpha)=0$.
\end{corollary}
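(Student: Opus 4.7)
\medskip

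The plan is to reduce directly to Corollary~\ref{cyclic0}. The hypotheses have been set up precisely so that the three bullet points of that corollary apply.

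First I would dispose of the trivial case when $A$ is finite. Since $G$ is non-torsion, $G$ is infinite, so along any right \Folner net $(F_i)_{i \in I}$ of $G$ one has $\card{F_i} \to \infty$. For every $X \in \Pf^0(A)$ the trajectory $T_{F_i}(\alpha,X) \subseteq A$ satisfies $\ell(T_{F_i}(\alpha,X)) \leq \ell(A) < \infty$, hence $H_{alg}(\alpha,X) = 0$ and $h_{alg}(\alpha) = 0$ by Lemma~\ref{constantFubini}(b) applied to the bounded function $f_X$.

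So I may assume $A$ is infinite. Since $G$ is non-torsion, pick a non-torsion element $g \in G$. Because $G$ is abelian, $g$ is central and $\langle g\rangle$ is a normal subgroup of $G$. Because $G$ is not virtually cyclic, the subgroup $\langle g\rangle$ has infinite index $[G:\langle g\rangle]$. Now $G$ is an amenable group (being abelian), and since $\alpha$ is a group action on the abelian group $A$, we have $\alpha(g) \in \Aut(A)$ by Remark~\ref{groupaction}. The standing hypothesis then gives $h_{alg}(\alpha(g)) < \infty$.

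All the hypotheses of Corollary~\ref{cyclic0} are now in place: $G$ is amenable, $A$ is infinite, $g$ is a non-torsion central element of $G$ with $h_{alg}(\alpha(g)) < \infty$ and $[G:\langle g\rangle]$ infinite. Hence $h_{alg}(\alpha) = 0$, completing the proof. There is no real obstacle here; the whole content is packaged in Corollary~\ref{cyclic0}, whose non-trivial ingredient is Theorem~\ref{teo:submonoid} together with the identification $h_{alg}(\alpha\restriction_{\langle g\rangle}) = h_{alg}(\alpha(g))$ from Remark~\ref{mayday}(a), valid precisely because $g$ is non-torsion.
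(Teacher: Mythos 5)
Your proof is correct and follows essentially the same route as the paper: the paper applies Theorem~\ref{teo:submonoid} directly to $N=\langle g\rangle$ for a non-torsion $g$ together with Remark~\ref{mayday}(a), which is exactly the content of Corollary~\ref{cyclic0} that you invoke. Your separate treatment of the case $A$ finite is a harmless extra step needed only because Corollary~\ref{cyclic0} assumes $A$ infinite, whereas Theorem~\ref{teo:submonoid} does not.
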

\begin{proof}
Since $G$ is non-torsion and $G$ is not virtually cyclic, there exists an infinite cyclic subgroup $N=\langle g\rangle$ of $G$ of infinite index in $G$. Since $\halg(\alpha\restriction _N) = \halg(\alpha(g))$, by Remark~\ref{mayday}(a), and $h_{alg}(\alpha(g))<\infty$ by hypothesis, we conclude that $\halg(\alpha)=0$ by Theorem~\ref{teo:submonoid}.
\end{proof}

In order to discuss further applications of Theorem~\ref{teo:submonoid}, we recall that for an abelian group $A$ the free-rank $r_0(A) $ of $A$ is $$r_0(A)= \dim_\Q (\Q\otimes A)$$ and the $p$-rank $r_p(A)$ of $A$ is $$r_p(A) = \dim_{\Z/p\Z}A[p],$$ where $A[p] = \{x\in A\colon px = 0\}$. 

In the sequel we are interested in the class $\mathfrak N$ of \emph{abelian groups with finite ranks}, namely, abelian groups $A$ with $r_0(A) < \infty$ and $r_p(A) < \infty$ for every prime $p$. The class $\mathfrak N$ is stable under taking subgroups, quotients, and extensions (hence, finite products). Moreover, every $A\in\mathfrak N$ splits as $A= t(A) \times A_1$, where $A_1\in\mathfrak N$ is torsion-free, so $A_1$ is isomorphic to a subgroup of $\Q^n$ for some $n \in \N$.

\smallskip
We see that the abelian groups in $\mathfrak N$ have the property required in Corollary~\ref{h<infty}.

\begin{lemma}\label{narrow}
Let $A\in\mathfrak N$ and $\phi\in\Aut(A)$. Then $h_{alg}(\phi) < \infty$.
\end{lemma}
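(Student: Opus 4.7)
The plan is to decompose $A$ via the short exact sequence $0 \to t(A) \to A \to A/t(A) \to 0$, handle each piece using the finite-rank hypotheses, and then recombine. Since $t(A)$ is fully invariant (hence $\phi$-invariant), the Addition Theorem for the algebraic entropy of a single endomorphism on an arbitrary abelian group \cite{DGB} yields
\[
\halg(\phi) \;=\; \halg(\phi\rest_{t(A)}) \;+\; \halg(\phi_{A/t(A)}),
\]
so it suffices to prove that both summands are finite.

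For the torsion-free quotient, $A/t(A)$ has free-rank $r_0(A/t(A)) = r_0(A) < \infty$, so the argument already used in the proof of Corollary~\ref{Coro:June12} applies verbatim: the induced endomorphism $\phi_{A/t(A)}$ extends to an endomorphism $\widetilde\phi$ of the divisible hull $\Q^{r_0(A)}$ with $\halg(\phi_{A/t(A)}) = \halg(\widetilde\phi)$ by~\cite{DGB0}, and the algebraic Yuzvinski Formula~\cite{GBV2} gives $\halg(\widetilde\phi) < \infty$.

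For the torsion part, decompose $t(A) = \bigoplus_p t_p(A)$ into its $p$-primary components, each of them fully invariant. Iterating Proposition~\ref{wAT} over the finite partial direct sums $t_P(A) = \bigoplus_{p \in P} t_p(A)$ and invoking upper continuity of $\halg$ (Proposition~\ref{contlim}) yields
\[
\halg(\phi\rest_{t(A)}) \;=\; \sum_p \halg(\phi\rest_{t_p(A)}).
\]
The main step is to show that each summand on the right vanishes. Fix a prime $p$ and a finite subset $X \subseteq t_p(A)$; choose $k \in \N$ with $X \subseteq t_p(A)[p^k]$. The subgroup $t_p(A)[p^k]$ is $\phi$-invariant, since $p^k \phi(x) = \phi(p^k x) = 0$ for every $x \in t_p(A)[p^k]$; moreover it embeds into $\Z(p^\infty)^{r_p(A)}[p^k] \cong (\Z/p^k\Z)^{r_p(A)}$, hence $|t_p(A)[p^k]| \leq p^{k\, r_p(A)}$. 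Consequently every trajectory $T_n(\phi\rest_{t_p(A)}, X)$ remains inside this \emph{finite} group, so $\ell(T_n(\phi\rest_{t_p(A)}, X)) \leq k\, r_p(A) \log p$ uniformly in $n$. Dividing by $n$ and letting $n \to \infty$ produces $\Halg(\phi\rest_{t_p(A)}, X) = 0$, and since $X$ is arbitrary, $\halg(\phi\rest_{t_p(A)}) = 0$.

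The obstacle I anticipated was the potential divergence of the sum $\sum_p \halg(\phi\rest_{t_p(A)})$ over all primes; the finite-$p$-rank hypothesis resolves this decisively by forcing each summand to be zero rather than merely finite. Combining the three displays, $\halg(\phi\rest_{t(A)}) = 0$ and hence $\halg(\phi) = \halg(\phi_{A/t(A)}) < \infty$, as required.
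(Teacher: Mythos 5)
Your proof is correct and follows essentially the same route as the paper: split $A$ via the Addition Theorem for a single endomorphism into the torsion part and the torsion-free quotient, handle the quotient by extending to the divisible hull $\Q^{r_0(A)}$ and invoking the algebraic Yuzvinski Formula, and kill the entropy on each $p$-primary component by trapping every finite subset inside the finite fully invariant subgroup $A[p^k]$. The only (harmless) deviation is that you reassemble the primary components using Proposition~\ref{wAT} together with upper continuity (Proposition~\ref{contlim}), whereas the paper cites the Addition Theorem for torsion abelian groups from \cite{DGSZ} for that step.
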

\begin{proof}
Since the Addition Theorem holds for $\halg$ in the case of single endomorphisms of $A$ (see \cite{DGB}), we have that
\begin{equation*}\label{LLL}
\halg(\phi) = \halg(\phi\restriction _{t(A)})  + \halg(\bar \phi),
\end{equation*}
where $\bar \phi: A/t(A) \to A/t(A)$ is the automorphism induced by $\phi$. Hence, it suffices to see that $\halg(\phi\restriction _{t(A)})< \infty$ and $h_{alg}(\bar \phi)<\infty$.

Since the quotient $A/t(A)$ is torsion free and has finite rank, it is isomorphic to a subgroup of $\Q^n$ for some $n\in \N$. As recalled above in the proof of Corollary~\ref{Coro:application1}, $\halg(\bar \phi)<\infty$. 

Now we show that $\halg(\phi\restriction _{t(A)})=0$. For every prime $p$ consider the $p$-primary component $t_p(A)$ of $t(A)$, which is fully invariant.
Since $r_p(A)<\infty$, the subgroup $A[p^n]$ of $t_p(A)$ is finite and fully invariant for every $n\in\N$; therefore, every finite subset of $t_p(A)$ is contained in a finite fully invariant subgroup of $t_p(A)$ (see \cite{DGSZ}). This yields that $\halg(\phi\restriction_{t_p(A)})=0$ for every prime $p$, so we can make use of the Addition Theorem for torsion abelian groups from \cite{DGSZ} to get that $\halg(\phi\restriction _{t(A)})=0$.
\end{proof}

The next result generalizes Corollary~\ref{Coro:application1}, it follows directly from Corollary~\ref{h<infty} and Lemma~\ref{narrow}.
  
\begin{corollary}\label{Coro:application}
Let $G$ be a non-torsion non-virtually cyclic abelian group, $A\in\mathfrak N$, and $G\overset{\alpha}{\curvearrowright} A$ a left action. Then $\halg(\alpha) = 0$.
\end{corollary}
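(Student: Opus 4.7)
The plan is to derive this corollary as an immediate composition of the two results that the paper explicitly flags: Lemma~\ref{narrow} and Corollary~\ref{h<infty}. No new argument is required; the entire content has already been packaged in those statements, and my only task is to verify that the hypotheses line up.

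First, I would fix an arbitrary $\phi\in\Aut(A)$ and observe that since $A\in\mathfrak N$ (i.e., $r_0(A)<\infty$ and $r_p(A)<\infty$ for every prime $p$), Lemma~\ref{narrow} applies and yields $h_{alg}(\phi)<\infty$. Since $\phi$ was arbitrary, this gives the universal bound $h_{alg}(\phi)<\infty$ for all $\phi\in\Aut(A)$, which is precisely the hypothesis needed to invoke Corollary~\ref{h<infty}.

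Next, I would apply Corollary~\ref{h<infty} to the action $G\overset{\alpha}{\curvearrowright}A$: the group $G$ is abelian, non-torsion, and non-virtually-cyclic by assumption, and every $\phi\in\Aut(A)$ has finite algebraic entropy by the previous step. The conclusion of that corollary then gives $h_{alg}(\alpha)=0$, which is exactly the statement of Corollary~\ref{Coro:application}.

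There is no genuine obstacle in this deduction; the real work lives inside the two cited results. The conceptual heavy lifting was done earlier: Corollary~\ref{h<infty} is built on Theorem~\ref{teo:submonoid} via the fact that a non-torsion non-virtually-cyclic abelian $G$ contains an infinite cyclic subgroup $N=\langle g\rangle$ of infinite index, allowing one to control $\halg(\alpha\rest_N)=\halg(\alpha(g))$; while Lemma~\ref{narrow} combines the Addition Theorem for single endomorphisms with the splitting $A=t(A)\times A_1$ (where $A_1\hookrightarrow\Q^n$) to show that no automorphism of $A\in\mathfrak N$ can have infinite algebraic entropy. Once these are in hand, the present corollary is a one-line consequence.
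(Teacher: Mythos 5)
Your proposal is correct and matches the paper exactly: the paper states that Corollary~\ref{Coro:application} ``follows directly from Corollary~\ref{h<infty} and Lemma~\ref{narrow},'' which is precisely the two-step composition you carry out. Nothing further is needed.
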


The class of abelian groups with all automorphisms of finite algebraic entropy, which contains $\mathfrak N$ according to Lemma~\ref{narrow}, was studied in \cite{DGSZ} for torsion abelian groups and in \cite{DS} in the general case, yet no complete description of this class is known so far.


\subsection{Entropy of quotient actions}\label{sec:quotient-action}

In this section we consider the following general setting. Let $G \overset{\alpha}{\curvearrowright} A$ be a left action of an amenable group $G$ on an abelian group $A$, and let $N$ be a normal subgroup of $G$ such that $N\subseteq\ker \alpha$, with $\pi: G \to G/N$ be the canonical projection. Denote by $$G/N\overset{\bar\alpha_{G/N}}{\curvearrowright} A$$ the \emph{quotient action} induced by $\alpha$, that is, for $x\in A$ and $h\in G/N$ let $\bar\alpha_{G/N}(h)(x)=\alpha(g)(x)$ for any $g\in G$ with $\pi(g) =h$.

\medskip
Under these hypotheses, and in particular in view of the assumption $N\subseteq \ker\alpha$, one could expect that $h_{alg}(\alpha)=h_{alg}(\bar\alpha_{G/N})$, but this is not the case as item (b) of the following example shows.

\begin{example}\label{gamma=alpha}
\begin{enumerate}[(a)]
\item Let $A$ be an abelian group, $L$ an amenable group, and $L\overset{\gamma}{\curvearrowright}A$ a left action. Let $N$ be an amenable group, $G=N\times L$ and let $G\overset{\alpha}{\curvearrowright}A$ be defined by 
$$\alpha(k,h)(a) = \gamma(h)(a)\quad \text{for every}\ (k,h)\in G.$$
Then $N\subseteq \ker\alpha$ and $\alpha\restriction_L=\gamma$, moreover $\bar\alpha_{G/N}$ is weakly conjugated to $\gamma$. So, by Proposition~\ref{conju}
\begin{equation}\label{New:Eqq}
\halg(\bar\alpha_{G/N}) = \halg(\gamma)\  \mbox{ and }\ \ent(\bar\alpha_{G/N}) = \ent(\gamma).  
\end{equation}
We verify that
\begin{equation}\label{New:Eqq1}
h_{alg}(\alpha)=\begin{cases}0 & \text{if $N$ is infinite}\\\frac{h_{alg}(\gamma)}{|N|} & \text{if $N$ is finite}\end{cases}\quad\text{and}\quad \ent(\alpha)=\begin{cases}0 & \text{if $N$ is infinite}\\\frac{\ent(\gamma)}{|N|} & \text{if $N$ is finite}\end{cases}.
\end{equation}
The case when $N$ is finite is settled by Proposition~\ref{finind}. So, assume that $N$ is infinite.
Let $(N_i)_{i \in I}$ be a right F\o lner net of $N$ and $(L_j)_{j\in J}$ be a right F\o lner net of $L$. Then $(N_i\times L_j)_{(i,j)\in I\times J}$ is a right F\o lner net of $G$ by Lemma~\ref{Fnprod}.
Let $X\in\Pf^0(A)$. For every $(i,j)\in I\times J$, we have that $$T_{N_i \times L_j}(\alpha, X) = T_{L_j}(\gamma, X).$$
Since the limit $H_{alg}(\gamma,X) = \lim_{j\in J} \frac{\ell(T_{L_j}(\gamma, X))}{\card{L_j}}$ is finite, there exists $j_0\in J$ such that, for $C=  H_{alg}(\gamma,X)+1$, $$\frac{\ell(T_{L_j}(\alpha, X))}{\card{L_j}} \leq C$$ for all $j \geq j_0$.
Therefore, $$\frac{\ell(T_{N_i \times L_j}(\alpha, X))}{\card{N_i\times L_j}} = \frac{1}{\card{N_i}} \frac{\ell(T_{L_j}(\gamma, X))}{\card{L_j}} \leq  \frac{C}{\card{N_i}}$$
for all $j \geq j_0$. Since $\lim_{i\in I} \frac{1}{\card{N_i}} = 0$, we deduce that 
$$H_{alg}(\alpha, X) = \lim_{(i,j)\in I\times J} \frac{\ell(T_{N_i \times L_j}(\alpha,X))}{\card{N_i \times L_j}}\leq\lim_{i\in I}\frac{C}{\card{N_i}} = 0.$$
Since the above is true for every $X\in\Pf^0(A)$, we conclude that $h_{alg}(\alpha) =0$. The second equality follows from the first one and Proposition~\ref{h=ent}.

\item We discuss the equality $h_{alg}(\alpha)=h_{alg}(\bar\alpha_{G/N})$ in the example in item (a); in view of Equation~\eqref{New:Eqq}, we need to see when $h_{alg}(\alpha)=h_{alg}(\gamma)$. 

We use Equation~\eqref{New:Eqq1}.
If $N$ is infinite, then $h_{alg}(\alpha)=0$. If $N$ is finite, and $h_{alg}(\gamma)=0$ or $h_{alg}(\gamma)= \infty$, then $h_{alg}(\alpha)=h_{alg}(\gamma)$. 
If $0 < h_{alg}(\gamma) <\infty$, then $h_{alg}(\alpha) = h_{alg}(\gamma)$ precisely when $N$ is trivial.

For specific examples consider $L = \Z$, the right Bernoulli shift $$\beta=\beta_{K}:K^{(\Z)}\to K^{(\Z)},\quad (x_n)_{n\in\Z}\mapsto(x_{n-1})_{n\in\N},$$ and the action $L\overset{\alpha_\beta}{\curvearrowright}A = K^{(\Z)}$ induced by $\beta$.
In case $K=\Z(p)$ for a prime $p$, one has $\ent(\alpha_\beta)=h_{alg}(\alpha_\beta)=\halg(\beta) = \ent(\beta) = \log p$.
To get an action with $h_{alg}(\gamma) =\infty$ it is enough to take $K = \Z$.  
\end{enumerate}
\end{example}

Item (a) of Example~\ref{gamma=alpha} is a particular case of the following general result on quotient actions. While the computation in the above example is elementary, the proof of the next theorem uses Theorem~\ref{Fubini}.

\begin{theorem}\label{quotient}
Let $G$ be an amenable group, $A$ an abelian group, and $G\overset{\alpha}{\curvearrowright}A$ a left action. Let $N$ be a normal subgroup of $G$ such that $N\subseteq\ker\alpha$ and consider $G/N\overset{\bar\alpha_{G/N}}{\curvearrowright}A$. Then 
$$h_{alg}(\alpha)=\begin{cases}0&\text{if $N$ is infinite},\\ \frac{h_{alg}(\overline\alpha_{G/N})}{\card{N}}&\text{if $N$ is finite}.\end{cases}
\quad\text{and}\quad
\ent(\alpha)=\begin{cases}0&\text{if $N$ is infinite},\\ \frac{\ent(\overline\alpha_{G/N})}{\card{N}}&\text{if $N$ is finite}.\end{cases}$$
\end{theorem}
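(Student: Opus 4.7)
My plan is to reduce this theorem to a direct application of Proposition~\ref{prop:Fubini-restriction} via the canonical projection $\pi\colon G\to C:=G/N$. Since $G$ is amenable and $N$ is a normal subgroup, $C$ is an amenable group; and as both $N$ and $C$ are groups, every section $\sigma\colon C\to G$ is automatically good. Thus the Fubini-type machinery from \S\ref{sec:integral} applies.

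The key observation is the following factorisation. Fix $X\in\Pf^0(A)$ and consider $f_X\in\mathcal S(G)$, i.e.\ $f_X(F)=\ell(T_F(\alpha,X))$. Because $N\subseteq\ker\alpha$, for every $g\in G$ and every $n\in N$ we have $\alpha(gn)=\alpha(g)\circ\alpha(n)=\alpha(g)$, so $\alpha(g)(X)$ depends only on $\pi(g)\in C$. Hence for any $F\in\Pf(G)$,
\[
T_F(\alpha,X)=\sum_{g\in F}\alpha(g)(X)=\sum_{c\in\pi(F)}\bar\alpha_{C}(c)(X)=T_{\pi(F)}(\bar\alpha_{C},X),
\]
so $f_X(F)=\ell(T_{\pi(F)}(\bar\alpha_C,X))=g_X(\pi(F))$, where $g_X\in\mathcal S(C)$ is the function $Y\mapsto\ell(T_Y(\bar\alpha_C,X))$. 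In the notation of Proposition~\ref{prop:Fubini-restriction} this exactly says $f_X=(g_X)_\pi$.

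Applying Proposition~\ref{prop:Fubini-restriction} (which requires amenability of $N$, provided by Lemma~\ref{lem:amenable-subgroup} or directly as subgroups of amenable groups are amenable) yields
\[
H_{alg}(\alpha,X)=\mathcal H_G(f_X)=\mathcal H_G((g_X)_\pi)=\frac{\mathcal H_C(g_X)}{\card N}=\frac{H_{alg}(\bar\alpha_{G/N},X)}{\card N}.
\]
If $N$ is infinite, then $\card N=\infty$; the numerator is bounded (indeed $H_{alg}(\bar\alpha_{G/N},X)\leq\ell(X)<\infty$ by Lemma~\ref{rem:decreasing} applied to $g_X$), so the ratio is $0$ and hence $H_{alg}(\alpha,X)=0$ for every $X$. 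Taking the supremum over $X\in\Pf^0(A)$ gives $h_{alg}(\alpha)=0$. If $N$ is finite, the equality above holds in the ordinary arithmetic sense, and taking the supremum over $X\in\Pf^0(A)$ on both sides gives
\[
h_{alg}(\alpha)=\frac{h_{alg}(\bar\alpha_{G/N})}{\card N}.
\]

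Finally, the statement for $\ent$ follows by running exactly the same computation but restricting $X$ to $\mathcal F(A)$ (noting that $\mathcal F(A)$ is the same family in both $G$-actions, since the underlying abelian group does not change), or alternatively by applying Proposition~\ref{h=ent} together with the just-proved formula for $h_{alg}$ applied to $\alpha_{t(A)}$ and $(\bar\alpha_{G/N})_{t(A)}=\overline{\alpha_{t(A)}}_{G/N}$. I do not anticipate any serious obstacle: the only small point requiring care is to verify the kernel factorisation $f_X=(g_X)_\pi$ and to confirm that $g_X$ lies in $\mathcal S(C)$ so that Proposition~\ref{prop:Fubini-restriction} is indeed available.
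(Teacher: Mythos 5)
Your reduction hinges on the identity $T_F(\alpha,X)=T_{\pi(F)}(\bar\alpha_{G/N},X)$, i.e.\ $f_X=(g_X)_\pi$, and this identity is false for a general $X\in\Pf^0(A)$. What is true is only that $\alpha(g)(X)$ depends on $\pi(g)$; but the trajectory is a \emph{sumset}, so repetitions matter. If $F$ meets the coset $\pi^{-1}(c)$ in $k_c$ points, then
\[
T_F(\alpha,X)=\sum_{c\in\pi(F)}\bigl(\bar\alpha_{G/N}(c)(X)\bigr)_{k_c},
\]
which properly contains $T_{\pi(F)}(\bar\alpha_{G/N},X)$ unless each $k_c=1$ or $X$ is a subgroup. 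Concretely, take $F=\{1,n\}$ with $1\neq n\in N$ and $X=\{0,a\}$ with $a$ of order greater than $2$: then $T_F(\alpha,X)=X+X$ has $3$ elements while $T_{\pi(F)}(\bar\alpha_{G/N},X)=X$ has $2$, so $f_X(F)\neq g_X(\pi(F))$. Consequently $f_X$ does not factor through $\pi$ and Proposition~\ref{prop:Fubini-restriction} is not applicable to it; the chain $\mathcal H_G(f_X)=\mathcal H_C(g_X)/\card N$ is unjustified. (Your argument is correct when restricted to $X\in\mathcal F(A)$, since a sum of copies of a subgroup is that subgroup; so the $\ent$ half of the theorem does follow by your route.)

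The paper's proof is built precisely around this obstruction. It applies the full Fubini Theorem~\ref{Fubini} to $f_B$ and computes, for $X\in\Pf(N)$ with $\card X=m$, that $T_{X\sigma(Y)}(\alpha,B)=T_Y(\bar\alpha_{G/N},B_m)$ (Lemma~\ref{next-to-the-last}), so the inner integral over $N$ sees the $m$-fold sumsets $B_m$ rather than $B$. When $N$ is infinite one cannot drop the subscript $m$; instead one uses the bound $\ell(W_m)\leq\card W\log(m+1)$, whose logarithmic growth in $m=\card{N_j}$ is killed by division by $\card{N_j}$, giving $\theta\equiv 0$ and hence $H_{alg}(\alpha,B)=0$. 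When $N$ is finite one gets $H_{alg}(\alpha,B)=\frac{1}{\card N}H_{alg}(\bar\alpha_{G/N},B_{\card N})$ --- note the argument is $B_{\card N}$, not $B$ --- and an extra monotonicity-and-supremum step is needed to convert this into $h_{alg}(\alpha)=h_{alg}(\bar\alpha_{G/N})/\card N$. To repair your proof you would need to reinstate exactly these estimates; the clean one-line appeal to Proposition~\ref{prop:Fubini-restriction} does not survive for non-subgroup $X$.
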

\begin{proof} 
Let $C=G/N$, $\gamma=\bar\alpha_{G/N}$ and fix a section $\sigma:C\to G$. For $B\in\Pf(A)$, by definition $H_{alg}(\alpha,B)=\mathcal H_G(f_B)$.
By Theorem~\ref{Fubini}, $$H_{alg}(\alpha,B)=\mathcal H_C(\theta),$$ where 
$$\theta=\Theta_\sigma(f_B):\Pf(C)\to\R_+,\ Y\mapsto \mathcal H_N((f_B)^{\sigma(Y)}).$$
Let $Y\in\Pf(C)$ and $X\in\Pf(N)$ with $m= |X|$. Since $N\subseteq \ker\alpha$,
$$T_{X\sigma(Y)}(\alpha,B)=T_X(\alpha,T_{\sigma(Y)}(\alpha,B))= T_{\sigma(Y)}(\alpha,B)_m=T_{\sigma(Y)}(\alpha,B_m)=T_Y(\gamma,B_m),$$
where in the next-to-the-last equality Lemma~\ref{next-to-the-last} applies.
Therefore,
\begin{equation}\label{Fub1}
(f_B)^{\sigma(Y)}(X)=f_B(X\sigma(Y))=\ell(T_{X\sigma(Y)}(\alpha,B))=\ell(T_{\sigma(Y)}(\alpha,B)_m)= \ell(T_Y(\gamma,B_m)).
\end{equation}
Here we consider two cases. 

\smallskip 
\noindent {\bf Case 1.} Assume that $N$ is infinite. Then $(f_B)^{\sigma(Y)}\restriction_{\Pf(N)}$ needs not be a constant function if $B$ is not a subgroup of $A$ (i.e., $B_m \ne B$). Nevertheless, in this case we can find a convenient estimate of this function using the folklore fact that  for every finite subset $W$ of $A$ one has $|W_m| \leq (m+1)^{|W|}$, so $\ell(W_m) \leq |W| \log(m+1)$.  Hence,  Equation~\eqref{Fub1} gives
\begin{equation}\label{Fub2}
(f_B)^{\sigma(Y)}(X)=\ell(T_Y(\gamma,B)_m \leq |T_Y(\gamma,B)| \log(m+1).
\end{equation}
 where $|T_Y(\gamma,B)|$ is constant with respect to $X$. 

Now we can compute $\theta$ by taking $X$ to be a generic member of a right F\o lner net $(N_j)_{j\in J}$ of $N$. From Equation~\eqref{Fub2} we deduce that, for a right F\o lner net $(C_i)_{i\in I}$ of $C$, for every $i\in I$
$$\theta(C_i)=\mathcal H_N((f_B)^{\sigma(C_i)})\leq  \lim_{j\in J}\frac{|T_{C_i}(\gamma,B)| \log(|N_j|+1)}{\card{N_j}} = 0,$$
as $|N_j|\to \infty$. Therefore, $H_{alg}(\alpha,B)=\mathcal H_C(\theta)= 0$ by Theorem~\ref{Fubini}. Consequently, since $B\in\Pf(A)$ is arbitrary, we can conclude that $h_{alg}(\alpha)=0$, if $N$ is infinite.

\smallskip 
\noindent {\bf Case 2.} Assume that $N$ is finite and set $X=N$, so $m=|N|=|X|$ in Equation~\eqref{Fub1}. Pick a right F\o lner net $(C_i)_{i\in _I}$ of $C$ and using Theorem~\ref{Fubini} along with Equation~\eqref{Fub1} (applied to $Y=C_i$) conclude that
\begin{equation}\label{LL1}
H_{alg}(\alpha,B)=\lim_{i\in I}\frac{\mathcal H_N((f_B)^{\sigma(C_i)})}{\card{C_i}}=\lim_{i\in I}\frac{\frac{\ell(T_{C_i}(\gamma,B_m))}{m}}{\card{C_i}}=\frac{1}{m}\lim_{i\in I}\frac{\ell(T_{C_i}(\gamma,B_m))}{\card{C_i}}=\frac{1}{m}H_{alg}(\gamma,B_m).
\end{equation}
Therefore, $$H_{alg}(\alpha,B)=\frac{1}{m}H_{alg}(\gamma,B_m)\leq \frac{1}{m}h_{alg}(\gamma),$$ and since $B\in\Pf(A)$ is arbitrary, we conclude that 
\begin{equation}\label{LL}
h_{alg}(\alpha)\leq\frac{1}{m}h_{alg}(\gamma).
\end{equation}

 To prove the opposite inequality consider first the case when $h_{alg}(\gamma)<\infty$ and fix an $\eps>0$. There exists $B\in\Pf(A)$ such that $H_{alg}(\gamma,B)\geq h_{alg}(\gamma) - \eps$.  Then Equation~\eqref{LL1} gives
$$h_{alg}(\alpha)\geq H_{alg}(\alpha,B) = \frac{1}{m}H_{alg}(\gamma,B_m)\geq  \frac{1}{m} H_{alg}(\gamma,B)\geq \frac{1}{m}h_{alg}(\gamma) - \eps.$$
Along with Equation~\eqref{LL}, this proves that $h_{alg}(\alpha) = \frac{1}{m}h_{alg}(\gamma)$. 

A similar argument works in the case $h_{alg}(\gamma)=\infty$. This completes the proof of the first assertion. 
The second assertion follows from the first one and Proposition~\ref{h=ent}. 
\end{proof}

In the following direct consequence of the above theorem we see that the algebraic entropy of $G\overset{\alpha}{\curvearrowright}A$ is always smaller that the algebraic entropy of $G/N\overset{\bar\alpha_{G/N}}{\curvearrowright}A$ for $N$ a normal subgroup of $G$ contained in $\ker\alpha$. Moreover, we see that the algebraic entropy of these two actions is the same only in special cases.

\begin{corollary}\label{quot:act}
Let $G$ be an amenable group, $A$ an abelian group, $G\overset{\alpha}{\curvearrowright}A$ a left action, and $N$ a normal subgroup of $G$ such that $N\subseteq\ker\alpha$. Then 
$$h_{alg}(\alpha)\leq h_{alg}(\bar\alpha_{G/N})\quad \text{and}\quad \ent(\alpha)\leq\ent(\bar\alpha_{G/N}).$$
Furthermore, $h_{alg}(\alpha)=h_{alg}(\bar\alpha_{G/N})$ if and only if either $h_{alg}(\bar\alpha_{G/N})=0$, or $N=\{1\}$, or $h_{alg}(\bar\alpha_{G/N})= \infty$
and $N$ is finite. 
The same assertions hold for $\ent$.
\end{corollary}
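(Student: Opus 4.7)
The plan is to derive this corollary almost immediately from Theorem~\ref{quotient}, which provides the precise formula
\[
h_{alg}(\alpha)=\begin{cases}0&\text{if $N$ is infinite},\\ h_{alg}(\bar\alpha_{G/N})/\card{N}&\text{if $N$ is finite},\end{cases}
\]
together with the identical statement for $\ent$ via Proposition~\ref{h=ent}. Thus the whole argument is a routine case analysis; no new dynamical content needs to be introduced.

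For the inequality $h_{alg}(\alpha)\leq h_{alg}(\bar\alpha_{G/N})$, I would simply split on the cardinality of $N$. If $N$ is infinite, then $h_{alg}(\alpha)=0$ by Theorem~\ref{quotient}, and the inequality is trivial since $h_{alg}(\bar\alpha_{G/N})\geq 0$. If $N$ is finite, Theorem~\ref{quotient} gives $h_{alg}(\alpha)=h_{alg}(\bar\alpha_{G/N})/\card{N}\leq h_{alg}(\bar\alpha_{G/N})$, because $\card N\geq 1$ (with the convention $\infty/n=\infty$ for every positive integer $n$).

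For the characterization of equality, I would analyze when the formulas collapse. In the infinite-$N$ case, $h_{alg}(\alpha)=0$, so equality forces $h_{alg}(\bar\alpha_{G/N})=0$. In the finite-$N$ case, the equality $h_{alg}(\bar\alpha_{G/N})/\card N = h_{alg}(\bar\alpha_{G/N})$ holds precisely when either the common value is $0$, or the common value is $\infty$ (again by the convention on division), or $\card N=1$, i.e., $N=\{1\}$. Conversely, each of the three listed conditions obviously yields equality: $h_{alg}(\bar\alpha_{G/N})=0$ forces $h_{alg}(\alpha)=0$ by Proposition~\ref{restriction_quotient}-style monotonicity or directly by Theorem~\ref{quotient}; $N=\{1\}$ makes $\alpha$ and $\bar\alpha_{G/N}$ the same action; and $h_{alg}(\bar\alpha_{G/N})=\infty$ with $N$ finite yields $h_{alg}(\alpha)=\infty$ as well.

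Finally, the statement for $\ent$ follows by exactly the same argument, applied to the restrictions $\alpha_{t(A)}$ and $(\bar\alpha_{G/N})_{t(A)}$, using Proposition~\ref{h=ent} together with the observation that $t(A)$ is $\alpha$-invariant and that the induced quotient action on $t(A)$ coincides with $\overline{\alpha_{t(A)}}_{G/N}$. There is no genuine obstacle here; the only mild care required is in handling the $\infty$ case correctly (so that $\infty/\card N=\infty$ when $N$ is finite), which I would flag explicitly to make the three-case dichotomy in the characterization of equality watertight.
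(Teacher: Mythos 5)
Your proposal is correct and is exactly the intended derivation: the paper states this corollary without proof as a direct consequence of Theorem~\ref{quotient}, and your case analysis on $\card N$ (with the convention $\infty/\card N=\infty$ for finite $N$) is precisely the routine bookkeeping needed, including the three-way characterization of equality. Note only that Theorem~\ref{quotient} already states the $\ent$ formula explicitly, so your final paragraph about passing to $t(A)$ is harmless but not needed.
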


The inequalities in the above corollary can be strict also when $N$ is infinite, as shown by Example~\ref{gamma=alpha}(b).

\begin{corollary}\label{tfcor}
Let $G$ be a torsion-free amenable group, $A$ an abelian group, and $G\overset{\alpha}{\curvearrowright}A$ a left action. 
If $\halg(\alpha) > 0$, then the action is faithful (i.e., $\ker\alpha=\{1\})$.
\end{corollary}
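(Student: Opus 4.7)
The plan is to argue by contraposition: assuming the action is not faithful, I will exhibit an infinite normal subgroup of $G$ contained in $\ker \alpha$ and then invoke Theorem~\ref{quotient} to force $\halg(\alpha)=0$.

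Concretely, suppose that $\ker \alpha \neq \{1\}$, and set $N = \ker \alpha$. First I would observe that $N$ is a normal subgroup of $G$: since $\alpha$ is a left monoid action of a group $G$ on an abelian group $A$, Remark~\ref{groupaction} tells us that $\alpha$ factors as a genuine group homomorphism $\alpha : G \to \Aut(A)$, so $N = \ker \alpha$ is normal in $G$ in the usual group-theoretic sense. Moreover, $N \subseteq \ker \alpha$ trivially (with equality).

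Next I would use the hypothesis that $G$ is torsion-free to deduce that $N$ is infinite. Indeed, any non-trivial element $g \in N$ generates an infinite cyclic subgroup $\langle g \rangle \subseteq N$, since $G$ has no non-trivial elements of finite order. Hence $|N| = \infty$.

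With $N$ an infinite normal subgroup of $G$ contained in $\ker \alpha$, Theorem~\ref{quotient} applies and yields $\halg(\alpha) = 0$, contrary to the hypothesis $\halg(\alpha) > 0$. This contradiction shows $\ker \alpha = \{1\}$, i.e., $\alpha$ is faithful. I do not anticipate any real obstacle here; the entire content is packaged in Theorem~\ref{quotient}, and the only small verifications needed are the normality of $\ker \alpha$ (standard, via Remark~\ref{groupaction}) and the infiniteness of any non-trivial subgroup of the torsion-free group $G$.
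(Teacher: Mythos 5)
Your proof is correct and matches the paper's own argument: the paper likewise takes $N=\ker\alpha$, notes it is infinite because $G$ is torsion-free, and applies Theorem~\ref{quotient} to conclude $\halg(\alpha)=0$, a contradiction. The extra verifications you mention (normality of $\ker\alpha$, infiniteness of any non-trivial subgroup) are exactly the implicit steps in the paper's proof.
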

\begin{proof}
Assume that $N= \ker \alpha$ is non-trivial. Then $N$ is infinite, as $N$ is torsion-free. By Theorem~\ref{quotient} applied to $G$ and $N$ we deduce that  $\halg(\alpha)=0$, a contradiction. 
\end{proof}

\begin{corollary}
Let $G$ be a non-abelian torsion-free amenable group, $A$ an abelian group, and $G\overset{\alpha}{\curvearrowright} A$ a left action. If $\mathrm{Aut}(A)$ is  abelian, then $h_{alg}(\alpha) = 0$. 
In particular, all actions $G\overset{\alpha}{\curvearrowright} \Z(p^\infty)$, $G\overset{\alpha}{\curvearrowright} \Q/\Z$ and $G\overset{\alpha}{\curvearrowright} \mathbb J_p$ have $\halg(\alpha) = 0$.
\end{corollary}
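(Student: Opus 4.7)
The plan is to reduce the statement to Corollary~\ref{tfcor} by showing that the action $\alpha$ is never faithful. A left monoid action of a group $G$ on an abelian group $A$ amounts, by Remark~\ref{groupaction}, to a group homomorphism $\alpha: G \to \mathrm{Aut}(A)$. Since $\mathrm{Aut}(A)$ is assumed to be abelian, the commutator subgroup $[G,G]$ is automatically contained in $\ker\alpha$. As $G$ is non-abelian, $[G,G] \neq \{1\}$, hence $\ker\alpha \neq \{1\}$ and $\alpha$ is not faithful. The contrapositive of Corollary~\ref{tfcor}, applied to the torsion-free amenable group $G$, then yields $\halg(\alpha) = 0$.

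For the three particular cases, the task reduces to verifying that $\mathrm{Aut}(A)$ is abelian for $A \in \{\Z(p^\infty), \Q/\Z, \mathbb{J}_p\}$. This follows from classical computations of endomorphism rings of abelian groups: $\mathrm{End}(\Z(p^\infty)) \cong \mathbb{J}_p$ and $\mathrm{End}(\Q/\Z) \cong \widehat\Z$ as rings, both commutative; and $\mathbb{J}_p$ is algebraically compact with $\mathrm{End}(\mathbb{J}_p) \cong \mathbb{J}_p$, again a commutative ring. The unit groups $\mathbb{J}_p^{\times}$, $\widehat\Z^{\times}$, and $\mathbb{J}_p^{\times}$ are therefore all abelian, and the first part of the corollary applies to each of these actions.

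The proof is essentially a one-step application of Corollary~\ref{tfcor}, so there is no substantial obstacle. The only delicate ingredient is the identification $\mathrm{End}(\mathbb{J}_p) \cong \mathbb{J}_p$, which is not entirely elementary and requires an appeal to the theory of algebraically compact (cotorsion) abelian groups; the analogous statements for $\Z(p^\infty)$ and $\Q/\Z$ are standard and follow from Pontryagin-type duality considerations.
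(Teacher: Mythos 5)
Your proof is correct and follows essentially the same route as the paper: both reduce the first assertion to Corollary~\ref{tfcor} via the observation that an abelian $\mathrm{Aut}(A)$ forces $\ker\alpha\supseteq[G,G]\neq\{1\}$ (the paper leaves the commutator-subgroup remark implicit), and both handle the three special cases by identifying $\mathrm{Aut}(A)$ with units of the commutative rings $\mathbb{J}_p$ and $\widehat{\Z}\cong\prod_p\mathbb{J}_p$.
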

\begin{proof} 
Since $G$ is non-abelian and $\mathrm{Aut}(A)$ is abelian, $\ker\alpha$ is non-trivial. Hence, $\halg(\alpha)=0$ by Corollary~\ref{tfcor}.

To prove the second assertion, note that $\mathrm{Aut}(\Z(p^\infty)) \cong \mathrm{Aut}(\mathbb J_p)
\cong U(\mathbb J_p)$ is abelian for every prime $p$. So, in view of the isomorphism $\Q/\Z \cong \bigoplus_p \Z(p^\infty)$, the group 
$\mathrm{Aut}(\Q/\Z)\cong\prod_{p\in\mathbb P}\mathrm{Aut}(\Z(p^\infty))\cong\prod_{p\in\mathbb P}U(\mathbb J_p)$ is abelian as well. 
\end{proof}

When $A$ is one of the groups $\Z(p^\infty)$ and $\Q/\Z$, the conclusion of the above theorem remains true also in case $G$ is abelian (see Corollary~\ref{h=0all}). 
However, one can produce automorphisms $\phi$ of $A=\mathbb J_p$ with $\halg(\phi)=\infty$; indeed, taking for example as $\phi$ the multiplication $x\mapsto \xi x$ 
by an invertible $p$-adic number $\xi$ that is transcendental over $\Z$, one has that the subgroup $B$ of $A$ generated by $\xi$ is $\phi$-invariant and  isomorphic to $\bigoplus_\Z\Z$, and  so $\halg(\phi)=\halg(\phi\restriction_B) = \infty$ since $\phi$ acts on $B\cong \bigoplus_\Z\Z$ as the two-sided Bernulli shift (see \cite{DGB0}).

\section{Addition Theorem}\label{AT:sec}

This section is dedicated to the proof of the Addition Theorem for the algebraic entropy stated in the introduction.


\subsection{The function $\ell(-,-)$}

We start defining the useful auxiliary function $\ell(-,-)$. Let $A$ be an abelian group, $B\leq A$ and $\pi:A\to A/B$ the canonical projection. For $Y\in\P(A)$ let
\begin{equation}\label{Lodz}
\ell(Y,B) = \ell(\pi(Y)). 
\end{equation}
Even if $Y$ is not necessarily a subgroup, we shall write sometimes $(Y + B) / B$ for the subset $\pi(Y)=\{y+B\colon y\in Y\}$ of $A/B$ in order to avoid the explicit use of $\pi$.
Clearly, it may happen that $B$ or $Y$ are infinite, but $\ell(Y,B)$ is  finite. From the known properties of $\ell(-)$ we obtain that, for every $Y,Y'\in\P(A)$,
\begin{equation}\label{Lodz1}
\ell(Y + Y',B) \leq \ell(Y,B) + \ell(Y',B).
\end{equation}

\begin{remark}\label{2019}
The the utility of the auxiliary function $\ell(-,-)$ is best illustrated in the calculation of the length of  orbits. Indeed, it allows for a transferring of the computation from the quotient $A/B$ of an abelian group $A$ to the group $A$ itself.  More specifically, if $S$ is a semigroup, $S\overset{\alpha}\curvearrowright A$ a left action, $B$ an $\alpha$-invariant subgroup of $A$, and $\pi:A\to A/B$ the canonical projection, then for $F\in\Pf(S)$ and $Y\in\Pf^0(A)$ one has 
\begin{equation}\label{1May2019}
\ell(T_F(\alpha_{A/B},\pi(Y)))=\ell(T_F(\alpha,Y),B),
\end{equation} 
due to the obvious equality $T_F(\alpha_{A/B},\pi(Y))=\pi(T_F(\alpha,Y))$.
\end{remark}
 
We start giving a basic property of the function $\ell(-,-)$.

\begin{lemma}\label{carina} Let $A$ be an abelian group and let $B,C,D$ be subgroups of $A$. Then:
\begin{enumerate}[(a)]
  \item $\ell(C,B)=\ell(C,B\cap C)$;
  \item $\ell(C,B)=\ell(C,D)$ if $B\cap C=D\cap C$.
\end{enumerate}
\end{lemma}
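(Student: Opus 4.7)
The plan is to reduce everything to an application of the second isomorphism theorem applied to the subgroups $C$ and $B$ of $A$, together with the observation that part (b) is an immediate consequence of part (a).

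For part (a), I would unfold the definition~\eqref{Lodz}. Let $\pi_B: A \to A/B$ and $\pi_{B\cap C}: A \to A/(B\cap C)$ denote the canonical projections. Then by definition $\ell(C,B) = \ell(\pi_B(C))$, and $\pi_B(C) = (C+B)/B$. By the second isomorphism theorem, $(C+B)/B$ is isomorphic as a group to $C/(B\cap C)$, so in particular they have the same cardinality. On the other hand, since $B\cap C \subseteq C$, one has $\pi_{B\cap C}(C) = C/(B\cap C)$, and again by definition $\ell(C, B\cap C) = \ell(C/(B\cap C))$. Combining these, $|\pi_B(C)| = |\pi_{B\cap C}(C)|$, hence $\ell(C,B) = \ell(C, B\cap C)$.

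For part (b), I would simply apply (a) twice, using the hypothesis $B \cap C = D \cap C$:
\[\ell(C,B) = \ell(C, B\cap C) = \ell(C, D\cap C) = \ell(C, D).\]

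There is essentially no obstacle here; the main subtlety is only to note that $\ell(\cdot)$ is defined via cardinality (with the convention $\ell(X) = \infty$ if $X$ is infinite), so the identifications coming from the second isomorphism theorem transfer the equalities even in the infinite case, since isomorphic groups have the same cardinality. The argument does not require $C$ to be finite, nor any finiteness of the indices involved.
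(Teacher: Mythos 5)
Your proposal is correct and matches the paper's intent: the paper merely states ``(a) is clear and (b) follows from (a),'' and your use of the second isomorphism theorem $(C+B)/B\cong C/(B\cap C)$ is exactly the standard justification being left implicit. Nothing is missing, and your remark that only cardinalities matter (so the infinite case is covered) is a correct, if minor, point of care.
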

\begin{proof}
(a) is clear and (b) follows from (a).
\end{proof}

In the following lemma we collect other useful properties of the function $\ell(-,-)$. 

\begin{lemma}\label{lem:ell}
Let $A$ an abelian group, $X,X'\in\Pf^0(A)$, $C\in\mathcal F(A)$, and $B, B'$ be subgroups of $A$. Then:
\begin{enumerate}[(a)]
\item the function $\ell(Y, B)$ is increasing in $Y$ and decreasing in $B$;
\item $\ell(X + C) = \ell(X , C) + \ell(C)$ and $\ell(X+ C , B) = \ell(X , C + B) + \ell(C , B)$;
\item  $\ell(X + X' , B + B') \leq \ell(X ,B) + \ell(X', B')$.
\end{enumerate}
\end{lemma}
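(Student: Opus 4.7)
The plan is to unwind the definition $\ell(Y,B)=\log|\pi_B(Y)|$ (where $\pi_B:A\to A/B$ is the canonical projection) and reduce everything to straightforward set-theoretic facts about quotients.

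For part (a), monotonicity in $Y$ is immediate: if $Y\subseteq Y'$, then $\pi_B(Y)\subseteq\pi_B(Y')$, so the cardinalities satisfy $|\pi_B(Y)|\leq|\pi_B(Y')|$. For monotonicity in $B$, I would observe that $B\subseteq B'$ induces a canonical surjective homomorphism $q:A/B\to A/B'$ with $q\circ\pi_B=\pi_{B'}$, so $\pi_{B'}(Y)=q(\pi_B(Y))$, and taking cardinalities gives $|\pi_{B'}(Y)|\leq|\pi_B(Y)|$, i.e., $\ell(Y,B')\leq\ell(Y,B)$.

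For part (b), the first equality exploits that $X+C$ is a union of cosets of $C$ (since $(X+C)+C=X+C$), each of size $|C|$, so $|X+C|=|C|\cdot|(X+C)/C|=|C|\cdot|\pi_C(X)|$; taking logs yields $\ell(X+C)=\ell(C)+\ell(X,C)$. For the second equality I would apply this first formula inside the abelian group $A/B$: the image $C':=\pi_B(C)$ is a finite subgroup of $A/B$, and the first formula (applied to $\pi_B(X)\in\Pf^0(A/B)$ and $C'$) gives
\[\ell\bigl(\pi_B(X)+C'\bigr)=\ell(C')+\ell\bigl(\pi_B(X),C'\bigr).\]
Now $\pi_B(X)+C'=\pi_B(X+C)$, so the left-hand side is $\ell(X+C,B)$; also $\ell(C')=\ell(C,B)$. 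Finally, by the third isomorphism theorem $(A/B)/C'\cong A/(B+C)$, which identifies $\pi_{C'}\circ\pi_B$ with $\pi_{B+C}$, so $\ell(\pi_B(X),C')=\ell(X,B+C)$. Combining the three identifications yields the claim.

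For part (c), I would build an explicit surjection
\[\Phi:\pi_B(X)\times\pi_{B'}(X')\twoheadrightarrow \pi_{B+B'}(X+X'),\qquad (x+B,\,x'+B')\mapsto x+x'+(B+B'),\]
which is well-defined because $B,B'\subseteq B+B'$, and surjective because every element of $\pi_{B+B'}(X+X')$ has the form $x+x'+(B+B')$ with $x\in X$, $x'\in X'$. Taking cardinalities gives $|\pi_{B+B'}(X+X')|\leq|\pi_B(X)|\cdot|\pi_{B'}(X')|$, and passing to logarithms yields $\ell(X+X',B+B')\leq\ell(X,B)+\ell(X',B')$. No step is really an obstacle here: the only subtle point is keeping track of which quotient one is working in while applying the isomorphism theorem in (b), but this is purely bookkeeping.
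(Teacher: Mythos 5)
Your proposal is correct. Parts (a) and (b) are declared ``obvious'' in the paper, and your fill-in (the induced surjection $A/B\to A/B'$ for monotonicity in $B$, the coset-counting identity $|X+C|=|C|\cdot|(X+C)/C|$, and the third isomorphism theorem for the relative version) is exactly the right justification. For part (c) the paper takes a slightly different, two-step route: it first applies the subadditivity property~\eqref{Lodz1}, i.e.\ $\ell(X+X',B+B')\leq \ell(X,B+B')+\ell(X',B+B')$, and then uses monotonicity in the second argument from item (a) to replace $B+B'$ by $B$ and $B'$ respectively; your single explicit surjection $\pi_B(X)\times\pi_{B'}(X')\twoheadrightarrow\pi_{B+B'}(X+X')$ packages both steps into one counting argument. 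The two arguments are equivalent in substance, and yours has the minor virtue of being self-contained, while the paper's reuses facts already on record.
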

\begin{proof}
(a) and (b) are obvious.

(c) By \eqref{Lodz1} and item (a), $$\ell(X+X',B+B')\leq\ell(X,B+B')+\ell(X',B + B')\leq \ell(X,B)+\ell(X',B').$$
This concludes the proof.
\end{proof}

Further properties of the function $\ell(-,-)$ follow, obtained from properties of trajectories from Lemma~\ref{lemma?}.

\begin{lemma}\label{lem:ell2}
Let $S$ be a semigroup, $A$ an abelian group, and $S\overset{\alpha}\curvearrowright A$ a left action. Let $F,F'\in\Pf(S)$, $X\in\Pf^0(A)$ and $B\leq A$. Then:
\begin{enumerate}[(a)]
\item $\ell(\alpha(g)(X),\alpha(g)(B)) \leq \ell(X , B)$ for every $g \in S$;
\item if $F' \subseteq F$, then $$\ell(T_{F'}(X),B) \leq \ell(T_F(X),B) \quad \text{and}\quad \ell(X ,T_F(B))\leq \ell(X , T_{F'}(B));$$
\item $\ell(T_F(X),T_F(B)) \leq \card F\, \ell(X , B)$;
\item if $B$ is $\alpha$-invariant, then $\ell(T_F(X),B) \leq \card F\, \ell(X , B)$.
\end{enumerate}
\end{lemma}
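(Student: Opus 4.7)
The plan is to handle the four items in order, reducing (c) and (d) to (a) via the subadditivity and monotonicity of $\ell(-,-)$ already recorded in Lemma~\ref{lem:ell}. Nothing deeper than bookkeeping is needed.

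For part (a) I would argue directly from the definition in~\eqref{Lodz}. If $x_1,x_2\in X$ satisfy $x_1-x_2\in B$, then $\alpha(g)(x_1)-\alpha(g)(x_2)=\alpha(g)(x_1-x_2)\in \alpha(g)(B)$, so the map $X\to \alpha(g)(X)/\alpha(g)(B)$ sending $x\mapsto \alpha(g)(x)+\alpha(g)(B)$ factors through the projection $X\to (X+B)/B$. Consequently $\card{\pi_{\alpha(g)(B)}(\alpha(g)(X))}\leq \card{\pi_B(X)}$, i.e. $\ell(\alpha(g)(X),\alpha(g)(B))\leq \ell(X,B)$.

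For part (b), the first inequality rests on the inclusion $T_{F'}(X)\subseteq T_F(X)$ when $F'\subseteq F$: since $X\in\Pf^0(A)$ we have $0\in X$, so $0=\sum_{s\in F\setminus F'}\alpha(s)(0)\in T_{F\setminus F'}(X)$, and therefore $T_{F'}(X)=T_{F'}(X)+0\subseteq T_{F'}(X)+T_{F\setminus F'}(X)=T_F(X)$. Now apply monotonicity of $\ell(-,B)$ (Lemma~\ref{lem:ell}(a)). For the second inequality, since $B$ is a subgroup, each $\alpha(s)(B)$ is a subgroup and $T_{F'}(B)=\sum_{s\in F'}\alpha(s)(B)\subseteq\sum_{s\in F}\alpha(s)(B)=T_F(B)$; the conclusion follows from monotonicity of $\ell(X,-)$ (decreasing in the subgroup), again Lemma~\ref{lem:ell}(a).

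For part (c) I iterate the subadditivity stated in Lemma~\ref{lem:ell}(c): writing $T_F(X)=\sum_{g\in F}\alpha(g)(X)$ and $T_F(B)=\sum_{g\in F}\alpha(g)(B)$,
\[
\ell(T_F(X),T_F(B))\leq \sum_{g\in F}\ell(\alpha(g)(X),\alpha(g)(B))\leq \sum_{g\in F}\ell(X,B)=\card{F}\,\ell(X,B),
\]
where the last inequality is part (a). Finally, part (d) follows from (c): if $B$ is $\alpha$-invariant, then $\alpha(s)(B)\subseteq B$ for every $s\in S$, hence $T_F(B)\subseteq B$; by the monotonicity of $\ell(Y,-)$ in the subgroup argument (Lemma~\ref{lem:ell}(a)) we get $\ell(T_F(X),B)\leq \ell(T_F(X),T_F(B))\leq \card{F}\,\ell(X,B)$.

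The only mildly delicate point is in (b), where one must be careful that $X$ is only a pointed finite set (hence the use of $\Pf^0$ to guarantee $0\in X$), whereas in the corresponding statement for $B$ one uses that $B$ is a subgroup so that $T_{F'}(B)$ sits automatically inside $T_F(B)$. Beyond that observation, the proof is a routine combination of (a) and Lemma~\ref{lem:ell}.
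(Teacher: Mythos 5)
Your proof is correct and follows essentially the same route as the paper's: (a) via the induced surjection on cosets, (b) via the trajectory containments $T_{F'}(X)\subseteq T_F(X)$ and $T_{F'}(B)\subseteq T_F(B)$ plus monotonicity, (c) by termwise subadditivity combined with (a), and (d) from (c) using $T_F(B)\subseteq B$. The only cosmetic difference is in (c), where you invoke Lemma~\ref{lem:ell}(c) in one shot while the paper first passes through $\ell(\alpha(g)(X),T_F(B))$ and then shrinks the second argument to $\alpha(g)(B)$; these are the same computation.
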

\begin{proof}
(a) The map $\alpha(g): A \to A$ induces a surjective homomorphism $(X + B)/B\to (\alpha(g)(X+B))/\alpha(g)(B)$.

(b) Since $0 \in X$, we have $T_{F'}(X) \subseteq T_F(X)$, and similarly $T_{F'}(B) \subseteq T_F (B)$. Hence, (a) applies, along with Lemma ~\ref{lem:ell}(a).

(c) By Lemma~\ref{lem:ell}(c) and item (a), $$\ell(T_F(X),T_F(B)) \leq \sum_{g \in F} \ell(\alpha(g)(X),T_F(B)) \leq \sum_{g \in F} \ell(\alpha(g)(X), \alpha(g)(B)) \leq \sum_{g \in F} \ell(X , B) = \card F \ell(X , B).$$

(d) Since $T_F(B) \leq B$, by Lemma~\ref{lem:ell}(a) and item (c), $$\ell(T_F(X),B) \leq \ell(T_F(X),T_F(B)) \leq \card F\ell(X , B).$$
This concludes the proof.
\end{proof}

We conclude with another lemma on the function $\ell(-,-)$.

\begin{lemma}\label{lem:ell3}
Let $S$ be a semigroup, $A$ an abelian group, and $S\overset{\alpha}\curvearrowright A$ a left action.
If $F_1, \dotsc, F_n \in\Pf(S)$, $C_1, \dotsc, C_n \in\mathcal F(A)$, and $B_1, \dotsc, B_n \leq A$, then
$$\ell(T_{F_1}(C_1)+ \ldots + T_{F_n}(C_n), T_{F_1}(B_1)+\ldots+T_{F_n}(B_n)) \leq \sum_{i=1}^n \card{F_i}\, \ell(C_i , B_i).$$
\end{lemma}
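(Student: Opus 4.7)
\medskip

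The plan is to derive this from the two previous lemmas by a straightforward two-step reduction: first decompose the combined quantity on the left into a sum of the ``single-block'' quantities $\ell(T_{F_i}(C_i),T_{F_i}(B_i))$, and then bound each of these using the action-aware estimate already available.

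\medskip

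More precisely, I would first extend Lemma~\ref{lem:ell}(c) from two summands to $n$ summands by a trivial induction on $n$: applying it $n-1$ times one gets
\[
\ell\Pa{\sum_{i=1}^n T_{F_i}(C_i),\ \sum_{i=1}^n T_{F_i}(B_i)} \ \leq\ \sum_{i=1}^n \ell\Pa{T_{F_i}(C_i),\ T_{F_i}(B_i)}.
\]
This is the ``separation of indices'' step, and it only uses the subadditive behavior of $\ell(-,-)$ under sums in both arguments (Lemma~\ref{lem:ell}(c)). Note that we do need $C_i \in \mathcal F(A)$ so that $T_{F_i}(C_i)\in\Pf^0(A)$ and all terms are finite.

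\medskip

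The second step applies Lemma~\ref{lem:ell2}(c) individually to each index $i\in\{1,\dotsc,n\}$, yielding
\[
\ell\Pa{T_{F_i}(C_i),\ T_{F_i}(B_i)}\ \leq\ \card{F_i}\,\ell(C_i,B_i).
\]
Summing these inequalities for $i = 1,\dotsc,n$ and chaining with the previous estimate gives exactly the desired bound
\[
\ell\Pa{\sum_{i=1}^n T_{F_i}(C_i),\ \sum_{i=1}^n T_{F_i}(B_i)}\ \leq\ \sum_{i=1}^n \card{F_i}\,\ell(C_i,B_i).
\]

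\medskip

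There is no real obstacle: both ingredients are already proved above, and the only thing to watch out for is the minor bookkeeping of turning the binary subadditivity of Lemma~\ref{lem:ell}(c) into an $n$-ary version, which is a one-line induction. So this lemma is essentially a formal corollary obtained by combining Lemma~\ref{lem:ell}(c) and Lemma~\ref{lem:ell2}(c).
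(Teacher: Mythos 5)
Your proposal is correct and follows exactly the paper's own argument: iterate Lemma~\ref{lem:ell}(c) to separate the indices, then apply Lemma~\ref{lem:ell2}(c) termwise. Nothing is missing.
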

\begin{proof} 
Applying Lemma~\ref{lem:ell}(c), and taking into account that $\ell(T_{F_i}(C_i), T_{F_i}(B_i)) \leq |F_i|\ell(C_i , B_i)$ by item (c) of Lemma
\ref{lem:ell2}, we conclude that
$$\ell(T_{F_1}(C_1)+ \ldots + T_{F_n}(C_n), T_{F_1}(B_1)+\ldots+T_{F_n}(B_n)) \leq \sum_{i=1}^n \ell(T_{F_i}(C_i), T_{F_i}(B_i)) \leq \sum_{i=1}^n\card{F_i} \ell(C_i , B_i),$$
hence we have the thesis.
\end{proof}

\subsection{The Filling Theorem}

In the sequel we expose a result from \cite{CCK}, and some of its consequences, that we apply in the next subsection to prove the Addition Theorem.

Let $S$ be a cancellative semigroup. For every $D, E \subseteq S$, define $$\partial_E(D)= \set{s \in D: (s E) \setminus D \neq \emptyset}.$$
In view of \cite[Proposition 2.4]{CCK}, $(F_i)_{i \in \N}$ is a right F\o lner net of $S$ if and only if, for every $E \in\Pf(S)$,
$$\lim_{i\in I}\frac{\card{\partial_{E}(F_i)}}{\card{F_i}} = 0.$$

The following theorem is exactly \cite[Theorem~3.8]{CCK} for $S^{op}$. Actually, for a set $X$ and $\eps>0$, a family $(Y_j)_{j\in J}$ in $\Pf(X)$ is \emph{$\eps$-disjoint} if there exists a family $(Z_j)_{j\in J}$ of pairwise disjoint subsets of $X$ such that $ Z_j\subseteq Y_j$ and $(1-\eps)\card{Y_j}\leq\card{Z_j}$ for every $j\in J$.
 
\begin{theorem}[Filling Theorem]\label{thm:filling}
Let $S$ be a cancellative semigroup.
For every $0 < \eps \leq 1/2$, there exists an integer $N = N(\eps)\geq 1$ such that, if $(F_j)_{j\in\{1\ldots,N\}}$ is a finite sequence of non-empty finite subsets of $S$ such that
\begin{equation}\label{eq:filling-1}
\frac{ \card{\partial_{F_j}(F_k)}}{\card{F_k}} \leq \frac{\eps^{2N}}{\card{F_j}} \quad \forall 1 \leq j < k \leq N
\end{equation}
and $D \subseteq S$ is a non-empty finite subset of $S$ such that
\begin{equation}\label{eq:filling-2}
\frac{\card{\partial_{F_j}(D)}}{\card{D}} \leq \eps^{2N} \quad \forall 1 \leq j \leq N,
\end{equation}
then there exists a finite sequence $(P_j)_{j \in\{1,\ldots,N\}}$ of finite subsets of $S$, such that:
\begin{enumerate}[(1)]
\item for every $j\in\{1,\ldots,N\}$, the family $(s F_j)_{s \in P_j}$ is $\eps$-disjoint\footnote{and $P_j \subseteq D\setminus \partial_{F_j}(D)$ -- we
separated this property, since we do not use it in the sequel.};
\item the subsets $P_j F_j$, $j\in\{1,\ldots,N\}$, are contained in $D$ and pairwise disjoint;
\item $U := \bigcup_{j=1}^N P_j F_j\subseteq D$ satisfies $\card{D \setminus U} \leq \eps \card D$,
\end{enumerate}
\end{theorem}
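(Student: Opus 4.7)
The plan is to construct the family $(P_j)_{j=1}^N$ by a greedy procedure, iterating from $j=N$ down to $j=1$. Set $U_{N+1}=\emptyset$; assuming $P_N,\dots,P_{j+1}$ have already been defined and $U_{j+1}=\bigsqcup_{k=j+1}^{N} P_k F_k$ has been computed, I would take $P_j$ to be a maximal (for inclusion) subset of $D\setminus \partial_{F_j}(D)$ such that the translates $(sF_j)_{s\in P_j}$ form an $\eps$-disjoint family and every $sF_j$ with $s\in P_j$ is disjoint from $U_{j+1}$. Condition (1) then holds by construction; condition (2) follows because $P_j\subseteq D\setminus\partial_{F_j}(D)$ forces $sF_j\subseteq D$ for $s\in P_j$, while the explicit disjointness from $U_{j+1}$ yields pairwise disjointness of the $P_jF_j$ across levels.

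The content of the theorem lies in the covering estimate (3). Put $R_j = D\setminus U_j$, so $R_{N+1}=D$ and $R_j$ shrinks as $j$ decreases. Maximality of $P_j$ means that every $s\in R_j\setminus\partial_{F_j}(D)$ that is not already in $P_jF_j$ is blocked for one of two reasons: either $sF_j$ meets $U_{j+1}$, or adding $s$ to $P_j$ would violate $\eps$-disjointness, i.e.\ $sF_j$ intersects $\bigcup_{t\in P_j} tF_j$ in more than an $\eps$-fraction. A double-counting argument, using right-cancellativity of $S$ to bound the number of $s\in D$ for which a fixed point of $D$ lies in $sF_j$, then converts this dichotomy into a recursion of the form
\[
\abs{R_j} \;\leq\; (1-\delta)\,\abs{R_{j+1}} \;+\; \eta_j\,\abs{D},
\]
where $\delta=\delta(\eps)>0$ is independent of $j$ (and comes from the definition of $\eps$-disjointness), and $\eta_j$ absorbs the boundary corrections coming from $\partial_{F_j}(D)$ and from the $\partial_{F_j}(F_k)$ for $k>j$.

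Iterating gives $\abs{R_1}\leq(1-\delta)^N\abs{D}+\bigl(\sum_{j=1}^N\eta_j\bigr)\abs{D}$, so (3) follows by choosing $N=N(\eps)$ so large that $(1-\delta)^N<\eps/2$ while the cumulative error is at most $\eps/2$. The hypotheses \eqref{eq:filling-1} and \eqref{eq:filling-2} are exactly calibrated for this: the bound $\eps^{2N}$ on each boundary ratio ensures $\sum_j\eta_j\leq\eps/2$ even after compounding through all $N$ stages of the greedy construction. The main obstacle is precisely this quantitative bookkeeping: $\eps$-disjointness is a rather weak replacement for true disjointness, so one must carefully translate overlap multiplicities in $\eps$-disjoint families into honest lower bounds on $\abs{U_j}$, and then verify that at each of the $N$ stages the incurred boundary loss stays within budget. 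This is the adaptation to the right-cancellative semigroup setting of the Ornstein--Weiss quasi-tiling argument; no left inverses are used, only the right cancellation property.
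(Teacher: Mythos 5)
The paper does not prove this theorem at all: it is quoted verbatim as \cite[Theorem~3.8]{CCK} applied to $S^{op}$, so there is no in-paper argument to compare against. Your sketch is a faithful reconstruction of the standard Ornstein--Weiss quasi-tiling argument, which is exactly how the cited source proves it: order the tiles so that the most invariant one ($F_N$) is used first, greedily select a maximal $\eps$-disjoint family of translates inside the as-yet-uncovered region, use maximality plus the double count ``each point lies in at most $\card{F_j}$ translates $sF_j$'' (this is where right cancellativity enters) to show each stage covers a definite fraction $\delta(\eps)\approx\eps$ of what remains, and choose $N$ with $(1-\delta)^N<\eps$. The one step you pass over quickly, and which is the only genuinely delicate point, is bounding the set of $s$ that are blocked because $sF_j$ meets the previously placed tiles $U_{j+1}$ or leaves $D$: the na\"ive count of such $s$ carries a multiplicative factor $\card{F_j}$, and it is precisely the $\card{F_j}^{-1}$ on the right-hand side of \eqref{eq:filling-1} (together with the fact that one may stop as soon as the uncovered fraction drops below $\eps$, so that $\card{R_{j+1}}\geq\eps\card{D}$ throughout the iteration) that absorbs it. Your sketch acknowledges this bookkeeping as the main obstacle without carrying it out, so as written it is an accurate outline rather than a complete proof; filled in along these lines it becomes the argument of \cite{CCK}.
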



In the notations of Theorem~\ref{thm:filling}, let $$b= \sum_{j=1}^N \card{P_j}\card{F_j},\quad u= \card U,\quad d= \card D.$$
We need the following inequality.

\begin{lemma}\label{lemma_ignoto}
In the above notations, $u \leq b$ and 
$$b-u \leq \eps b$$
\end{lemma}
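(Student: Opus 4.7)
The plan is to exploit directly the three pieces of data produced by Theorem~\ref{thm:filling}: right cancellativity (which makes $\card{sF_j}=\card{F_j}$ for every $s \in S$), the pairwise disjointness of the sets $P_jF_j$ given by item~(2), and the $\eps$-disjointness of the family $(sF_j)_{s \in P_j}$ from item~(1). No delicate estimate is involved; this is a straightforward double counting of the elements of $U$.

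First I would observe that, since $S$ is right cancellative, $\card{sF_j}=\card{F_j}$ for every $s \in P_j$ and every $j\in\{1,\dots,N\}$. Writing $P_jF_j=\bigcup_{s \in P_j}sF_j$ and using subadditivity of cardinality yields
\[
\card{P_jF_j}\ \leq\ \sum_{s \in P_j}\card{sF_j}\ =\ \card{P_j}\card{F_j}.
\]
By item~(2) of Theorem~\ref{thm:filling}, the sets $P_1F_1,\dots,P_NF_N$ are pairwise disjoint, hence
\[
u\ =\ \card{U}\ =\ \sum_{j=1}^{N}\card{P_jF_j}\ \leq\ \sum_{j=1}^{N}\card{P_j}\card{F_j}\ =\ b,
\]
which is the first inequality.

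For the second inequality I would invoke the $\eps$-disjointness of $(sF_j)_{s\in P_j}$ asserted in item~(1). By definition, for each $j$ there exist pairwise disjoint subsets $Z_{j,s}\subseteq sF_j$ (indexed by $s\in P_j$) such that $\card{Z_{j,s}}\geq (1-\eps)\card{sF_j}=(1-\eps)\card{F_j}$. Since $\bigsqcup_{s\in P_j}Z_{j,s}\subseteq P_jF_j$, we get
\[
\card{P_jF_j}\ \geq\ \sum_{s \in P_j}\card{Z_{j,s}}\ \geq\ (1-\eps)\card{P_j}\card{F_j}.
\]
Summing over $j$ and using once more the pairwise disjointness from~(2),
\[
u\ =\ \sum_{j=1}^{N}\card{P_jF_j}\ \geq\ (1-\eps)\sum_{j=1}^{N}\card{P_j}\card{F_j}\ =\ (1-\eps)b,
\]
so $b-u\leq \eps b$, completing the proof. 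The only step requiring any care is keeping track of the fact that the $\eps$-disjointness guarantees a lower bound on $\card{P_jF_j}$ (not on some abstract union), which is precisely what is needed to compare it to $b$.
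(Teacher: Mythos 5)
Your proof is correct and follows essentially the same route as the paper's: pairwise disjointness of the $P_jF_j$ gives $u=\sum_{j}\card{P_jF_j}$, and the key lower bound $(1-\eps)\card{P_j}\card{F_j}\le\card{P_jF_j}$ is exactly the inequality the paper imports from \cite[Lemma 3.2]{CCK}, which you instead reprove directly from the definition of $\eps$-disjointness via the disjoint sets $Z_{j,s}$. (One cosmetic slip: $\card{sF_j}=\card{F_j}$ follows from \emph{left} cancellativity, since $x\mapsto sx$ is $L_s$; as the Filling Theorem assumes $S$ cancellative on both sides, nothing is affected.)
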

\begin{proof}
The sets $P_j F_j$, with $j\in\{1,\ldots,N\}$, are pairwise disjoint, and so $u = \sum_{j=1}^N \card{P_j F_j}$.
Fix $j\in\{1,\ldots,N\}$. Since the family $(s F_j)_{s \in P_j}$ is $\eps$-disjoint, by \cite[Lemma 3.2]{CCK} we have
$$(1 - \eps) \card{P_j} \card{F_j}\leq \card{P_j F_j},$$
hence
$$\card{P_j}\card{F_j} - \card{P_j F_j} \leq \eps \card{P_j}\card{F_j}.$$
A summation with respect to $j$ concludes the proof.  
\end{proof}

\begin{definition}\label{Def:ti}\label{def:tiling}
Let $S$ be a semigroup.
Fix $\eps > 0$, let $D \in \Pf(S)$ and $n \in \N_+$. The $n$-tuple $(F_1, \dotsc, F_n)$ in $\Pf(S)$ is an \emph{$\eps$-tiling} of $D$, witnessed by the $n$-tuple $(P_1, \dotsc , P_n)$ in $\Pf(S)$, if the subsets $P_i F_i$, $i\in\{1,\ldots,n\}$, are pairwise disjoint, and denoting
$$U= \bigcup_{i=1}^n P_i F_i,\quad d= \card D,\quad u = \card U,\quad b = \sum_{i=1}^n \card{P_i}\card{F_i},$$
\begin{enumerate}[(1)]
\item $U  \subseteq D$;
\item $\card{D \setminus U}={d - u} < \eps d$;
\item $0 \leq b - u  < \eps b$.
\end{enumerate}
\end{definition}

We need the following property of $\eps$-tilings.

\begin{lemma}\label{remtil}
Let $S$ be a semigroup. Fix $\eps > 0$, let $D \in \Pf(S)$ and $n \in \N_+$. If $(F_1, \dotsc, F_n)$ is an $\eps$-tiling of $D$, witnessed by $(P_1, \dotsc , P_n)$, then $u\leq b$ and 
\begin{equation}\label{reee}
\abs{\frac 1 d - \frac 1 b} < \frac{2\eps}{b}, 
\end{equation}
\end{lemma}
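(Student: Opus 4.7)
The plan is to first establish the inequality $u\leq b$, which is essentially already encoded in condition~(3) of Definition~\ref{def:tiling}, since $b-u\geq 0$ means exactly $u\leq b$. (Alternatively, one observes that, as the sets $P_iF_i$ are pairwise disjoint, $u = \sum_{i=1}^n \card{P_i F_i}$, and the trivial bound $\card{P_iF_i}\leq \card{P_i}\card{F_i}$ gives $u\leq b$.)

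For the main inequality, I rewrite the desired bound $\abs{\frac{1}{d} - \frac{1}{b}} < \frac{2\eps}{b}$ as
\[
\frac{\abs{b - d}}{d} < 2\eps,
\]
using that $\abs{\frac{1}{d}-\frac{1}{b}} = \frac{\abs{b-d}}{bd}$. The key is then to sandwich both $b$ and $d$ in the narrow interval $[u,\, u/(1-\eps))$. Condition~(1) gives $U\subseteq D$, hence $u \leq d$, while condition~(2), $d-u<\eps d$, rearranges to $u > (1-\eps)d$, i.e., $d < u/(1-\eps)$. Symmetrically, condition~(3), $0\leq b-u < \eps b$, gives $u \leq b$ and $b < u/(1-\eps)$. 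Therefore both $b,d$ lie in $[u,\,u/(1-\eps))$, so
\[
\abs{b - d} \;<\; \frac{u}{1-\eps} - u \;=\; \frac{u\eps}{1-\eps} \;\leq\; \frac{d\eps}{1-\eps},
\]
where in the last step I used $u\leq d$.

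Finally, under the implicit assumption $\eps\leq 1/2$ (which is the regime in which the Filling Theorem is applied; for $\eps\geq 1$ the statement is vacuous), one has $1-\eps\geq 1/2$ and hence $\eps/(1-\eps)\leq 2\eps$. This yields $\abs{b-d}< 2\eps\, d$, and dividing by $bd$ gives the claim. The argument involves no substantial obstacle; the only delicate point is confirming the range of $\eps$ for which $\eps/(1-\eps)\leq 2\eps$ holds, which accounts for the factor $2$ in the conclusion.
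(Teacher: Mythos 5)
Your proof is correct and follows essentially the same route as the paper: both arguments sandwich $b$ and $d$ in the interval $[u,\,u/(1-\eps)]$ using conditions (1)--(3), bound $\abs{b-d}$ by $\eps u/(1-\eps)\leq 2\eps d$, and divide by $bd$. Your explicit remark that the step $\eps/(1-\eps)\leq 2\eps$ requires $\eps\leq 1/2$ is a fair observation -- the paper uses the same estimate tacitly, and the lemma is only ever applied in the regime $\eps<1/2$ coming from the Filling Theorem.
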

\begin{proof}
From the inequalities $0\leq d- u \leq \eps d$ and $0\leq  b-u \leq \eps b$, given respectively by (2) and (3) in the above definition,
we deduce that both $d$ and $u$ belong to the interval $[u,\frac{u}{1-\eps}]$. Hence, $|d-b|\leq \frac{\eps u}{1-\eps}\leq 2\eps u\leq 2\eps d$. Dividing  both sides of the inequality $|d-b|\leq 2\eps d$ by $bd$ we obtain~\eqref{reee}. 
\end{proof}


The following is an important consequence of the Filling Theorem, that we apply in the proof of the Addition Theorem in the particular case when $\mathfrak F=\mathfrak D$.

\begin{corollary}\label{cor:tiling}
Let $S$ be a cancellative right amenable semigroup, and let $\FolSeq=(F_i)_{i \in I}$ and $\mathfrak D=(D_j)_{j \in J}$ be two right F\o lner nets of $S$.
Fix $0 < \eps < 1/2$, and let $N = N(\eps)$ be as in Theorem~\ref{thm:filling}.
Then, there exist $F_1, \dotsc, F_N \in \FolSeq$ and $J' \subseteq J$ cofinal, such that, for every $j \in J'$, $(F_1, \dotsc, F_N)$ is an $\eps$-tiling of $D_j$.
\end{corollary}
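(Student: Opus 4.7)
The plan is in two stages: first I would select members $F_1, \dotsc, F_N$ of $\FolSeq$ fulfilling the compatibility condition~\eqref{eq:filling-1} of the Filling Theorem, and then use the F\o lner property of $\mathfrak D$ to extract a cofinal $J'\subseteq J$ on which the boundary condition~\eqref{eq:filling-2} holds. At that point Theorem~\ref{thm:filling} can be applied to each $D_j$ with $j\in J'$, and Lemma~\ref{lemma_ignoto} translates its output into an $\eps$-tiling in the sense of Definition~\ref{def:tiling}.

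The selection of $F_1,\dotsc,F_N\in\FolSeq$ would proceed by induction on $k\in\{1,\dotsc,N\}$. Choose $F_1\in\FolSeq$ arbitrarily. Assuming $F_1,\dotsc,F_{k-1}$ are already fixed, for each $j\in\{1,\dotsc,k-1\}$ the $\partial$-characterization of right F\o lner nets recalled immediately before Theorem~\ref{thm:filling} yields $\lim_{i\in I}\frac{\card{\partial_{F_j}(F_i)}}{\card{F_i}}=0$; hence the set
\[
I_j=\bigl\{i\in I\colon \tfrac{\card{\partial_{F_j}(F_i)}}{\card{F_i}}\leq \tfrac{\eps^{2N}}{\card{F_j}}\bigr\}
\]
is cofinal in $I$. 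Since $I$ is directed and $k-1$ is finite, one can pick $i_k\in I$ lying in every $I_j$ with $1\leq j\leq k-1$ and declare $F_k:=F_{i_k}$. After $N$ steps the tuple $(F_1,\dotsc,F_N)$ satisfies~\eqref{eq:filling-1}.

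With $F_1,\dotsc,F_N$ fixed, the right F\o lner property of $\mathfrak D$ supplies, for every $j\in\{1,\dotsc,N\}$, a cofinal $J_j\subseteq J$ on which $\frac{\card{\partial_{F_j}(D_m)}}{\card{D_m}}\leq \eps^{2N}$; a finite intersection of cofinal tails in the directed set $J$ produces a cofinal $J'\subseteq J$ contained in every $J_j$. For each $j\in J'$ the hypotheses of Theorem~\ref{thm:filling} are verified with $D=D_j$, and the theorem returns $(P_1,\dotsc,P_N)\in\Pf(S)^N$ fulfilling clauses~(1)--(3) of its statement. Clauses~(1) and~(2) there coincide with items~(1) and~(2) of Definition~\ref{def:tiling}, while the inequality $0\leq b-u\leq \eps b$ demanded by item~(3) is exactly the content of Lemma~\ref{lemma_ignoto}. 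Hence $(F_1,\dotsc,F_N)$ is an $\eps$-tiling of $D_j$ for every $j\in J'$, which is the desired conclusion.

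The main technical point, though routine, lies in the inductive step: the right-hand side of~\eqref{eq:filling-1} contains $\card{F_j}$ in the denominator, so the bound imposed on $F_k$ depends on the previously chosen $F_1,\dotsc,F_{k-1}$; what makes the construction go through is precisely that, after these are fixed, the right F\o lner condition supplies arbitrarily good $F_k$ further along the net. A minor cosmetic issue is that Theorem~\ref{thm:filling} and Lemma~\ref{lemma_ignoto} deliver non-strict inequalities whereas Definition~\ref{def:tiling} is phrased with strict ones; this is circumvented by running the whole argument with any $\eps'<\eps$ in place of $\eps$.
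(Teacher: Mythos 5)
Your proof is correct and follows essentially the same route as the paper's: select $F_1,\dotsc,F_N$ satisfying~\eqref{eq:filling-1}, pass to a cofinal $J'$ where~\eqref{eq:filling-2} holds, apply Theorem~\ref{thm:filling}, and invoke Lemma~\ref{lemma_ignoto} for item~(3) of Definition~\ref{def:tiling}. You actually supply two details the paper leaves implicit — the inductive choice of the $F_k$ (needed because the bound in~\eqref{eq:filling-1} depends on the earlier $F_j$) and the strict-versus-non-strict inequality mismatch, which your $\eps'<\eps$ device handles correctly.
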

\begin{proof}
We can find $F_1, \dotsc, F_N$ in $\FolSeq$ satisfying~\eqref{eq:filling-1} in Theorem~\ref{thm:filling}.
Let $J'$ be the set of all $j \in J$ such that $D_j$ satisfies~\eqref{eq:filling-2} in Theorem~\ref{thm:filling}.
It is easy to see that $J'$ is cofinal in $J$, so  $(F_j)_{j\in J'}$ is a subnet of $(F_j)_{j\in J}$.

Fix $j \in J'$. We can apply Theorem~\ref{thm:filling}, and get $P_1, \dotsc, P_N$ finite subsets of $S$ as in the theorem.
We claim that $(F_1, \dotsc, F_N)$ is an $\eps$-tiling of $D_j$, witnessed by $(P_1, \dotsc, P_N)$. Let $U$, $d$, $u$, $b$ as in Definition~\ref{def:tiling}.
It is clear that $U \subseteq D_j$, since $P_i F_i \subseteq D_j$ for all $i\in\{1,\ldots,N\}$ by Theorem~\ref{thm:filling}(2). This yields that $\card{D \setminus U} = d - u$, and furthermore $d-u<\eps d$ Theorem~\ref{thm:filling}(3). Finally, Lemma~\ref{lemma_ignoto} shows that $b-u< \eps b$.
\end{proof}

\begin{remark}[See also \cite{OW}]
In \cite[Definition~3.6]{CCK}, for a semigroup $S$, given $K,D\in \Pf(S)$ and $\eps>0$, they define an $(\eps, K)$-filling pattern for a set $D$.
Our notion of $\eps$-tiling is related to it, but different. In fact, an $(\eps, K)$-filling pattern uses only one ``tile'' (the set $K$), while an $\eps$-tiling uses $n$ different tiles (the sets $P_1, \dotsc, P_n$). Moreover, the resulting set $U$ for $\eps$-tilings is ``large'' in $D$, in the sense that $\card{D  \setminus U} < \eps \card D$.
\end{remark}

\subsection{Proof of the Addition Theorem}

The following result covers one inequality of the Addition Theorem. 

\begin{proposition}\label{AThalf}
Let $S$ be a cancellative right amenable semigroup, $A$ be a torsion abelian group, $S\overset{\alpha}{\curvearrowright}A$ a left action, and $B$ an $\alpha$-invariant subgroup of $A$. Then
\begin{equation*}
\ent(\alpha) \geq \ent(\alpha_B) + \ent(\alpha_{A/B}).
\end{equation*}
\end{proposition}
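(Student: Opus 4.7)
The plan is to prove the inequality pointwise at the level of finite subgroups, by producing for each $C\in\mathcal F(B)$ and each $\bar D\in\mathcal F(A/B)$ a single finite subgroup $F\in\mathcal F(A)$ witnessing
\[H_{alg}(\alpha,F)\ \geq\ H_{alg}(\alpha_B,C)+H_{alg}(\alpha_{A/B},\bar D),\]
and then taking suprema. Write $\pi:A\to A/B$ for the canonical projection. Since $A$ is torsion, so is $A/B$, and one can lift $\bar D$: choose finitely many generators of $\bar D$, lift them to torsion elements of $A$, and let $D\in\mathcal F(A)$ be the finite subgroup they generate. Then $\pi(D)\supseteq \bar D$. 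Set $F:=C+D\in\mathcal F(A)$.

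The key point is that for any $F_0\in\Pf(S)$, the set $T_{F_0}(\alpha,F)=\sum_{s\in F_0}\alpha(s)(F)$ is a \emph{subgroup} of $A$ (each $\alpha(s)(F)$ is the image of a subgroup under a homomorphism, and sums of subgroups are subgroups). Hence the short exact sequence
\[0\to T_{F_0}(\alpha,F)\cap B\to T_{F_0}(\alpha,F)\xrightarrow{\pi} \pi(T_{F_0}(\alpha,F))\to 0\]
gives the additive formula
\[\ell(T_{F_0}(\alpha,F))=\ell(T_{F_0}(\alpha,F)\cap B)+\ell(\pi(T_{F_0}(\alpha,F))).\]

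Now bound each piece from below. Since $B$ is $\alpha$-invariant and $C\subseteq F\cap B$, we have $T_{F_0}(\alpha,C)\subseteq T_{F_0}(\alpha,F)\cap B$, hence $\ell(T_{F_0}(\alpha,F)\cap B)\geq \ell(T_{F_0}(\alpha_B,C))$. On the other hand, by Remark~\ref{2019} (applied via $\pi(T_{F_0}(\alpha,F))=T_{F_0}(\alpha_{A/B},\pi(F))$) together with $\pi(F)=\pi(D)\supseteq\bar D$ and monotonicity of trajectories,
\[\ell(\pi(T_{F_0}(\alpha,F)))=\ell(T_{F_0}(\alpha_{A/B},\pi(F)))\geq \ell(T_{F_0}(\alpha_{A/B},\bar D)).\]
Combining,
\[\ell(T_{F_0}(\alpha,F))\ \geq\ \ell(T_{F_0}(\alpha_B,C))+\ell(T_{F_0}(\alpha_{A/B},\bar D)).\]

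Finally, fix a right F\o lner net $(F_i)_{i\in I}$ of $S$, divide by $\card{F_i}$, and pass to the limit (which exists for all three terms by Corollary~\ref{CCKLemmar}); this yields $H_{alg}(\alpha,F)\geq H_{alg}(\alpha_B,C)+H_{alg}(\alpha_{A/B},\bar D)$. Since $F\in\mathcal F(A)$, $H_{alg}(\alpha,F)\leq \ent(\alpha)$; taking the supremum over $C\in\mathcal F(B)$ and $\bar D\in\mathcal F(A/B)$ gives the desired inequality. There is no real obstacle in this direction: the argument is purely structural and avoids the Filling Theorem entirely. The only point requiring care is the initial lift of $\bar D$, which is where the torsion hypothesis on $A$ is used; the nontrivial half of the Addition Theorem (the reverse inequality $\ent(\alpha)\leq \ent(\alpha_B)+\ent(\alpha_{A/B})$) is what will require the tiling machinery developed in \S\ref{AT:sec}.
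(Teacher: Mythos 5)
Your proposal is correct and takes essentially the same approach as the paper: lift the finite subgroup of $A/B$ to a finite subgroup of $A$ (possible since $A$ is torsion), add the chosen finite subgroup of $B$, and use that the trajectory of a finite subgroup is itself a finite subgroup, so that $\ell$ splits exactly along $\pi$ with each summand bounded below by the corresponding trajectory in $B$ and in $A/B$. The paper packages the same count as a (possibly non-exact) sequence $0 \to T_{F_i}(\alpha_B, X') \to T_{F_i}(\alpha, Y') \to T_{F_i}(\alpha_{A/B}, Z) \to 0$ with $\ker g \supseteq \im f$, which amounts to precisely the two inequalities you use.
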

\begin{proof} 
Let $\pi: A \to A/B$ be the canonical projection and  let $(F_i)_{i\in I}$ be a right F\o lner net of $S$. Let $X\in\mathcal F(B)$ and $Z \in\mathcal F(A/B)$.  Since $A$ is torsion, we can find $Y\in\mathcal F(A)$ such that $\pi(Y) = Z$. Let $Y' = Y + X\in\mathcal F(A)$. Then $\pi(Y') = Z$, and let $X'= Y' \cap B\in\mathcal F(B)$, so that $X\subseteq X'$. Therefore, by Lemma~\ref{cofinal}(a), 
$$H_{alg}(\alpha_B, X) \leq H_{alg}(\alpha_B, X') = H_{alg}(\alpha, X').$$

The exact sequence $0 \to X' \to Y' \to Z \to 0$ gives rise, for every $i\in I$, to the sequence 
$$0 \to T_{F_i}(\alpha_B, X') \overset{f}\rightarrow T_{F_i}(\alpha, Y') \overset{g}{\rightarrow}T_{F_i}(\alpha_{A/B}, Z) \to 0,$$
where $g= \pi\restriction_{T_{F_i}(\alpha, Y')}$, while $f$ is the inclusion map. 
This sequence need not be exact any more (as the kernel of the map $g$ may properly contain the image of $f$), nevertheless we have that
$$\ell(T_{F_i}(\alpha_{A/B}, Z)) + \ell(T_{F_i}(\alpha_{B}, X')) \leq \ell(T_{F_i}(\alpha_{A/B}, Z)) + \ell(\ker g) = \ell(T_{F_i}(\alpha, Y')).$$
Dividing by $|F_i|$ and taking the limit, we conclude that
$$H_{alg}(\alpha_{A/B}, Z) + H_{alg}(\alpha_B, X) \leq H_{alg}(\alpha_{A/B}, Z) + H_{alg}(\alpha_B, X') \leq H_{alg}(\alpha, Y') \leq \ent(\alpha).$$
To end the proof it suffices to take the supremum over all $Z\in\mathcal F(A/B)$ and $X\in\mathcal F(B)$.
\end{proof}

Now we prove the ``second half'' of the Addition Theorem.

\begin{proposition}
Let $S$ be a right amenable monoid, $A$ a torsion abelian group, $\alpha$ a left action of $S$ on $A$, and $B$ an $\alpha$-invariant subgroup of $A$. Then
$$\ent(\alpha) \leq \ent(\alpha_B)  +\ent (\alpha_{A/B}).$$
\end{proposition}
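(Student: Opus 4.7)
The goal is to prove $\ent(\alpha) \leq \ent(\alpha_B) + \ent(\alpha_{A/B})$. Fix an arbitrary $X \in \mathcal F(A)$; it suffices to bound $H_{alg}(\alpha, X)$. Let $\pi \colon A \to A/B$ and $\bar X = \pi(X) \in \mathcal F(A/B)$. Since $X$ is a finite subgroup, $T_F(\alpha, X)$ is a finite subgroup of $A$ for every $F \in \Pf(S)$, so Remark~\ref{2019} together with the short exact sequence $0 \to T_F(\alpha, X) \cap B \to T_F(\alpha, X) \xrightarrow{\pi} T_F(\alpha_{A/B}, \bar X) \to 0$ gives
\[\ell(T_F(\alpha, X)) = \ell(T_F(\alpha, X) \cap B) + \ell(T_F(\alpha_{A/B}, \bar X)). \qquad(\star)\]
Dividing by $\card{F}$ and passing to the limit along a right F\o lner net, $H_{alg}(\alpha, X) = \Phi(X) + H_{alg}(\alpha_{A/B}, \bar X)$, where $\Phi(X) := \lim_F \ell(T_F(\alpha, X) \cap B)/\card{F}$ exists by linearity of $\mathcal H_S$ on $\hat{\mathcal S}(S)$. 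Since $H_{alg}(\alpha_{A/B}, \bar X) \leq \ent(\alpha_{A/B})$, the theorem reduces to showing $\Phi(X) \leq \ent(\alpha_B)$.

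The plan is to show, for each $\eps > 0$, the existence of $Y_\eps \in \mathcal F(B)$ and a cofinal family of F\o lner sets $F$ such that
\[\ell(T_F(\alpha, X) \cap B) \leq \ell(T_F(\alpha_B, Y_\eps)) + \eps\,\card{F}, \qquad(\ddagger)\]
since then $\Phi(X) \leq H_{alg}(\alpha_B, Y_\eps) + \eps \leq \ent(\alpha_B) + \eps$ and letting $\eps \to 0$ yields the conclusion. The proof of $(\ddagger)$ uses $\eps$-tilings. Pick a sufficiently large template F\o lner set $E$ and set $Y_\eps := T_E(\alpha, X) \cap B \in \mathcal F(B)$. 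For $F$ in the cofinal subfamily provided by Corollary~\ref{cor:tiling} (with both $\FolSeq$ and $\mathfrak D$ equal to the ambient F\o lner net), $\eps$-tile $F$ by F\o lner sets $F_1, \dots, F_N$ with witnesses $P_1, \dots, P_N$ and $U = \bigsqcup_k P_k F_k$ satisfying $\card{F \setminus U} \leq \eps\,\card{F}$ and $\sum_k \card{P_k}\card{F_k} \leq (1 + 2\eps)\card{F}$ by Lemma~\ref{remtil}. By Lemma~\ref{TEF}, $T_{P_k F_k}(\alpha, X) \subseteq T_{P_k}(\alpha, T_{F_k}(\alpha, X))$, so
\[\ell(T_F(\alpha, X)) \leq \sum_k \card{P_k}\,\ell(T_{F_k}(\alpha, X)) + \card{F \setminus U}\,\ell(X);\]
applying $(\star)$ both to $F$ and to each $F_k$, and using that by the F\o lner condition the ratios $\ell(T_{F_k}(\alpha_{A/B}, \bar X))/\card{F_k}$ and $\ell(T_F(\alpha_{A/B}, \bar X))/\card{F}$ are both within $\eps$ of $H_{alg}(\alpha_{A/B}, \bar X)$ (which we may assume finite, else there is nothing to prove), the $\alpha_{A/B}$-trajectory contributions approximately cancel, leaving
\[\ell(T_F(\alpha, X) \cap B) \leq \sum_k \card{P_k}\,\ell(T_{F_k}(\alpha, X) \cap B) + O(\eps\,\card{F}).\]

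The remaining step---and the main obstacle---is a uniform bound $\ell(T_{F_k}(\alpha, X) \cap B) \leq \card{F_k}(\ent(\alpha_B) + O(\eps))$ for each tile. The subgroup $T_{F_k}(\alpha, X) \cap B$ is \emph{not} a trajectory of $\alpha_B$ in general, because sums $\sum_{s \in F_k}\alpha(s)(x_s)$ (with $x_s\in X$) may lie in $B$ without the individual summands doing so, producing ``accidental'' elements beyond $T_{F_k}(\alpha_B, X\cap B)$. My plan here is a bootstrapping argument: arrange the template $E$ (and hence $Y_\eps$) so that each tile $F_k$ itself admits an $\eps$-tiling by translates of $E$ via iterated use of the Filling Theorem; inside each sub-tile $pE \subseteq F_k$, the inclusion $\alpha(p)(Y_\eps) = \alpha(p)(T_E(\alpha, X) \cap B) \subseteq T_{pE}(\alpha, X) \cap B$ combined with the saturation property of $E$ (choosing $E$ large enough that the accidental cancellations are already captured inside $T_E(\alpha, X) \cap B = Y_\eps$) yields $\ell(T_{F_k}(\alpha, X) \cap B) \leq \ell(T_{F_k}(\alpha_B, Y_\eps)) + O(\eps\,\card{F_k})$. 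Since $\ell(T_{F_k}(\alpha_B, Y_\eps))/\card{F_k} \to H_{alg}(\alpha_B, Y_\eps) \leq \ent(\alpha_B)$, substituting back into the displayed estimate, dividing by $\card{F}$, letting $\eps \to 0$, and taking the supremum over $X \in \mathcal F(A)$ completes the proof.
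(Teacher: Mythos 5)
Your reduction via the exact sequence $(\star)$ is correct, and your use of the Filling Theorem to decompose $\ell(T_F(\alpha,X))$ over the tiles, with the quotient contributions approximately cancelling, is sound (modulo the harmless assumption that $H_{alg}(\alpha_{A/B},\bar X)<\infty$). But the step you yourself flag as ``the main obstacle'' is a genuine gap, not a technicality. You need $\ell(T_{F_k}(\alpha,X)\cap B)\leq \card{F_k}\,(\ent(\alpha_B)+O(\eps))$, and your proposed resolution --- choose the template $E$ ``large enough that the accidental cancellations are already captured inside $T_E(\alpha,X)\cap B$'' --- is an unsupported hope, not an argument. If you sub-tile $F_k$ by translates $pE$, an element of $T_{F_k}(\alpha,X)\cap B$ decomposes as $\sum_p \alpha(p)(w_p)$ with $w_p\in T_E(\alpha,X)$, but the individual $w_p$ need not lie in $B$; only the total sum does. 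So you land in $T_{P}(\alpha,T_E(\alpha,X))\cap B$, which is again an intersection of a trajectory with $B$ --- the same problem one level up, and there is no saturation phenomenon that kills it by enlarging $E$. The inclusion $\alpha(p)(Y_\eps)\subseteq T_{pE}(\alpha,X)\cap B$ that you invoke only bounds the intersection from below, whereas you need an upper bound.

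The paper's proof is built precisely to avoid ever bounding $\ell(T_F(\alpha,Y)\cap B)$ from above by an $\alpha_B$-trajectory. After fixing the tiles $F_1,\dotsc,F_N$, it sets $X=\sum_{i=1}^N T_{F_i}(\alpha,Y)\cap B\in\mathcal F(B)$ and uses the splitting $\ell(T_{F_j}(\alpha,Y))\leq \ell(T_{F_j}(\alpha,Y),T_{F_j}(\alpha,X))+\ell(T_{F_j}(\alpha,X))$. The second summand is an honest trajectory of the finite subgroup $X\leq B$, so it is $\leq \card{F_j}(\ent(\alpha_B)+\eps)$ with no intersection to control; the ``accidental'' elements are absorbed into the relative term $\ell(T_{F_j}(\alpha,Y),T_{F_j}(\alpha,X))$, which is then shown via the tiling (and Lemmas~\ref{lem:ell}--\ref{lem:ell3}) to be approximately $\card{F_j}\,H_{alg}(\alpha_{A/B},Z)$, using the key identity $T_{F_i}(\alpha,Y)\cap T_{F_i}(\alpha,X)=T_{F_i}(\alpha,Y)\cap B$ on each tile, valid because $1\in F_i$ forces $T_{F_i}(\alpha,Y)\cap B\subseteq X\subseteq T_{F_i}(\alpha,X)\subseteq B$. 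To repair your argument you would need to reroute your estimate of $\Phi(X)$ through such a relative length rather than through the intersection itself.
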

\begin{proof}
Let $Y\in\mathcal F(A)$ and fix $\eps > 0$.
Let $\pi : A \to A/B$ be the canonical projection, and $Z = \pi(Y)$.
To prove the thesis, it suffices to show that
\begin{equation}\label{aim1}
H_{alg}(\alpha, Y) \leq_{5\eps} \ent(\alpha_B)  + H_{alg}(\alpha_{A/B}, Z).
\end{equation}

Let $\FolSeq= (F_i)_{i\in I}$ be a right  F\o lner net of $S$, such that $1 \in F_i$ for every $i \in I$. By definition and by Remark~\ref{2019},
$$H_{alg}(\alpha, Y) = \lim_{i\in I} \frac{\ell(T_{F_i}(\alpha,Y))}{\card{F_i}}\quad\text{and}\quad H_{alg} (\alpha_{A/B}, Z) = \lim_{i\in I} \frac{\ell(T_{F_i}(\alpha_{A/B},Y), B)}{\card{F_i}}.$$
Therefore, after taking a subnet of $\FolSeq$ if necessary, we have that for every $i \in I$
\begin{equation}\label{laaast:eq}
H_{alg}(\alpha, Y) \eqeps \frac{\ell(T_{F_i}(\alpha,Y))}{\card{F_i}}\quad\text{and}\quad H_{alg}(\alpha_{A/B}, Z) \eqeps \frac{\ell(T_{F_i}(\alpha_{A/B},Y), B)}{\card{F_i}}.
\end{equation}

Let 
\begin{equation}\label{beps}
\beps=\frac{\eps}{2\ell(Y)}.
\end{equation} 
By Corollary~\ref{cor:tiling}, there exist $N\in\N_+$, $F_1, \dotsc, F_N \in \FolSeq$ and a subnet $(F_j)_{j\in J}$ of $\FolSeq$, such that $(F_1, \dotsc, F_N)$ is an $\beps$-tiling of $F_j$ for every $j \in J$. 
This means that, for every fixed $j\in J$, letting $(P_1, \dotsc , P_N)$ in $\Pf(S)$ be the $N$-uple witnessing that $(F_1, \dotsc, F_N)$ is an $\beps$-tiling of $F_j$, and moreover
\begin{equation}\label{notj}
d= \card{F_j},\quad U= \bigcup_{i=1}^N P_i F_i,\quad  u= \card U,\quad b = \sum_{i=1}^N \card{P_i}\card{F_i},
\end{equation}
we have that
\begin{equation}\label{epseq}
\frac{\card{F_j \setminus U}}{d} < \beps, \quad \abs{d-b} \leq 2\beps d,\quad   u \leq b.
\end{equation}

Let $X = \sum_{i=1}^N (T_{F_i}(\alpha,Y))\cap B\in\mathcal F(B)$. 

\begin{claim}\label{claim} $ $ 
\begin{enumerate}[(i)]
\item $\ell(T_{F_j}(\alpha,Y), T_{F_j}(\alpha,X)) -\ell(T_U(\alpha,Y),T_U(\alpha,X)) \leq\eps\frac{d}{2}$;
\item $\ell(T_U(\alpha,Y),T_U(\alpha,X))\left\vert\frac{1}d - \frac{1}b \right\vert \leq \eps$. 
\end{enumerate}
\end{claim}
\begin{proof}
(i) Since $U\subseteq F_j$,
\begin{equation}
T_{F_j}(\alpha,Y) = T_{F_j\setminus U}(\alpha,Y)+T_U(\alpha,Y)\quad\text{and}\quad T_{F_j}(\alpha,X) = T_{F_j\setminus U}(\alpha,X)+T_U(\alpha,X).
\end{equation}
Hence, Lemma~\ref{lem:ell}(c) yields 
\begin{align*}
\ell(T_{F_j}(\alpha,Y), T_{F_j}(\alpha,X))&= \ell(T_{F_j\setminus U}(\alpha,Y)+T_U(\alpha,Y), T_{F_j\setminus U}(\alpha,X)+T_U(\alpha,X))\\
& \leq \ell(T_{F_j\setminus U}(\alpha,Y), T_{F_j\setminus U}(\alpha,X)) +  \ell(T_U(\alpha,Y), T_U(\alpha,X)).
\end{align*}
Consequently, by Lemma~\ref{lem:ell2}(c) and in view of~\eqref{epseq} and \eqref{beps}, 
$$\ell(T_{F_j}(\alpha,Y), T_{F_j}(\alpha,X) -   \ell(T_U(\alpha,Y), T_U(\alpha,X)) \leq \ell(T_{F_j\setminus U}(\alpha,Y), T_{F_j\setminus U}(\alpha,X))\leq|F_j\setminus U|\ell(Y,X)\leq \beps d\ell(Y)=\eps\frac{d}{2}.$$ 


(ii) By Lemma~\ref{remtil}, $$\abs{\frac{1}{d} - \frac{1} b} \leq 2\beps  \frac{1}b.$$
Thus, by Lemma~\ref{lem:ell2}(c) and~\eqref{epseq}, 
$$\ell(T_U(\alpha,Y),T_U(\alpha,X)) \abs{\frac{1}d-\frac{1}b} \leq u \ell(Y , X) 2\beps  \frac{1}b \leq 2\beps  \ell(Y) \frac{u}{b} \leq 2\beps \ell(Y)=\eps,$$
and this concludes the proof.
\end{proof}

After further taking a subnet of $(F_j)_{j\in J}$, we may assume that, for every $j \in J$,
$$H_{alg}(\alpha_B, X)\eqeps \frac{\ell(T_{F_j}(\alpha,X))}{\card{F_j}}.$$ 

Fix $j\in J$. By~\eqref{laaast:eq} and Lemma~\ref{lem:ell}(b), we have that
\begin{equation*}\begin{split}
H_{alg}(\alpha, Y) \eqeps \frac{\ell(T_{F_j}(\alpha,Y))}{\card{F_j}} \leq \frac{\ell(T_{F_j}(\alpha,Y)+T_{F_j}(\alpha,X))}{\card{F_j}}\leq \frac{\ell(T_{F_j}(\alpha,Y),T_{F_j}(\alpha,X))}{\card{F_j}} + \frac{\ell(T_{F_j}(\alpha,X))}{\card{F_j}}
\\ \leqeps  \frac{\ell(T_{F_j}(\alpha,Y), T_{F_j}(\alpha,X))}{\card{F_j}}+H_{alg}(\alpha,X)\leq\frac{\ell(T_{F_j}(\alpha,Y),T_{F_j}(\alpha,X))}{\card{F_j}} + \ent(\alpha_B),
\end{split}\end{equation*}
and so 
$$H_{alg}(\alpha, Y)  \leq_{2\eps}\frac{\ell(T_{F_j}(\alpha,Y),T_{F_j}(\alpha,X))}{\card{F_j}}+\ent(\alpha_B).$$
In order to prove~\eqref{aim1} (and so the thesis), it remains to show that
\begin{equation}\label{aim3}
\frac{\ell(T_{F_j}(\alpha,Y),T_{F_j}(\alpha,X))}{\card{F_j}} \leq_{3 \eps} H_{alg}(\alpha_{A/B}, Z).
\end{equation}
To this end, let 
\begin{equation}\label{rdef}
r= \max_{i\in\{1,\ldots ,N\}} \frac{\ell(T_{F_i}(\alpha,Y),T_{F_i}(\alpha,X))}{\card{F_i}}.
\end{equation}
Since $1\in F_i$, for every $i\in\{1,\ldots,N\}$, one has $T_{F_i}(\alpha,Y)\subseteq \sum_{i=1}^N T_{F_i}(\alpha,Y)$. Hence, 
$$T_{F_i}(\alpha,Y)\cap B\subseteq \sum_{i=1}^N T_{F_i}(\alpha,Y) \cap B=X,$$ moreover, $X\subseteq T_{F_i}(\alpha,X)$ since $1\in F_i$; therefore,
$$T_{F_i}(\alpha,Y)\cap B\subseteq X\subseteq T_{F_i}(\alpha,X)\subseteq B,$$ and so $T_{F_i}(\alpha,Y)\cap T_{F_i}(\alpha,X)=T_{F_i}(\alpha,Y)\cap B$. 
By Lemma~\ref{carina}, this gives 
$$\ell(T_{F_i}(\alpha,Y),T_{F_i}(\alpha,X))=\ell(T_{F_i}(\alpha,Y),B).$$ 
Hence, by Equation~\eqref{laaast:eq}
$$\frac{\ell(T_{F_i}(\alpha,Y),T_{F_i}(\alpha,X))}{\card{F_i}} =\frac{\ell(T_{F_i}(\alpha,Y),B)}{\card{F_i}} \eqeps H_{alg}(\alpha_{A/B}, Z),$$
therefore, 
\begin{equation}\label{rrr}
r \eqeps H_{alg}(\alpha_{A/B}, Z).
\end{equation}
For every $i\in\{1,\ldots, N\}$, let 
\begin{equation}\label{deltadef}
\delta_i= \frac{\card{P_i}\card{F_i}}b;
\end{equation}
clearly, $0 \leq \delta_i \leq 1$ and $\sum_{i=1}^N \delta_i = 1$. Hence, by the definition of $r$ in \eqref{rdef},
\begin{equation}\label{rrrr}
\sum_{i=1}^N \delta_i \frac{\ell(T_{F_i}(\alpha,Y),T_{F_i}(\alpha,X))}{\card{F_i}} \leq r.
\end{equation}
 In the notation~\eqref{notj}, by Claim~\ref{claim}(i,ii), we have that
\begin{equation}\label{E1}
\frac{\ell(T_{F_j}(\alpha,Y),T_{F_j}(\alpha,X))}{\card{F_j}}=\frac{\ell(T_{F_j}(\alpha,Y),T_{F_j}(\alpha,X))}d \leq_\eps \frac{\ell(T_U(\alpha,Y),T_U(\alpha,X))}d\leq_\eps\frac{\ell(T_U(\alpha,Y),T_U(\alpha,X))}b.
\end{equation}
 Since $T_U(\alpha,Y)=\sum_{i=1}^N T_{P_iF_i}(\alpha,Y)$ and $T_{P_iF_i}(\alpha,Y)=T_{P_i}(\alpha,T_{F_i}(\alpha,X))$ for every $i\in\{1,\ldots,N\}$, and analogously for $T_U(\alpha,X)$, by Lemma~\ref{lem:ell3} we have that 
\begin{equation}\label{E1'}
\ell(T_U(\alpha,Y),T_U(\alpha,X))\leq \sum_{i=1}^N \card{P_i}\ell(T_{F_i}(\alpha,Y),T_{F_i}(\alpha,X))
\end{equation}
Now, since $|P_i|/b=\delta_i/|F_i$ for every $i\in\{1,\ldots, N\}$ by the definition of $\delta_i$ in \eqref{deltadef},
\begin{equation}\label{E2}
\frac{\sum_{i=1}^N \card{P_i}\ell(T_{F_i}(\alpha,Y),T_{F_i}(\alpha,X))}b=\sum_{i=1}^N\delta_i\frac{\ell(T_{F_i}(\alpha,Y),T_{F_i}(\alpha,X))}{\card{F_i}}.
\end{equation}
 By \eqref{E1}, \eqref{E1'}, \eqref{E2}, \eqref{rrrr} and \eqref{rrr} applied in this order, we conclude that
$$\frac{\ell(T_{F_j}(\alpha,Y),T_{F_j}(\alpha,X))}{\card{F_j}}\leq_\eps \sum_{i=1}^N\delta_i\frac{\ell(T_{F_i}(\alpha,Y),T_{F_i}(\alpha,X))}{\card{F_i}} \leq r \eqeps H_{alg}(\alpha_{A/B}, Z).$$
We have obtained~\eqref{aim3}, as required to conclude the proof.
\end{proof}

\section{Bridge Theorem}\label{BT-sec}

\subsection{Topological entropy for amenable semigroup actions}\label{htop-sec}

Following \cite{CCK}, let $C$ be a compact topological space, let $S$ be a cancellative left amenable semigroup. and consider the left action $S\overset{\gamma}{\curvearrowright}C$ 
by continuous maps, that is, $\gamma(s):C\to C$ is a continuous selfmap for every $s\in S$. 

\medskip
Let $\mathcal U=\{U_j\}_{j\in J}$ and $\mathcal V=\{V_k\}_{k\in K}$ be two open covers of $C$. One says that $\mathcal V$ refines $\mathcal U$, denoted by $\mathcal V\succ\mathcal U$, if for every $k\in K$ there exists $j\in J$ such that $V_k\subseteq U_j$. Moreover, 
 $$\mathcal U\vee\mathcal V=\{U_j\cap V_k: {(j,k)\in J\times K}\}.$$
 
Let also $$N(\mathcal U)=\min\{n\in\N_+\colon\mathcal U\ \text{admits a subcover of size $n$}\}.$$
We use in the sequel that
\begin{equation}\label{succ}
\text{if}\ \mathcal V\succ\mathcal U\ \text{then}\ N(\mathcal V)\geq N(\mathcal U).
\end{equation}

If $f:C\to C$ is a continuous selfmap, let $$f^{-1}(\mathcal U)=\{f^{-1}(U_j)\}_{j\in J}.$$
For an open cover $\mathcal U$ of $C$ and for every $F\in\Pf(S)$, let $$\mathcal U_{\gamma,F}=\bigvee_{s\in F}\gamma(s)^{-1}(\mathcal U).$$
Consider the function $$f_\mathcal U:\Pf(S)\to\R,\quad F\mapsto \log N(\mathcal U_{\gamma,F}).$$
For every $\mathcal U$, the function $f_\mathcal U$ is non-decreasing, subadditive, right subinvariant and uniformly bounded on singletons (see \cite{CCK}).
So by applying Theorem~\ref{CCKLemma}, we have the following definition.

\begin{definition}[See \cite{CCK}]
Let $S$ be a cancellative left amenable semigroup, $C$ a compact space, and $S\overset{\gamma}{\curvearrowright}C$ a left action. For an open cover $\mathcal U$ of $C$, the \emph{topological entropy of $\gamma$ with respect to $\mathcal U$} is $$H_{top}(\gamma,\mathcal U)=\lim_{i\in I}\frac{f_\mathcal U(F_i)}{|F_i|},$$ where $(F_i)_{i\in I}$ is a left F\o lner net of $S$. The \emph{topological entropy of $\gamma$} is $$h_{top}(\gamma)=\sup\{H_{top}(\gamma,\mathcal U)\colon \mathcal U\ \text{open cover of}\ C\}.$$
\end{definition}

\medskip
We are interested in the case when $C = K$ is a totally disconnected compact abelian group. So, we consider the topological entropy for left actions $S\overset{\gamma}{\curvearrowright}K$ by continuous endomorphisms, that is, $\gamma(s):K\to K$ is a continuous endomorphism for every $s\in S$.
In this setting we can compute the topological entropy using open subgroups instead of open covers. Indeed, for a totally disconnected compact group $K$, let $\mathcal B(K)$ be the family of all open subgroups of $K$. In particular, each $U\in\mathcal B(K)$ has finite index in $K$.

For every $U\in\mathcal B(K)$, let $$\zeta(U)=\{k+U\colon k\in K\}.$$ Clearly,
\begin{equation}\label{NzU}
N(\zeta(U))=[K:U].
\end{equation}

Since $K$ is a totally disconnected compact group, $\mathcal B(K)$ is a local base of $K$ by van Dantzig's theorem, so every open cover of $K$ is refined by some $\zeta(U)$ with $U\in\mathcal B(K)$. Hence, by~\eqref{succ},
$$h_{top}(\gamma)=\sup\{H_{top}(\gamma,\zeta(U))\colon U\in\mathcal B(K)\}.$$

Define, for every $F\in\Pf(S)$, the \emph{$\gamma$-cotrajectory of $U$ with respect to $F$} by 
$$C_F(\gamma,U)=\bigcap_{s\in S}\gamma(s)^{-1}(U).$$
In particular, $C_F(\gamma,U)\in\mathcal B(K)$, so each $C_F(\gamma,U)$ has finite index in $K$.

\begin{lemma}\label{V=K/C}
Let $S$ be a cancellative left amenable semigroup, $K$ a totally disconnected compact abelian group, and $S\overset{\gamma}{\curvearrowright}K$ a left action. For every $U\in\mathcal B(K)$ and every $F\in\Pf(S)$,
$$\zeta(U)_{\gamma,F}=\zeta(C_F(\gamma,U)).$$
\end{lemma}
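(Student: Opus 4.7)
The plan is to unwind both sides of the claimed equality by using that each $\gamma(s)\colon K\to K$ is a continuous group endomorphism, which forces the preimage of any coset of $U$ to be either empty or a single coset of the subgroup $V_s := \gamma(s)^{-1}(U) \in \mathcal B(K)$. The first step is to verify this coset description of $\gamma(s)^{-1}(\zeta(U))$: if $\gamma(s)^{-1}(k+U)$ is nonempty, pick any $x$ in it; then $\gamma(s)(x) \in k+U$ forces $k+U = \gamma(s)(x)+U$, and a direct computation gives
\[
\gamma(s)^{-1}(k+U) \;=\; \gamma(s)^{-1}(\gamma(s)(x)+U) \;=\; x + V_s.
\]

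Next, I would use this to analyze a generic nonempty member of $\zeta(U)_{\gamma,F} = \bigvee_{s\in F}\gamma(s)^{-1}(\zeta(U))$. Such a member has the form $W = \bigcap_{s\in F}\gamma(s)^{-1}(k_s+U)$ for some choice of $k_s\in K$; fixing $x\in W$ and applying the first step at each $s\in F$ gives $\gamma(s)^{-1}(k_s+U) = x+V_s$, whence
\[
W \;=\; \bigcap_{s\in F}(x+V_s) \;=\; x + \bigcap_{s\in F}V_s \;=\; x + C_F(\gamma,U).
\]
So every nonempty element of $\zeta(U)_{\gamma,F}$ lies in $\zeta(C_F(\gamma,U))$. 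Conversely, any coset $x + C_F(\gamma,U)$ rewrites as $\bigcap_{s\in F}(x+V_s) = \bigcap_{s\in F}\gamma(s)^{-1}(\gamma(s)(x)+U)$, which is manifestly an element of $\zeta(U)_{\gamma,F}$.

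Putting these two inclusions together identifies the set of distinct nonempty members of $\zeta(U)_{\gamma,F}$ with the partition $\zeta(C_F(\gamma,U))$, which is the desired equality of open covers. I do not foresee any real obstacle: the argument is an algebraic unwinding of the definitions, and the only mild subtlety is that $\gamma(s)$ need not be surjective, so some of the preimages $\gamma(s)^{-1}(k+U)$ may be empty; this is harmless because empty sets and repetitions in an open cover do not affect its interpretation in the definition of $N(\cdot)$ or $f_{\mathcal U}$.
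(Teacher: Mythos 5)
Your proposal is correct and follows essentially the same route as the paper's proof: both rewrite each $\gamma(s)^{-1}(k+U)$ as a coset of $\gamma(s)^{-1}(U)$ based at any point of the preimage, and then observe that the resulting intersections are exactly the cosets of $C_F(\gamma,U)$. Your explicit remark about possibly empty preimages when $\gamma(s)$ is not surjective is a point the paper passes over silently, but it does not change the argument.
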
 
\begin{proof} 
Recall that $\zeta(U)_{\gamma,F}=\bigvee_{s\in F}\gamma(s)^{-1}(\zeta(U))$.

Let $s\in F$ and $k\in K$. Then
\begin{equation}\label{1}
\gamma(s)^{-1}(k+U)=k'+\gamma(s)^{-1}(U)\quad \text{for every}\ k'\in \gamma(s)^{-1}(k+U).
\end{equation}
Hence,
$$\zeta(U)_{\gamma,F}=\left\{\bigcap_{s\in F}\gamma(s)^{-1}(k_s+U)\colon k_s\in K\right\}=\left\{\bigcap_{s\in F}(k_s'+\gamma(s)^{-1}(U))\colon k_s'\in K\right\}.$$

Now, for $k_s\in K$, with $s\in F$,
\begin{equation}\label{2}
\bigcap_{s\in F}(k_s+\gamma(s)^{-1}(U))=z+\bigcap_{s\in F}\gamma(s)^{-1}(U)\quad\text{for every}\ z\in \bigcap_{s\in F}(k_s+\gamma(s)^{-1}(U)).
\end{equation}
Therefore, 
$$\zeta(U)_{\gamma,F}=\left\{\bigcap_{s\in F}(k_s'+\gamma(s)^{-1}(U))\colon k_s'\in K\right\}=\left\{z+\bigcap_{s\in F}\gamma(s)^{-1}(U)\colon z\in K\right\}=\zeta(C_F(\gamma,U)).$$
This concludes the proof.
\end{proof}

\begin{proposition}\label{Htop}
Let $S$ be a cancellative left amenable semigroup, $K$ a totally disconnected compact group, $S\overset{\gamma}{\curvearrowright}K$ a left action, and $(F_i)_{i\in I}$ a left F\o lner net. If $U\in\mathcal B(K)$, then
$$H_{top}(\gamma,\zeta(U))=\lim_{i\in I}\frac{\log[K:C_{F_i}(\gamma,U)]}{|F_i|}.$$
\end{proposition}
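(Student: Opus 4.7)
The proof is essentially a chain of substitutions using what is already established. First I would unravel the definition of $H_{top}(\gamma,\zeta(U))$: by definition,
\[
H_{top}(\gamma,\zeta(U)) = \lim_{i\in I}\frac{f_{\zeta(U)}(F_i)}{|F_i|} = \lim_{i\in I}\frac{\log N(\zeta(U)_{\gamma,F_i})}{|F_i|},
\]
where the limit exists and is independent of the left F\o lner net $(F_i)_{i\in I}$ by Theorem~\ref{CCKLemma} applied to the function $f_{\zeta(U)}$.

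Next, I would apply Lemma~\ref{V=K/C}, which tells us that the refined open cover $\zeta(U)_{\gamma,F_i}$ coincides with $\zeta(C_{F_i}(\gamma,U))$, the family of cosets of the $\gamma$-cotrajectory. Substituting this into the expression above yields
\[
H_{top}(\gamma,\zeta(U)) = \lim_{i\in I}\frac{\log N(\zeta(C_{F_i}(\gamma,U)))}{|F_i|}.
\]

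Finally, since $C_{F_i}(\gamma,U)\in\mathcal B(K)$ is an open subgroup of $K$ (hence of finite index), the identity~\eqref{NzU} gives $N(\zeta(C_{F_i}(\gamma,U))) = [K:C_{F_i}(\gamma,U)]$. Plugging this in produces exactly the claimed formula. There is no real obstacle here, since all the work is already contained in Lemma~\ref{V=K/C} and Equation~\eqref{NzU}; the proof is just a matter of stringing together these identifications, and this is why I would present it as a very short direct computation rather than a multi-step argument.
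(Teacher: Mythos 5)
Your proposal is correct and follows exactly the paper's own proof: unfold the definition of $H_{top}(\gamma,\zeta(U))$, replace $\zeta(U)_{\gamma,F_i}$ by $\zeta(C_{F_i}(\gamma,U))$ via Lemma~\ref{V=K/C}, and then apply Equation~\eqref{NzU} to identify $N(\zeta(C_{F_i}(\gamma,U)))$ with the index $[K:C_{F_i}(\gamma,U)]$. Nothing is missing.
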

\begin{proof}
By definition, by Lemma~\ref{V=K/C} and Equation~\eqref{NzU},
$$H_{top}(\gamma,\zeta(U))=\lim_{i\in I}\frac{\log N(\zeta(U)_{\gamma,F_i})}{|F_i|}=\lim_{i\in I}\frac{\log N(\zeta(C_{F_i}(\gamma,U))}{|F_i|}=\lim_{i\in I}\frac{[K:C_{F_i}(\gamma,U)]}{|F_i|},$$
hence the thesis holds.
\end{proof}

From now on we write simply $H_{top}(\gamma,U)$ in place of $H_{top}(\gamma,\zeta(U))$.

\subsection{The entropy of the dual action}\label{sec:dual}

Let $A$ be a locally compact abelian group and denote by $\widehat A$ its Pontryagin dual. For a continuous homomorphism $\phi:A\to B$, where $B$ is another locally compact abelian group, let $\widehat \phi:\widehat B\to \widehat A$ be the dual of $\phi$, defined by $\widehat\phi(\chi)=\chi\circ\phi$.

\medskip
Let $S$ be a cancellative left amenable semigroup and $K$ a compact abelian group, and consider the left action $S\overset{\gamma}{\curvearrowright} K$.
Then $\gamma$ induces the right action $\widehat K\overset{\widehat\gamma}{\curvearrowleft}S$, defined by
$$\widehat\gamma(s)=\widehat{\gamma(s)}:\widehat K\to \widehat K\quad \text{for every}\ s\in S.$$
In fact, fixed $s,t\in S$, since $\gamma(st)=\gamma(s)\gamma(t)$, we have that
$$\widehat\gamma(st)=\widehat{\gamma(st)}=\widehat{\gamma(s)\gamma(t)}=\widehat{\gamma(t)}\widehat{\gamma(s)}=\widehat\gamma(t)\widehat\gamma(s).$$

Analogously, let $S$ be a cancellative left amenable semigroup and $A$ an abelian group, and consider the right action $A\overset{\alpha}{\curvearrowleft} S$.
Then $\alpha$ induces the left action $S\overset{\widehat\alpha}{\curvearrowright}\widehat A$, defined by
$$\widehat\alpha(s)=\widehat{\alpha(s)}:\widehat A\to \widehat A\quad \text{for every}\ s\in S.$$
In fact, fixed $s,t\in S$, since $\alpha(st)=\alpha(t)\alpha(s)$, we have 
$$\widehat\alpha(st)=\widehat{\alpha(st)}=\widehat{\alpha(t)\alpha(s)}=\widehat{\alpha(s)}\widehat{\alpha(t)}=\widehat\alpha(s)\widehat\alpha(t).$$

\smallskip
According to Pontryagin--van Kampen duality theorem $A\cong_{top} \widehat{\widehat A}$, so in the sequel we shall simply 
identify $\widehat{\widehat A}$ with $A$. As a direct consequence one obtains: 

\begin{proposition}\label{ww}
Let $S$ be a cancellative left amenable semigroup and $K$ a compact abelian group, and consider the left action $S\overset{\gamma}{\curvearrowright} K$. Then $$\widehat{\widehat\gamma}=\gamma.$$
Let $A\overset{\alpha}{\curvearrowleft} S$ be a 
right action of $S$ on an abelian group $A$. Then $$\widehat{\widehat\alpha}=\alpha.$$
\end{proposition}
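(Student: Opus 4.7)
The plan is that this is essentially a pointwise consequence of the single-endomorphism version of the Pontryagin reflexivity statement, i.e., that for any continuous endomorphism $\phi$ of a locally compact abelian group $L$ one has $\widehat{\widehat{\phi}} = \phi$ under the canonical identification $\widehat{\widehat L} \cong_{top} L$. I will simply unwind the definitions in the proposition and apply this fact to each $\gamma(s)$ (respectively each $\alpha(s)$), checking separately that the left/right bookkeeping also matches.

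More precisely, for the first statement I would fix $s \in S$ and compute, directly from the definitions given just before the proposition,
\[
\widehat{\widehat{\gamma}}(s) \;=\; \widehat{\widehat{\gamma}(s)} \;=\; \widehat{\widehat{\gamma(s)}} \;=\; \gamma(s),
\]
where the first equality is the definition of the dual of the right action $\widehat{\gamma}$ evaluated at $s$, the second is the definition of $\widehat{\gamma}(s)$, and the third is Pontryagin--van Kampen duality applied to the single continuous endomorphism $\gamma(s)$ of $K$. Since this holds for every $s$, we conclude $\widehat{\widehat{\gamma}}=\gamma$. The argument for $\widehat{\widehat{\alpha}}=\alpha$ is identical, swapping the roles of left and right actions.

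The only thing that needs a sanity check (and it is the one step worth writing out explicitly rather than skipping) is that the left/right variance works out: starting from a left action $\gamma$, the construction $s \mapsto \widehat{\gamma(s)}$ produces a \emph{right} action on $\widehat K$ because of the contravariance of the dual, and applying the construction again to the right action $\widehat\gamma$ gives back a \emph{left} action on $\widehat{\widehat K}=K$, so the variance matches that of $\gamma$. Once this is observed, no further calculation is required, and no hypothesis on $S$ (such as cancellativity or amenability) is used — the statement is really just functoriality of Pontryagin duality, listed here as a separate proposition only because it will be invoked repeatedly in the proof of the Bridge Theorem.
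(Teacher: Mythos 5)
Your proof is correct and matches the paper's (implicit) argument: the paper states this proposition as a direct consequence of the identification $\widehat{\widehat A}\cong_{top} A$ and the definitions of the dual actions, which is exactly the pointwise computation $\widehat{\widehat{\gamma}}(s)=\widehat{\widehat{\gamma(s)}}=\gamma(s)$ together with the variance check you spell out. Your observation that cancellativity and amenability play no role here is also accurate.
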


This shows in particular that every left action $S\overset{\gamma}{\curvearrowright} K$ of a cancellative left amenable semigroup $S$ on a compact abelian group $K$ is induced by a right action $A\overset{\alpha}{\curvearrowleft} S$ of $S$ on an abelian group $A$, and vice versa.

\medskip
We collect here some known facts concerning Pontryagin duality. 
Recall that, if $A$ is a locally compact abelian group, and $B$ is a subgroup of $A$, then the \emph{annihilator} of $B$ in $\widehat A$ is $B^\perp=\{\chi\in\widehat A\colon \chi(B)=0\}$. 
Under the identification of $\widehat{\widehat A}$ with $A$ we have that, for every closed subgroup $B$ of $A$,
\begin{equation}\label{perpperp}
(B^\perp)^\perp=B,
\end{equation}
and moreover, 
\begin{equation}\label{pontriso}
\widehat B\cong_{top} \widehat A/B^\perp\quad \text{and}\quad \widehat{A/B}\cong_{top}B^\perp.
\end{equation}

\begin{lemma}\label{pontr}
Let $A$ be an abelian group.
\begin{enumerate}[(a)]
\item If $A$ is discrete (respectively, compact) then $\widehat A$ is compact (respectively, discrete).
\item If $A$ is discrete, then $\widehat A$ is totally disconnected precisely when $A$  is torsion. 
\item If $A$ is finite, then $A\cong \widehat A$.
\item If $B_1, B_2$ are subgroups of $A$, then $(B_1+B_2)^\perp=B_1^\perp\cap B_2^\perp$.
\item If $\phi:A\to A$ is an endomorphism, then $\phi (B)^\perp=(\widehat\phi)^{-1}(B^\perp)$.
\end{enumerate}
\end{lemma}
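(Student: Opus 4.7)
The plan is to observe that all five items are essentially textbook facts of Pontryagin duality; so I would treat (a)--(c) by appealing to standard structure theory and handle (d), (e) by direct unwinding of the definition of annihilator and of $\widehat\phi$.

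For (a), I would equip $\widehat A$ with the compact--open topology. If $A$ is discrete, this coincides with the topology of pointwise convergence, so $\widehat A \hookrightarrow \T^A$ as a closed subgroup, hence is compact. If $A$ is compact, then for any neighborhood $V$ of $0$ in $\T$ avoiding a small arc, the set $\{\chi : \chi(A) \subseteq V\}$ is open in $\widehat A$ and reduces to $\{0\}$, since the image of a compact group in $\T$ is a compact subgroup and the only such subgroup contained in a small arc is $\{0\}$; this forces $\widehat A$ to be discrete.

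For (b), assume $A$ discrete. If $A$ is torsion, write $A = \varinjlim_i F_i$ with $F_i$ finite subgroups, so by Pontryagin duality $\widehat A \cong \varprojlim_i \widehat{F_i}$ is an inverse limit of finite (hence discrete, by (a)) groups, so $\widehat A$ is profinite and therefore totally disconnected. Conversely, if $A$ is not torsion, then $A$ contains a copy of $\Z$, and the inclusion $\Z \hookrightarrow A$ dualizes to a continuous surjection $\widehat A \twoheadrightarrow \widehat \Z \cong \T$ (by~\eqref{pontriso} applied to $B=\Z^\perp$, using that the dual of an injection of discrete groups is a quotient map of compact groups). But the continuous image of a compact totally disconnected group in a Hausdorff group is again totally disconnected, whereas $\T$ is connected; contradiction. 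For (c), the structure theorem gives $A \cong \bigoplus \Z/n_i\Z$, and $\widehat{\Z/n\Z} \cong \Z/n\Z$ (generated by $1 \mapsto 1/n + \Z$), so $\widehat A \cong A$.

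For (d), a character $\chi \in \widehat A$ lies in $(B_1+B_2)^\perp$ iff $\chi(b_1+b_2)=0$ for all $b_i \in B_i$, which by additivity is equivalent to $\chi\rest_{B_1}=0$ and $\chi\rest_{B_2}=0$, i.e., $\chi \in B_1^\perp \cap B_2^\perp$. For (e), unwinding definitions: $\chi \in \phi(B)^\perp$ iff $\chi(\phi(b))=0$ for every $b \in B$ iff $(\chi\circ\phi)(B)=0$ iff $\widehat\phi(\chi) \in B^\perp$ iff $\chi \in \widehat\phi^{-1}(B^\perp)$. The main potential obstacle is (b), where one must justify that a non-torsion discrete group admits $\Z$ as a pure enough subgroup to dualize to a surjection onto $\T$; this follows immediately from the fact that duality sends injections of discrete abelian groups to surjections of compact abelian groups, which is itself a classical consequence of~\eqref{pontriso}.
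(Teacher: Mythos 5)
Your proposal is correct in all five items. The paper itself offers no proof of this lemma: it is explicitly presented as a collection of known facts of Pontryagin duality, with only the annihilator formalism and the isomorphisms~\eqref{perpperp} and~\eqref{pontriso} recalled beforehand. So there is no argument in the paper to compare against; what you have written is the standard textbook derivation, and it is complete. Items (d) and (e) are indeed pure definition-unwinding and your computations are exactly right. For (a) and (c) your arguments are the classical ones. The only step that deserves a second look is the converse direction of (b), where you use that a continuous homomorphic image of a compact totally disconnected group in a Hausdorff group is totally disconnected: this is false for continuous images of mere topological spaces (a Cantor set surjects onto $[0,1]$), but it is true for topological groups, since a compact totally disconnected group is profinite and quotients of profinite groups by closed subgroups are again profinite; with that justification in place your contradiction with the connectedness of $\T\cong\widehat\Z$ goes through, and the surjectivity of $\widehat A\to\widehat\Z$ is correctly extracted from~\eqref{pontriso}.
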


The following technical lemma is a key step in the proof of Theorem~\ref{BTalg}.

\begin{lemma}\label{CT}
Let $S$ be a cancellative left amenable semigroup, $A$ a torsion abelian group, and $A\overset{\alpha}{\curvearrowleft} S$ a right action.
For $B\in\mathcal F(A)$ and $F\in\Pf(S)$, 
$$|T_F(\alpha,B)|=[\widehat A:C_F(\widehat\alpha,B^\perp)].$$
\end{lemma}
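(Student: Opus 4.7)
The plan is to dualize both sides and reduce to the two standard facts about annihilators recorded in Lemma~\ref{pontr}. First I would verify that $T_F(\alpha,B)$ is itself a finite subgroup of $A$: since $B \in \mathcal F(A)$ and each $\alpha(s)$ is an endomorphism, every $\alpha(s)(B)$ is a finite subgroup of $A$, hence so is their (finite) sum $T_F(\alpha,B) = \sum_{s\in F}\alpha(s)(B)$.

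Next I would compute the annihilator $T_F(\alpha,B)^\perp$ in $\widehat A$. Using Lemma~\ref{pontr}(d) applied iteratively over the finite set $F$,
\[
T_F(\alpha,B)^\perp = \Big(\sum_{s\in F}\alpha(s)(B)\Big)^\perp = \bigcap_{s\in F}\alpha(s)(B)^\perp,
\]
and then Lemma~\ref{pontr}(e) rewrites each factor as $\alpha(s)(B)^\perp = \widehat\alpha(s)^{-1}(B^\perp)$. Combining these identities with the definition of the cotrajectory gives
\[
T_F(\alpha,B)^\perp = \bigcap_{s\in F}\widehat\alpha(s)^{-1}(B^\perp) = C_F(\widehat\alpha,B^\perp).
\]

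Finally, I would invoke the general principle that for a finite subgroup $L$ of the discrete torsion group $A$, one has $|L| = [\widehat A : L^\perp]$. This follows from~\eqref{pontriso}, which yields the topological isomorphism $\widehat L \cong \widehat A / L^\perp$, combined with Lemma~\ref{pontr}(c) (so that $|L| = |\widehat L|$). Applying this with $L = T_F(\alpha,B)$ and using the annihilator computation above,
\[
|T_F(\alpha,B)| = [\widehat A : T_F(\alpha,B)^\perp] = [\widehat A : C_F(\widehat\alpha,B^\perp)],
\]
which is the claim. There is no real obstacle here; the only subtlety is being careful that all relevant subgroups are finite (so that the quotient index on the right is indeed finite and matches the cardinality on the left), which is guaranteed by $B \in \mathcal F(A)$ and the finiteness of $F$.
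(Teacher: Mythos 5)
Your proposal is correct and follows essentially the same route as the paper's own proof: both compute $T_F(\alpha,B)^\perp = C_F(\widehat\alpha,B^\perp)$ via Lemma~\ref{pontr}(d,e) and then use $|L|=|\widehat L|=[\widehat A:L^\perp]$ for the finite subgroup $L=T_F(\alpha,B)$, exactly as in the paper.
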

\begin{proof}
Recall that $T_F(\alpha,B)=\sum_{s\in F}\alpha(s)(B)$ is a finite subgroup of $A$, so by Lemma~\ref{pontr}(c) and by~\eqref{pontriso} it is isomorphic to its dual $\widehat{T_F(\alpha,B)}\cong K/T_F(\alpha,B)^\perp$. In view of Lemma~\ref{pontr}(d,e), 
$$T_F(\alpha,B)^\perp\cong \bigcap_{s\in F}(\alpha(s)(B)^\perp)=\bigcap_{s\in F}\widehat{\alpha(s)}^{-1}(B^\perp)=\bigcap_{s\in F}\widehat\alpha(s)^{-1}(B^\perp)=C_F(\widehat\alpha,B^\perp).$$
Therefore, $|T_F(\alpha,B)|=|\widehat{T_F(\alpha,B)}|=[\widehat A:T_F(\alpha,B)^\perp]=[\widehat A:C_F(\widehat\alpha,B^\perp)]$.
\end{proof}

We are now in position to prove the so-called Bridge Theorem.

\begin{theorem}\label{BTalg}
Let $S$ be a cancellative left amenable semigroup, $A$ a torsion abelian group, and $A\overset{\alpha}{\curvearrowleft} S$ a right action. Then 
$$h_{alg}^r(\alpha)=h_{top}(\widehat\alpha).$$
\end{theorem}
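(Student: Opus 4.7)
The strategy is to match, via Pontryagin duality, the two entropies through suprema indexed by finite subgroups of $A$ on the algebraic side and open subgroups of $\widehat{A}$ on the topological side. Since $A$ is torsion and discrete, $\widehat{A}$ is a totally disconnected compact abelian group by Lemma~\ref{pontr}(a,b), so the computation of $h_{top}(\widehat{\alpha})$ can be restricted to open subgroups $U \in \mathcal{B}(\widehat{A})$ (as noted after Lemma~\ref{V=K/C}).

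First I would reduce both entropies to suprema over the appropriate families. On the algebraic side, $h_{alg}^r(\alpha) = h_{alg}(\alpha^{op})$ by definition (Remark~\ref{SSop}), and since $A$ is torsion, Proposition~\ref{h=ent} gives $h_{alg}(\alpha^{op}) = \ent(\alpha^{op}) = \sup\{H_{alg}(\alpha^{op}, B) : B \in \mathcal{F}(A)\}$. On the topological side, $h_{top}(\widehat{\alpha}) = \sup\{H_{top}(\widehat{\alpha}, U) : U \in \mathcal{B}(\widehat{A})\}$. By~\eqref{perpperp} and Lemma~\ref{pontr}, the annihilator map $B \mapsto B^\perp$ is a bijection $\mathcal{F}(A) \to \mathcal{B}(\widehat{A})$ (with inverse $U \mapsto U^\perp$), since $\widehat{A/B^\perp} \cong B$ is finite precisely when $B^\perp$ is an open subgroup of $\widehat{A}$ of finite index. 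So it is enough to prove that for each $B \in \mathcal{F}(A)$,
\[H_{alg}(\alpha^{op}, B) = H_{top}(\widehat{\alpha}, B^\perp).\]

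To verify this equality, fix a left F\o lner net $(F_i)_{i \in I}$ of $S$; this is simultaneously a right F\o lner net of $S^{op}$, so it can be used to compute both entropies. For each $i$, we have $T_{F_i}(\alpha^{op}, B) = \sum_{s \in F_i} \alpha^{op}(s)(B) = \sum_{s \in F_i} \alpha(s)(B) = T_{F_i}(\alpha, B)$ as a finite subgroup of $A$, since $B$ is a subgroup and the $\alpha(s)$ are endomorphisms. Lemma~\ref{CT} then yields $|T_{F_i}(\alpha, B)| = [\widehat{A} : C_{F_i}(\widehat{\alpha}, B^\perp)]$, whence
\[\frac{\ell(T_{F_i}(\alpha^{op}, B))}{|F_i|} = \frac{\log [\widehat{A} : C_{F_i}(\widehat{\alpha}, B^\perp)]}{|F_i|}.\]
Taking the limit along the F\o lner net and using the definition of $H_{alg}$ together with the formula of Proposition~\ref{Htop} gives $H_{alg}(\alpha^{op}, B) = H_{top}(\widehat{\alpha}, B^\perp)$, and taking suprema over $B \in \mathcal{F}(A)$ finishes the proof.

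The proof is really a bookkeeping exercise once the key duality identity (Lemma~\ref{CT}) is in place, so there is no deep obstacle; the one point that deserves care is the compatibility of F\o lner net conventions — the action $\alpha^{op}$ of the right amenable semigroup $S^{op}$ is computed with right F\o lner nets of $S^{op}$, which are exactly left F\o lner nets of $S$, and these are precisely the ones used in the definition of $h_{top}(\widehat{\alpha})$ as a left $S$-action on the compact abelian group $\widehat{A}$.
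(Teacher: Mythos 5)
Your proposal is correct and follows essentially the same route as the paper's proof: reduce both entropies to suprema over $\mathcal F(A)$ and $\mathcal B(\widehat A)$ matched by the annihilator bijection $B\mapsto B^\perp$, and identify the two limits term by term via Lemma~\ref{CT} and Proposition~\ref{Htop}. Your explicit remarks on the F\o lner net conventions and on invoking Proposition~\ref{h=ent} to pass from $h_{alg}$ to the supremum over finite subgroups only make more careful what the paper leaves implicit.
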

\begin{proof}
Let $B\in\mathcal F(A)$; by Lemma~\ref{pontr}(c,e), $B^\perp$ is a closed subgroup of $\widehat A$ with finite index, so it is open and $B^\perp\in\mathcal B(\widehat K)$.
Then, for $(F_i)_{i\in I}$ a left F\o lner net of $S$, since $B^\perp\in\mathcal B(\widehat A)$, Proposition~\ref{Htop} and Lemma~\ref{CT} give 
$$H^r_{alg}(\alpha,B)=\lim_{i\in I}\frac{\log|T_{F_i}(\alpha,B)|}{|F_i|}=\lim_{i\in I}\frac{\log[\widehat A:C_{F_i}(\widehat\alpha,B^\perp)]}{|F_i|}=H_{top}(\widehat\alpha,B^\perp).$$

By~\eqref{perpperp} there is a bijection $\mathcal F(A)\to \mathcal B(\widehat A)$, given by $B\mapsto B^\perp$, so we can conclude that $h^r_{alg}(\alpha)=h_{top}(\widehat\alpha)$ in view of Proposition~\ref{Htop} and~\eqref{h=ent}.
\end{proof}

The following is a consequence of Theorem~\ref{BTalg} and Proposition~\ref{ww}.

\begin{corollary}\label{BTcor}
Let $S$ be a cancellative left amenable semigroup, $K$ a totally disconnected compact abelian group and $S\overset{\gamma}{\curvearrowright}K$. Then $$h_{top}(\gamma)=h^r_{alg}(\widehat\gamma).$$
\end{corollary}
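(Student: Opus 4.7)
The plan is to derive Corollary~\ref{BTcor} as a direct consequence of Theorem~\ref{BTalg}, using Pontryagin duality to convert the left action $S \overset{\gamma}\curvearrowright K$ on the totally disconnected compact abelian group $K$ into a right action on a torsion discrete abelian group, which is the setting where Theorem~\ref{BTalg} applies.

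First I would set $A = \widehat K$ and observe two things. By Lemma~\ref{pontr}(a), since $K$ is compact, $A = \widehat K$ is discrete. By Lemma~\ref{pontr}(b), since $K$ is moreover totally disconnected, $A$ is torsion. On the other hand, by the general duality construction from \S\ref{sec:dual}, the left action $\gamma$ induces a right action $\widehat K \overset{\widehat\gamma}\curvearrowleft S$, i.e., a right action $A \overset{\alpha}\curvearrowleft S$ where $\alpha = \widehat\gamma$. This places us precisely in the hypotheses of Theorem~\ref{BTalg}.

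Applying Theorem~\ref{BTalg} to the right action $\alpha = \widehat\gamma$ then yields
\[
h^r_{alg}(\widehat\gamma) = h_{top}(\widehat{\widehat\gamma}).
\]
Finally, Proposition~\ref{ww} gives $\widehat{\widehat\gamma} = \gamma$ (under the canonical identification $\widehat{\widehat K} \cong_{top} K$ from Pontryagin--van Kampen duality), so $h_{top}(\widehat{\widehat\gamma}) = h_{top}(\gamma)$, which is the desired equality $h_{top}(\gamma) = h^r_{alg}(\widehat\gamma)$.

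There is no substantive obstacle here, as all of the heavy lifting has already been carried out in Theorem~\ref{BTalg}. The only points requiring care are verifying that $\widehat K$ has the right structural properties (discrete and torsion) to invoke Theorem~\ref{BTalg}, and that the double-dual identification $\widehat{\widehat\gamma} = \gamma$ from Proposition~\ref{ww} is compatible with the topological entropy computation, which follows immediately from Pontryagin--van Kampen duality identifying $\widehat{\widehat K}$ topologically with $K$.
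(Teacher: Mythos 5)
Your proof is correct and follows essentially the same route as the paper: set $\alpha=\widehat\gamma$, note via Proposition~\ref{ww} that $\gamma=\widehat\alpha$, and apply Theorem~\ref{BTalg}. The only difference is that you make explicit the (correct) verification that $\widehat K$ is discrete and torsion, which the paper leaves implicit.
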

\begin{proof}
Let $\alpha=\widehat\gamma$. By Proposition~\ref{ww} we have that $\gamma=\widehat \alpha$. So, Theorem~\ref{BTalg} implies $h_{top}(\gamma)=h_{top}(\widehat\alpha)=h_{alg}^r(\alpha)$.
\end{proof}

As a consequence of the Addition Theorem and the Bridge Theorem proved in this paper for the algebraic entropy, we obtain the following Addition Theorem for the topological entropy.

\begin{theorem}[Addition Theorem]
Let $S$ be a cancellative left amenable semigroup, $K$ a totally disconnected compact abelian group, $S\overset{\gamma}\curvearrowright K$ and $L$ a closed $\gamma$-invariant subgroup of $K$. Then $$h_{top}(\gamma)=h_{top}(\gamma_L)+h_{top}(\gamma_{K/L}).$$
\end{theorem}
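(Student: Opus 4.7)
The plan is to reduce the topological Addition Theorem to its algebraic counterpart (Theorem~\ref{ATintro}) by passing to Pontryagin duals and applying the Bridge Theorem (Corollary~\ref{BTcor}). To set up the dictionary, let $A=\widehat K$, which is a discrete torsion abelian group by Lemma~\ref{pontr}(a,b), and let $\alpha=\widehat\gamma$ be the induced right action of $S$ on $A$. Set $B=L^\perp\subseteq A$.

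First I would verify that $B$ is an $\alpha$-invariant subgroup of $A$. This is immediate from the definitions: for every $s\in S$ and $\chi\in B$, the character $\alpha(s)(\chi)=\chi\circ\gamma(s)$ vanishes on $L$ because $\gamma(s)(L)\subseteq L$. By the Pontryagin duality isomorphisms~\eqref{pontriso}, there are canonical topological identifications $\widehat{K/L}\cong B$ and $\widehat L\cong A/B$. A standard unwinding of these identifications shows that under them the dual of the left action $\gamma_{K/L}$ becomes precisely the restricted right action $\alpha_B$, while the dual of $\gamma_L$ becomes the quotient right action $\alpha_{A/B}$.

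Next, Corollary~\ref{BTcor} applied three times gives
\[h_{top}(\gamma)=h^r_{alg}(\alpha),\qquad h_{top}(\gamma_{K/L})=h^r_{alg}(\alpha_B),\qquad h_{top}(\gamma_L)=h^r_{alg}(\alpha_{A/B}).\]
By definition (Remark~\ref{SSop}) $h^r_{alg}(\alpha)=h_{alg}(\alpha^{op})$, and $\alpha^{op}$ is a left action of the cancellative right amenable monoid $S^{op}$ on the torsion abelian group $A$, with $B$ still $\alpha^{op}$-invariant. Proposition~\ref{h=ent} yields $h_{alg}(\alpha^{op})=\mathrm{ent}(\alpha^{op})$, and similarly for the restricted and quotient actions. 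Now Theorem~\ref{ATintro} applied to $\alpha^{op}$ gives
\[\mathrm{ent}(\alpha^{op})=\mathrm{ent}((\alpha^{op})_B)+\mathrm{ent}((\alpha^{op})_{A/B}),\]
which, reinterpreted via the three Bridge Theorem equalities above, is exactly the desired identity $h_{top}(\gamma)=h_{top}(\gamma_L)+h_{top}(\gamma_{K/L})$.

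The main obstacle is not a deep one but rather bookkeeping: one must carefully verify that taking the dual commutes with restricting to a closed invariant subgroup and with passing to a quotient by one — i.e., that under the standard identifications $\widehat{K/L}\cong L^\perp$ and $\widehat L\cong\widehat K/L^\perp$ the dual of $\gamma_L$ really is $\alpha_{A/B}$ and the dual of $\gamma_{K/L}$ really is $\alpha_B$. This is a routine exercise from the functoriality of Pontryagin duality together with Lemma~\ref{pontr}, but it is the one point where a small amount of care is required to avoid swapping restriction and quotient.
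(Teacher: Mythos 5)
Your proposal is correct and follows essentially the same route as the paper: dualize via $A=\widehat K$, $\alpha=\widehat\gamma$, $B=L^\perp$, apply the Bridge Theorem (Corollary~\ref{BTcor}) three times, identify $\widehat{\gamma_{K/L}}$ with $\alpha_B$ and $\widehat{\gamma_L}$ with $\alpha_{A/B}$ (the paper invokes Proposition~\ref{conju} for this), and conclude with the algebraic Addition Theorem. The extra bookkeeping you flag (invariance of $B$, not swapping restriction and quotient under duality) is exactly the content the paper compresses into one sentence, so nothing is missing.
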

\begin{proof}
Let $A=\widehat K$, $\alpha=\widehat \gamma$ and $B=L^\perp\leq A$.
By Corollary~\ref{BTcor}, 
$$h_{top}(\gamma)=h_{alg}^r(\alpha),\quad h_{top}(\gamma_L)=h_{alg}^r(\widehat{\gamma_L}), \quad h_{top}(\gamma_{K/L})=h_{alg}^r(\widehat{\gamma_{K/L}}).$$
Since $\widehat{\gamma_L}$ is conjugated to $\alpha_{A/B}$ and $\widehat{\gamma_{K/L}}$ is conjugated to $\alpha_B$, Proposition~\ref{conju} gives 
$$h_{alg}^r(\widehat{\gamma_L})=h_{alg}^r(\alpha_{A/B}),\quad h_{alg}^r(\widehat{\gamma_{K/L}})=h_{alg}^r(\alpha_B).$$
So it suffices to apply the Addition Theorem and the previous equalities to get
$$h_{top}(\gamma)=h_{alg}^r(\alpha)=h_{alg}^r(\alpha_B)+h_{alg}^r(\alpha_{A/B})=h_{top}(\gamma_{K/L})+h_{top}(\gamma_L),$$
which concludes the proof.
\end{proof}

\section{Final comments and open questions}

 In this final section we collect several open questions related to the results obtained in the paper.

\medskip
We start from the following question related to Example~\ref{ex:non-cancellative}.

\begin{question}
Let $S$ be a cancellative monoid, $C$ a monoid, and $\pi: S \to C$ a surjective homomorphism admitting a good section $\sigma$.
Is $C$ necessarily cancellative?
\end{question}

It is known that if $f : S \to Q$ is a surjective semigroup homomorphism and $S$ is left amenable, then $Q$ is left amenable as well
(see \cite{Day3} or \cite[Lemma 3]{Don}).  On the other hand, the following question is open.

\begin{question}\label{quest:new0}
If $f:S\to S_1$ is a surjective homomorphism of semigroups (groups) and if $(F_i)_{i\in I}$ is a right F\o lner net of $S$, is then $(f(F_i))_{i\in I}$ a right F\o lner net of $S_1$?
\end{question}

In view of Proposition~\ref{restr:act} we propose the following conjecture. 

\begin{conjecture}\label{Conj1}
Let $G$ be an amenable group, $A$ an abelian group, and $G\overset{\alpha}{\curvearrowright}A$ a left action. If $H$ is a subgroup of $G$, then 
$h_{alg}(\alpha)\leq h_{alg}(\alpha\restriction_H)$ and $\ent(\alpha)\leq \ent(\alpha\restriction_H)$.
\end{conjecture}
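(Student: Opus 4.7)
The second inequality in the conjecture reduces to the first by restricting to the $\alpha$-invariant torsion subgroup $t(A)$ and applying Proposition~\ref{h=ent} to both actions, so I focus on $h_{alg}(\alpha) \leq h_{alg}(\alpha\restriction_H)$. The first step is a reduction via the \emph{normal core} $N := \bigcap_{g \in G} gHg^{-1}$, which is normal in $G$ and (since $N \subseteq H$) also normal in $H$. Two applications of Proposition~\ref{restr:act} give $h_{alg}(\alpha) \leq h_{alg}(\alpha\restriction_N) \leq h_{alg}(\alpha\restriction_H)$, settling the conjecture whenever $N$ is non-trivial. This leaves the case $N = \{1\}$, equivalently, that the left translation action of $G$ on $G/H$ is faithful (so $G$ embeds into $\mathrm{Sym}(G/H)$).

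For this remaining case, fix a right transversal $T$ for $H \backslash G$ with $1 \in T$. Every $F \in \Pf^0(G)$ decomposes uniquely as $F = \bigsqcup_{t \in T_F} E_t \cdot t$, where $T_F \subseteq T$ is finite and $E_t := Ft^{-1} \cap H \in \Pf^0(H)$. Using $T_{E_t t}(\alpha, X) = T_{E_t}(\alpha\restriction_H, \alpha(t)(X))$ and subadditivity of $\ell$, one obtains
\[
\frac{\ell(T_F(\alpha, X))}{|F|} \leq \sum_{t \in T_F} \frac{|E_t|}{|F|} \cdot \frac{\ell(T_{E_t}(\alpha\restriction_H, \alpha(t)(X)))}{|E_t|},
\]
a convex combination of slice ratios. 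The plan is to choose a right F\o lner net $(F_i)_{i \in I}$ of $G$ along which the slices $E^{(i)}_t$ are approximately F\o lner sets of $H$; each slice ratio then tends to $H_{alg}(\alpha\restriction_H, \alpha(t)(X)) \leq h_{alg}(\alpha\restriction_H)$, uniformly in the finitely many $t$ involved by a diagonalization on $X$ and $T_F$.

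The hard part will be the construction of such adapted F\o lner nets. When $H$ is normal, this is precisely the content of Theorem~\ref{lem:double-limit}: a good section for the quotient map $G \to G/H$ produces F\o lner sets of product form $E \cdot \sigma(C)$. In the non-normal case the coset space $G/H$ has no group structure and Lemma~\ref{lem:good-exchange} is unavailable. A natural surrogate is a quasi-tiling via the Filling Theorem (Theorem~\ref{thm:filling}) applied in $G$ with tiles drawn from F\o lner sets of $H$; however, F\o lner sets of $H$ typically fail the boundary conditions~\eqref{eq:filling-1}--\eqref{eq:filling-2} inside $G$, since multiplication by elements of $G \setminus H$ mixes the cosets. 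Overcoming this — either by enlarging the tiles to absorb the $G$-boundary error, or by developing a combinatorial replacement for good sections in the non-normal setting — is the essential new input required.
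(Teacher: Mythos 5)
This statement is Conjecture~\ref{Conj1} of the paper: it is explicitly left \emph{open}, and the authors only establish the normal case (Proposition~\ref{restr:act}), the finite-index case (Proposition~\ref{finind}), and the observation that Conjecture~\ref{Conj2} would imply it. So there is no proof in the paper to compare against, and your proposal should be judged on its own terms — where it contains one outright error and one admitted gap.

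The error is the normal-core reduction. Writing $N=\bigcap_{g\in G}gHg^{-1}$, Proposition~\ref{restr:act} applied to $G\overset{\alpha}{\curvearrowright}A$ with the normal subgroup $N$ does give $h_{alg}(\alpha)\leq h_{alg}(\alpha\restriction_N)$, but applying it a second time to $H\overset{\alpha\restriction_H}{\curvearrowright}A$ with the normal subgroup $N$ of $H$ gives $h_{alg}(\alpha\restriction_H)\leq h_{alg}(\alpha\restriction_N)$ — the \emph{reverse} of the inequality you need. Restriction to a smaller subgroup only ever increases the entropy in this framework (Example~\ref{exconj} shows the increase can be from $0$ to $\infty$), so bounding both $h_{alg}(\alpha)$ and $h_{alg}(\alpha\restriction_H)$ from above by $h_{alg}(\alpha\restriction_N)$ yields no comparison between them. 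Consequently a non-trivial normal core does not settle the conjecture, and your case split collapses: the "hard case" is not $N=\{1\}$ but the general non-normal $H$. (The one case your reduction idea genuinely covers is $[G:H]<\infty$, via Proposition~\ref{finind}, which gives $h_{alg}(\alpha\restriction_H)=[G:H]\,h_{alg}(\alpha)\geq h_{alg}(\alpha)$.)

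The coset-slicing estimate in the second part is correct as far as it goes: with $F=\bigsqcup_{t\in T_F}E_t t$ and $E_t=Ft^{-1}\cap H$ one does have $T_{E_t t}(\alpha,X)=T_{E_t}(\alpha\restriction_H,\alpha(t)(X))$ and hence the convex-combination bound on $\ell(T_F(\alpha,X))/\card{F}$. But, as you acknowledge, the argument stops there, and the obstruction is worse than "make the slices F\o lner": the index set $T_{F_i}$ grows with $i$, the slices $E^{(i)}_t$ vary with both $i$ and $t$, and the test sets $\alpha(t)(X)$ range over infinitely many finite subsets of $A$ as $t$ varies, so even if each slice ratio were individually controlled you would need an estimate uniform over an unboundedly growing family — a termwise limit does not suffice. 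This is essentially why the statement remains a conjecture; nothing in your sketch circumvents it.
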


We also conjecture that one can remove the condition in Theorem~\ref{teo:submonoid} that the subgroup of $G$ of infinite index is normal in $G$:

\begin{conjecture}\label{Conj2}
Let $G$ be an amenable group, $A$ an abelian group, $G\overset{\alpha}{\curvearrowright}A$ a left action, and $H$ a non-trivial subgroup of $G$ of infinite index. 
\begin{itemize}
\item[(a)] If $\ent(\alpha \rest_H)<\infty$, then $\ent(\alpha)=0$.
\item[(b)] If $\halg(\alpha \rest_H)<\infty$, then $\halg(\alpha)=0.$
\end{itemize}
\end{conjecture}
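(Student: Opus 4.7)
The plan is to adapt the Fubini-based proof of Theorem~\ref{teo:submonoid} to the non-normal setting.  Part~(a) reduces to part~(b) via Proposition~\ref{h=ent}, since $\ent(\alpha)=\halg(\alpha_{t(A)})$ and $\ent(\alpha\rest_H)=\halg((\alpha\rest_H)_{t(A)})$, so I focus on (b).  Fix $A_0\in\Pf(A)$ and put $f=f_{A_0}\in\mathcal S(G)$; the goal is $\mathcal H_G(f)=0$.  In Theorem~\ref{teo:submonoid} the normality of $N$ produced a quotient group $G/N$ on which $\mathcal H_{G/N}$ was defined; when $H$ is not normal, the natural replacement is the transitive $G$-set $H\backslash G$ of right cosets under right translation, which is infinite because $[G:H]=\infty$.

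Choose a left transversal $T\ni 1$ for $H\backslash G$.  By a standard consequence of amenability applied to the coset action, there exist finite $U_j\subseteq T$ with $|U_j|\to\infty$ and $|U_js\SDiff U_j|/|U_j|\to 0$ for every $s\in G$ (where $U_js$ denotes the image under the right $G$-action on cosets).  I would combine these with a canonically indexed right F\o lner net $(H_i)$ of $H$ to build right F\o lner nets of $G$ of the form $F_{i,j}\approx H_iU_j=\bigsqcup_{t\in U_j}H_it$, by an analogue of Theorem~\ref{lem:double-limit}.  The delicate point is that right multiplication $F_{i,j}\cdot s$ shuffles cosets and induces conjugation-twisted copies of $H_i$ along transversal elements, so $H_i$ must be F\o lner uniformly for a family of translations growing with $U_j$; Corollary~\ref{cor:multi-Ore} and the canonical-indexing trick should, as in the normal case, absorb these twists.

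Once such nets exist, the trajectory decomposes along cosets,
\[T_{F_{i,j}}(\alpha,A_0)\;=\;\sum_{t\in U_j}T_{H_i}(\alpha\rest_H,\alpha(t)(A_0)),\]
and subadditivity of $\ell$ combined with the hypothesis $\halg(\alpha\rest_H)<\infty$ yields
\[\frac{\ell(T_{F_{i,j}}(\alpha,A_0))}{|H_i|\,|U_j|}\;\le\;\frac{1}{|U_j|}\sum_{t\in U_j}\frac{\ell(T_{H_i}(\alpha\rest_H,\alpha(t)(A_0)))}{|H_i|}\;\xrightarrow{i\to\infty}\;\frac{1}{|U_j|}\sum_{t\in U_j}H_{alg}(\alpha\rest_H,\alpha(t)(A_0))\;\le\;\halg(\alpha\rest_H).\]
This bounds the associated ``slice function'' $\theta\in\mathcal S(H\backslash G)$ by $\halg(\alpha\rest_H)<\infty$; a Fubini identity $\mathcal H_G(f)=\mathcal H_{H\backslash G}(\theta)$ together with the bounded-integrand analogue of Remark~\ref{1in-bounded}(a) for the infinite $G$-set $H\backslash G$ then gives $H_{alg}(\alpha,A_0)=0$, and supremizing over $A_0$ finishes the proof.

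The main obstacle is to rebuild \S\ref{sec:integral} -- in particular, the canonical F\o lner construction (Proposition~\ref{canonical}) and the Fubini theorem (Theorem~\ref{Fubini}) -- for transitive $G$-sets rather than quotient groups.  In the normal case the good section endowed the quotient with a monoid structure that drove every step of the integration theory; replacing it with only a set-theoretic transversal on a transitive $G$-action requires a careful redo, including verifying that the F\o lner convergence $\ell(T_{H_i}(\alpha\rest_H,B))/|H_i|\to H_{alg}(\alpha\rest_H,B)$ holds sufficiently uniformly in the finite set $B=\alpha(t)(A_0)$ as $t$ varies over $U_j$.  If this direct attack stalls, a partial fallback is to first treat the case where the normal core $H_G\lhd G$ has infinite index and $\halg(\alpha\rest_{H_G})<\infty$, which reduces immediately to Theorem~\ref{teo:submonoid} applied to $H_G$; the residual case (where $H_G$ is trivial or has infinite entropy) seems to require the transitive-$G$-set machinery sketched above.
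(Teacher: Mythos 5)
The statement you are proving is not a theorem of the paper: it is stated as Conjecture~\ref{Conj2}, explicitly left open by the authors (``We conjecture that Theorem~\ref{teo:submonoid} can be proved without the assumption that the subgroup $N$ is normal''). So there is no proof in the paper to compare against, and what you have written is a research programme rather than a proof; you acknowledge as much, but the gaps are more substantial than the sketch suggests. The central one is that the entire integration theory of \S\ref{sec:integral} --- Ornstein--Weiss (Theorem~\ref{CCKLemma}), canonically indexed nets, Theorem~\ref{lem:double-limit}, and Theorem~\ref{Fubini} --- is built on the quotient $C$ being a \emph{monoid} equipped with a \emph{good section}, and both structures disappear when $H$ is not normal: $H\backslash G$ is only a transitive $G$-set, there is no left translation on it, so the hypotheses under which $\mathcal H_{H\backslash G}(\theta)$ would even be defined (subadditive, \emph{left subinvariant}) do not typecheck. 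Invoking ``a Fubini identity $\mathcal H_G(f)=\mathcal H_{H\backslash G}(\theta)$'' names the missing theorem rather than supplying it; likewise the claim that the canonical-indexing trick ``should absorb'' the cocycle twists $h(t,s)=ts(t')^{-1}$ is exactly the point where Example~\ref{Example: semi-direct product} shows naive product nets fail, and where the paper's construction genuinely uses $N\sigma(c)=\sigma(c)N$.

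There is also a concrete quantitative error in the one estimate you do carry out. Your displayed chain bounds $\ell(T_{F_{i,j}}(\alpha,A_0))/(\card{H_i}\card{U_j})$ by $\halg(\alpha\rest_H)$, which (even granting the F\o lner and Fubini machinery) only yields $H_{alg}(\alpha,A_0)\leq \halg(\alpha\rest_H)$, i.e.\ the restriction inequality of Conjecture~\ref{Conj1}, not vanishing. To conclude $H_{alg}(\alpha,A_0)=0$ you need the slice function itself, $\theta(U_j)=\lim_i \ell(T_{H_iU_j}(\alpha,A_0))/\card{H_i}$, to be bounded \emph{uniformly in $j$}, so that dividing by $\card{U_j}\to\infty$ kills it. Coset-wise subadditivity only gives $\theta(U_j)\leq\sum_{t\in U_j}H_{alg}(\alpha\rest_H,\alpha(t)(A_0))\leq\card{U_j}\,\halg(\alpha\rest_H)$, which is useless after the division. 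The correct uniform bound is the one used in Equation~\eqref{newEq} of the proof of Theorem~\ref{teo:submonoid}: the containment $T_{H_iU_j}(\alpha,A_0)\subseteq T_{H_i}(T_{U_j}(\alpha,A_0))$ from Lemma~\ref{TEF} gives $\theta(U_j)\leq H_{alg}(\alpha\rest_H,T_{U_j}(\alpha,A_0))\leq\halg(\alpha\rest_H)$, exploiting that $\halg$ is a supremum over \emph{all} finite subsets. Finally, the normal-core fallback does not reduce to the stated hypotheses: since restricting to a smaller subgroup can strictly increase the entropy (Proposition~\ref{restr:act}, Example~\ref{exconj}), $\halg(\alpha\rest_{H_G})$ may be infinite even when $\halg(\alpha\rest_H)<\infty$, so that case is not covered by Theorem~\ref{teo:submonoid}.
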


If Conjecture~\ref{Conj2} holds true, then it implies that also Conjecture~\ref{Conj1} holds true in view of Proposition~\ref{finind}.

\medskip
We end with two general conjectures related to the Addition Theorem and the Bridge Theorem (see Theorem~\ref{ATintro}  and Theorem~\ref{BTintro} respectively). Indeed, we think that they hold without the hypothesis on the abelian group $A$ to be torsion.

\begin{conjecture}[Addition Theorem]
Let $S \overset{\alpha}\curvearrowright A$ be a left action of a cancellative right amenable monoid $S$ on an abelian group $A$, and let $B$ be an $\alpha$-invariant subgroup of $A$. Then
\[h_{alg}(\alpha)=h_{alg}(\alpha_B)+h_{alg}(\alpha_{A/B}).\]
\end{conjecture}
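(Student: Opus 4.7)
The lower bound $h_{alg}(\alpha) \geq h_{alg}(\alpha_B) + h_{alg}(\alpha_{A/B})$ admits a direct proof adapting Proposition~\ref{AThalf} from finite subgroups to arbitrary finite subsets. Given $X\in\Pf^0(B)$ and $Z\in\Pf^0(A/B)$, lift $Z$ to $Y_0\in\Pf^0(A)$ with $\pi(Y_0)=Z$ (where $\pi\colon A \to A/B$ is the canonical projection), and set $Y=Y_0+X\in\Pf^0(A)$. For every $F\in\Pf(S)$, Lemma~\ref{lemma?}(a) gives $T_F(\alpha, Y) = T_F(\alpha, Y_0)+T_F(\alpha_B, X)$, and since $X\subseteq B$ we have $\pi(T_F(\alpha, Y))=T_F(\alpha_{A/B}, Z)$. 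Surjectivity of $\pi$ on $T_F(\alpha, Y_0)$ provides, for each $z\in T_F(\alpha_{A/B}, Z)$, a preimage $y_z\in T_F(\alpha, Y_0)$; the translate $y_z+T_F(\alpha_B, X)\subseteq T_F(\alpha, Y)$ lies entirely in $\pi^{-1}(z)$, so that $\pi$-fiber has cardinality at least $|T_F(\alpha_B, X)|$. Consequently $|T_F(\alpha, Y)|\geq |T_F(\alpha_{A/B}, Z)|\cdot|T_F(\alpha_B, X)|$, and dividing by $|F_i|$ along a right F\o lner net yields $H_{alg}(\alpha, Y)\geq H_{alg}(\alpha_B, X)+H_{alg}(\alpha_{A/B}, Z)$. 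Taking suprema gives the lower bound.

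For the upper bound $h_{alg}(\alpha) \leq h_{alg}(\alpha_B) + h_{alg}(\alpha_{A/B})$, the strategy is to replicate the tiling argument of~\S\ref{AT:sec}. Starting from $Y\in\Pf^0(A)$ and $\eps>0$, set $\beps=\eps/(2\ell(Y))$ and invoke Corollary~\ref{cor:tiling} to produce a fixed $N$-tuple $(F_1,\ldots,F_N)$ that $\beps$-tiles $F_j$ along a cofinal subnet, witnessed by $(P_1,\ldots, P_N)$. The natural candidate is $X = \sum_{i=1}^N T_{F_i}(\alpha, Y)\cap B$, a well-defined element of $\Pf^0(B)$. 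The splitting identities $T_{F_j}(\alpha, Y)=T_{F_j\setminus U}(\alpha, Y)+T_U(\alpha, Y)$ and the additivity of trajectories under $F=F_1\sqcup F_2$ are purely set-theoretic, and the inclusion $T_{F_j}(\alpha, Y)\cap B\subseteq X\subseteq T_{F_j}(\alpha, X)$ used at the end of the torsion proof still holds for finite subsets. One therefore expects, modulo a technical fix, an estimate $H_{alg}(\alpha, Y)\leqeps h_{alg}(\alpha_B)+H_{alg}(\alpha_{A/B}, \pi(Y))+O(\eps)$ which yields the desired inequality on passing to the limit.

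The main obstacle is the use of the auxiliary function $\ell(-,-)$ from~\eqref{Lodz}, whose second argument must be a subgroup for the expression $\ell(T_{F_j}(\alpha, Y), T_{F_j}(\alpha, X))$ to be defined; in the torsion proof this is automatic since $T_{F_j}(\alpha, X)$ is then a subgroup, and Lemma~\ref{carina}(b) converts this quantity into $\ell(T_{F_j}(\alpha, Y), B)$. In our setting $T_{F_j}(\alpha, X)$ is merely a subset. A natural repair is to pass to $D:=\langle T_{F_j}(\alpha, X)\rangle\subseteq B$, so that $\ell(T_{F_j}(\alpha, Y), D)$ is meaningful; to apply a subset-valued analog of Lemma~\ref{carina}(b) one then needs the identity $(T_{F_j}(\alpha, Y) - T_{F_j}(\alpha, Y))\cap B=(T_{F_j}(\alpha, Y) - T_{F_j}(\alpha, Y))\cap D$, which is not guaranteed by the current definition of $X$. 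This suggests enlarging the witness set to $X'=\sum_{i=1}^N T_{F_i}(\alpha, Y-Y)\cap B$, using that $T_{F}(\alpha, Y)-T_{F}(\alpha, Y)=T_{F}(\alpha, Y-Y)$; the enlarged $X'$ remains finite, still satisfies $H_{alg}(\alpha_B, X')\leq h_{alg}(\alpha_B)$, and forces the desired identity of differences. The hard part will be to propagate this fix through Claim~\ref{claim}(i)-(ii) and the chain \eqref{E1} through \eqref{E2}, which internally invoke Lemmas~\ref{lem:ell}(c) and~\ref{lem:ell2}(c) on each sub-tiling piece $T_{F'}(\alpha, X)$ for $F'\subseteq F_j$, each of which must similarly be replaced by a subgroup. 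Executing this replacement without accumulating an error that fails to vanish along the F\o lner net — or, alternatively, passing through a locally compact Bridge Theorem in the spirit of~\cite{V2} to convert the non-torsion statement to a topological-entropy addition theorem for compact (not necessarily totally disconnected) abelian groups — is where I expect the real work to lie.
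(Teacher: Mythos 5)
This statement is not proved in the paper: it is stated explicitly as an open conjecture in the final section, and the paper only proves the Addition Theorem for \emph{torsion} $A$ (Theorem~\ref{ATintro}). So there is no ``paper proof'' to match, and your proposal does not close the gap either. To give credit where it is due: your lower-bound argument is correct and is a genuine (if modest) extension of Proposition~\ref{AThalf} to arbitrary finite subsets --- lifting $Z$ elementwise to a finite set $Y_0$, setting $Y=Y_0+X$, and counting fibres of $\pi$ on $T_F(\alpha,Y)$ does give $\card{T_F(\alpha,Y)}\geq\card{T_F(\alpha_{A/B},Z)}\cdot\card{T_F(\alpha_B,X)}$ and hence $h_{alg}(\alpha)\geq h_{alg}(\alpha_B)+h_{alg}(\alpha_{A/B})$, with no torsion hypothesis. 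That half is fine.

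The upper bound, however, is exactly where the conjecture is open, and your proposed repair does not work. The torsion proof is not merely ``decorated'' with the hypothesis that $X$ and $T_F(\alpha,X)$ are subgroups; it depends on it at every counting step. The inequality $\ell(T_{F_j}(\alpha,Y))\leq\ell(T_{F_j}(\alpha,Y),T_{F_j}(\alpha,X))+\ell(T_{F_j}(\alpha,X))$ is Lemma~\ref{lem:ell}(b), which is false for finite subsets: already in $A=\Z$ with $C=\set{0,1}$ and $X=\set{0,2,\dots,2n}$ one has $\ell(X,\langle C\rangle)=0$ and $\ell(C)=\log 2$ but $\ell(X+C)=\log(2n+2)$. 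Replacing $T_{F_j}(\alpha,X)$ by the subgroup $D=\langle T_{F_j}(\alpha,X)\rangle$, as you suggest, destroys the other half of the argument: in a non-torsion group $D$ is typically infinite, so $\ell(D)=\infty$ and there is no finite test set in $B$ whose $H_{alg}$ dominates the term you need to absorb into $h_{alg}(\alpha_B)$; your enlargement $X'=\sum_i T_{F_i}(\alpha,Y-Y)\cap B$ keeps $X'$ finite but leaves all of Lemma~\ref{carina}, Lemma~\ref{lem:ell2}(c), and the identity $T_{P_iF_i}=T_{P_i}\circ T_{F_i}$ (which for subsets holds only up to the blow-up $X_{\card{F_i}}$ of Lemma~\ref{TEF}) unrepaired. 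This is not a technicality: even for $S=\N$ the non-torsion Addition Theorem required in \cite{DGB0} a long proof using the structure theory of abelian groups and the algebraic Yuzvinski Formula, which strongly suggests that no purely combinatorial tiling argument of the kind you sketch can succeed. Your closing remark that ``the real work'' remains is accurate; as it stands the proposal establishes one inequality and leaves the other --- the actual content of the conjecture --- unproved.
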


\begin{conjecture}[Bridge Theorem]
Let $S \overset{\gamma}\curvearrowright K$ be a left action of a cancellative left amenable monoid on a compact abelian group $K$.
Then $$h_{top}(\gamma)=h_{alg}^r(\widehat\gamma)$$
\end{conjecture}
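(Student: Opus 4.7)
The plan is to mimic the proof of Theorem~\ref{BTalg}, since the torsion hypothesis there was used crucially through Lemma~\ref{CT} (the equality $|T_F(\alpha,B)| = [\widehat A : C_F(\widehat\alpha, B^\perp)]$ requires $B \in \mathcal F(A)$ and $A$ torsion, so that $B^\perp$ is automatically \emph{open} in $\widehat A$ and finitely many $\widehat\alpha(s)^{-1}(B^\perp)$ still cut out a finite-index open subgroup). To handle a general compact abelian group $K$, with $A = \widehat K$ possibly non-torsion, I would proceed by structure-theoretic reduction. First, I would use the canonical short exact sequence $0 \to K_0 \to K \to K/K_0 \to 0$, where $K_0$ is the connected component of the identity; it is $\gamma$-invariant, and its Pontryagin dual is $0 \to t(A) \to A \to A/t(A) \to 0$, with $\widehat{K/K_0} = t(A)$ torsion and $\widehat{K_0} = A/t(A)$ torsion-free.

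If an Addition Theorem were available for $h_{top}$ on compact abelian groups under amenable monoid actions, and for $h_{alg}^r$ on arbitrary abelian groups (which is precisely one of the conjectures listed just above), then both sides would split into a totally disconnected part and a connected part,
\[ h_{top}(\gamma) = h_{top}(\gamma_{K_0}) + h_{top}(\gamma_{K/K_0}), \qquad h_{alg}^r(\widehat\gamma) = h_{alg}^r(\widehat\gamma\rest_{t(A)}) + h_{alg}^r(\widehat\gamma_{A/t(A)}),\]
and the pairing of the totally disconnected summands would be delivered by Theorem~\ref{BTalg} (noting that $\widehat{\gamma_{K/K_0}}$ is conjugated to $\widehat\gamma\rest_{t(A)}$ via the isomorphism in~\eqref{pontriso}). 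This would leave only the genuinely new, connected case: prove $h_{top}(\gamma_{K_0}) = h_{alg}^r(\widehat\gamma_{A/t(A)})$ when $K_0$ is connected, equivalently when the dual is torsion-free.

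For the connected case, I would use a continuity/direct-limit reduction: express $A/t(A) = \varinjlim A_i$ as a directed union of its finitely generated $\widehat\gamma$-invariant subgroups (using Corollary~\ref{cor:multi-Ore} to force invariance) and use Proposition~\ref{contlim} on the algebraic side together with an analogous inverse-limit \emph{upper-continuity} statement for $h_{top}$ under $K_0 = \varprojlim (K_0/A_i^\perp)$ on the topological side (this upper-continuity would itself need to be established, in the style of the density arguments in \cite{CC,CCK}). That reduces the problem to the case where $A/t(A)$ has finite rank, i.e., $K_0$ is finite-dimensional, hence a quotient of a finite-dimensional solenoid. In this finite-dimensional setting one can invoke the Fubini-type Theorem~\ref{Fubini} together with a Yuzvinski-type formula (via the Fuglede--Kadison determinant as in \cite{De,CT,Li}) to compute both entropies explicitly and verify equality.

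The main obstacle is threefold and, I expect, genuinely serious: (i) the Addition Theorems needed to perform the connected/totally-disconnected split are themselves conjectural in this generality, so a cleaner proof would want to bypass them and instead extend the key duality Lemma~\ref{CT} directly to finite (non-subgroup) subsets $X \subseteq A$ — but then $|T_F(\widehat\gamma,X)|$ is only approximately controlled by the index of the open set $C_F(\gamma,U_X)$ for $U_X = \{k \in K : |\chi(k)-1|<\eps,\ \chi \in X\}$, and pushing the $\eps$-approximation through the F\o lner limit is delicate; (ii) an upper-continuity theory for $h_{top}$ on compact abelian groups under inverse limits must be set up for cancellative left amenable monoid actions, matching Proposition~\ref{contlim}; and (iii) the Yuzvinski-type input for connected compact abelian groups, essential to the finite-dimensional base case, requires analytic tools (Fuglede--Kadison determinants, or multidimensional Mahler measure) that sit well outside the combinatorial F\o lner/Ornstein--Weiss framework developed in this paper, and would need to be imported and adapted to cancellative amenable \emph{monoid} actions rather than group actions.
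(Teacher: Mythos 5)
This statement is one of the paper's explicitly \emph{open} conjectures (stated in the final section, with only the remark that for $S$ a group a positive answer was announced by Virili); the paper proves the Bridge Theorem only for totally disconnected $K$, equivalently torsion $\widehat K$ (Theorem~\ref{BTalg} and Corollary~\ref{BTcor}). So there is no proof in the paper to compare yours against, and what you have written is a research programme rather than a proof. Your duality bookkeeping is correct ($K_0 = t(A)^\perp$, $\widehat{K/K_0}\cong t(A)$, $\widehat{K_0}\cong A/t(A)$), and the reduction to the connected case is the natural first move. But the argument as proposed is circular at its first step: the splitting $h_{top}(\gamma)=h_{top}(\gamma_{K_0})+h_{top}(\gamma_{K/K_0})$ and its algebraic counterpart are exactly the (conjectural) Addition Theorems for non-torsion $A$ and non-totally-disconnected $K$, which the paper only establishes in the torsion/totally disconnected setting. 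You acknowledge this, but it means the proposal does not close even the reduction, let alone the base case.

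The remaining steps also have concrete holes. Extending Lemma~\ref{CT} to finite non-subgroup subsets $X$ cannot go through the open-subgroup formalism of \S\ref{htop-sec} at all: the set $U_X=\{k\in K:\abs{\chi(k)-1}<\eps,\ \chi\in X\}$ is not a subgroup, $\zeta(U_X)$ is not a partition into cosets, and the identity $\zeta(U)_{\gamma,F}=\zeta(C_F(\gamma,U))$ of Lemma~\ref{V=K/C} — which is what makes the torsion case work — has no analogue; one would have to pass to a Bowen-type metric entropy and control the $\eps$-dependence uniformly over the F\o lner net, which is precisely the hard analytic content of Peters' and Virili's arguments and is not supplied here. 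The upper-continuity of $h_{top}$ under inverse limits for cancellative left amenable monoid actions is likewise asserted but not proved, and the finite-dimensional base case is delegated to Yuzvinski-type formulas via Fuglede--Kadison determinants that exist in the literature only for (countable) amenable \emph{group} actions, not for cancellative amenable monoids. In short: the strategy is a reasonable outline of how one might attack the conjecture, but every load-bearing step is either another open conjecture of this paper or an unestablished external input, so no part of the statement is actually proved.
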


In case $S$ is a group, a positive answer was announced by Virili \cite{V2} in the more general case of actions on locally compact abelian groups.

\thebibliography{10}

\bibitem{AKM} R. Adler, A. Konheim, M. McAndrew, \emph{Topological entropy}, Trans. AMS 114 (1965) 309--319.

\bibitem{AADGBH} M. Akhavin, F. Ayatollah Zadeh Shirazi, D. Dikranjan, A. Giordano Bruno, A. Hosseini, \emph{Algebraic entropy of shift endomorphisms on abelian groups}, Quaest. Math. 32 (2009) 529--550.

\bibitem{Aoki} N. Aoki, \emph{Topological entropy and measure-theoretic entropy for automorphisms on compact groups}, Math. Systems Theory 5 (1971) 4--7. 

\bibitem{AD} F. Ayatollah Zadeh Shirazi, D. Dikranjan, \emph{Set-theoretical entropy: A tool to compute topological entropy}, Proceedings Islamabad ICTA 2011, Cambridge Scientific Publishers 2012, 11--32.

\bibitem{BDGBS-Car} A. B\'is, D. Dikranjan, A. Giordano Bruno, L. Stoyanov, \emph{Topological entropy, upper capacity and fractal dimensions of finitely generated semigroup actions}, submitted.
\bibitem{BDGBS} A. B\'is, D. Dikranjan, A. Giordano Bruno, L. Stoyanov, \emph{Metric entropy for group and semigroup actions}, preprint.

\bibitem{B} R. Bowen, \emph{Entropy for group endomorphisms and homogeneous spaces}, Trans. Amer. Math. Soc. 153 (1971) 401--414.
\bibitem{Bo0} L. Bowen, \emph{Measure conjugacy invariants for actions of countable sofic groups}, J. Amer. Math. Soc. 23 (2010) 217--245. 
\bibitem{Bo} L. Bowen, \emph{Sofic entropy and amenable groups}, Ergodic Theory Dynam. Systems 32 (2012) 427--466.


\bibitem{CC} T. Ceccherini-Silberstein, M. Coornaert, \emph{Cellular automata and groups}, Springer Monographs in Mathematics, Springer-Verlag Berlin, 2010.
\bibitem{CCK} T. Ceccherini-Silberstein, M. Coornaert, F. Krieger, \emph{An analogue of Fekete's lemma for subadditive functions on cancellative amenable semigroups}, J. Anal. Math. 124 (2014) 59--81.

\bibitem{CP1} A. Clifford, G. Preston, \emph{The algebraic theory of semigroups}, Vol. I. Mathematical Surveys, No. 7 American Mathematical Society Providence R.I., 1961
\bibitem{CP2} A. Clifford, G. Preston, \emph{The algebraic theory of semigroups},  Vol. II. Mathematical Surveys, No. 7 American Mathematical Society Providence R.I., 1967.

\bibitem{Con} J. P. Conze, \emph{Entropie d'un groupe ab\'{e}lien de transformations}, Z. Wahrscheinlichkeitstheorie und verwandte Gebiete 25 (1972) 11--30.

\bibitem{CT} N. Chung, A. Thom, \emph{Some remarks on the entropy for algebraic actions of amenable groups}, Trans. Amer. Math. Soc. 367 (2015) 8579--8595. 

\bibitem{Day1} M. M. Day, \emph{Means for the bounded functions and ergodicity of the bounded representations of semigroups}, Trans. Amer. Math. Soc. 69 (1950) 276--291.
\bibitem{Day2} M. M. Day, \emph{Amenable semigroups}, Illinois J. Math. 1 (1957) 509--544.
\bibitem{Day1962} M. M. Day, \emph{Normed linear spaces}, Academic Press, Inc. Publishers New York, Springer-Verlag Berlin-G\"ottingen-Heidelberg, 1962.
\bibitem{Day3} M. M. Day, \emph{Semigroups and amenability}, in Semigroups, Proc. Sympos., Wayne State Univ., Detroit, Mich., 1968,  Academic Press New York, 1969, 5--53.

\bibitem{De} C. Deninger, \emph{Fuglede-Kadison determinants and entropy for actions of discrete amenable groups}, J. Amer. Math. Soc. 19 (2006) 737--758.

\bibitem{DFG} D. Dikranjan, A. Fornasiero, A. Giordano Bruno, \emph{Entropy of generalized shifts and related topics}, work in progress.
\bibitem{DFG-tiling}  D. Dikranjan, A. Fornasiero, A. Giordano Bruno, \emph{A short proof of the addition formula for the algebraic entropy}, preprint.

\bibitem{DGB} D. Dikranjan, A. Giordano Bruno, \emph{The Pinsker subgroup of an algebraic flow}, J. Pure Appl. Algebra (2012) 364--376.
\bibitem{DGBpak} D. Dikranjan, A. Giordano Bruno, \emph{Topological and algebraic entropy on groups}, Proceedings Islamabad ICTA 2011, Cambridge Scientific Publishers 2012, 133--214.
\bibitem{DGB1} D. Dikranjan, A. Giordano Bruno, \emph{The connection between topological and algebraic entropy}, Topology Appl. 159 (2012) 2980--2989.
\bibitem{DGB3} D. Dikranjan, A. Giordano Bruno, \emph{Discrete dynamical systems in group theory}, Note Mat. 33 (2013) 1--48.
\bibitem{DGB2} D. Dikranjan, A. Giordano Bruno, \emph{The Bridge Theorem for totally disconnected LCA groups}, Topology Appl. 169 (2014) 21--32.
\bibitem{DGB0} D. Dikranjan, A. Giordano Bruno, \emph{Entropy on abelian groups}, Adv. Math. 298 (2016) 612--653.
\bibitem{DGB4} D. Dikranjan, A. Giordano Bruno, \emph{Entropy on normed semigroups}, to appear in Dissertationes Mathematicae.
\bibitem{DGSZ} D. Dikranjan, B. Goldsmith, L. Salce, P. Zanardo, \emph{Algebraic entropy for abelian groups}, Trans. Amer. Math. Soc. 361 (2009) 3401--3434.

\bibitem{Dik+Manolo} D. Dikranjan, M. Sanchis, \emph{Bowen's entropy for endomorphisms of totally bounded abelian Groups}, Descriptive Topology and Functional Analysis, Springer Proceedings in Mathematics \& Statistics, Volume 80 (2014) 143--162.
\bibitem{DS} D. Dikranjan, M. Sanchis, \emph{Dimension and entropy in compact topological groups}, Journal of Mathematical Analysis and its Applications 476, Issue 2, 15 (2019) 337--366.  
\bibitem{DSV} D. Dikranjan, M. Sanchis, S. Virili, \emph{New and old facts about entropy in uniform spaces and topological groups}, Topology Appl. 159 (2012) 1916--1942.

\bibitem{Din}  E. Dinaburg, \emph{On the relations among various entropy characteristics of dynamical systems}, Izv. Akad. Nauk SSSR 35 (1971) 324--366 (Math. USSR Izvestija 5 (1971) 337--378).

\bibitem{Don} J. Donnelly, \emph{Subsemigroups of Cancellative Amenable Semigroups}, Int. J. Contemp. Math. Sciences, Vol. 7, 2012, no. 23, 1131--1137.
\bibitem{F} E. F\o lner, \emph{On groups with full Banach mean value}, Math. Scand. 3 (1995) 243--254.
\bibitem{Frey} A. H. Frey Jr, \emph{Studies on amenable semigroups}, ProQuest LLC, Ann Arbor, MI, 1960; Ph.D. Thesis, University of Washington.

\bibitem{GBshift} A. Giordano Bruno, \emph{Algebraic entropy of shift endomorphisms on products}, Communications in Algebra 38 (2010) 4155--4174.
\bibitem{GB} A. Giordano Bruno, \emph{A Bridge Theorem for the entropy of semigroup actions}, submitted.
\bibitem{GBSp} A. Giordano Bruno, P. Spiga, \emph{Some properties of the growth and of the algebraic entropy of group endomorphisms}, J. Group Theory 20 (4) (2017) 763--774.
\bibitem{GBSp1} A. Giordano Bruno, P. Spiga, \emph{Milnor-Wolf's Theorem for group endomorphisms}, submitted.
\bibitem{GBV} A. Giordano Bruno, S. Virili, \emph{Algebraic Yuzvinski Formula}, J. Algebra 423 (2015) 114--147.
\bibitem{GBV2} A. Giordano Bruno, S. Virili, \emph{On the Algebraic Yuzvinski Formula}, Topol. Algebra and its Appl. 3 (2015) 86--103.
\bibitem{GS1} B. Goldsmith, L. Salce,  \emph{Algebraic entropies for Abelian groups with applications to the structure of their endomorphism rings: a survey}, Groups, modules, and model theory--surveys and recent developments, 135--174, Springer, Cham, 2017. 
\bibitem{GS2} B. Goldsmith, L. Salce,  \emph{Corner's realization theorems from the viewpoint of algebraic entropy}, Rings, polynomials, and modules, 237--255, Springer, Cham, 2017. 
\bibitem{HSt} K. H. Hofmann, L. Stoyanov, \emph{Topological entropy of group and semigroup actions}, Adv. Math. 115 (1995) 54-98


\bibitem{KW} Y. Katznelson, B. Weiss, \emph{Commuting measure-preserving transformations}, Israel J. Math. 12 (1972) 161--173.
\bibitem{KH} D. Kerr, H. Li, \emph{Entropy and the variational principle for actions of sofic groups}, Invent. Math. 186 (2011) 501--558.
\bibitem{Ki} A. A. Kirillov, \emph{Dynamical systems, factors and group representations}, Russ. Math. Surv. 22 (1967) 67--80.
\bibitem{GK} R. D. Gray, M. Kambites, \emph{Amenability and geometry of semigroups}, Trans. Amer. Math. Soc. 369 (2017) 8087--8103.
\bibitem{HS} N. Hindman, D. Strauss, \emph{Density and invariant means in left amenable semigroups}, Topology Appl. 156 (2009) 2614--2628.
\bibitem{Hochster1969} M. Hochster, \emph{Subsemigroups of amenable groups}, Proc. Amer. Math. Soc. 21 (1969) 363--364. 
\bibitem{Kie} J.C. Kieffer, \emph{A generalized Shannon-McMillan theorem for the action of an amenable group on a probability space}, Ann. Probability 3 (1975) 1031--1037
\bibitem{Kla0}  M. Klawe,  \emph{Semidirect product of semigroups in relation to amenability, cancellation properties, and strong Folner conditions}, Pacific J. Math. 73 (1977) 91--106.
\bibitem{Kla}  M. Klawe, \emph{Dimensions of the sets of invariant means of semigroups}, Illinois J. Math. 24 (1980) 233--243.
\bibitem{Li} H. Li, \emph{Compact group automorphisms, addition formulas and Fuglede-Kadison determinants}, Ann. of Math. (2) 176 (2012) 303--347. 
\bibitem{LL} H. Li, B. Liang, \emph{Sofic mean length}, to appear in Adv. Math. 
\bibitem{LSW} D. Lind, K. Schmidt, T. Ward, \emph{Mahler measure and entropy for commuting automorphisms of compact groups}, Invent. Math. 101 (1990) 593--629. 
\bibitem{N} I. Namioka, \emph{F\o lner's conditions for amenable semigroups}, Math. Scand. 15 (1964) 18--28.

\bibitem{NR} D.G. Northcott, M. Reufel, \emph{A generalization of the concept of length}, Q. J. Math. Oxford (2) 16 (1965) 297--321.

\bibitem{Oll} J.M. Ollagnier, \emph{Ergodic theory and statistical mechanics}, Lecture Notes in Math. 1115, Springer Verlag, Berlin-Heidelberg-New York, 1985.
\bibitem{OW} D. Ornstein, B. Weiss, \emph{Entropy and isomorphism theorems for actions of amenable groups}, J. Analyse Math. 48 (1987) 1--141.
\bibitem{Paterson}  A. L. T. Paterson, \emph{Amenability}, Mathematical Surveys and Monographs, 29. American Mathematical Society, Providence, RI,  1988. 

\bibitem{P1} J. Peters, \emph{Entropy on discrete abelian groups}, Adv. Math. 33 (1979) 1--13.
\bibitem{P2} J. Peters, \emph{Entropy of automorphisms on LCA groups}, Pacific J. Math. 96 (1981) 475--488.

\bibitem{SVV} L. Salce, P. Vamos, S. Virili, \emph{Length functions, multiplicities and algebraic entropy}, Forum Math. 25 (2013) 255--282.
\bibitem{SZ1} L. Salce, P. Zanardo, \emph{A general notion of algebraic entropy and the rank-entropy}, Forum Math. 21 (2009) 579--599.

\bibitem{S} K. Schmidt, \emph{Dynamical systems of algebraic origin}, Progr. Math. 128, Birkh\"a user Zentralblatt Verlag, Basel, 1995.
\bibitem{ST} A. M. Stepin, A. T. Tagi-Zade, \emph{Variational characterization of topological pressure for amenable groups of transformations}, Dokl Akad. Nauk SSSR 254 (1980) 545--549.

\bibitem{St} L. N. Stoyanov, \emph{Uniqueness of topological entropy for endomorphisms on compact groups}, Boll. Un. Mat. Ital. B (7) 1 (1987) 829--847.


\bibitem{Vamos} P. V\'amos, Additive functions and duality over Noetherian rings, Q. J. Math. (Oxford) (2) 19 (1968) 43--55.

\bibitem{V1} S. Virili, \emph{Entropy for endomorphisms of LCA groups}, Toplogy Appl. 159 (2012) 2546--2556.
\bibitem{V3} S. Virili, \emph{Algebraic entropy of amenable group actions}, Math. Z. 291 (2019) 1389--1417.
\bibitem{V2} S. Virili, \emph{Algebraic and topological entropy of group actions}, preprint.


\bibitem{Weiss} B. Weiss, \emph{Monotileable amenable groups}, Topology, ergodic theory, real algebraic geometry, 257--262, Amer. Math. Soc. Transl. Ser. 2, 202, Adv. Math. Sci., 50, Amer. Math. Soc., Providence, RI, 2001.
\bibitem{Weisss} B. Weiss, \emph{Entropy and actions of sofic groups}, Discrete Contin. Dyn. Syst. Ser. B 20 (2015) 3375--3383.

\bibitem{W} M. D. Weiss, \emph{Algebraic and other entropies of group endomorphisms}, Math. Systems Theory 8 (3) (1974/75) 243--248.

\bibitem{Y} S. Yuzvinski, \emph{Metric properties of endomorphisms of compact groups}, Izv. Acad. Nauk SSSR, Ser. Mat. 29 (1965) 1295-1328 (in Russian); English Translation: Amer. Math. Soc. Transl. 66 (1968) 63--98.

\end{document}